\documentclass[11pt,reqno]{amsart}
\usepackage[foot]{amsaddr}
\usepackage[margin=1in]{geometry}
\usepackage{hyperref}

\usepackage{amsmath}
\usepackage{amssymb}
\usepackage{amsthm}
\usepackage{algpseudocode}
\usepackage{algorithm}
\usepackage{enumerate}
\usepackage{graphicx}

\allowdisplaybreaks
\setcounter{tocdepth}{1}

\newtheorem{theorem}{Theorem}[section]
\newtheorem{lemma}[theorem]{Lemma}
\newtheorem{proposition}[theorem]{Proposition}
\newtheorem{corollary}[theorem]{Corollary}
\theoremstyle{definition}
\newtheorem{remark}[theorem]{Remark}

\newtheorem{definition}[theorem]{Definition}
\newtheorem{assumption}[theorem]{Assumption}
\numberwithin{equation}{section}
\numberwithin{figure}{section}

\newcommand{\N}{\mathcal{N}}
\newcommand{\RR}{\mathbb{R}}
\newcommand{\EE}{\mathbb{E}}
\newcommand{\PP}{\mathbb{P}}
\newcommand{\toW}{\overset{W}{\to}}
\newcommand{\toWtwo}{\overset{W_2}{\to}}
\newcommand{\toWp}{\overset{W_p}{\to}}
\newcommand{\mmse}{\operatorname{mmse}}
\newcommand{\diag}{\operatorname{diag}}
\newcommand{\Cov}{\operatorname{Cov}}
\newcommand{\Var}{\operatorname{Var}}
\newcommand{\Tr}{\operatorname{Tr}}
\newcommand{\PCA}{{\text{PCA}}}
\newcommand{\lb}{{\text{lb}}}
\newcommand{\eps}{\varepsilon}
\newcommand{\A}{\mathbf{A}}
\newcommand{\B}{\mathbf{B}}

\newcommand{\E}{\mathbf{E}}
\newcommand{\F}{\mathbf{F}}

\renewcommand{\H}{\mathbf{H}}
\newcommand{\I}{\mathbf{I}}
\newcommand{\J}{\mathbf{J}}
\renewcommand{\L}{\mathbf{L}}
\newcommand{\M}{\mathbf{M}}
\newcommand{\bN}{\mathbf{N}}
\renewcommand{\O}{\mathbf{O}}
\renewcommand{\P}{\mathbf{P}}
\newcommand{\Q}{\mathbf{Q}}
\newcommand{\R}{\mathbf{R}}

\newcommand{\T}{\mathbf{T}}
\newcommand{\U}{\mathbf{U}}
\newcommand{\V}{\mathbf{V}}
\newcommand{\W}{\mathbf{W}}
\newcommand{\X}{\mathbf{X}}
\newcommand{\Y}{\mathbf{Y}}
\newcommand{\Z}{\mathbf{Z}}
\renewcommand{\a}{\mathbf{a}}
\renewcommand{\b}{\mathbf{b}}
\newcommand{\bc}{\mathbf{c}}

\newcommand{\f}{\mathbf{f}}
\newcommand{\g}{\mathbf{g}}
\newcommand{\h}{\mathbf{h}}
\renewcommand{\i}{\mathbf{i}}
\renewcommand{\j}{\mathbf{j}}
\renewcommand{\l}{\mathbf{l}}
\newcommand{\p}{\mathbf{p}}
\newcommand{\q}{\mathbf{q}}
\renewcommand{\r}{\mathbf{r}}
\newcommand{\s}{\mathbf{s}}

\renewcommand{\u}{\mathbf{u}}
\renewcommand{\v}{\mathbf{v}}
\newcommand{\w}{\mathbf{w}}
\newcommand{\x}{\mathbf{x}}
\newcommand{\y}{\mathbf{y}}
\newcommand{\z}{\mathbf{z}}
\newcommand{\blambda}{\pmb{\lambda}}
\newcommand{\bgamma}{\pmb{\gamma}}
\newcommand{\bLambda}{\pmb{\Lambda}}
\newcommand{\proj}{\Pi}
\newcommand{\bSigma}{\mathbf{\Sigma}}
\newcommand{\bOmega}{\mathbf{\Omega}}
\newcommand{\bDelta}{\mathbf{\Delta}}
\newcommand{\bGamma}{\mathbf{\Gamma}}
\newcommand{\bPhi}{\mathbf{\Phi}}
\newcommand{\bPsi}{\mathbf{\Psi}}
\newcommand{\bTheta}{\mathbf{\Theta}}
\newcommand{\bXi}{\mathbf{\Xi}}
\newcommand{\bUpsilon}{\mathbf{\Upsilon}}
\newcommand{\bsigma}{\pmb{\sigma}}
\newcommand{\bomega}{\pmb{\omega}}
\newcommand{\bdelta}{\pmb{\delta}}
\newcommand{\bphi}{\pmb{\phi}}
\newcommand{\bpsi}{\pmb{\psi}}
\newcommand{\bmu}{\pmb{\mu}}
\newcommand{\bnu}{\pmb{\nu}}
\newcommand{\0}{\mathbf{0}}
\newcommand{\Id}{\mathrm{Id}}

\newcommand{\cF}{\mathcal{F}}

\newcommand{\cX}{\mathcal{X}}
\newcommand{\NC}{\operatorname{NC}}
\newcommand{\1}{\mathbf{1}}

\title{Approximate Message Passing algorithms for rotationally invariant matrices}

\author{Zhou Fan}
\address{Z.F.: Department of Statistics and Data Science \\ Yale University}
\email{zhou.fan@yale.edu}

\begin{document}

\begin{abstract}
Approximate Message Passing (AMP) algorithms have seen widespread use across a
variety of applications. However, the precise forms for their Onsager
corrections and state evolutions depend on properties of the underlying random
matrix ensemble, limiting the extent to which AMP algorithms derived for white
noise may be applicable to data matrices that arise in practice.

In this work, we study more general AMP algorithms for random matrices $\W$
that satisfy orthogonal rotational invariance in law, where $\W$ may have a
spectral distribution that is different from the semicircle and Marcenko-Pastur laws characteristic of white noise. The Onsager corrections and state
evolutions in these algorithms are defined by the free cumulants or
rectangular free cumulants of the spectral distribution of $\W$. Their forms
were derived previously by Opper, \c{C}akmak, and Winther using non-rigorous
dynamic functional theory techniques, and we provide rigorous proofs.

Our motivating application is a Bayes-AMP algorithm for Principal Components
Analysis, when there is prior structure for the principal components (PCs)
and possibly non-white noise. For sufficiently large signal strengths and any
non-Gaussian prior distributions for the PCs, we show that
this algorithm provably achieves higher estimation accuracy than the sample PCs.
\end{abstract}

\maketitle

\tableofcontents

\section{Introduction}

Approximate Message Passing (AMP) algorithms are a general family of iterative
algorithms that have seen widespread use in a variety of applications. 
First developed for Bayesian linear regression and compressed sensing in
\cite{kabashima2003cdma,donoho2009message,donoho2010messageI,donoho2010messageII}, they have since
been applied to many high-dimensional problems arising in statistics and
machine learning, including Lasso estimation and sparse linear regression
\cite{bayati2011lasso,maleki2013asymptotic}, generalized linear models and phase
retrieval \cite{rangan2011generalized,schniter2014compressive,sur2019modern},
robust linear regression \cite{donoho2016high}, sparse or structured principal
components analysis (PCA)
\cite{rangan2012iterative,deshpande2014information,dia2016mutual,montanari2017estimation}, group synchronization problems \cite{perry2018message},
deep learning \cite{borgerding2016onsager,borgerding2017amp,metzler2017learned},
and optimization in spin glass models
\cite{montanari2019optimization,gamarnik2019overlap,alaoui2020optimization}. We
refer to \cite{feng2021unifying} for a recent review.

In their basic form as described in \cite{bayati2011dynamics}, given a data
matrix $\W \in \RR^{m \times n}$ and an initialization $\u_1 \in \RR^m$, an AMP
algorithm consists of the iterative updates
\begin{align*}
\z_t&=\W^\top \u_t-b_t\v_{t-1}\\
\v_t&=v_t(\z_t)\\
\y_t&=\W\v_t-a_t\u_t\\
\u_{t+1}&=u_{t+1}(\y_t).
\end{align*}
Here, $a_t,b_t \in \RR$ are two sequences of debiasing coefficients, and
$v_t:\RR \to \RR$ and $u_{t+1}:\RR \to \RR$ are two sequences of 
functions applied entrywise to $\z_t \in \RR^n$ and $\y_t \in \RR^m$.
By appropriately designing these functions $v_t$ and $u_{t+1}$, possibly
to also depend on additional ``side information'' such as response variables in
regression problems, this basic iteration may be
applied to perform optimization or Bayes posterior-mean estimation in the
above applications.

A defining characteristic of the AMP algorithm is the subtraction of the
two ``memory'' terms $b_t\v_{t-1}$ and $a_t\u_t$ in the definitions of
$\z_t$ and $\y_t$, known as the \emph{Onsager corrections}.
This achieves the effect of removing a bias of 
$\W^\top \u_t$ and $\W\v_t$ in the directions of the preceding iterates, so
that as $m,n \to \infty$, the empirical distributions of
$\y_t$ and $\z_t$ converge to certain Gaussian limits
\begin{equation}\label{eq:gaussianlimitintro}
\y_t \to \N(0,\sigma_t^2) \quad \text{ and } \quad \z_t \to
\N(0,\omega_t^2).
\end{equation}
This was proven rigorously in the Sherrington-Kirkpatrick model by
Bolthausen in \cite{bolthausen2014iterative} and for general AMP algorithms of
the above form by Bayati and Montanari in \cite{bayati2011dynamics},
and various extensions have been established in
\cite{donoho2013information,javanmard2013state,bayati2015universality,berthier2020state,chen2020universality}. The description of the variances
$\sigma_t^2$ and $\omega_t^2$ across iterations is known as the algorithm's
\emph{state evolution}. This ability to characterize the distributions of the
iterates is a major appeal of the AMP approach, and has enabled a more
precise theoretical understanding of many high-dimensional statistical
estimators and the development of associated inference procedures that
quantify statistical uncertainty
\cite{su2017false,mousavi2018consistent,sur2019modern,sur2019likelihood,bu2019algorithmic}.

A drawback of AMP algorithms, however, is that the correct forms of the
debiasing coefficients $a_t,b_t$ and resulting variances
$\sigma_t^2,\omega_t^2$ depend on the properties of the
data matrix $\W$. When $\W$ has i.i.d.\ $\N(0,1/n)$ entries,
these quantities are given explicitly by
\[a_t=\langle v_t'(\z_t) \rangle, \qquad
b_t=\gamma \langle u_t'(\y_{t-1}) \rangle, \qquad
\sigma_t^2=\langle v_t(\z_t)^2 \rangle, \qquad
\omega_t^2=\gamma \langle u_t(\y_{t-1})^2 \rangle\]
where $\gamma=m/n$, $u_t'(\cdot),v_t'(\cdot),u_t(\cdot)^2,v_t(\cdot)^2$
denote the derivatives and squares of $u_t,v_t$ applied entrywise, and
$\langle \cdot \rangle$ denotes the empirical average of coordinates. It has
been shown in \cite{bayati2015universality,chen2020universality} that
these forms enjoy a certain amount of universality, being valid also for $\W$
having i.i.d.\ non-Gaussian entries. Extensions to $\W$ having independent
entries with several blocks of differing variances were derived in
\cite{donoho2013information,javanmard2013state}. Unfortunately, these results
do not apply to $\W$ with more complex correlation structure, which is
common in data applications. A sizeable body of work has developed alternative
algorithms or damping procedures to address this shortcoming
\cite{opper2001adaptive,opper2001tractable,minka2001family,opper2005expectation,kabashima2014signal,cakmak2014s,caltagirone2014convergence,fletcher2016expectation,schniter2016vector,ma2017orthogonal,takeuchi2017rigorous,rangan2019convergence,rangan2019vector},
and the connections between several of these algorithms were discussed recently in
\cite{maillard2019high}. However, many such algorithms are no longer
characterized by a rigorous state evolution, and some have been empirically
observed to exhibit slow convergence or divergent behavior.

\subsection{Contributions}

We develop a rigorous extension of general AMP procedures of the above form to
rotationally invariant matrices. We then apply these general
algorithms to a prototypical ``structured PCA''
problem of estimating a rank-one matrix in (possibly non-white) noise.
In this PCA application, we develop a Bayes-AMP algorithm that provably
achieves lower mean-squared-error than the rank-one estimate constructed from
the sample principal components (PCs), for any sufficiently large signal
strength and any prior distributions of the PCs that are not mean-zero Gaussian
laws.

Let us first describe the general AMP algorithm in the
simpler setting of symmetric square matrices. We study matrices
$\W \in \RR^{n \times n}$ that satisfy the equality in law
\[\W\overset{L}{=} \tilde{\O}^\top \W\tilde{\O}\]
for any deterministic orthogonal matrix $\tilde{\O} \in \RR^{n \times n}$.
Equivalently, such matrices admit the eigendecomposition $\W=\O^\top \bLambda\O$
where the eigenvectors $\O \in \RR^{n \times n}$ are independent of the
eigenvalues $\bLambda$ and are uniformly distributed on the orthogonal group.
The AMP algorithm will take the form
\begin{align}
\z_t&=\W\u_t-b_{t1}\u_1-b_{t2}\u_2-\ldots-b_{tt}\u_t\label{eq:AMPintro1}\\
\u_{t+1}&=u_{t+1}(\z_1,\z_2,\ldots,\z_t)\label{eq:AMPintro2}
\end{align}
where the coefficients $b_{ts}$ are defined so that each $\z_t$ has an
empirical Gaussian limit as in (\ref{eq:gaussianlimitintro}).
For greater generality and applicability,
we will allow $u_{t+1}:\RR^t \to \RR$ to be
a function of all previous iterates $\z_1,\ldots,\z_t$, rather than only the
preceding iterate $\z_t$. (Outside of the 
i.i.d.\ Gaussian setting, the full debiasing of $\W\u_t$ by
$\u_1,\ldots,\u_t$ is necessary even if $u_{t+1}(\cdot)$ 
depends only on $\z_t$.) The correct forms for $b_{t1},\ldots,b_{tt}$ and the
corresponding state evolution
\begin{equation}\label{eq:SEintro}
(\z_1,\z_2,\ldots,\z_t) \to \N(0,\bSigma_t)
\end{equation}
were derived previously by Opper, \c{C}akmak, and Winther using non-rigorous
dynamic functional theory techniques \cite{opper2016theory}. These forms
depend on the free cumulants of the eigenvalue distribution of $\W$, and we
describe them in Section \ref{sec:BSigma}. Our work provides a rigorous
proof of the validity of this state evolution.

In the rectangular setting, we study bi-rotationally invariant matrices
$\W \in \RR^{m \times n}$ satisfying the equality in law
\[\W\overset{L}{=} \tilde{\O}^\top \W\tilde{\Q}\]
for any deterministic orthogonal matrices $\tilde{\O} \in \RR^{m \times m}$ and
$\tilde{\Q} \in \RR^{n \times n}$. Equivalently, such matrices
admit the singular value decomposition $\W=\O\bLambda\Q^\top$ where the singular
vectors $\O \in \RR^{m \times m}$ and $\Q \in \RR^{n \times n}$ are independent
of the singular values $\bLambda$ and are both uniformly distributed over the
orthogonal groups. The analogous AMP algorithm takes the form
\begin{align}
\z_t&=\W^\top\u_t-b_{t1}\v_1-b_{t2}\v_2-\ldots-b_{t,t-1}\v_{t-1}\label{eq:AMPintrorect1}\\
\v_t&=v_t(\z_1,\z_2,\ldots,\z_t)\\
\y_t&=\W\v_t-a_{t1}\u_1-a_{t2}\u_2-\ldots-a_{tt}\u_t\\
\u_{t+1}&=u_{t+1}(\y_1,\y_2,\ldots,\y_t)\label{eq:AMPintrorect4}
\end{align}
We describe in Section \ref{sec:debiasingrect} the forms of the
debiasing coefficients $a_{ts},b_{ts}$ and the corresponding state evolutions
\[(\y_1,\ldots,\y_t) \to \N(0,\bSigma_t), \qquad (\z_1,\ldots,\z_t) \to
\N(0,\bOmega_t),\]
which are related to the rectangular free cumulants of the singular value
distribution of $\W$ as introduced in
\cite{benaych2009rectangular,benaych2009rectangularII}.
This algorithm has also been derived recently and independently
in \cite{ccakmak2020dynamical}, using a dynamic functional theory approach
similar to \cite{opper2016theory}.

These classes of rotationally invariant matrices include, but are not
restricted to, $\W$ having i.i.d.\ Gaussian entries. Importantly,
the spectral distribution of $\W$ can be arbitrary, rather than following the
behavior prescribed by the semicircle or Marcenko-Pastur law. Our primary
motivation for studying such rotationally invariant models is that we expect
the resulting AMP algorithms to be valid under a much larger universality class 
of matrices $\W$ than AMP algorithms derived in the i.i.d.\ Gaussian setting,
and that this class may provide a more flexible model for data matrices arising
in practice.

In the contexts of compressed sensing and generalized linear models,
alternative ``vector AMP'' or ``orthogonal AMP'' approaches for
rotationally-invariant matrices have been developed in
\cite{rangan2019vector,schniter2016vector,ma2017orthogonal,takeuchi2017rigorous}, and rigorous state evolutions for these algorithms were also derived.
These derivations are based on analyses of denoising functions that satisfy
the divergence-free conditions
\begin{equation}\label{eq:divergencefree}
\langle \partial_s v_t(\z_1,\ldots,\z_t) \rangle=0, \qquad \langle
\partial_s u_{t+1}(\y_1,\ldots,\y_t) \rangle=0 \qquad \text{ for all } s \leq t.
\end{equation}
A similar idea was used
in \cite{ccakmak2019memory} to develop an algorithm for solving the
TAP equations for Ising models with rotationally-invariant couplings.
Analyses of certain ``long-memory'' Convolutional AMP algorithms for
compressed sensing, related to our work, were recently carried out in
\cite{takeuchi2019unified,takeuchi2020convolutional,takeuchi2020bayes} by
mapping these algorithms to a divergence-free form. Our proofs build on the
insight in \cite{rangan2019vector,takeuchi2017rigorous} that Bolthausen's
conditioning technique may be applied to rotationally-invariant models. However,
we derive directly the forms of the Onsager corrections and state evolutions
for AMP algorithms that do not restrict $v_t(\cdot)$ and $u_{t+1}(\cdot)$ to be
divergence-free, in a general setting that extends beyond compressed sensing
applications. We clarify the relation between certain long-memory algorithms and
the AMP algorithms of \cite{bayati2011dynamics} for Gaussian matrices, by
relating their Onsager corrections and state evolutions to the free cumulants 
of the spectral distribution of $\W$.

\subsection{Organization of paper}
Section \ref{sec:prelim} establishes preliminary background and notation
on Wasserstein convergence of empirical measures and free cumulants. Section
\ref{sec:PCA} first discusses the specific application of structured PCA and
the Bayes-AMP algorithms for this application that specialize the more general
AMP algorithms to follow. Section \ref{sec:square} describes the general AMP
algorithm and state evolution for symmetric square matrices, and Section
\ref{sec:rect} describes the analogous general algorithm for rectangular
matrices. Section \ref{sec:proof} provides a high-level overview of the proofs,
which are contained in the Supplementary Appendices.

\section{Preliminaries on Wasserstein convergence and free
probability}\label{sec:prelim}

\subsection{Notation and conventions}

For vectors $\v \in \RR^n$ and $\w \in \RR^m$, we denote
\[\langle \v \rangle=\frac{1}{n}\sum_{i=1}^n v_i,
\qquad \langle \w \rangle=\frac{1}{m}\sum_{i=1}^m w_i.\]
For a matrix $(\v_1,\ldots,\v_k) \in \RR^{n \times k}$ and a function $f:\RR^k
\to \RR$, we write $f(\v_1,\ldots,\v_k) \in \RR^n$
as its row-wise evaluation.

For a weakly differentiable function $u:\RR^k \to \RR$, we denote by
$\partial_s u$ (any version of) its $s^\text{th}$ partial derivative. 
For a matrix $(\v_1,\ldots,\v_k) \in \RR^{n \times k}$, we write
$\proj_{(\v_1,\ldots,\v_k)} \in \RR^{n \times n}$ for the orthogonal projection
onto the linear span of $(\v_1,\ldots,\v_k)$, and
$\proj_{(\v_1,\ldots,\v_k)^\perp}=\Id-\proj_{(\v_1,\ldots,\v_k)}$ for the
projection onto its orthogonal complement. $\Id$ is the identity matrix, and we
write $\Id_{k \times k}$ to specify the dimension $k$. We will use the
convention
\[\M^0=\Id\]
for the zero-th power of any square matrix $\M$, even if some eigenvalues of
$\M$ may be 0.

Products over the empty set are equal to 1, and sums over the
empty set are equal to 0.
$\|\cdot\|$ is the $\ell_2$ norm for vectors and $\ell_2 \to \ell_2$
operator norm for matrices. $\|\v\|_\infty=\max_i |v_i|$ is the vector
$\ell_\infty$ norm, and $\|\M\|_F=(\sum_{i,j} m_{ij}^2)^{1/2}$ is the matrix
Frobenius norm.

\subsection{Wasserstein convergence of empirical
distributions}\label{sec:wasserstein}

\begin{definition}\label{def:toW}
For $p \geq 1$, a matrix
$(\v_1,\ldots,\v_k)=(v_{i,1},\ldots,v_{i,k})_{i=1}^n
\in \RR^{n \times k}$, and a probability distribution $\mathcal{L}$
over $\RR^k$ or a random vector $(V_1,\ldots,V_k) \sim \mathcal{L}$, we write
\[(\v_1,\ldots,\v_k) \overset{W_p}{\to} \mathcal{L} \qquad \text{ or }
\qquad (\v_1,\ldots,\v_k) \overset{W_p}{\to} (V_1,\ldots,V_k)\]
for the convergence of the empirical distribution of rows of
$(\v_1,\ldots,\v_k)$ to $\mathcal{L}$ in the Wasserstein space of order $p$.
This means, for any $C>0$ and continuous function $f:\RR^k \to \RR$
satisfying
\begin{equation}\label{eq:growth}
|f(v_1,\ldots,v_k)| \leq C\Big(1+\|(v_1,\ldots,v_k)\|^p\Big),
\end{equation}
as $n \to \infty$,
\begin{equation}\label{eq:LLN}
\frac{1}{n}\sum_{i=1}^n f(v_{i,1},\ldots,v_{i,k}) \to
\EE\Big[f(V_1,\ldots,V_k)\Big].
\end{equation}
Implicit in this notation is the finite moment condition
$\EE_{(V_1,\ldots,V_k) \sim \mathcal{L}}[\|(V_1,\ldots,V_k)\|^p]<\infty$. 

We write
\[(\v_1,\ldots,\v_k) \toW \mathcal{L} \qquad \text{ or } \qquad
(\v_1,\ldots,\v_k) \toW (V_1,\ldots,V_k)\]
to mean that this convergence holds for every fixed $p \geq 1$,
where $\mathcal{L}$ has finite moments of all orders.
\end{definition}

We will use a certain calculus associated to these notations
$\overset{W_p}{\to}$ and $\toW$, which we review in Appendix
\ref{appendix:wasserstein}. By \cite[Definition 6.7]{villani2008optimal}, to
show that (\ref{eq:LLN}) holds for all continuous
functions $f$ satisfying (\ref{eq:growth}),
it suffices to check that it holds for all bounded Lipschitz functions $f$
together with the function $f(v_1,\ldots,v_k)=\|(v_1,\ldots,v_k)\|^p$. See
Chapter 6 of \cite{villani2008optimal} for further background.

\subsection{Free cumulants}\label{sec:cumulants}

We briefly review the notion of free cumulants, and refer readers to
\cite{novak2014three} for a more thorough and motivated introduction.

Let $X$ be a random variable with finite moments of all orders, and denote
$m_k=\EE[X^k]$. In what follows, the law of $X$ will be the empirical
eigenvalue distribution of a symmetric matrix $\W \in \RR^{n \times n}$. Let
$\NC(k)$ be the set of all non-crossing partitions of $\{1,\ldots,k\}$. 
The free cumulants $\kappa_1,\kappa_2,\kappa_3,\ldots$ of $X$
are defined recursively by the moment-cumulant relations
\begin{equation}\label{eq:momentcumulant}
m_k=\sum_{\pi \in \NC(k)} \prod_{S \in \pi} \kappa_{|S|}
\end{equation}
where $|S|$ is the cardinality of the set $S \in \pi$.
The first four free cumulants may be computed to be
\begin{align*}
\kappa_1&=m_1=\EE[X]\\
\kappa_2&=m_2-m_1^2=\Var[X]\\
\kappa_3&=m_3-3m_2m_1+2m_1^3\\
\kappa_4&=m_4-4m_3m_1-2m_2^2+10m_2m_1^2-5m_1^4,
\end{align*}
where $\kappa_4$ is the first free cumulant that differs from the classical
cumulants. The free cumulants linearize free additive convolution, describing
the eigenvalue distribution of sums of freely independent symmetric square
matrices. If $X$ has the Wigner semicircle law supported on $[-2,2]$, then
\[\kappa_1=0, \qquad \kappa_2=1, \qquad \kappa_j=0 \quad
\text{ for all } j \geq 3.\]

Defining the formal generating functions
\[M(z)=1+\sum_{k=1}^\infty m_k z^k,
\qquad R(z)=\sum_{k=1}^\infty \kappa_k z^{k-1},\]
the relations (\ref{eq:momentcumulant}) are equivalent to an identity
of formal series (see \cite[Section 2.5]{novak2014three})
\[M(z)=1+zM(z) \cdot R(zM(z)).\]
Here, $R(z)$ is the R-transform of $X$.
Comparing the coefficients of $z^k$ on both sides,
each free cumulant $\kappa_k$ may be computed from $m_1,\ldots,m_k$ and
$\kappa_1,\ldots,\kappa_{k-1}$ as
\[\kappa_k=m_k-[z^k]\sum_{j=1}^{k-1}
\kappa_j \left(z+m_1z^2+m_2z^3+\ldots+m_{k-1}z^k\right)^j\]
where $[z^k](q(z))$ denotes the coefficient of $z^k$ in the polynomial $q(z)$.

\subsection{Rectangular free cumulants}\label{sec:cumulantsrect}

For rectangular matrices $\W \in \RR^{m \times n}$, we review the notion
of rectangular free cumulants developed in \cite{benaych2009rectangular}.
This is an example of the operator-valued free cumulants described in
\cite{speicher1998combinatorial}, where freeness is with amalgamation over a
2-dimensional subalgebra corresponding to the $2 \times 2$ block structure of
$\RR^{(m+n) \times (m+n)}$. 

We fix an aspect ratio parameter
\[\gamma=m/n>0.\]
Let $X$ be a random variable with finite moments of all orders, and denote the
even moments by $m_{2k}=\EE[X^{2k}]$. The law of $X^2$
will be the empirical eigenvalue distribution of
$\W\W^\top \in \RR^{m \times m}$, so that $m_{2k}$ is the $k^\text{th}$ moment
of this distribution. Define also an auxiliary sequence of even moments
\begin{equation}\label{eq:barmu}
\bar{m}_{2k}=\begin{cases} 1 & \text{ if } k=0 \\
\gamma \cdot m_{2k} & \text{ if } k \geq 1. \end{cases}
\end{equation}
Since the eigenvalues of $\W\W^\top$ and $\W^\top\W$ coincide up to the addition
or removal of $|m-n|$ zeros,
the value $\bar{m}_{2k}$ is the $k^\text{th}$ moment of the empirical
eigenvalue distribution of $\W^\top \W \in \RR^{n \times n}$.

Let $\NC'(2k)$ be the non-crossing partitions $\pi$ of
$\{1,\ldots,2k\}$ where each set $S \in \pi$ has even cardinality. Then we may
define two sequences of rectangular free cumulants
$\kappa_2,\kappa_4,\kappa_6,\ldots$ and
$\bar{\kappa}_2,\bar{\kappa}_4,\bar{\kappa}_6,\ldots$ by the moment-cumulant
relations
\begin{align*}
m_{2k}&=\sum_{\pi \in \NC'(2k)} \mathop{\prod_{S \in \pi}}_{\min S \text{ is
odd}} \kappa_{|S|} \cdot \mathop{\prod_{S \in \pi}}_{\min S \text{ is even}}
\bar{\kappa}_{|S|}\\
\bar{m}_{2k}&=\sum_{\pi \in \NC'(2k)} \mathop{\prod_{S \in \pi}}_{\min S \text{ is odd}} \bar{\kappa}_{|S|} \cdot \mathop{\prod_{S \in \pi}}_{\min S \text{ is even}} \kappa_{|S|}
\end{align*}
See \cite[Eqs.\ (8--9)]{benaych2009rectangular}. These cumulants have a simple
relation given by
\begin{equation}\label{eq:kappabarkappa}
\bar{\kappa}_{2k}=\gamma \cdot \kappa_{2k} \quad \text{ for all } k \geq 1,
\end{equation}
so outside of the proofs, we will always refer to the first sequence
$\{\kappa_{2k}\}_{k \geq 1}$ for simplicity.

Letting $e(\pi)$ be the number of sets $S \in \pi$ where the smallest element
of $S$ is even, and letting $o(\pi)$ be the number where the smallest element
is odd, applying (\ref{eq:kappabarkappa}) above implies
\begin{equation}\label{eq:momentcumulantrect}
m_{2k}=\sum_{\pi \in \NC'(2k)} \gamma^{e(\pi)}\prod_{S \in \pi} \kappa_{|S|},
\qquad \bar{m}_{2k}=\sum_{\pi \in \NC'(2k)} \gamma^{o(\pi)}\prod_{S \in \pi}
\kappa_{|S|}.
\end{equation}
See also \cite[Proposition 3.1]{benaych2009rectangular}.
The first four rectangular free cumulants may be computed as
\begin{align*}
\kappa_2&=m_2=\EE[X^2]\\
\kappa_4&=m_4-(1+\gamma)m_2^2\\
\kappa_6&=m_6-(3+3\gamma)m_4m_2
+(2+3\gamma+2\gamma^2)m_2^3\\
\kappa_8&=m_8-(4+4\gamma)m_6m_2
-(2+2\gamma)m_4^2+(10+16\gamma+10\gamma^2)m_4m_2^2\\
&\hspace{1in}-(5+10\gamma+10\gamma^2+5\gamma^3)m_2^4.
\end{align*}
The rectangular free cumulants linearize rectangular free additive
convolution, describing the singular value distribution of sums of freely
independent rectangular matrices. If $X^2$ has the Marcenko-Pastur law with
aspect ratio $\gamma$, then
\[\kappa_2=1, \qquad \kappa_{2j}=0 \quad \text{ for all } j \geq 2.\]

The rectangular free cumulants may be computed
from the following relation of generating functions: Let
\[M(z)=\sum_{k=1}^\infty m_{2k}z^k,
\qquad R(z)=\sum_{k=1}^\infty \kappa_{2k} z^k.\]
Here, $R(z)$ is the rectangular R-transform of $X$. Then
\begin{equation}\label{eq:rectseries}
M(z)=R\Big(z(\gamma M(z)+1)(M(z)+1)\Big),
\end{equation}
see \cite[Lemma 3.4]{benaych2009rectangular}.
Thus, comparing the coefficients of $z^k$ on both sides, each value
$\kappa_{2k}$ may be computed from $m_2,\ldots,m_{2k}$ and
$\kappa_2,\ldots,\kappa_{2k-2}$ as
\[\kappa_{2k}=m_{2k}-[z^k]\sum_{j=1}^{k-1}
\kappa_{2j}\Big(z(\gamma M(z)+1)(M(z)+1)\Big)^{2j}\]
where $[z^k](q(z))$ again denotes the coefficient of $z^k$ in the
polynomial $q(z)$.

\begin{remark}
The reasons for the appearance of the square/rectangular free cumulants 
in the forms of the Onsager corrections and state evolution for AMP are somewhat
opaque in our work, as they will arise from a certain combinatorial unfolding
of the moment-cumulant relations on the non-crossing partition lattice; we
discuss this further in Section \ref{sec:proof}. Their emergence is
conceptually clearer in the (non-rigorous, but illuminating)
analysis of the limit characteristic function of the AMP iterates in
\cite{opper2016theory}, where they arise instead from the evaluation of a
low-rank HCIZ integral over the Haar-orthogonal randomness in $\W$, and from the
coefficients of the series expansion of the R-transform that describes this
integral. See \cite{collins2003moments,guionnet2005fourier} and
\cite{benaych2011rectangular} for this connection in the square and rectangular
settings, respectively.
\end{remark}

\section{Structured Principal Components Analysis}\label{sec:PCA}

We study the problem of estimating a rank-one signal matrix in possibly
non-white noise, where the singular vectors of the rank-one signal have some
``prior'' structure. For sufficiently large signal strengths, we describe a 
Bayes-AMP algorithm that provably achieves lower mean-squared-error than the
rank-one estimate constructed from the sample principal components. This
extends the types of AMP algorithms that were studied for i.i.d.\ Gaussian
noise in \cite{rangan2012iterative,deshpande2014information,dia2016mutual,montanari2017estimation}.

\subsection{Symmetric square matrices}\label{sec:squarePCA}

Suppose first that we observe a symmetric data matrix
\[\X=\frac{\alpha}{n}\u_*\u_*^\top+\W \in \RR^{n \times n}\]
and seek to estimate $\u_* \in \RR^n$. Writing the eigendecomposition
$\W=\O^\top \bLambda\O$ where $\bLambda=\diag(\blambda)$, we assume that $\W$ is
rotationally-invariant in law and that as $n \to \infty$,
the empirical distributions of $\blambda$ and $\u_*$ satisfy
\begin{equation}\label{eq:Bayescondition1}
\blambda \toW \Lambda, \qquad \u_* \toW U_*
\end{equation}
for two limit laws $\Lambda$ and $U_*$. This notation $\toW$ denotes
Wasserstein convergence at all orders, as discussed in
Section \ref{sec:wasserstein}. To fix the scaling, we take $\|\u_*\|=\sqrt{n}$,
so that
\[\EE[U_*^2]=\lim_{n \to \infty} \frac{1}{n}\|\u_*\|^2=1.\]
Here, the law of $\Lambda$ is the limit spectral distribution of $\W$. The law
of $U_*$ represents a prior distribution for the entries of $\u_*$, which
may reflect assumptions of sparsity \cite{deshpande2014information},
non-negativity \cite{montanari2015non}, or a discrete support that encodes
cluster or community membership \cite{deshpande2017asymptotic}.

We assume for simplicity that we have an initialization
$\u_1 \in \RR^n$ independent of $\W$,
satisfying the joint empirical convergence
\begin{equation}\label{eq:Bayescondition2}
(\u_1,\u_*) \toW (U_1,U_*), \qquad \EE[U_1U_*]>0.
\end{equation}
We then estimate $\u_*$ by the iterates $\u_t$ of an AMP algorithm
\begin{align}
\f_t&=\X\u_t-b_{t1}\u_1-\ldots-b_{tt}\u_t\label{eq:AMPPCA1}\\
\u_{t+1}&=u_{t+1}(\f_t).\label{eq:AMPPCA2}
\end{align}
It will be shown that each iterate $\f_t$ behaves like $\u_*$ corrupted by
entrywise Gaussian noise, so we take each function $u_{t+1}(\cdot)$ to be a
scalar denoiser that estimates $\u_*$ from $\f_t$.

To describe the forms of the debiasing coefficients
$b_{t1},\ldots,b_{tt}$, let us write
$\lambda_1(\X) \geq \ldots \geq \lambda_n(\X)$ as the eigenvalues of
$\X$. For each $k \geq 1$, let
\begin{equation}\label{eq:PCAmk}
m_k=\frac{1}{n}\sum_{i=2}^n \lambda_i(\X)^k
\end{equation}
be the $k^\text{th}$ moment of the empirical eigenvalue distribution of $\X$
excluding its largest eigenvalue. Let $\{\kappa_k\}_{k \geq 1}$ be the free
cumulants corresponding to this sequence of moments $\{m_k\}_{k \geq 1}$, as
defined in Section \ref{sec:cumulants}. It is easy to check that under the
assumption (\ref{eq:Bayescondition1}), as $n \to \infty$,
\[m_k \to m_k^\infty=\EE[\Lambda^k],
\qquad \kappa_k \to \kappa_k^\infty\]
for each fixed $k \geq 1$, where these limits are the moments and free
cumulants of the limit spectral distribution $\Lambda$ of the noise $\W$. The
debiasing coefficients in (\ref{eq:AMPPCA1}) are set as
\begin{equation}\label{eq:PCAb}
b_{tt}=\kappa_1, \qquad b_{t,t-j}=\kappa_{j+1}\prod_{i=t-j+1}^t \langle
u_i'(\f_{i-1}) \rangle \text{ for } j=1,\ldots,t-1.
\end{equation}

The state evolution that describes the AMP iterations
(\ref{eq:AMPPCA1}--\ref{eq:AMPPCA2}) is
expressed in terms of a sequence of mean vectors
$\bmu_T^\infty=(\mu_t^\infty)_{1 \leq t \leq T}$ and covariance matrices
$\bSigma_T^\infty=(\sigma_{st}^\infty)_{1 \leq s,t \leq T}$, defined
recursively as follows: For $T=1$, we set
\[\mu_1^\infty=\alpha \cdot \EE[U_1U_*],
\qquad \sigma_{11}^\infty=\kappa_2^\infty \EE[U_1^2].\]
Having defined $\bmu_T^\infty$ and $\bSigma_T^\infty$, we denote
\[U_t=u_t(F_{t-1}) \text{ for } t=2,\ldots,T+1, \qquad
(F_1,\ldots,F_T)=\bmu_T^\infty \cdot U_*+(Z_1,\ldots,Z_T),
\text{ and }\]
\begin{equation}\label{eq:UFPCA}
(Z_1,\ldots,Z_T) \sim \N(0,\bSigma_T^\infty) \text{ independent of } (U_1,U_*).
\end{equation}
We then define $\bmu_{T+1}^\infty$ and $\bSigma_{T+1}^\infty$ to have the
entries, for $1 \leq s,t \leq T+1$,
\begin{align}
\mu_t^\infty&=\alpha \cdot \EE[U_tU_*]\nonumber\\
\sigma_{st}^\infty&=\sum_{j=0}^{s-1} \sum_{k=0}^{t-1}
\kappa_{j+k+2}^\infty \left(\prod_{i=s-j+1}^s \EE[u_i'(F_{i-1})] \right)
\left(\prod_{i=t-k+1}^t \EE[u_i'(F_{i-1})]\right)\EE[U_{s-j}U_{t-k}].
\label{eq:PCAsigma}
\end{align}
In the limit $n \to \infty$, the iterates of (\ref{eq:AMPPCA1}) will satisfy the
second-order Wasserstein convergence
\[(\f_1,\ldots,\f_T,\u_*) \toWtwo (F_1,\ldots,F_T,U_*).\]
Thus, the rows of $(\f_1,\ldots,\f_T)$ behave like Gaussian vectors with mean
$\bmu_T^\infty \cdot U_*$ and covariance $\bSigma_T^\infty$.

As one example of choosing the functions $u_{t+1}(\cdot)$,
let us analyze this state evolution for the following ``single-iterate
posterior mean'' denoisers: In the scalar Gaussian observation model
\begin{equation}\label{eq:scalarchannel}
F=\mu \cdot U_*+Z, \qquad Z \sim \N(0,\sigma^2) \text{ independent of } U_*,
\end{equation}
we denote the Bayes posterior-mean estimate of $U_*$ as
\begin{equation}\label{eq:denoiserexplicit}
\eta(f \mid \mu,\sigma^2)=\EE[U_* \mid F=f]
=\frac{\EE[U_* \exp(-(f-\mu \cdot U_*)^2/2\sigma^2)]}
{\EE[\exp(-(f-\mu \cdot U_*)^2/2\sigma^2)]}.
\end{equation}
We denote the Bayes-optimal mean-squared-error of this estimate as
\begin{equation}\label{eq:mmse}
\mmse(\mu^2/\sigma^2)
=\EE\Big[\big(U_*-\eta(F \mid \mu,\sigma^2)\big)^2\Big].
\end{equation}
The single-iterate posterior mean denoiser is the choice
\begin{equation}\label{eq:utBayes}
u_{t+1}(f_t)=\eta(f_t \mid \mu_t^\infty,\sigma_{tt}^\infty)
\end{equation}
where $\mu_t^\infty$ and $\sigma_{tt}^\infty$ are the above state evolution
parameters that describe the univariate Gaussian law of $F_t$. These
parameters may be replaced by consistent estimates in practice.

Let $R(x)$ be the R-transform of the limit spectral distribution $\Lambda$,
as discussed in Section \ref{sec:cumulants},
and let $R'(x)$ be its derivative. For small $|x|$, these may be defined by the
convergent series (see Proposition \ref{prop:cumulantbound})
\begin{equation}\label{eq:Rintro}
R(x)=\sum_{k=1}^\infty \kappa_k^\infty x^{k-1}.
\end{equation}

\begin{theorem}\label{thm:PCA}
Suppose $\W=\O^\top \bLambda\O \in \RR^{n \times n}$ where $\O$ is a
Haar-uniform orthogonal matrix. Let $\bLambda=\diag(\blambda)$, where
$(\blambda,\u_1,\u_*)$ are independent of $\O$,
$\|\u_*\|=\sqrt{n}$, and
\[\blambda \toW \Lambda, \qquad (\u_1,\u_*) \toW (U_1,U_*)\]
almost surely as $n \to \infty$. Suppose $\EE[U_1^2] \leq 1$,
$\EE[U_1U_*]=\eps>0$, and
$\|\blambda\|_\infty \leq C_0$ almost surely for all large $n$ and some
constants $C_0,\eps>0$.
\begin{enumerate}[(a)]
\item Let $\alpha \geq 0$, and let each function $u_{t+1}(\cdot)$ be
continuously differentiable and Lipschitz on $\RR$. Then for each fixed
$T \geq 1$, almost surely as $n \to \infty$,
\[(\u_1,\ldots,\u_{T+1},\f_1,\ldots,\f_T,\u_*) \toWtwo
(U_1,\ldots,U_{T+1},F_1,\ldots,F_T,U_*)\]
where the joint law of this limit is described by (\ref{eq:UFPCA}).
\item Suppose each function $u_{t+1}(\cdot)$ is the posterior-mean denoiser in
(\ref{eq:utBayes}), and suppose this is Lipschitz on $\RR$. Then
there exist constants $C,\alpha_0>0$ depending only
on $C_0,\eps$ such that for all $\alpha>\alpha_0$,
defining $I_\Delta=[1-C/\alpha^2,1]$ and
$I_\Sigma=[\kappa_2^\infty/2,3\kappa_2^\infty/2]$, there is a unique fixed point
$(\Delta_*,\Sigma_*) \in I_\Delta \times I_\Sigma$ to the equations
\begin{equation}\label{eq:PCAfixedpoint}
1-\Delta_*=\mmse\left(\frac{\alpha^2 \Delta_*^2}{\Sigma_*}\right),
\qquad \Sigma_*=\Delta_* R'\left(\frac{\alpha
\Delta_*(1-\Delta_*)}{\Sigma_*}\right).
\end{equation}
Furthermore,
\begin{equation}\label{eq:alignment}
\lim_{T \to \infty} \left(\lim_{n \to \infty} \frac{1}{n}\u_T^\top \u_*\right)
=\lim_{T \to \infty} \left(\lim_{n \to \infty} \frac{1}{n}\|\u_T\|^2\right)=\Delta_*.
\end{equation}
\end{enumerate}
\end{theorem}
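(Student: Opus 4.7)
My plan is to reduce to the general state evolution theorem for AMP on symmetric rotationally invariant matrices in Section \ref{sec:square}. Writing $\X\u_t = \W\u_t + \alpha\,(\u_*^\top\u_t/n)\,\u_*$, the iteration (\ref{eq:AMPPCA1}) is exactly the general AMP run on the rotationally invariant $\W$ with an additive rank-one signal. The empirical average $\u_*^\top\u_t/n$ converges to $\EE[U_tU_*]$, contributing the $\mu_t^\infty\cdot U_*$ mean in (\ref{eq:UFPCA}). Two additional checks are needed: first, that the moments $m_k$ in (\ref{eq:PCAmk}), computed from $\X$ with its top outlier removed, converge almost surely to $\EE[\Lambda^k]$ and hence that the empirical cumulants $\kappa_j$ used in (\ref{eq:PCAb}) converge to $\kappa_j^\infty$ (standard, since removing one of $n$ atoms does not affect the limiting empirical distribution); and second, that the deterministic rank-one signal does not disrupt the Bolthausen-style conditioning argument underlying the general theorem, which is straightforward since its contribution depends on the fixed $\u_*$ (independent of the Haar randomness of $\O$) and separates additively. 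Induction in $T$, using the Wasserstein calculus of Appendix \ref{appendix:wasserstein} and continuity of $u_{t+1}$, yields part (a).

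\textbf{Part (b), reduction to scalar recursion.} With the Bayes-optimal denoiser (\ref{eq:utBayes}), I would first simplify (\ref{eq:PCAsigma}) using Gaussian and Bayes identities. Tweedie's formula gives $\EE[u_{t+1}'(F_t)] = (\mu_t^\infty/\sigma_{tt}^\infty)\,\mmse((\mu_t^\infty)^2/\sigma_{tt}^\infty)$, while orthogonality of Bayes posterior means yields $\EE[U_{t+1}U_*]=\EE[U_{t+1}^2]=1-\mmse((\mu_t^\infty)^2/\sigma_{tt}^\infty)$. Setting $\Delta_t=\EE[U_{t+1}U_*]$ and $\Sigma_t=\sigma_{tt}^\infty$, substituting these identities into (\ref{eq:PCAsigma}) and applying the combinatorial identity $|\{(j,k):j+k=n,\,j,k\geq 0\}|=n+1$ together with $R'(x)=\sum_{n\geq 0}(n+1)\kappa_{n+2}^\infty x^n$ collapses the double sum, so that any stationary point $(\Delta_*,\Sigma_*)$ satisfies exactly (\ref{eq:PCAfixedpoint}). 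For large $\alpha$, the linear-MMSE upper bound $\mmse(s)\leq 1/(1+s)$ gives $\mmse$ of order $1/\alpha^2$ at SNR of order $\alpha^2$, placing the putative fixed point near $(1,\kappa_2^\infty)$ and the argument $\alpha\Delta_*(1-\Delta_*)/\Sigma_*$ of $R'$ at order $1/\alpha$, well inside the R-transform's disk of convergence (quantified using $\|\blambda\|_\infty\leq C_0$ via Proposition \ref{prop:cumulantbound}). On this neighborhood the self-consistency map is a contraction, giving existence and uniqueness in $I_\Delta\times I_\Sigma$ by Banach's theorem.

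\textbf{Part (b), convergence.} To obtain (\ref{eq:alignment}), I would show that the state-evolution iterates enter this contractive basin after finitely many steps: starting from $\Delta_0=\eps$, a single Bayes-optimal update yields $\Delta_1=1-\mmse(\alpha^2\eps^2/\kappa_2^\infty)>1-C/\alpha^2$ once $\alpha$ is large in terms of $\eps$ and $C_0$. Thereafter the iterates converge geometrically to $(\Delta_*,\Sigma_*)$, so $\EE[U_TU_*]\to\Delta_*$ and $\EE[U_T^2]\to\Delta_*$ as $T\to\infty$, and (\ref{eq:alignment}) follows from part (a) applied to the Lipschitz test functions $(u,v)\mapsto uv$ and $u\mapsto u^2$.

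The main obstacle I anticipate is the combinatorial reduction of (\ref{eq:PCAsigma}) via the R-transform identity in a form that holds throughout the trajectory rather than only at the fixed point: one must simultaneously control the off-diagonal overlaps $\EE[U_sU_t]$ and the derivative products $\prod_i \EE[u_i'(F_{i-1})]$, verify that these do not wander outside the R-transform's disk of convergence at any step, and handle the finite-$t$ truncations of what become infinite series only in the limit. Relatedly, care is needed to show that once the iterates enter the contractive basin they remain there, so that the combinatorial estimates used to sum the series are uniformly valid.
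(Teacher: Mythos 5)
For part (a), you correctly identify that the strategy is to reduce to the general state evolution theorem for symmetric rotationally invariant $\W$ (Corollary~\ref{cor:degenerate}), treating $\u_*$ as side information. However, the claim that the rank-one signal ``does not disrupt the Bolthausen-style conditioning argument \ldots since its contribution \ldots separates additively'' glosses over the actual issue: writing $\f_t = \z_t + (\alpha\u_*^\top\u_t/n)\,\u_*$ where $\z_t$ is the AMP iterate driven by $\W$ alone, the coefficient $\alpha\u_*^\top\u_t/n$ is \emph{random} (it depends on $\u_t$, hence on $\O$). The general theorem requires the denoiser to be a fixed function of $(\z_1,\ldots,\z_t,\E)$, and a random coefficient is not admissible. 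The paper resolves this by defining an auxiliary AMP sequence whose denoiser uses the \emph{deterministic} limit $\mu_t^\infty$ in place of the random coefficient---so it is a bona fide instance of the general theorem with $\E=\u_*$---and then proving by induction that the original iterates track the auxiliary ones in $W_2$, using $\|\W\|\le C_0$, Lipschitz-ness of the denoisers, interlacing to compare the empirical cumulants of $\X$ and $\W$, and the convergence $\alpha\u_*^\top\u_t/n\to\mu_t^\infty$ furnished by the previous induction step. This comparison argument is the missing step in your sketch; without it, one cannot invoke the general theorem.

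For part (b), you correctly derive the fixed-point equations via the posterior-mean identities and the identification of $R'$ from the double sum, and you correctly identify the obstacle: the state evolution (\ref{eq:PCAsigma}) is not a two-dimensional recursion in $(\Delta_t,\Sigma_t)$, since $\sigma_{st}$ depends on the full history of off-diagonal overlaps $\EE[U_{s'}U_{t'}]$ and derivative products. But having named the obstacle, your argument does not resolve it---it simply asserts geometric convergence after one step, which the two-dimensional Banach fixed-point theorem does not supply because the map is not two-dimensional along the trajectory. The paper's resolution, which your proposal omits, is to embed $\bDelta_T$ and $\bSigma_T$ into a space of infinite matrices indexed by $(s,t)\le 0$ equipped with a geometrically weighted $\ell_\infty$ norm $\|\x\|_\zeta=\sup_{s,t\le 0}\zeta^{|s|\vee|t|}|x_{st}|$, approximate the iteration-dependent state-evolution map by a fixed map $h^\Delta\circ h^\Sigma$ on this space with error $O(\zeta^{T/2})$, prove that this map is a contraction there for $\alpha$ large (with Lipschitz constants $O(1/\alpha)$ after weighting), and observe that its unique fixed point is the constant matrix $(\Delta_*,\Sigma_*)$. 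Your scalar reduction is what emerges when all off-diagonal entries are already constant, which one only knows a posteriori at the fixed point; to prove convergence of the trajectory one needs the full infinite-dimensional contraction.
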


The proof of this result is provided in Appendix \ref{appendix:PCA}. As
discussed in Section \ref{sec:wasserstein}, the notation $\toWtwo$ in part (a)
guarantees that for any continuous function $f:\RR^{2T+2} \to \RR$ satisfying
$\EE[f(U_1,\ldots,U_{T+1},Z_1,\ldots,Z_T,U_*)^2]<\infty$,
\[\big\langle f(\u_1,\ldots,\u_{T+1},\z_1,\ldots,\z_T,\u_*)\big\rangle
\to \EE\big[f(U_1,\ldots,U_{T+1},Z_1,\ldots,Z_T,U_*)\big]\]
where the left side is the empirical average of this function $f$ evaluated
across the $n$ rows.

\begin{remark}\label{remark:squarePCA}
Theorem \ref{thm:PCA}(b) implies that the asymptotic matrix mean-squared-error
of the rank-one estimate $\u_T\u_T^\top$ for $\u_*\u_*^\top$,
in the limit $T \to \infty$, is given by
\begin{align*}
\text{MSE} &\equiv \lim_{T \to \infty}\left(
\lim_{n \to \infty} \frac{1}{n^2}\|\u_T\u_T^\top-\u_*\u_*^\top\|_F^2\right)\\
&=\lim_{T \to \infty} \left(\lim_{n \to \infty} \frac{1}{n^2}(\|\u_T\|^2)^2
-\frac{2}{n^2}(\u_T^\top \u_*)^2+\frac{1}{n^2}(\|\u_*\|^2)^2\right)
=1-\Delta_*^2.
\end{align*}
Let us compare this with the matrix mean-squared-error of the best PCA estimate
$c \cdot \hat{\u}_\PCA\hat{\u}_\PCA^\top$ optimized over $c>0$,
where $\hat{\u}_\PCA$ is the leading sample eigenvector of $\X$. Normalizing
$\hat{\u}_\PCA$ such that $\|\hat{\u}_\PCA\|=\|\u_*\|=\sqrt{n}$, 
\cite[Theorem 2.2(a)]{benaych2011eigenvalues} shows for sufficiently large
$\alpha$ that
\begin{equation}\label{eq:squarePCAbehavior}
\lim_{n \to \infty} \left(\frac{1}{n}\hat{\u}_\PCA^\top \u_*\right)^2
=\Delta_\PCA \equiv\frac{-1}{\alpha^2 G'(G^{-1}(1/\alpha))},
\end{equation}
where $G(z)=\EE[(z-\Lambda)^{-1}]$ is the Cauchy transform of $\Lambda$, and
$G^{-1}(z)$ is the functional inverse of $G$ (which is
well-defined for small $|z|$). Then
\begin{align*}
\text{MSE}_\PCA &\equiv \min_{c>0}\left(\lim_{n \to \infty}
\frac{1}{n^2}\|c \cdot \hat{\u}_\PCA \hat{\u}_\PCA^\top-\u_*\u_*^\top\|_F^2
\right)\\
&=\min_{c>0} c^2-2c\Delta_\PCA+1=1-\Delta_\PCA^2,
\end{align*}
with the minimum attained at the rescaling $c=\Delta_\PCA<1$.

To see that $1-\Delta_*^2 \leq 1-\Delta_\PCA^2$,
observe that for any prior distribution $U_*$ satisfying our normalization
$\EE[U_*^2]=1$, we have
\begin{equation}\label{eq:mmsebound}
\mmse(\mu^2/\sigma^2) \leq \frac{1}{1+\mu^2/\sigma^2}.
\end{equation}
This is because under the scalar observation model (\ref{eq:scalarchannel}),
the right side of (\ref{eq:mmsebound}) is the risk $\EE[(\hat{U}-U_*)^2]$
of the linear estimator $\hat{U}=(\mu/(\sigma^2+\mu^2))F$,
which upper bounds the Bayes risk on the left side of (\ref{eq:mmsebound}).
Equality holds in (\ref{eq:mmsebound}) if and only if $\hat{U}$ is the Bayes
estimator in this model, i.e.\ if and only if the prior distribution
is $U_* \sim \N(0,1)$. Applying (\ref{eq:mmsebound}) to the
first equation of (\ref{eq:PCAfixedpoint}) and rearranging, we obtain
\[\frac{\alpha\Delta_*(1-\Delta_*)}{\Sigma_*} \leq \frac{1}{\alpha}.\]
Now applying this to the second equation of (\ref{eq:PCAfixedpoint}), and
using that $\kappa_2^\infty=\Var[\Lambda]>0$ so that the function
$xR'(x)=\kappa_2^\infty x+2\kappa_3^\infty x^2+3\kappa_4^\infty x^3+\ldots$ is
increasing in a neighborhood of 0, we have for $\alpha>\alpha_0$ sufficiently
large that
\[1-\Delta_*=\frac{1}{\alpha} \cdot
\frac{\alpha \Delta_*(1-\Delta_*)}{\Sigma_*}R'\left(
\frac{\alpha \Delta_*(1-\Delta_*)}{\Sigma_*}\right)
\leq \frac{1}{\alpha^2}R'\left(\frac{1}{\alpha}\right).\]
Differentiating the R-transform identity $R(x)=G^{-1}(x)-1/x$, this
is equivalently written as
\[\Delta_* \geq 1-\frac{1}{\alpha^2}R'\left(\frac{1}{\alpha}\right)
=\frac{-1}{\alpha^2 G'(G^{-1}(1/\alpha))}=\Delta_\PCA,\]
so that
\begin{equation}\label{eq:PCAlowerbound}
\text{MSE}=1-\Delta_*^2 \leq 1-\Delta_\PCA^2=\text{MSE}_\PCA
\end{equation}
as desired. Equality holds here if and only if equality holds in
(\ref{eq:mmsebound}), i.e.\ when $U_* \sim \N(0,1)$. Thus, for any
signal strength $\alpha>\alpha_0$ sufficiently large and any distribution of
$U_*$ other than $\N(0,1)$, the above AMP algorithm achieves strictly better
estimation accuracy than PCA.
\end{remark}

An illustration of the algorithm and state evolution is presented in the left
panel of Figure \ref{fig:PCA}, with noise eigenvalues drawn from a centered and
rescaled $\operatorname{Beta}(1,2)$ distribution. We observe a close agreement
with the state evolution predictions at sample size $n=2000$, and a significant
improvement in estimation accuracy over the naive principal components for this
prior distribution $U_* \sim \operatorname{Uniform}\{+1,-1\}$. Let us remark
that although carrying out many iterations of this AMP algorithm would require
estimating successively higher-order free cumulants of the spectral
distribution of $\W$, for large signal strengths $\alpha$ the algorithm only
needs a very small number of iterations to converge.  

\begin{remark}
In this algorithm, the Onsager corrections involving the free cumulants 
may be understood as iteratively constructing the series (\ref{eq:Rintro})
for $R(\alpha \Delta_*(1-\Delta_*)/\Sigma_*)$, whose derivative appears
in the characterization of the fixed-point in Theorem \ref{thm:PCA}. This is
somewhat analogous to the single-step-memory algorithm in
\cite{opper2016theory} for solving the TAP equations in a related Ising model,
which alternatively constructs a series for the inverse R-transform.

The convergence condition and final mean-squared-error of this algorithm
are likely not Bayes-optimal. For example, we believe that the convergence of
(\ref{eq:AMPPCA1}--\ref{eq:AMPPCA2}) requires convergence of the series
(\ref{eq:Rintro}) at $x=\alpha \Delta_*(1-\Delta_*)/\Sigma_*$, which (depending
on the spectral law of $\W$) may impose a stronger condition for the signal
strength $\alpha$ than the spectral phase transition. One
natural way to improve upon the algorithm is to consider more generally
\[u_{t+1}(\f_1,\ldots,\f_t)=\eta(c_{t1}\f_1+\ldots+c_{tt}\f_t \mid \bc_t^\top
\bmu_t^\infty,\;\bc_t^\top \bSigma_t^\infty \bc_t)\]
for a vector $\bc_t=(c_{t1},\ldots,c_{tt})$ in each iteration, or specialize
this to $\bc_t=(\bSigma_t^\infty)^{-1}\bmu_t^\infty$ to obtain
the posterior mean estimate of $U_*$ given all previous observations
$(F_1,\ldots,F_t)$. Our general results describe also the state evolution for
these extensions, but analyses of their fixed points are more
involved, and we will not pursue this in the current work.

These procedures differ from the ``Vector AMP'' or ``memory-free'' algorithms
of \cite{rangan2019vector,ccakmak2019memory}, whose forms may be derived from
the Expectation Propagation framework of \cite{minka2001family}. These latter
algorithms operate directly on a resolvent of $\W$ and use divergence-free
nonlinearities, corresponding to $\bPhi_t=\bPsi_t=0$ in our notations to follow.
Thus their state evolutions have simpler forms that depend on the first two
moments of the resolvent but not (explicitly) on the free cumulants
of $\W$. PCA differs from the applications in
\cite{opper2016theory,rangan2019vector,ccakmak2019memory} in two important
ways: First, the log-likelihood of $\X$ given $\u$ is not quadratic in $\u$
under general spectral laws of $\W$. Second, the noise matrix $\W$ is not
directly observed in PCA, and its resolvent cannot be directly computed.
Due to these differences, we believe that extending the algorithmic ideas
of \cite{rangan2019vector,ccakmak2019memory} to PCA may be an interesting
open question to study in future work.
\end{remark}

\subsection{Rectangular matrices}\label{sec:rectPCA}

\begin{figure}
\includegraphics[width=0.33\textwidth]{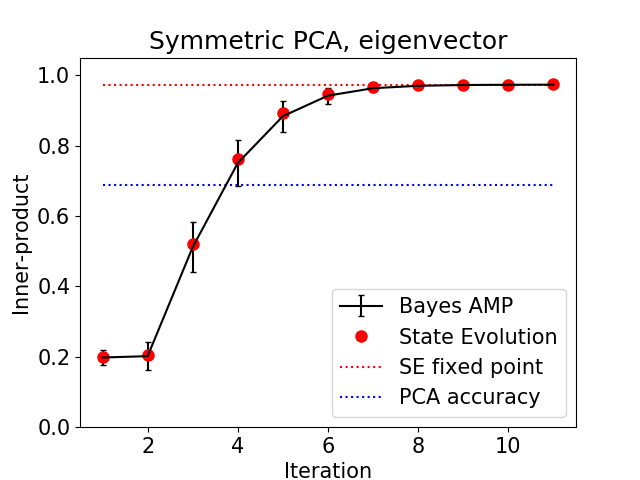}%
\includegraphics[width=0.33\textwidth]{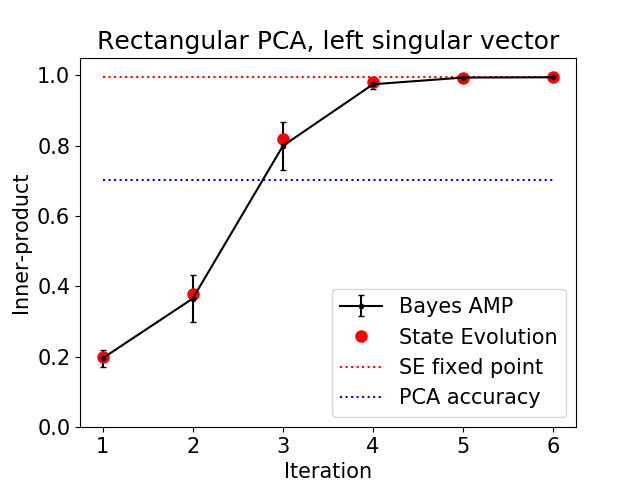}%
\includegraphics[width=0.33\textwidth]{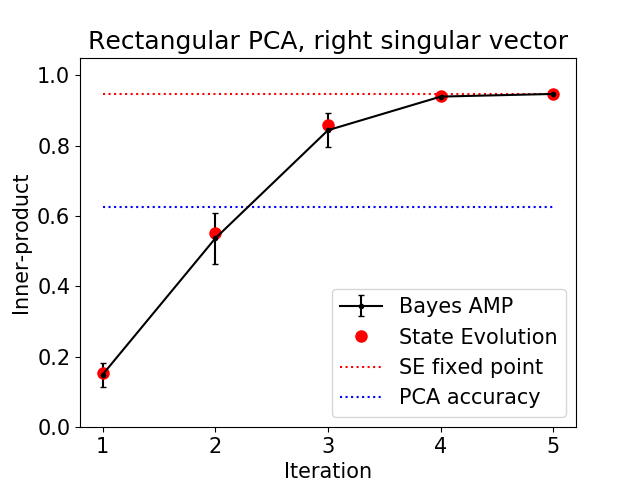}
\caption{Simulations of the Bayes-AMP algorithms for PCA, with priors
$U_*,V_* \sim \operatorname{Uniform}\{+1,-1\}$ and the single-iterate posterior
mean denoisers in (\ref{eq:utBayes}) and (\ref{eq:vtutBayes}). Shown are the
mean and std.\ dev.\ of $\langle \u_t\u_* \rangle$ and
$\langle \v_t\v_* \rangle$ across 100 simulations
in black, their state evolution predictions computed from (\ref{eq:PCAsigma}),
(\ref{eq:PCAsigmarect}), and (\ref{eq:PCAomegarect}) in red dots, and the
fixed points $\Delta_*$ and $\Gamma_*$ of (\ref{eq:PCAfixedpoint}) and
(\ref{eq:PCAfixedpointrect}) in dashed red. For comparison, $\Delta_\PCA$ and
$\Gamma_\PCA$ corresponding to the sample PCs are in dashed blue.
Left: $\u_1$ (initialization), $\u_2,\ldots,\u_{11}$
for symmetric square $\W$ with $n=2000$,
$\alpha=2.5$, and eigenvalue distribution given by centering and scaling
$\operatorname{Beta}(1,2)$ to mean 0 and variance 1.
Middle and right: $\u_1$ (initialization), $\u_2,\ldots,\u_6$
and $\v_1,\ldots,\v_5$ for
rectangular $\W$ with $m=2000$, $n=4000$, $\gamma=0.5$, $\alpha=1.5$,
and singular value distribution given by rescaling
$\operatorname{Beta}(1,2)$ to second-moment 1.}\label{fig:PCA}
\end{figure}

Consider now a rectangular data matrix
\[\X=\frac{\alpha}{m}\u_*\v_*^\top+\W \in \RR^{m \times n},\]
and the task of estimating $\u_* \in \RR^m$ and $\v_* \in \RR^n$.
Writing the singular value decomposition $\W=\O^\top \bLambda\Q$ where
$\bLambda=\diag(\blambda)$ and $\blambda \in \RR^{\min(m,n)}$, we assume that
$\W$ is bi-rotationally invariant in law and that
\[m/n=\gamma,
\qquad \blambda \toW \Lambda, \qquad \u_* \toW U_*, \qquad \v_* \toW V_*\]
as $m,n \to \infty$,
for some constant $\gamma \in (0,\infty)$ and some limit laws
$\Lambda,U_*,V_*$.
We fix the scalings $\|\u_*\|=\sqrt{m}$ and $\|\v_*\|=\sqrt{n}$, so that
\[\EE[U_*^2]=\EE[V_*^2]=1.\]
Note that the rank-one signal component $(\alpha/m)\u_*\v_*^\top$ has singular
value $\alpha/\sqrt{\gamma}$.

We again assume that we have an initialization $\u_1 \in \RR^m$ independent of
$\W$, for which
\[(\u_1,\u_*) \toW (U_1,U_*), \qquad \EE[U_1U_*]>0.\]
We then estimate $\u_*$ and $\v_*$ by the iterates $\u_t$ and $\v_t$ of an
AMP algorithm
\begin{align}
\g_t&=\X^\top \u_t-b_{t1}\v_1-\ldots-b_{t,t-1}\v_{t-1}\label{eq:AMPPCArect1}\\
\v_t&=v_t(\g_t)\label{eq:AMPPCArect2}\\
\f_t&=\X\v_t-a_{t1}\u_1-\ldots-a_{tt}\u_t\label{eq:AMPPCArect3}\\
\u_{t+1}&=u_{t+1}(\f_t),\label{eq:AMPPCArect4}
\end{align}
where $u_{t+1}(\cdot)$ and $v_t(\cdot)$ are scalar denoisers that estimate
$\u_*$ and $\v_*$ from $\f_t$ and $\g_t$.

To describe the forms of the debiasing coefficients $a_{ts}$ and $b_{ts}$, let 
us define $\blambda_m \in \RR^m$ to be $\blambda$ if $m \leq n$ or $\blambda$
extended by $m-n$ additional 0's if $m>n$. We will work instead with the limit
\[\blambda_m \toW \Lambda_m,\]
which is a mixture of $\Lambda$ and a point mass at 0 if $\gamma=m/n>1$.
Denoting the singular values of $\X$ by $\lambda_1(\X) \geq \ldots \geq
\lambda_{\min(m,n)}(\X)$, for each $k \geq 1$ we set
\[m_{2k}=\frac{1}{m}\sum_{i=2}^{\min(m,n)} \lambda_i(\X)^{2k}.\]
We then define $\{\kappa_{2k}\}_{k \geq 1}$ as the rectangular free cumulants
associated to these even moments $\{m_{2k}\}_{k \geq 1}$
and aspect ratio $\gamma$, as defined in
Section \ref{sec:cumulantsrect}. It is easily checked that as $m,n \to \infty$,
\[m_{2k} \to m_{2k}^\infty=\EE[\Lambda_m^{2k}],
\qquad \kappa_{2k} \to \kappa_{2k}^\infty,\]
where these limits are the even moments and rectangular free cumulants of
$\Lambda_m$. Then the debiasing coefficients in
(\ref{eq:AMPPCArect1}--\ref{eq:AMPPCArect4}) are set as
\begin{align*}
a_{t,t-j}&=\kappa_{2(j+1)}\langle v_t'(\g_t) \rangle
\prod_{i=t-j+1}^t \langle u_i'(\f_{i-1})\rangle \langle v_{i-1}'(\g_{i-1})
\rangle \quad \text{ for } \quad j=0,\ldots,t-1,\\
b_{t,t-j}&=\gamma \kappa_{2j} \langle u_t'(\f_{t-1}) \rangle
\prod_{i=t-j+1}^{t-1} \langle v_i'(\g_i) \rangle \langle u_i'(\f_{i-1}) \rangle
\quad \text{ for } \quad j=1,\ldots,t-1.
\end{align*}
We use the convention that empty products equal 1, so the first coefficients
here are simply
\[a_{tt}=\kappa_2\langle v_t'(\g_t) \rangle, \qquad
b_{t,t-1}=\gamma \kappa_2 \langle u_t'(\f_{t-1}) \rangle.\]

The state evolution for this algorithm may be expressed in terms of two
sequences of mean vectors $\bmu_T^\infty=(\mu_t^\infty)_{1 \leq t \leq T}$ and
$\bnu_T^\infty=(\nu_t^\infty)_{1 \leq t \leq T}$ and 
covariance matrices
$\bSigma_T^\infty=(\sigma_{st}^\infty)_{1 \leq s,t \leq T}$ and
$\bOmega_T^\infty=(\omega_{st}^\infty)_{1 \leq s,t \leq T}$, defined as follows:
For $T=1$ we set
\[\nu_1^\infty=\alpha \cdot \EE[U_1U_*],
\qquad \omega_{11}^\infty=\gamma \kappa_2^\infty \cdot \EE[U_1^2].\]
Having defined $\bmu_{T-1}^\infty$, $\bSigma_{T-1}^\infty$,
$\bnu_T^\infty$, and $\bOmega_T^\infty$, we denote
\[U_t=u_t(F_{t-1}) \text{ for } t=2,\ldots,T, \qquad
(F_1,\ldots,F_{T-1})=\bmu_{T-1}^\infty \cdot U_*+(Y_1,\ldots,Y_{T-1}),\]
\[(Y_1,\ldots,Y_{T-1}) \sim \N(0,\bSigma_{T-1}^\infty) \text{ independent of }
(U_1,U_*),\]
\[V_t=v_t(G_t) \text{ for } t=1,\ldots,T, \qquad
(G_1,\ldots,G_T)=\bnu_T^\infty \cdot V_*+(Z_1,\ldots,Z_T),\]
\begin{equation}\label{eq:UFPCArect}
(Z_1,\ldots,Z_T) \sim \N(0,\bOmega_T^\infty) \text{ independent of } V_*.
\end{equation}
We then define $\bmu_T^\infty$ and $\bSigma_T^\infty$ with the entries, for
$1 \leq s,t \leq T$,
\begin{align}
\mu_t^\infty&=(\alpha/\gamma) \cdot \EE[V_tV_*]\nonumber\\
\sigma_{st}^\infty&=\sum_{j=0}^{s-1} \sum_{k=0}^{t-1}
\left(\prod_{i=s-j+1}^s \EE[v_i'(G_i)]\EE[u_i'(F_{i-1})]\right)
\left(\prod_{i=t-k+1}^t \EE[v_i'(G_i)]\EE[u_i'(F_{i-1})]\right)\nonumber\\
&\hspace{0.2in}\cdot\left(\kappa_{2(j+k+1)}^\infty \EE[V_{s-j}V_{t-k}]
+\kappa_{2(j+k+2)}^\infty \EE[v_{s-j}'(G_{s-j})]
\EE[v_{t-k}'(G_{t-k})]\EE[U_{s-j}U_{t-k}]\right).\label{eq:PCAsigmarect}
\end{align}
Now having defined $\bmu_T^\infty$ and $\bSigma_T^\infty$, we extend
(\ref{eq:UFPCArect}) to 
\[U_t=u_t(F_{t-1}) \text{ for } t=2,\ldots,T+1, \qquad
(F_1,\ldots,F_T)=\bmu_T^\infty \cdot U_*+(Y_1,\ldots,Y_T),\]
\begin{equation}\label{eq:UFPCArect2}
(Y_1,\ldots,Y_T) \sim \N(0,\bSigma_T^\infty) \text{ independent of }
(U_1,U_*)
\end{equation}
and define $\bnu_{T+1}^\infty$ and $\bOmega_{T+1}^\infty$ with the entries, for
$1 \leq s,t \leq T+1$,
\begin{align}
\nu_t^\infty&=\alpha \cdot \EE[U_tU_*]\nonumber\\
\omega_{st}^\infty&=\gamma \sum_{j=0}^{s-1} \sum_{k=0}^{t-1}
\left(\prod_{i=s-j+1}^s \EE[u_i'(F_{i-1})]\EE[v_{i-1}'(G_{i-1})]\right)
\left(\prod_{i=t-k+1}^t \EE[u_i'(F_{i-1})]\EE[v_{i-1}'(G_{i-1})]\right)
\nonumber\\
&\hspace{0.2in}\cdot\left(\kappa_{2(j+k+1)}^\infty
\EE[U_{s-j}U_{t-k}]+\kappa_{2(j+k+2)}^\infty \EE[u_{s-j}'(F_{s-j-1})]
\EE[u_{t-k}'(F_{t-k-1})]\EE[V_{s-j-1}V_{t-k-1}]\right).\label{eq:PCAomegarect}
\end{align}
We use the convention $V_0=0$, so that the second term of
(\ref{eq:PCAomegarect}) is 0 for $j=s-1$ or $k=t-1$. 
In the limit $m,n \to \infty$, the iterates of
(\ref{eq:AMPPCArect1}--\ref{eq:AMPPCArect4}) will satisfy
\[(\f_1,\ldots,\f_T,\u_*) \toWtwo (F_1,\ldots,F_T,U_*),
\qquad (\g_1,\ldots,\g_T,\v_*) \toWtwo (G_1,\ldots,G_T,V_*).\]

As an example of choices for $v_t(\cdot)$ and $u_{t+1}(\cdot)$, let us again
analyze the single-iterate posterior mean denoisers given by
\begin{equation}\label{eq:vtutBayes}
v_t(g_t)=\eta(g_t \mid \nu_t,\omega_{tt}),
\qquad u_{t+1}(f_t)=\eta(f_t \mid \mu_t,\sigma_{tt}),
\end{equation}
where $\eta(\cdot)$ is as defined in (\ref{eq:denoiserexplicit}), and
$(\nu_t,\omega_{tt})$ and $(\mu_t,\sigma_{tt})$ are the state evolution
parameters describing the univariate Gaussian laws of $G_t$ and $F_t$.
We denote by $\mmse(\cdot)$ the scalar mean-squared-error function from
(\ref{eq:mmse}), and by $R(x)$ the rectangular R-transform of
$\Lambda_m$ with aspect ratio $\gamma$, as discussed in Section
\ref{sec:cumulantsrect}. This may be defined for small
$|x|$ by the convergent series (see Proposition \ref{prop:cumulantbound})
\[R(x)=\sum_{k=1}^\infty \kappa_{2k}^\infty x^k,\]
where $\kappa_{2k}^\infty$ are the rectangular free cumulants of $\Lambda_m$
above. We denote $R'(x)$ as its derivative, and
\[S(x)=\left(\frac{R(x)}{x}\right)'=\frac{xR'(x)-R(x)}{x^2}.\]

\begin{theorem}\label{thm:PCArect}
Suppose $\W=\O^\top \bLambda \Q \in \RR^{m \times n}$ where $\Q$ and $\O$ are
Haar-uniform orthogonal matrices. Let $\bLambda=\diag(\blambda)$, where
$(\blambda,\u_1,\u_*,\v_*)$ are independent of $(\O,\Q)$,
$\|\u_*\|=\sqrt{n}$, $\|\v_*\|=\sqrt{m}$, and
\[\blambda \toW \Lambda, \qquad (\u_1,\u_*) \toW (U_1,U_*),
\qquad \v_* \toW V_*, \qquad m/n=\gamma \in (0,\infty)\]
as $m,n \to \infty$. Suppose $\EE[U_1^2] \leq 1$, $\EE[U_1U_*]=\eps>0$,
and $\|\blambda\|_\infty \leq
C_0$, almost surely for all large $n$ and some constants $C_0,\eps>0$.
\begin{enumerate}[(a)]
\item Let $\alpha \geq 0$, and let each function $v_t(\cdot)$ and
$u_{t+1}(\cdot)$ be continuously differentiable and Lipschitz on $\RR$. Then
for each fixed $T \geq 1$, almost surely as $m,n \to \infty$,
\begin{align*}
(\u_1,\ldots,\u_{T+1},\f_1,\ldots,\f_T,\u_*) &\toW
(U_1,\ldots,U_{T+1},F_1,\ldots,F_T,U_*),\\
(\v_1,\ldots,\v_T,\g_1,\ldots,\g_T,\v_*) &\toW
(V_1,\ldots,V_T,G_1,\ldots,G_T,V_*),
\end{align*}
where these limits are as defined in (\ref{eq:UFPCArect}) and
(\ref{eq:UFPCArect2}).
\item Suppose $v_t(\cdot),u_{t+1}(\cdot)$ are the posterior-mean denoisers
in (\ref{eq:vtutBayes}) and are Lipschitz on $\RR$.
There exist constants $C,\alpha_0>0$ depending only on
$C_0,\eps,\gamma$ such that for all $\alpha>\alpha_0$, setting
\[I_\Delta=I_\Gamma=[1-C/\alpha^2,1],\quad 
I_\Sigma=[\kappa_2^\infty/2,3\kappa_2^\infty/2],
\quad I_\Omega=\gamma \cdot I_\Sigma,\]
there is a unique fixed point
$(\Delta_*,\Sigma_*,\Gamma_*,\Omega_*,X_*) \in I_\Delta \times I_\Sigma \times
I_\Gamma \times I_\Omega \times \RR$ to the equations
\begin{align}
X_*&=\frac{\alpha^2\Delta_*\Gamma_*(1-\Delta_*)(1-\Gamma_*)}
{\gamma \Sigma_*\Omega_*},\;\;
1-\Delta_*=\mmse\left(\frac{\alpha^2\Gamma_*^2}{\gamma^2\Sigma_*}\right),\;\;
1-\Gamma_*=\mmse\left(\frac{\alpha^2\Delta_*^2}{\Omega_*}\right),\nonumber\\
\Sigma_*&=\Gamma_*R'(X_*)+\frac{\alpha^2\Delta_*^3(1-\Gamma_*)^2}{\Omega_*^2}S(X_*),\;\;
\Omega_*=\gamma \Delta_*R'(X_*)
+\frac{\alpha^2\Gamma_*^3(1-\Delta_*)^2}{\gamma\Sigma_*^2}S(X_*).
\label{eq:PCAfixedpointrect}
\end{align}
Furthermore,
\begin{align*}
\lim_{T \to \infty} \left(\lim_{m,n \to \infty} \frac{1}{m}\u_T^\top \u_*\right)
&=\lim_{T \to \infty} \left(\lim_{m,n \to \infty} \frac{1}{m}\|\u_T\|^2\right)
=\Delta_*\\
\lim_{T \to \infty} \left(\lim_{m,n \to \infty} \frac{1}{n}\v_T^\top \v_*\right)
&=\lim_{T \to \infty} \left(\lim_{m,n \to \infty} \frac{1}{n}\|\v_T\|^2\right)
=\Gamma_*.
\end{align*}
\end{enumerate}
\end{theorem}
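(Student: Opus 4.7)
For part (a), my plan is to identify the PCA iteration (\ref{eq:AMPPCArect1}--\ref{eq:AMPPCArect4}) as an instance of the general rectangular AMP algorithm from Section \ref{sec:rect}, specialized to denoisers that depend only on the latest iterate. Writing $\X^\top\u_t = \W^\top\u_t + \v_*\cdot(\alpha/m)(\u_*^\top\u_t)$ and symmetrically for $\X\v_t$, the rank-one perturbation acts as a deterministic multiple of $\v_*$ or $\u_*$ whose coefficient converges, by the Wasserstein hypotheses and an inductive argument on $t$, to $\alpha\EE[U_tU_*]$ or $(\alpha/\gamma)\EE[V_tV_*]$. Folding $(\u_*,\v_*)$ into the auxiliary inputs of the general AMP theorem, the signal appears as a mean shift in each Gaussian limit, while the covariances are governed by the rectangular free cumulants of $\W$ exactly as in (\ref{eq:PCAsigmarect})--(\ref{eq:PCAomegarect}). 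This yields the joint Wasserstein convergence claimed in part (a).

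For part (b), my strategy has two layers: first reduce the multi-step state evolution to the five-variable fixed-point system (\ref{eq:PCAfixedpointrect}), then analyze that system. Three identities for Bayes posterior-mean denoising drive the reduction. First, conditioning gives $\EE[\eta(F|\mu,\sigma^2)U_*] = \EE[\eta(F|\mu,\sigma^2)^2] = 1-\mmse(\mu^2/\sigma^2)$, so $\EE[U_tU_*] = \EE[U_t^2]$ (and analogously for $V_t$); this already produces the equality of alignment and squared norm claimed in the limit. Second, Gaussian integration by parts applied to $Y_t = F_t - \mu_t U_*$ gives $\EE[\eta'(F_t|\mu_t,\sigma_{tt}^\infty)] = \mu_t\cdot\mmse(\mu_t^2/\sigma_{tt}^\infty)/\sigma_{tt}^\infty$, yielding closed forms for every derivative average in (\ref{eq:PCAsigmarect})--(\ref{eq:PCAomegarect}). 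Third, a nested-information identity $\EE[U_sU_t] = \EE[U_{\min(s,t)}^2]$, provable by induction from the tower property applied to the Bayes-AMP iterates, collapses all cross-covariances to diagonal values. At a stationary fixed point, substituting these identities into the covariance update and using $R'(x) = \sum_{k\geq1}k\kappa_{2k}^\infty x^{k-1}$ together with $S(x) = (R(x)/x)'$ lets the geometric double sums telescope into the expressions in (\ref{eq:PCAfixedpointrect}), with $X_* = \alpha^2\Delta_*\Gamma_*(1-\Delta_*)(1-\Gamma_*)/(\gamma\Sigma_*\Omega_*)$.

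Existence and uniqueness of the fixed point then follow from a perturbative contraction argument for $\alpha$ large. The MMSE bound $\mmse(\mu^2/\sigma^2) \leq (1+\mu^2/\sigma^2)^{-1}$ from Remark \ref{remark:squarePCA}, combined with $\Sigma_* \in I_\Sigma$ and $\Omega_* \in I_\Omega$, forces $1-\Delta_*, 1-\Gamma_* = O(1/\alpha^2)$, so $X_* = O(1/\alpha^2)$; Proposition \ref{prop:cumulantbound} then ensures that the rectangular R-transform series converges at $X_*$ with $R'(X_*) = \kappa_2^\infty + O(1/\alpha^2)$ and $S(X_*) = O(1)$, which consistently places $\Sigma_*$ and $\Omega_*$ in the prescribed intervals. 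A Banach fixed-point argument on the compact box $I_\Delta\times I_\Sigma\times I_\Gamma\times I_\Omega$ (with $X_*$ slaved to the other four variables) closes existence and uniqueness once $\alpha_0$ is chosen large enough depending only on $C_0, \eps, \gamma$.

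The main obstacle is the convergence of the iterates to this fixed point, not merely its existence. My plan is to induct on $T$, seeding with $\EE[U_1U_*] = \eps > 0$ and invoking the I-MMSE / data-processing monotonicity of Bayes posterior-mean denoising to show that $\Delta_t$ is non-decreasing and bounded above by $1$, hence convergent; by the uniqueness established above, its limit must be $\Delta_*$, and symmetrically for $\Gamma_t$. The delicate point is that $\Delta_t$ does not evolve in isolation: its update depends on $\sigma_{t-1,t-1}^\infty$, which involves all past $\Delta_s, \Gamma_s$ through the cross-covariance sums. I expect to handle this via a joint-monotonicity / Lyapunov argument, showing inductively that once the state $(\Delta_t, \Gamma_t, \Sigma_t, \Omega_t)$ enters the prescribed box (which it does after a finite $T_0$ depending on $\eps$ and $\alpha$), the alignment coordinates subsequently evolve monotonically toward $(\Delta_*, \Gamma_*)$.
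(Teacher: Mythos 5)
Part (a) is correct and takes essentially the same route as the paper: fold $\u_*,\v_*$ into the side-information slots of the general rectangular AMP theorem (via Corollary \ref{cor:degeneraterect}), replace the rank-one term $(\alpha/m)\u_*^\top\u_t$ by its deterministic limit in an auxiliary sequence, and show the original and auxiliary iterates are asymptotically indistinguishable in $W_2$.

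Part (b) has a genuine gap. Your ``nested-information identity'' $\EE[U_sU_t]=\EE[U_{\min(s,t)}^2]$ is false in general and is not ``provable by the tower property,'' because $F_{s-1}$ is not measurable with respect to $\sigma(F_{t-1})$ --- the two noise vectors $Y_{s-1}$ and $Y_{t-1}$ are merely correlated Gaussians, not nested. Concretely, in the Gaussian-prior case with $F_{s-1}=\mu_{s-1}U_*+Y_{s-1}$, $F_{t-1}=\mu_{t-1}U_*+Y_{t-1}$, and $\Cov(Y_{s-1},Y_{t-1})=\rho$, one gets $\EE[U_sU_t]=\EE[U_s^2]$ only when $\rho$ satisfies a specific algebraic constraint, which does not hold along the iteration. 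Without this collapse, the full off-diagonal arrays $\EE[U_{s-j}U_{t-k}]$ and $\EE[V_{s-j}V_{t-k}]$ in (\ref{eq:PCAsigmarect})--(\ref{eq:PCAomegarect}) enter the dynamics and cannot be eliminated. What the paper actually shows is weaker but sufficient: the off-diagonal entries do all converge to the common constant $\Delta_*$ (resp.\ $\Gamma_*$) \emph{in the limit} $T\to\infty$, but this is a consequence of the argument, not an input to it.

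The second piece --- convergence of the iterates to the fixed point --- is where your proposal is most uncertain, and you flag this yourself. Your proposed monotonicity/Lyapunov approach runs directly into the off-diagonal coupling just described: since $\sigma_{tt}^\infty$ and $\omega_{tt}^\infty$ depend on the entire history of cross-moments, no scalar monotone invariant is apparent. The paper sidesteps this entirely by embedding the whole state-evolution matrices $(\bDelta_T,\bSigma_T,\bGamma_T,\bOmega_T)$ into a space of infinite matrices $\cX$ with a weighted $\ell_\infty$ norm $\|\cdot\|_\zeta$, showing the one-step state-evolution map is approximated by a $T$-independent map $G$ on $\cX_{I_\Delta}\times\cX_{I_\Sigma}\times\cX_{I_\Gamma}\times\cX_{I_\Omega}$ which is a contraction for large $\alpha$ (Lemmas \ref{lemma:PCAcontractionrect} and \ref{lemma:PCAapproxmaprect}). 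Banach's theorem then yields a unique fixed point, which is shown to be constant in every coordinate and to satisfy (\ref{eq:PCAfixedpointrect}); convergence of the actual iterates follows by tracking the approximation error $C\zeta^{T/2}$. Your existence/uniqueness reasoning for the scalar system (perturbative contraction on a box, $X_*$ slaved) is close in spirit to the scalar specialization of the paper's argument, but you would still need the matrix-level contraction to establish that the state evolution converges \emph{to} that fixed point; monotonicity alone does not do it.
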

The proof of this result is provided in Appendix \ref{appendix:PCA}.

\begin{remark}
As in the symmetric square setting of Remark \ref{remark:squarePCA},
the above fixed points imply that the asymptotic matrix
mean-squared-error is given by
\[\text{MSE} \equiv \lim_{T \to \infty}\left(\lim_{m,n \to \infty}
\frac{1}{mn}\|\u_T\v_T^\top-\u_*\v_*^\top\|_F^2\right)=1-\Delta_*\Gamma_*.\]
We may compare this with the asymptotic error of the PCA estimate: Assume
without loss of generality that $\gamma=m/n \leq 1$. Let $\hat{\u}_\PCA$ and
$\hat{\v}_\PCA$ be the leading left and right singular vectors of $\X$, with the
scalings $\|\hat{\u}_\PCA\|=\|\u_*\|=\sqrt{m}$ and
$\|\hat{\v}_\PCA\|=\|\v_*\|=\sqrt{n}$. Recall that the singular value of the
rank-one signal $(\alpha/m)\u_*\v_*^\top$ is $\alpha/\sqrt{\gamma}$, and set
\[x=\gamma/\alpha^2.\]
Then \cite[Theorem 2.9]{benaych2012singular} shows
\begin{align}
\lim_{m,n \to \infty} \left(\frac{1}{m}\hat{\u}_\PCA^\top \u_*\right)^2
&=\Delta_\PCA \equiv \frac{-2x\varphi(D^{-1}(x))}{D'(D^{-1}(x))}
\label{eq:DeltaPCA}\\
\lim_{m,n \to \infty} \left(\frac{1}{n}\hat{\v}_\PCA^\top \v_*\right)^2
&=\Gamma_\PCA \equiv \frac{-2x\bar{\varphi}(D^{-1}(x))}{D'(D^{-1}(x))}
\label{eq:GammaPCA}
\end{align}
where
\begin{equation}\label{eq:BGNnotation}
\varphi(z)=\EE\left[\frac{z}{z^2-\Lambda_m^2}\right],
\quad \bar{\varphi}(z)=\gamma \varphi(z)+\frac{1-\gamma}{z},
\quad D(z)=\varphi(z)\bar{\varphi}(z),
\end{equation}
and $D^{-1}(z)$ is the functional inverse of $D$ for small $|z|$. Then the
matrix mean-squared-error for the best rescaling of the PCA estimate is
\begin{align*}
\text{MSE}_\PCA& \equiv \min_{c>0} \left(\lim_{m,n \to \infty}
\frac{1}{mn}\|c\cdot \hat{\u}_\PCA\hat{\v}_\PCA^\top
-\u_*\v_*^\top\|_F^2\right)\\
&=\min_{c>0}\left(c^2-2c\sqrt{\Delta_\PCA \Gamma_\PCA}+1\right)
=1-\Delta_\PCA\Gamma_\PCA
\end{align*}
with the minimum attained at $c=\sqrt{\Delta_\PCA \Gamma_\PCA}$.
We verify in Appendix \ref{subsec:rectPCAimproves} that for all
$\alpha>\alpha_0$ sufficiently large, the fixed points of
Theorem \ref{thm:PCArect}(b) satisfy
\begin{equation}\label{eq:rectPCAimproves}
\text{MSE}=1-\Delta_*\Gamma_* \leq 1-\Delta_\PCA\Gamma_\PCA=\text{MSE}_\PCA,
\end{equation}
and that equality holds if and only if both $U_* \sim \N(0,1)$ and $V_* \sim
\N(0,1)$. Thus, for sufficiently large signal strength and any
non-Gaussian prior for either $U_*$ or $V_*$, the above AMP algorithm achieves
strictly better estimation accuracy than PCA.
\end{remark}

An illustration of this AMP algorithm and its state evolution is presented in
the middle and right panels of Figure \ref{fig:PCA}, with noise singular values
drawn from a rescaled $\operatorname{Beta}(1,2)$ distribution. Again, close
agreement with the state evolution predictions is observed at these sample sizes
$(m,n)=(2000,4000)$ and $\gamma=1/2$.

\section{General AMP algorithm for symmetric square matrices}\label{sec:square}

We now describe the general AMP algorithm for symmetric square matrices
\begin{equation}\label{eq:squaremodel}
\W=\O^\top \bLambda \O \in \RR^{n \times n}, \qquad \bLambda=\diag(\blambda)
\end{equation}
and we state a formal theorem for its state evolution.

We consider an initialization $\u_1 \in \RR^n$, and also a possible matrix of
side information
\[\E \in \RR^{n \times k}\]
for a fixed dimension $k \geq 0$, both independent of $\W$.
(We may take $k=0$ if there is no such
side information.) Starting from this initialization $\u_1$,
the AMP algorithm takes the form
\begin{align}
\z_t&=\W \u_t-b_{t1}\u_1-b_{t2}\u_2-\ldots-b_{tt}\u_t\label{eq:AMPz}\\
\u_{t+1}&=u_{t+1}(\z_1,\ldots,\z_t,\E)\label{eq:AMPu}
\end{align}
Each function $u_{t+1}:\RR^{t+k} \to \RR$ is applied row-wise
to $(\z_1,\ldots,\z_t,\E) \in \RR^{n \times (t+k)}$.
The debiasing coefficients $b_{t1},\ldots,b_{tt} \in \RR$ are defined
to ensure the empirical convergence
\[(\z_1,\ldots,\z_t) \toW \N(0,\bSigma_t^\infty)\]
as $n \to \infty$. The forms of $b_{t1},\ldots,b_{tt}$ and $\bSigma_t^\infty$
were first described in \cite{opper2016theory}, and we review this
in the next section.

\subsection{Debiasing coefficients and limit covariance}\label{sec:BSigma}

Define the $t \times t$ matrices
\begin{equation}\label{eq:DeltaPhi}
\bDelta_t=\begin{pmatrix} \langle \u_1^2 \rangle & \langle \u_1 \u_2 \rangle
& \cdots & \langle \u_1 \u_t \rangle \\
\langle \u_2\u_1 \rangle & \langle \u_2^2 \rangle & \cdots & \langle \u_2 \u_t
\rangle\\
\vdots & \vdots & \ddots & \vdots \\ \langle \u_t\u_1 \rangle & \langle \u_t\u_2
\rangle & \cdots & \langle \u_t^2 \rangle \end{pmatrix},
\quad \bPhi_t=\begin{pmatrix} 0 & 0 & \cdots & 0 & 0 \\
\langle \partial_1 \u_2 \rangle & 0 & \cdots & 0 & 0 \\
\langle \partial_1 \u_3 \rangle & \langle \partial_2 \u_3 \rangle
& \cdots & 0 & 0 \\
\vdots & \vdots & \ddots & \vdots & \vdots \\
\langle \partial_1 \u_t \rangle & \langle \partial_2 \u_t \rangle
& \cdots & \langle \partial_{t-1} \u_t \rangle & 0 \end{pmatrix}
\end{equation}
where $\u_s\u_{s'} \in \RR^n$, $\u_s^2 \in \RR^n$, and $\partial_{s'} \u_s
\in \RR^n$ denote the entrywise product, square, and partial derivative with
respect to $z_{s'}$. For each $j \geq 0$, define
\begin{equation}\label{eq:Theta}
\bTheta^{(j)}_t=\sum_{i=0}^j \bPhi_t^i \bDelta_t (\bPhi_t^{j-i})^\top.
\end{equation}
For example,
\[\bTheta_t^{(0)}=\bDelta_t, \quad \bTheta_t^{(1)}=\bPhi_t\bDelta_t
+\bDelta_t\bPhi_t^\top, \quad \bTheta_t^{(2)}=\bPhi_t^2\bDelta_t
+\bPhi_t\bDelta_t\bPhi_t^\top+\bDelta_t(\bPhi_t^2)^\top.\]

Let $\{\kappa_k\}_{k \geq 1}$ be the free cumulants of the empirical
eigenvalue distribution of $\W$. These are the free cumulants
as defined in Section \ref{sec:cumulants}
corresponding to the empirical moments
\begin{equation}\label{eq:empiricalmoments}
m_k=\frac{1}{n}\sum_{i=1}^n \lambda_i^k,
\end{equation}
where $(\lambda_1,\ldots,\lambda_n)=\blambda$ are the eigenvalues of $\W$.
Then define two matrices $\B_t$ and $\bSigma_t$ by
\begin{equation}\label{eq:BSigma}
\B_t=\left(\sum_{j=0}^\infty \kappa_{j+1}\bPhi_t^j\right)^\top,
\qquad \bSigma_t=\sum_{j=0}^\infty \kappa_{j+2}\bTheta_t^{(j)}.
\end{equation}
Here, $\B_t$ may be interpreted as the R-transform applied to $\bPhi_t^\top$.
Note that we write these as infinite series for
convenience, but in fact the series are finite because $\bPhi_t^j=0$ for
all $j \geq t$, and hence also 
$\bTheta_t^{(j)}=0$ for all $j \geq 2t-1$. So for example,
\[\B_1=\kappa_1\Id_{1 \times 1}, \qquad \B_2=\kappa_1 \Id_{2 \times 2}
+\kappa_2\bPhi_2^\top,\]
\[\bSigma_1=\kappa_2\bTheta_1^{(0)}, \qquad
\bSigma_2=\kappa_2\bTheta_2^{(0)}+\kappa_3\bTheta_2^{(1)}
+\kappa_4\bTheta_2^{(2)}.\]

Each matrix $\B_t$ is upper-triangular, which we may write entrywise as
\[\B_t=\begin{pmatrix} b_{11} & b_{21} & \cdots & b_{t1} \\
& b_{22} & \cdots & b_{t2} \\
& & \ddots & \vdots \\
& & & b_{tt} \end{pmatrix}.\]
The debiasing coefficients in (\ref{eq:AMPz}) are defined to be
the last column of $\B_t$. Note that
the diagonal entries $b_{11},b_{22},\ldots,b_{tt}$ are all equal to $\kappa_1$,
corresponding to the subtraction of $\kappa_1\u_t$ in (\ref{eq:AMPz}) when the
eigenvalue distribution of $\W$ has mean $\kappa_1$. If $\kappa_1=0$, then the
debiasing for $\W\u_t$ depends only on the previous iterates
$\u_1,\ldots,\u_{t-1}$.

Under the conditions to be imposed in Assumption \ref{assump:main}, all of the
matrices $\bDelta_t$, $\bPhi_t$, $\B_t$, and
$\bSigma_t$ will converge to deterministic $t \times t$ matrices in the
$n \to \infty$ limit, which we denote as
\[(\bDelta_t^\infty,\bPhi_t^\infty,\B_t^\infty,\bSigma_t^\infty)
=\lim_{n \to \infty} (\bDelta_t,\bPhi_t,\B_t,\bSigma_t).\]
This matrix $\bSigma_t^\infty$ is the covariance defining the state
evolution of the iterates $(\z_1,\ldots,\z_t)$.
All of our results will hold equally if the debiasing coefficients in
(\ref{eq:AMPz}) are replaced by their limits $b_{ts}^\infty$, or by any
consistent estimates of these limits.

We make two observations regarding this construction:
\begin{enumerate}
\item From the lower-triangular form of $\bPhi_t$, one may check
that the upper-left $(t-1) \times (t-1)$ submatrix of $(\bPhi_t^j)^\top$
is $(\bPhi_{t-1}^j)^\top$, and similarly the upper-left $(t-1) \times (t-1)$
submatrix of $\bTheta_t^{(j)}$ is $\bTheta_{t-1}^{(j)}$. Thus, the upper-left
submatrices of $\B_t$ and $\bSigma_t$ coincide with
$\B_{t-1}$ and $\bSigma_{t-1}$.
\item For each iteration $t \geq 1$, $\B_t$
depends on $\blambda$ only via its first $t$ free cumulants
$\kappa_1,\ldots,\kappa_t$, and $\bSigma_t$ depends on $\blambda$
only via its first $2t$ free cumulants $\kappa_1,\ldots,\kappa_{2t}$.
\end{enumerate}

\begin{remark}\label{remark:GOE}
In the Gaussian setting of $\W \sim \operatorname{GOE}(n)$, where $\W$ has
independent $\N(0,1/n)$ entries above the diagonal and $\N(0,2/n)$
entries on the diagonal, the limit spectral distribution of $\W$ is the Wigner
semicircle law. The limits of the free cumulants $\kappa_1,\kappa_2,\ldots$ in
this case are
\[\kappa_1^\infty=0, \qquad \kappa_2^\infty=1, \qquad \kappa_j^\infty=0 \qquad
\text{ for all } j \geq 2.\]
This yields simply
\[\B_t^\infty=(\bPhi_t^\infty)^\top, \qquad \bSigma_t^\infty=\bDelta_t^\infty.\]
If we further specialize to an algorithm where each $\u_t$ depends only on
the previous iterate $\z_{t-1}$, then $\langle \partial_s \u_t \rangle=0$ for
$s \neq t-1$, and this yields the Gaussian AMP algorithm
\begin{align*}
\z_t&=\W\u_t-\langle \partial_{t-1} \u_t \rangle \u_{t-1}\\
\u_{t+1}&=u_{t+1}(\z_t,\E)
\end{align*}
as studied in \cite{bolthausen2014iterative} and
\cite[Section 4]{bayati2011dynamics}. Furthermore, the state
evolution is such that each iterate $\z_t$ has the empirical limit
$\N(0,\sigma_{tt}^\infty)$, where
$\sigma_{tt}^\infty=\lim_{n \to \infty} \langle \u_t^2 \rangle$.

Note that outside of this Gaussian setting, we do not in general have
the identity $\bSigma_t^\infty=\bDelta_t^\infty$, i.e.\ the empirical second
moments of $\z_1,\ldots,\z_t$ do not coincide with those of $\u_1,\ldots,\u_t$
in the large-$n$ limit, even if $\W$ is scaled so that $\kappa_2=1$.
\end{remark}

\subsection{Main result}

We impose the following assumptions on the model (\ref{eq:squaremodel})
and the AMP iterates (\ref{eq:AMPz}--\ref{eq:AMPu}). Note that here, we do not
require the functions $u_{t+1}(\cdot)$ to be Lipschitz, but instead impose only
the assumption (\ref{eq:growth}) of polynomial growth.

\begin{assumption}\label{assump:main}
\hspace{2em}
\begin{enumerate}[(a)]
\item $\O \in \RR^{n \times n}$ is a random and Haar-uniform orthogonal matrix.
\item $\blambda \in \RR^n$ is independent of $\O$ and satisfies
$\blambda \toW \Lambda$ almost surely as $n \to \infty$, for a random variable
$\Lambda$ having finite moments of all orders.
\item $\u_1 \in \RR^n$ and $\E \in \RR^{n \times k}$ are independent of $\O$
and satisfy $(\u_1,\E) \toW (U_1,E)$ almost surely
as $n \to \infty$, for a random vector
$(U_1,E) \equiv (U_1,E_1,\ldots,E_k)$ having finite moments of all orders.
\item Each function $u_{t+1}:\RR^{t+k} \to \RR$ satisfies (\ref{eq:growth}) for
some $C>0$ and $p \geq 1$.
Writing its argument as $(z,e)$ where $z \in \RR^t$ and $e \in \RR^k$,
$u_{t+1}$ is weakly differentiable in $z$ and continuous in $e$.
For each $s=1,\ldots,t$, $\partial_s u_{t+1}$ also
satisfies (\ref{eq:growth}) for some $C>0$ and $p \geq 1$,
and $\partial_s u_{t+1}(z,e)$ is continuous
at Lebesgue-a.e.\ $z \in \RR^t$ for every $e \in \RR^k$.
\item $\Var[\Lambda]>0$ and $\EE[U_1^2]>0$. Letting $(Z_1,\ldots,Z_t) \sim
\N(0,\bSigma_t^\infty)$ be independent of $(U_1,E)$, each function
$u_{t+1}$ is such that there do not exist constants
$\alpha_1,\ldots,\alpha_t,\beta_1,\ldots,\beta_t$ for which
\[u_{t+1}(Z_1,\ldots,Z_t,E)=\sum_{s=1}^t \alpha_s Z_s+\beta_1 U_1+\sum_{s=2}^t
\beta_s U_s(Z_1,\ldots,Z_{s-1},E)\]
with probability 1 over $(U_1,E,Z_1,\ldots,Z_t)$.
\end{enumerate}
\end{assumption}

We clarify that Theorem \ref{thm:main} below establishes the existence of the
limit $\bSigma_t^\infty$ provided that condition (e) holds for the
functions $u_2,\ldots,u_t$, and this limit $\bSigma_t^\infty$ then defines
condition (e) for the next function $u_{t+1}$. This condition (e) is a
non-degeneracy assumption that holds if each function $u_{t+1}$ has a
non-linear dependence on the preceding iterate $z_t$.

\begin{theorem}\label{thm:main}
Under Assumption \ref{assump:main}, for each fixed $t \geq 1$,
almost surely as $n \to \infty$: $\bSigma_t \to \bSigma_t^\infty$ for 
a deterministic non-singular matrix $\bSigma_t^\infty$, and
\[(\u_1,\ldots,\u_{t+1},\z_1,\ldots,\z_t,\E) \toW
(U_1,\ldots,U_{t+1},Z_1,\ldots,Z_t,E)\]
where $(Z_1,\ldots,Z_t) \sim \N(0,\bSigma_t^\infty)$,
this vector $(Z_1,\ldots,Z_t)$ is independent of $(U_1,E)$, and
$U_s=u_s(Z_1,\ldots,Z_{s-1},E)$ for each $s=2,\ldots,t+1$.
\end{theorem}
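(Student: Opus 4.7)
The plan is to prove Theorem \ref{thm:main} by induction on $t$, using Bolthausen's conditioning technique adapted to Haar-orthogonal randomness, in the spirit of \cite{rangan2019vector,takeuchi2017rigorous}. Stacking iterates as $\U_t=(\u_1|\ldots|\u_t)$ and $\Z_t=(\z_1|\ldots|\z_t)$, and setting $\Y_t=\Z_t+\U_t\B_t$, the recursion (\ref{eq:AMPz}) collapses to the single linear constraint $\W\U_t=\Y_t$. At step $t$, I would condition on the $\sigma$-algebra $\cF_{t-1}$ generated by $\blambda$, $\E$, $\U_t$, and $\Z_{t-1}$, which also determines $\B_{t-1}$ and $\Y_{t-1}$. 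Since $\W=\O^\top\bLambda\O$ with $\bLambda$ known, the conditional law of $\O$ is Haar-uniform on the slice of orthogonal matrices satisfying $\bLambda(\O\U_{t-1})=\O\Y_{t-1}$, and may be realized as a fixed reference matrix consistent with this constraint composed with a free Haar-orthogonal action on the orthogonal complement of $\operatorname{span}(\O\U_{t-1},\O\Y_{t-1})$.

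Given this conditional description, decompose $\W\u_t=\W\proj_{\U_{t-1}}\u_t+\W\proj_{\U_{t-1}^\perp}\u_t$. The first summand equals $\Y_{t-1}(\U_{t-1}^\top\U_{t-1})^{-1}\U_{t-1}^\top\u_t$ and is $\cF_{t-1}$-measurable, so only the second summand carries new randomness. To analyze it I would compute conditional moments $\EE[(\W\u_s)_i(\W\u_t)_j\mid\cF_{t-1}]$, which reduce to evaluating bilinear forms $\a^\top\W^k\b$ and normalized traces $n^{-1}\Tr(\W^k)$ against $\cF_{t-1}$-measurable $\a,\b$. Haar concentration shows these converge almost surely to expressions built from the spectral moments $m_k^\infty=\EE[\Lambda^k]$. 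The specific Onsager matrix $\B_t=(\sum_{j\geq 0}\kappa_{j+1}\bPhi_t^j)^\top$ is exactly the combination that cancels all $\cF_{t-1}$-measurable ``memory'' contributions arising from these expansions, via the moment--cumulant unfolding (\ref{eq:momentcumulant}); the residual is an approximate Gaussian whose cross-covariances, obtained by matching against $\sum_{j\geq 0}\kappa_{j+2}\bTheta_t^{(j)}$ from (\ref{eq:BSigma}), give the covariance $\bSigma_t^\infty$.

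The main obstacle is this combinatorial unfolding. Because Assumption \ref{assump:main}(d) only imposes polynomial (not Lipschitz) growth on $u_{t+1}$ and permits full dependence on all of $(\z_1,\ldots,\z_t)$, the iterated projections onto $\U_{t-1}^\perp$ generated by the conditioning produce nested sums of bilinear forms in matrix powers of $\W$ that must be reorganized as sums over non-crossing partitions $\pi\in\NC(k)$, yielding exactly the free cumulants $\kappa_j$ of $\Lambda$ rather than classical cumulants. Carrying this out rigorously requires: (i) sharp concentration inequalities for $\a^\top\W^k\b$ under Haar conditioning, uniform in $k$ up to a diverging cutoff that absorbs the polynomial growth; (ii) Assumption \ref{assump:main}(e), which keeps $\bDelta_{t-1}$ well-conditioned in the limit and produces the non-singular $\bSigma_t^\infty$ needed to iterate the conditioning to step $t+1$; and (iii) an upgrade from finite-moment convergence to the Wasserstein statement $\toW$, using the polynomial growth of $u_{t+1}$ and $\partial_s u_{t+1}$, their Lebesgue-a.e.\ continuity, and the Wasserstein calculus of Section \ref{sec:wasserstein}. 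The induction then closes by observing that applying $u_{t+1}$ row-wise to the now-convergent tuple $(\Z_t,\E)$ preserves $\toW$ convergence.
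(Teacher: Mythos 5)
Your high-level plan---Bolthausen-style conditioning adapted to Haar-orthogonal randomness, Onsager cancellation of the $\cF$-measurable bias, free cumulants emerging from moment combinatorics---is the right template, but the specific conditioning scheme you propose does not work, and this is the crux of the paper's proof. Conditioning on $\cF_{t-1}=\sigma(\blambda,\E,\U_t,\Z_{t-1})$ and writing the constraint as $\bLambda(\O\U_{t-1})=\O\Y_{t-1}$ does not put $\O$ into the tractable form $\O\X=\Y$ that Proposition~\ref{prop:orthogconditioning} handles: $\O$ appears on \emph{both} sides (once conjugated through $\bLambda$), so the constraint is not affine in $\O$, and the conditional law is \emph{not} ``Haar on the orthogonal complement of $\operatorname{span}(\O\U_{t-1},\O\Y_{t-1})$''---that span itself is random and $\bLambda$ does not preserve it. The paper resolves this by enlarging the filtration to include the rotated iterates $\r_t=\O\u_t$ and expanding each AMP step into \emph{two} separate Haar-conditionings: one event $\O(\U_t,\Z_t)=(\R_t,\bLambda\R_t)\bigl(\begin{smallmatrix}\Id&-\B_t\\0&\Id\end{smallmatrix}\bigr)$ to produce $\r_{t+1}$, and a second event with $\U_{t+1},\R_{t+1}$ included to produce $\s_{t+1}=\O^\top\bLambda\r_{t+1}$. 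Only with $\R_t$ in the $\sigma$-algebra do both sides of the constraint become measurable and Proposition~\ref{prop:orthogconditioning} applicable.

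A second substantial gap is the combinatorial bookkeeping. You invoke ``moment--cumulant unfolding'' to cancel the memory terms, but the actual computation requires tracking $\lim_n n^{-1}\u_s^\top\W^k\u_t$ for all $k$ and all $s,t\leq T$, and the limits are \emph{not} given directly by moments or free cumulants: they are governed by the ``partial moment coefficients'' $c_{k,j}$ of Lemma~\ref{lemma:ckj}, which enumerate over the restricted non-crossing sets $\NC(k+j,j)$ and interpolate between $m_k$ (at $j=0$) and $\kappa_{k+1}$ (at $k=1$). Crucially, the matrix $\X^\top\X$ appearing in the conditional law has no tractable closed-form inverse, and the paper handles products $(\X^\top\X)^{-1}\v$ by \emph{guessing and verifying} via the $\bUpsilon_t$-identities of Lemma~\ref{lemma:upsilon}. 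Without this machinery you cannot actually compute the $\s_\parallel$ component and verify that the $\B_t$-correction cancels it exactly. Finally, your worry about concentration of $\a^\top\W^k\b$ ``uniform in $k$ up to a diverging cutoff'' is a non-issue: for fixed iteration $t$ one has $\bPhi_t^j=0$ for $j\geq t$, so all defining series for $\B_t$ and $\bSigma_t$ are finite, and the polynomial growth of $u_{t+1}$ is absorbed by the Wasserstein calculus (Propositions~\ref{prop:composition} and \ref{prop:discontinuous}), not by a cutoff in $k$.
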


The proof of this result is provided in Appendix
\ref{appendix:squareproof}. The limit $\bSigma_t^\infty$ is
given by replacing $\langle \u_s\u_{s'} \rangle$,
$\langle \partial_{s'} \u_s \rangle$, and $\kappa_k$ in the
definitions (\ref{eq:DeltaPhi}) and (\ref{eq:BSigma}) with $\EE[U_sU_{s'}]$,
$\EE[\partial_{s'} u_s(Z_1,\ldots,Z_{s-1},E)]$, and the free cumulants
$\kappa_k^\infty$ of the limit spectral distribution $\Lambda$.

\subsection{Removing the non-degeneracy assumption}

The following corollary provides a version of Theorem \ref{thm:main}
without the non-degeneracy condition of Assumption \ref{assump:main}(e),
under the stronger condition that each
function $u_{t+1}$ is continuously-differentiable and Lipschitz.
Note that the convergence established is only in $W_2$, rather than
in $W_p$ for every order $p \geq 1$ as in Theorem \ref{thm:main}.

The proof follows the idea of
\cite{berthier2020state} by studying a perturbed AMP sequence and then taking
the limit of this perturbation to 0. We provide this proof in
Appendix \ref{appendix:degenerate}.

\begin{corollary}\label{cor:degenerate}
Suppose Assumption \ref{assump:main}(a--c) holds, $\limsup_{n \to \infty}
\|\blambda\|_\infty<\infty$, each function
$u_{t+1}:\RR^{t+k} \to \RR$ is continuously-differentiable, and
\[|u_{t+1}(z,e)-u_{t+1}(z',e)| \leq C\|z-z'\|\]
for a constant $C>0$ and all $z,z' \in \RR^t$ and $e \in \RR^k$.
Then for each fixed $t \geq 1$, almost surely as $n \to \infty$:
$\bSigma_t \to \bSigma_t^\infty$ for a deterministic (possibly singular)
matrix $\bSigma_t^\infty$, and
\[(\u_1,\ldots,\u_{t+1},\z_1,\ldots,\z_t,\E) \toWtwo
(U_1,\ldots,U_{t+1},Z_1,\ldots,Z_t,E)\]
where $(U_1,\ldots,U_{t+1},Z_1,\ldots,Z_t,E)$ is as defined in
Theorem \ref{thm:main}.
\end{corollary}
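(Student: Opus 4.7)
The plan is to introduce a perturbed AMP sequence whose denoisers are known to satisfy the non-degeneracy condition of Assumption \ref{assump:main}(e), apply Theorem \ref{thm:main} to the perturbed sequence at each perturbation level $\eps > 0$, and then pass to $\eps \to 0$, following the template of \cite{berthier2020state}. Concretely, fix a strictly nonlinear, $C^1$, Lipschitz scalar function $\psi:\RR \to \RR$ with bounded derivative (e.g.\ $\psi(x)=\tanh(x)$), and for each $\eps > 0$ define the perturbed denoisers
\[u_{t+1}^\eps(z_1,\ldots,z_t,e) = u_{t+1}(z_1,\ldots,z_t,e) + \eps\,\psi(z_t),\]
generating a perturbed AMP sequence $(\u_t^\eps,\z_t^\eps)$ and limit variables $(U_t^\eps,Z_t^\eps)$. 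These $u_{t+1}^\eps$ remain $C^1$ and Lipschitz with constant uniformly bounded in $\eps \in (0,1]$. Moreover, since each $U_s^\eps$ for $s\le t$ is a function of $(Z_1^\eps,\ldots,Z_{s-1}^\eps,E)$, any putative affine identity for $u_{t+1}^\eps$ of the form in (e) would force $\eps\psi(Z_t^\eps)$ to be affine in $Z_t^\eps$ almost surely, which fails for $\eps>0$. Thus Theorem \ref{thm:main} applies to the perturbed system and yields, for every $\eps>0$ and $T\geq 1$,
\[(\u_1^\eps,\ldots,\u_{T+1}^\eps,\z_1^\eps,\ldots,\z_T^\eps,\E) \toW (U_1^\eps,\ldots,U_{T+1}^\eps,Z_1^\eps,\ldots,Z_T^\eps,E)\]
with non-singular limiting covariance $\bSigma_T^{\eps,\infty}$.

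Next I would prove a uniform (in $n$) stability estimate: by induction on $t$, there is a constant $C(t)$ (independent of $\eps$ and $n$) such that
\[\limsup_{n\to\infty} \frac{1}{n}\bigl(\|\u_t^\eps-\u_t\|^2+\|\z_t^\eps-\z_t\|^2\bigr) \leq C(t)\,\eps^2, \qquad |b_{ts}^\eps-b_{ts}| \leq C(t)\,\eps.\]
The inductive step combines: the spectral bound $\|\W\|_{\mathrm{op}}=\|\blambda\|_\infty$ applied to $\W(\u_t^\eps-\u_t)$; the Lipschitz hypothesis on $u_{t+1}$ applied to $u_{t+1}^\eps(\z^\eps,\E)-u_{t+1}(\z,\E)$; the explicit $O(\eps)$ contribution from $\eps\psi$; and continuity of each $b_{ts}$, viewed as a polynomial in the empirical averages $\langle \partial_{s'}\u_s\rangle$ and in the (unperturbed) free cumulants $\kappa_k$ of $\blambda$. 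An analogous calculation on the limit side yields $\bSigma_T^{\eps,\infty}\to\bSigma_T^\infty$ for some (possibly singular) matrix $\bSigma_T^\infty$, and $(U_t^\eps,Z_t^\eps)\to(U_t,Z_t)$ in $W_2$, where $(Z_1,\ldots,Z_T)\sim\N(0,\bSigma_T^\infty)$ independent of $(U_1,E)$ and $U_s=u_s(Z_1,\ldots,Z_{s-1},E)$.

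With these two pieces the conclusion follows by a triangle-inequality argument in the $W_2$ metric on the empirical joint law of the relevant tuple: for any $\delta>0$ choose $\eps$ small so that the uniform-in-$n$ stability estimate and the $\eps\to 0$ limit on the limit side make two of the three terms (unperturbed-vs-perturbed at finite $n$, and perturbed limit law vs unperturbed limit law) at most $\delta/3$; then at this fixed $\eps$ send $n\to\infty$ using Theorem \ref{thm:main} applied to the perturbed sequence to control the middle term. This gives convergence in $W_2$, but not in every $W_p$, which matches the statement of the corollary.

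The main obstacle is the uniform stability step. Controlling $b_{ts}^\eps-b_{ts}$ requires bounding $\langle \partial_{s'}\u_s^\eps\rangle - \langle \partial_{s'}\u_s\rangle$, where the derivatives $\partial_{s'}u_s$ are only continuous and bounded (by the Lipschitz constant) but not themselves Lipschitz, so there is no a priori quantitative modulus of continuity. A clean way to handle this is to approximate $\partial_{s'}u_s$ uniformly on compact sets by Lipschitz functions, use the $W_p$ convergence of the perturbed iterates at fixed $\eps$ together with the uniform boundedness of $\partial_{s'}u_s$ to reduce to a compact region, and then combine with the inductive bound on $\|\z_{s-1}^\eps-\z_{s-1}\|/\sqrt{n}$. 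This localization is the most delicate piece, and it is where the stronger hypotheses of the corollary — $C^1$ Lipschitz denoisers and a bounded spectrum — play an essential role beyond what Theorem \ref{thm:main} requires.
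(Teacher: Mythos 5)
Your high-level strategy (perturb to restore Assumption (e), apply Theorem \ref{thm:main} to the perturbed system, pass to $\eps\to 0$ via a triangle inequality in $W_2$) is the same as the paper's, and the general plan is sound. But there are two gaps in the execution.

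First, the deterministic perturbation $u_{t+1}^\eps = u_{t+1} + \eps\psi(z_t)$ does not repair all of Assumption (e). That assumption also requires $\Var[\Lambda]>0$ and $\EE[U_1^2]>0$, and the corollary assumes neither. If $\Lambda$ is a.s.\ constant, or $U_1=0$ a.s., your perturbed system still violates (e), and Theorem \ref{thm:main} does not apply for any $\eps>0$. The paper instead perturbs \emph{all three} sources of degeneracy: $\W^\eps=\O^\top\diag(\blambda+\eps\bgamma)\O$ with i.i.d.\ $\operatorname{Uniform}(-1,1)$ entries $\bgamma$ (forcing $\Var[\Lambda^\eps]>0$), $\u_1^\eps=\u_1+\eps\w_1$ with $\w_1$ i.i.d.\ Gaussian (forcing $\EE[(U_1^\eps)^2]>0$), and $\u_{t+1}^\eps = u_{t+1}(\z_1^\eps,\ldots,\z_t^\eps,\E)+\eps\w_{t+1}$ with fresh i.i.d.\ Gaussian $\w_{t+1}$ folded into an augmented side-information matrix $\E^\eps=(\E,\w_2,\ldots,\w_{T+1})$. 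This last choice is also cleaner than $\eps\psi(z_t)$: the independent noise $\eps W_{t+1}$ cannot be matched by any linear combination of $Z_1,\ldots,Z_t,U_1,\ldots,U_t$, so (e) holds automatically for every $\eps>0$, whereas your argument only rules out the affine identity for all but one exceptional value of $\eps$ per denoiser.

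Second, your finite-$n$ quantitative stability bound $|b_{ts}^\eps - b_{ts}| \leq C(t)\eps$ uniformly in $n$ is not provable from the stated hypotheses, and the proposed ``localization'' fix does not produce a uniform-in-$n$ rate. The obstruction you identify is real: $\partial_{s'}u_s$ is continuous and bounded but has no quantitative modulus of continuity, so $\langle\partial_{s'}\u_s^\eps\rangle-\langle\partial_{s'}\u_s\rangle$ cannot be bounded at a rate in $\eps$ at fixed $n$. The paper sidesteps this entirely by never working at fixed $n$: it proves the purely qualitative statement $\lim_{\eps\to 0}\limsup_{n\to\infty} n^{-1}\|\z_{t-1}^\eps-\z_{t-1}\|^2 = 0$, and obtains $\lim_{\eps\to 0}\limsup_{n\to\infty}|b_{ts}^\eps-b_{ts}|=0$ from the chain $b_{ts}^\eps \to b_{ts}^{\eps,\infty}$ (as $n\to\infty$, by Lemma \ref{lemma:main} applied to the perturbed sequence), $b_{ts} \to b_{ts}^\infty$ (inductive hypothesis), and $b_{ts}^{\eps,\infty} \to b_{ts}^\infty$ (as $\eps\to 0$, using $W_2$ convergence of the limit laws $X^\eps\to X$ and bounded continuity of $\partial_{s'}u_s$). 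Replacing your quantitative claims with these double-limit statements removes the obstacle and is what lets the rest of your triangle-inequality argument go through.
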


\section{AMP algorithm for rectangular matrices}\label{sec:rect}

In this section, we describe the form of the general AMP algorithm for
a rectangular matrix
\begin{equation}\label{eq:rectmodel}
\W=\O^\top \bLambda \Q \in \RR^{m \times n}, \qquad \bLambda=\diag(\blambda)
\end{equation}
and state a formal theorem for its state evolution. We denote
\begin{equation}\label{eq:lambdarect}
\blambda=(\lambda_1,\ldots,\lambda_{\min(m,n)}) \in \RR^{\min(m,n)}
\end{equation}
as the diagonal entries of $\bLambda$, which are the singular values of $\W$.

We consider an initialization $\u_1 \in \RR^m$, and two matrices of side
information
\[\E \in \RR^{m \times k} \qquad \text{ and }
\qquad \F \in \RR^{n \times \ell}\]
for fixed dimensions $k,\ell \geq 0$, all independent of $\W$.
(We may take $k,\ell=0$ if there is no
such side information.) Starting from this initialization, the AMP algorithm
takes the form
\begin{align}
\z_t&=\W^\top
\u_t-b_{t1}\v_1-b_{t2}\v_2-\ldots-b_{t,t-1}\v_{t-1}\label{eq:AMPrectz}\\
\v_t&=v_t(\z_1,\ldots,\z_t,\F)\label{eq:AMPrectv}\\
\y_t&=\W \v_t-a_{t1}\u_1-a_{t2}\u_2-\ldots-a_{tt}\u_t\label{eq:AMPrecty}\\
\u_{t+1}&=u_{t+1}(\y_1,\ldots,\y_t,\E)\label{eq:AMPrectu}
\end{align}
for functions $v_t:\RR^{t+\ell} \to \RR$ and $u_{t+1}:\RR^{t+k} \to \RR$.
In the first iteration $t=1$, (\ref{eq:AMPrectz}) is simply $\z_1=\W^\top \u_1$.
The debiasing coefficients $a_{t1},\ldots,a_{tt}$ and
$b_{t1},\ldots,b_{t,t-1}$ are defined to ensure that
\[(\y_1,\ldots,\y_t) \toW \N(0,\bSigma_t^\infty) \qquad \text{ and }
\qquad (\z_1,\ldots,\z_t) \toW \N(0,\bOmega_t^\infty)\]
as $m,n \to \infty$. We describe these debiasing coefficients and
state evolution in the next section, in terms of the rectangular free cumulants
of $\W$---these were also derived recently in \cite{ccakmak2020dynamical}.

\subsection{Debiasing coefficients and limit covariance}\label{sec:debiasingrect}

Define the $t \times t$ matrices
\begin{align}
\bDelta_t=\begin{pmatrix} \langle \u_1^2 \rangle & \langle \u_1\u_2 \rangle
& \cdots & \langle \u_1\u_t \rangle \\
\langle \u_2\u_1 \rangle & \langle \u_2^2 \rangle & \cdots & \langle \u_2\u_t
\rangle\\
\vdots & \vdots & \ddots & \vdots \\
\langle \u_t\u_1 \rangle & \langle \u_t\u_2 \rangle & \cdots &
\langle \u_t^2 \rangle \end{pmatrix},&
\quad \bPhi_t=\begin{pmatrix} 0 & 0 & \cdots & 0 & 0 \\
\langle \partial_1 \u_2 \rangle & 0 & \cdots & 0 & 0 \\
\langle \partial_1 \u_3 \rangle & \langle \partial_2 \u_3 \rangle
& \cdots & 0 & 0 \\
\vdots & \vdots & \ddots & \vdots & \vdots \\
\langle \partial_1 \u_t \rangle & \langle \partial_2 \u_t \rangle
& \cdots & \langle \partial_{t-1} \u_t \rangle & 0 \end{pmatrix},
\label{eq:DeltaPhirect}\\
\bGamma_t=\begin{pmatrix} \langle \v_1^2 \rangle & \langle \v_1\v_2 \rangle &
\cdots & \langle \v_1\v_t \rangle \\
\langle \v_2\v_1 \rangle & \langle \v_2^2 \rangle & \cdots & \langle \v_2\v_t
\rangle \\
\vdots & \vdots & \ddots & \vdots \\ \langle \v_t\v_1 \rangle & \langle \v_t\v_2
\rangle & \cdots & \langle \v_t^2 \rangle \end{pmatrix},&
\quad \bPsi_t=
\begin{pmatrix} \langle \partial_1 \v_1 \rangle & 0 & \cdots & 0 \\
\langle \partial_1 \v_2 \rangle & \langle \partial_2 \v_2 \rangle
& \cdots & 0 \\
\vdots & \vdots & \ddots & \vdots \\
\langle \partial_1 \v_t \rangle & \langle \partial_2 \v_t \rangle
& \cdots & \langle \partial_t \v_t \rangle \end{pmatrix}.
\label{eq:GammaPsirect}
\end{align}
For each $j \geq 0$, define
\begin{align}
\bTheta_t^{(j)}&=\sum_{i=0}^j (\bPhi_t\bPsi_t)^i \bDelta_t
(\bPsi_t^\top \bPhi_t^\top)^{j-i}
+\sum_{i=0}^{j-1} (\bPhi_t\bPsi_t)^i\bPhi_t \bGamma_t\bPhi_t^\top
(\bPsi_t^\top \bPhi_t^\top)^{j-1-i},\label{eq:Thetarect}\\
\bXi_t^{(j)}&=\sum_{i=0}^j (\bPsi_t\bPhi_t)^i \bGamma_t
(\bPhi_t^\top \bPsi_t^\top)^{j-i}
+\sum_{i=0}^{j-1} (\bPsi_t\bPhi_t)^i\bPsi_t \bDelta_t\bPsi_t^\top
(\bPhi_t^\top \bPsi_t^\top)^{j-1-i}.\label{eq:Xirect}
\end{align}
The second summations of
(\ref{eq:Thetarect}) and (\ref{eq:Xirect}) are not present
for $j=0$. So for example,
\begin{align*}
\bTheta_t^{(0)}&=\bDelta_t\\
\bTheta_t^{(1)}&=\bPhi_t\bPsi_t\bDelta_t+\bPhi_t \bGamma_t \bPhi_t^\top+
\bDelta_t\bPsi_t^\top\bPhi_t^\top\\
\bTheta_t^{(2)}&=\bPhi_t\bPsi_t\bPhi_t\bPsi_t\bDelta_t+\bPhi_t\bPsi_t\bPhi_t
\bGamma_t \bPhi_t^\top+\bPhi_t\bPsi_t\bDelta_t\bPsi_t^\top\bPhi_t^\top\\
&\hspace{0.5in}+\bPhi_t\bGamma_t \bPhi_t^\top\bPsi_t^\top\bPhi_t^\top
+\bDelta_t\bPsi_t^\top\bPhi_t^\top\bPsi_t^\top\bPhi_t^\top\\
\bXi_t^{(0)}&=\bGamma_t\\
\bXi_t^{(1)}&=\bPsi_t\bPhi_t\bGamma_t+\bPsi_t\bDelta_t\bPsi_t^\top
+\bGamma_t\bPhi_t^\top\bPsi_t^\top\\
\bXi_t^{(2)}&=\bPsi_t\bPhi_t\bPsi_t\bPhi_t\bGamma_t+\bPsi_t\bPhi_t\bPsi_t\bDelta_t\bPsi_t^\top
+\bPsi_t\bPhi_t\bGamma_t\bPhi_t^\top\bPsi_t^\top\\
&\hspace{0.5in}+\bPsi_t\bDelta_t\bPsi_t^\top\bPhi_t^\top\bPsi_t^\top
+\bGamma_t\bPhi_t^\top\bPsi_t^\top\bPhi_t^\top\bPsi_t^\top.
\end{align*}

Let $\{\kappa_{2k}\}_{k \geq 1}$ be the rectangular free cumulants
with aspect ratio $\gamma=m/n$ corresponding to the sequence of even moments
\begin{equation}\label{eq:momentsrect}
m_{2k}=\frac{1}{m}\sum_{i=1}^{\min(m,n)} \lambda_i^{2k},
\end{equation}
as defined in Section \ref{sec:cumulantsrect}. Note that we always use the
normalization $1/m$, so these are the moments of $\blambda$ padded by $m-n$
additional 0's if $m>n$.

Define the $t \times t$ matrices
\begin{equation}\label{eq:ABrect}
\A_t=\left(\sum_{j=0}^\infty
\kappa_{2(j+1)}\bPsi_t(\bPhi_t\bPsi_t)^j\right)^\top, \qquad
\B_t=\left(\gamma \sum_{j=0}^\infty \kappa_{2(j+1)}
\bPhi_t(\bPsi_t\bPhi_t)^j\right)^\top,
\end{equation}
\begin{equation}\label{eq:SigmaOmegarect}
\bSigma_t=\sum_{j=0}^\infty \kappa_{2(j+1)}\bXi_t^{(j)}, \qquad
\bOmega_t=\gamma \sum_{j=0}^\infty \kappa_{2(j+1)}\bTheta_t^{(j)}.
\end{equation}
These are in fact finite series, as it may be verified that
\begin{align*}
\bPsi_t(\bPhi_t\bPsi_t)^j&=0 \text{ for } j \geq t+1\\
\bPhi_t(\bPsi_t\bPhi_t)^j&=0 \text{ for } j \geq t\\
\bXi_t^{(j)}&=0 \text{ for } j \geq 2t\\
\bTheta_t^{(j)}&=0 \text{ for } j \geq 2t-1.
\end{align*}
So for example,
\begin{align*}
&\A_1=\kappa_2\bPsi_1^\top, \qquad
\A_2=\kappa_2\bPsi_2^\top+\kappa_4(\bPsi_2\bPhi_2\bPsi_2)^\top, \quad \ldots\\
&\B_1=0, \qquad \B_2=\gamma \kappa_2 \bPhi_2^\top,
\qquad \B_3=\gamma \kappa_2\bPhi_3^\top+\gamma
\kappa_4(\bPhi_3\bPsi_3\bPhi_3)^\top, \quad \ldots\\
&\bSigma_1=\kappa_2 \bXi_1^{(0)}+\kappa_4\bXi_1^{(1)},
\qquad \bSigma_2=\kappa_2\bXi_2^{(0)}+\kappa_4\bXi_2^{(1)}+\kappa_6\bXi_2^{(2)}+\kappa_8\bXi_2^{(3)}, \quad \ldots\\
&\bOmega_1=\gamma \kappa_2 \bTheta_1^{(0)},
\qquad \bOmega_2=\gamma \kappa_2\bTheta_2^{(0)}+\gamma
\kappa_4\bTheta_2^{(1)}+\gamma\kappa_6\bTheta_2^{(2)}, \quad \ldots
\end{align*}

The matrices $\A_t$ and $\B_t$ are upper-triangular, with the forms
\[\A_t=\begin{pmatrix} a_{11} & a_{21} & \cdots & a_{t1} \\
& a_{22} & \cdots & a_{t2} \\
& & \ddots & \vdots \\
& & & a_{tt} \end{pmatrix}, 
\qquad
\B_t=\begin{pmatrix} 0 & b_{21} & b_{31} & \cdots & b_{t1} \\
& 0 & b_{32} & \cdots & b_{t2} \\
& & \ddots & \ddots & \vdots \\
& & & 0 & b_{t,t-1}\\
& & & & 0 \end{pmatrix}.\]
The debiasing coefficients $a_{t1},\ldots,a_{tt},b_{t1},\ldots,b_{t,t-1}$ in
(\ref{eq:AMPrectz}) and (\ref{eq:AMPrecty}) are defined as the last columns of
$\A_t$ and $\B_t$. Under the conditions to be imposed in Assumption
\ref{assump:rect}, these matrices all have deterministic $t \times t$ limits
\[(\bDelta_t^\infty,\bGamma_t^\infty,\bPhi_t^\infty,\bPsi_t^\infty,
\A_t^\infty,\B_t^\infty,\bSigma_t^\infty,\bOmega_t^\infty)
=\lim_{m,n \to \infty} (\bDelta_t,\bGamma_t,\bPhi_t,\bPsi_t,\A_t,\B_t,
\bSigma_t,\bOmega_t).\]
The matrices $\bSigma_t^\infty$ and $\bOmega_t^\infty$ are the covariances
in the state evolutions for $(\y_1,\ldots,\y_t)$ and $(\z_1,\ldots,\z_t)$.
As in the symmetric square setting, the debiasing coefficients in
(\ref{eq:AMPrectz}) and (\ref{eq:AMPrecty}) may be replaced by their limits
$a_{ts}^\infty$ and $b_{ts}^\infty$, or by any consistent estimates of these
limits.

We make the following observations about the above definitions:
\begin{enumerate}
\item The upper-left $(t-1) \times (t-1)$ submatrices of
$\A_t,\B_t,\bSigma_t,\bOmega_t$ coincide with the matrices
$\A_{t-1},\B_{t-1},\bSigma_{t-1},\bOmega_{t-1}$.
\item For each $t \geq 1$, $\A_t,\B_t,\bSigma_t,\bOmega_t$ depend respectively
only on the rectangular free cumulants of $\blambda$ up to the orders
$\kappa_{2t},\kappa_{2t-2},\kappa_{4t},\kappa_{4t-2}$.
\item The matrices $\A_t,\bSigma_t$ depend on
$\u_1,\ldots,\u_t,\v_1,\ldots,\v_t$ and their derivatives.
The matrices $\B_t,\bOmega_t$ depend on
$\u_1,\ldots,\u_t,\v_1,\ldots,\v_{t-1}$ and their derivatives, but they
do not depend on $\v_t$ or its derivatives. (Thus the debiasing coefficients
and state evolution for $\z_t$ in (\ref{eq:AMPrectz}) are well-defined before
defining $\v_t$ in (\ref{eq:AMPrectv}).)
\end{enumerate}
The first two statements are analogous to our observations in the
symmetric square setting. The third statement holds from the definitions of
$\B_t$ and $\bOmega_t$ in (\ref{eq:ABrect}--\ref{eq:SigmaOmegarect}),
because the last column of
$\bPhi_t$ is 0, so $\bPhi_t\bPsi_t$ does not depend on the last row of
$\bPsi_t$, and $\bPhi_t\bGamma_t\bPhi_t^\top$ does not depend on the last row or
column of $\bGamma_t$.

\begin{remark}\label{remark:wishart}
In the Gaussian setting where $\W$ has i.i.d.\ $\N(0,1/n)$ entries, the
limit spectral distribution of $\W\W^\top$ is the Marcenko-Pastur law, with
limiting rectangular free cumulants
\[\kappa_2^\infty=1, \qquad \kappa_{2j}^\infty=0 \quad \text{ for all }
j \geq 2.\]
This yields simply
\[\A_t^\infty=(\bPsi_t^\infty)^\top, \qquad \B_t^\infty=\gamma
(\bPhi_t^\infty)^\top, \qquad \bSigma_t^\infty=\bGamma_t^\infty,
\qquad \bOmega_t=\gamma \bDelta_t^\infty.\]
If we further specialize to an algorithm where $v_t$ depends only on $z_t$ and
$u_{t+1}$ depends only on $y_t$, then $\langle \partial_s \u_t \rangle=0$ for
all $s \neq t-1$ and $\langle \partial_s \z_t \rangle=0$ for all $s \neq t$.
This yields the Gaussian AMP algorithm
\begin{align*}
\z_t&=\W^\top \u_t-\gamma \langle \partial_{t-1}\u_t \rangle\v_{t-1}\\
\v_t&=v_t(\z_t,\F)\\
\y_t&=\W \v_t-\langle \partial_t \v_t \rangle \u_t\\
\u_{t+1}&=u_{t+1}(\y_t,\E)
\end{align*}
as studied in \cite[Section 3]{bayati2011dynamics}. Furthermore,
the state evolution is such that $\z_t$ has the
empirical limit $\N(0,\omega_{tt}^\infty)$ where
$\omega_{tt}^\infty=\lim_{m,n \to \infty} \gamma \cdot \langle \u_t^2 \rangle$,
and $\y_t$ has the empirical limit $\N(0,\sigma_{tt}^\infty)$ where
$\sigma_{tt}^\infty=\lim_{m,n \to \infty} \langle \v_t^2
\rangle$.

Note that outside of this Gaussian setting, in general we do not have
the identities $\bSigma_t=\bGamma_t$ and $\bOmega_t=\gamma \bDelta_t$ even when
$\W$ is normalized such that $\kappa_2=1$.
\end{remark}

\subsection{Main result}

We impose the following assumptions on the model
(\ref{eq:rectmodel}--\ref{eq:lambdarect}) and the AMP
iterates (\ref{eq:AMPrectz}--\ref{eq:AMPrectu}). Again, we do not
require here $v_t(\cdot)$ and $u_{t+1}(\cdot)$ to be Lipschitz.

\begin{assumption}\label{assump:rect}
\hspace{2em}
\begin{enumerate}[(a)]
\item $m,n \to \infty$ such that $m/n=\gamma \in (0,\infty)$ is a fixed
constant.
\item $\O \in \RR^{m \times m}$ and $\Q \in \RR^{n \times n}$ are independent
random and Haar-uniform orthogonal matrices.
\item $\blambda \in \RR^{\min(m,n)}$ is independent of $\O,\Q$ and satisfies
$\blambda \toW \Lambda$ almost surely as $m,n \to \infty$, for a random
variable $\Lambda$ having finite moments of all orders.
\item $\u_1 \in \RR^m$, $\E \in \RR^{m \times k}$, and $\F \in \RR^{n \times
\ell}$ are independent of $\O,\Q$
and satisfy $(\u_1,\E) \toW (U_1,E)$ and $\F \toW F$ almost surely
as $m,n \to \infty$, where
$(U_1,E) \equiv (U_1,E_1,\ldots,E_k)$ and $F \equiv (F_1,\ldots,F_\ell)$
are random vectors having finite moments of all orders.
\item Each function $v_t:\RR^{t+\ell} \to \RR$ and
$u_{t+1}:\RR^{t+k} \to \RR$ satisfies (\ref{eq:growth}) for some $C>0$ and $p
\geq 1$. Writing their arguments
as $(z,f)$ and $(y,e)$ where $z,y \in \RR^t$, $f \in \RR^\ell$, and $e \in
\RR^k$, $v_t$ is weakly differentiable in $z$ and continuous in $f$, and
$u_{t+1}$ is weakly differentiable in $y$ and continuous in $e$. For each
$s=1,\ldots,t$, $\partial_s v_t$ and $\partial_s u_{t+1}$ also
satisfy (\ref{eq:growth}) for some $C>0$ and $p \geq 1$, where
$\partial_s v_t(z,f)$ is continuous at Lebesgue-a.e.\ $z
\in \RR^t$ for every $f \in \RR^\ell$, and
$\partial_s u_{t+1}(y,e)$ is continuous at Lebesgue-a.e.\ $y \in \RR^t$ for
every $e \in \RR^k$.
\item $\Var[\Lambda]>0$ and $\EE[U_1^2]>0$. Letting $(Z_1,\ldots,Z_t) \sim
\N(0,\bOmega_t^\infty)$ be independent of $F$, there do not exist constants
$\alpha_1,\ldots,\alpha_t,\beta_1,\ldots,\beta_{t-1}$ for which
\[v_t(Z_1,\ldots,Z_t,F)=\sum_{s=1}^t \alpha_s Z_s+\sum_{s=1}^{t-1}
\beta_s v_s(Z_1,\ldots,Z_s,F)\]
with probability 1 over $(F,Z_1,\ldots,Z_t)$. Letting
$(Y_1,\ldots,Y_t) \sim \N(0,\bSigma_t^\infty)$ be independent of $(U_1,E)$,
there do not exist constants
$\alpha_1,\ldots,\alpha_t,\beta_1,\ldots,\beta_t$ for which
\[u_{t+1}(Y_1,\ldots,Y_t,E)=\sum_{s=1}^t \alpha_s Y_s+\beta_1 U_1
+\sum_{s=2}^t \beta_s u_s(Y_1,\ldots,Y_{s-1},E)\]
with probability 1 over $(U_1,E,Y_1,\ldots,Y_t)$.
\end{enumerate}
\end{assumption}

As in the symmetric square setting, we clarify that Theorem \ref{thm:rect} below
establishes the existence of $\bOmega_t^\infty$ when condition (f) holds for
$u_1,\ldots,u_t$ and $v_1,\ldots,v_{t-1}$, and this limit $\bOmega_t^\infty$
then defines condition (f) for $v_t$. Similarly, the theorem establishes
the existence of $\bSigma_t^\infty$ when condition (f) holds for
$u_1,\ldots,u_t$ and $v_1,\ldots,v_t$, and this limit $\bSigma_t^\infty$
then defines the condition for $u_{t+1}$. This condition (f) is a non-degeneracy
assumption that will hold as long as $u_{t+1}(\cdot)$ and $v_t(\cdot)$ depend
non-linearly on $y_t$ and $z_t$, respectively.

\begin{theorem}\label{thm:rect}
Under Assumption \ref{assump:rect}, for each fixed $t \geq 1$, almost surely as
$n \to \infty$: $\bSigma_t \to \bSigma_t^\infty$ and
$\bOmega_t \to \bOmega_t^\infty$ for some deterministic non-singular
matrices $\bSigma_t^\infty$ and $\bOmega_t^\infty$. Also,
\begin{align*}
(\u_1,\ldots,\u_{t+1},\y_1,\ldots,\y_t,\E) &\toW
(U_1,\ldots,U_{t+1},Y_1,\ldots,Y_t,E)\\
(\v_1,\ldots,\v_t,\z_1,\ldots,\z_t,\F) &\toW
(V_1,\ldots,V_t,Z_1,\ldots,Z_t,F)
\end{align*}
where $(Y_1,\ldots,Y_t) \sim \N(0,\bSigma_t^\infty)$ is independent of
$(U_1,E)$; $(Z_1,\ldots,Z_t) \sim \N(0,\bOmega_t^\infty)$ is independent of $F$;
$U_s=u_s(Z_1,\ldots,Z_{s-1},E)$ for each $s=2,\ldots,t+1$;
and $V_s=v_s(Z_1,\ldots,Z_s,F)$ for each $s=1,\ldots,t$.
\end{theorem}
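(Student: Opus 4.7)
The plan is to prove Theorem \ref{thm:rect} by a joint induction on $t$ that alternates between analyzing the pair $(\z_t,\v_t)$ and the pair $(\y_t,\u_{t+1})$, using Bolthausen's conditioning technique adapted to the bi-rotationally invariant setting in the spirit of \cite{rangan2019vector,takeuchi2017rigorous}. At each step, let $\mathcal{F}$ denote the $\sigma$-algebra generated by all iterates constructed so far, together with $\E,\F,\blambda$. Since $\W=\O^\top\bLambda\Q$ with $\O,\Q$ independent Haar-uniform, the conditional law of $(\O,\Q)$ given $\mathcal{F}$ is uniform on the submanifold of orthogonal pairs satisfying the linear constraints recorded by the past equations $\y_s=\W\v_s-\sum_{s'}a_{s,s'}\u_{s'}$ and $\z_s=\W^\top\u_s-\sum_{s'}b_{s,s'}\v_{s'}$. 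The standard representation of this conditional distribution lets one decompose, say, $\W^\top\u_t=P+R$, where $P$ is a deterministic linear combination of past $\v_s$'s read off from the constraints and $R$ is a ``fresh'' Gaussian component supported on the orthogonal complement of $\v_1,\ldots,\v_{t-1}$, up to $o(1)$ errors in appropriate empirical norms.

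The central work is then to identify $P$ with the Onsager correction $\sum_{s<t}b_{t,s}\v_s$ from (\ref{eq:ABrect}) and the covariance of $R$ with $\bOmega_t^\infty$ from (\ref{eq:SigmaOmegarect}). Unrolling the past constraints iteratively expresses these quantities as sums of empirical even moments $m_{2k}$ of $\blambda$ weighted by alternating products of entries of $\bPhi_t,\bPsi_t$ acting on $\bDelta_t,\bGamma_t$. Substituting the moment-cumulant relation (\ref{eq:momentcumulantrect}) for each $m_{2k}$ and reorganizing the resulting sum over $\NC'(2k)$ according to the parity of block minima converts these empirical moments into the rectangular free cumulants $\kappa_{2k}$ and reproduces the power-series identity (\ref{eq:rectseries}) at the level of the matrix polynomials $\A_t,\B_t,\bSigma_t,\bOmega_t$. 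This combinatorial unfolding of moments into cumulants on the non-crossing partition lattice, foreshadowed in the remark at the end of Section \ref{sec:cumulantsrect}, is the step at which the rectangular free cumulants enter, and I expect it to be the main technical obstacle. The parallel calculation with the roles of $(\O,\u)$ and $(\Q,\v)$ swapped handles $\W\v_t$ and produces the corresponding identification of $\A_t$ and $\bSigma_t$.

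Once the Onsager form is identified, the conditional distribution of $\y_t$ given $\mathcal{F}$ reduces, modulo vanishing errors, to a deterministic vector plus an independent centered Gaussian with the correct covariance, so the inductive hypothesis upgrades immediately to include $\y_t$ and then, via the continuity part of Assumption \ref{assump:rect}(e), the next denoised iterate $\u_{t+1}$. The non-degeneracy Assumption \ref{assump:rect}(f) is used to guarantee that the residual of $\v_t$ after projection onto $\v_1,\ldots,\v_{t-1}$ stays macroscopically large, which in turn keeps $\bSigma_t^\infty,\bOmega_t^\infty$ non-singular and allows the conditioning step to be iterated; without (f) one would perturb the denoisers and take a limit as in Corollary \ref{cor:degenerate}. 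Finally, the promotion from $W_2$ to $W_p$ at every order follows from the polynomial-growth condition in Assumption \ref{assump:rect}(e), together with uniform-in-$(m,n)$ control of polynomial moments of the iterates inherited from $\blambda\toW\Lambda$ and $(\u_1,\E,\F)\toW(U_1,E,F)$; the Wasserstein calculus of Section \ref{sec:wasserstein} then reduces the verification of $\toW$ to bounded Lipschitz test functions (handled by the conditioning argument above) plus the single polynomial $\|\cdot\|^p$.
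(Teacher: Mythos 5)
Your high-level strategy---conditioning on past iterates via Proposition~\ref{prop:orthogconditioning}, inducting on $t$ by alternating the $\O$- and $\Q$-sides, and converting empirical moments to rectangular free cumulants through the lattice $\NC'(2k)$---is the same route the paper takes. But two substantive points are either wrong or missing.

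First, the bias term $P$ in the decomposition of $\W^\top\u_t$ is \emph{not} a linear combination of $\v_1,\ldots,\v_{t-1}$ alone, and it is not equal to the Onsager correction. Conditioning on past iterates records both constraints $\p_s=\Q\v_s$ and $\bLambda^\top\r_s=\Q(\z_s+\sum_{s'}b_{ss'}\v_{s'})$, so the ``parallel'' piece lies in the span of $(\v_1,\ldots,\v_{t-1},\z_1,\ldots,\z_{t-1})$. Only the $\v$-component is cancelled by the Onsager subtraction; the $\z$-component survives and is precisely what makes $Z_t$ correlated with $Z_1,\ldots,Z_{t-1}$ (cf.\ the form $Z_{t+1}=(Z_1,\ldots,Z_t)(\bOmega_t^\infty)^{-1}\bomega_t^\infty+S_\perp$ in the paper's Step~3). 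If $P$ were a $\v$-combination cancelled entirely by Onsager, then $\z_t$ would be the fresh component $R$, independent of prior $\z_s$'s, which would force $\bOmega_t^\infty$ to be diagonal. That contradicts the state evolution even in the Marcenko--Pastur case, where $\bOmega_t^\infty=\gamma\bDelta_t^\infty$ has nontrivial off-diagonal entries.

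Second, the ``unrolling and substituting the moment-cumulant relation'' step is much more structured than your description suggests, and without the extra structure the induction does not close. The parallel/fresh split produces $(\X^\top\X)^{-1}$ where the Gram matrix involves entries of the form $n^{-1}\u_s^\top(\W\W^\top)^k\u_{s'}$, $n^{-1}\v_s^\top\W^\top(\W\W^\top)^k\u_{s'}$, etc.\ for $k=1,2$; these in turn depend recursively on higher $k$ from earlier iterations, down to $n^{-1}\u_1^\top(\W\W^\top)^k\u_1$ whose limit is a pure moment. The paper therefore strengthens the induction hypothesis to track all these higher-order overlaps, packaging them as matrices $\H_t^{(2k)},\I_t^{(2k+1)},\J_t^{(2k+1)},\L_t^{(2k)}$ built from ``partial moment coefficients'' $c_{2k,j},\bar c_{2k,j},\ldots$ (sums over $\NC'(2k+2j,2j)$ rather than all of $\NC'$), and handles $(\X^\top\X)^{-1}$ indirectly by guessing $\w$ and verifying $(\X^\top\X)\w=\v$ via the identities of Lemma~\ref{lemma:upsilonrect} involving $\bUpsilon_t$ and $\T_t$. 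You correctly flag the combinatorial conversion as the main obstacle, but you do not supply the mechanism---namely, the strengthened induction on all moment orders and the partial-moment identities---that makes the conversion actually go through. As written, after one unrolling step you are left with higher-order overlaps whose limits you have not yet established.
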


The limits $\bSigma_t^\infty$ and $\bOmega_t^\infty$ are
given by replacing $\langle \u_s\u_{s'} \rangle$, $\langle \v_s\v_{s'} \rangle$,
$\langle \partial_{s'} \u_s \rangle$, $\langle \partial_{s'} \v_s \rangle$,
and $\kappa_{2k}$
in the definitions (\ref{eq:DeltaPhirect}--\ref{eq:GammaPsirect}) and
(\ref{eq:SigmaOmegarect}) with
$\EE[U_sU_{s'}]$, $\EE[V_sV_{s'}]$,
$\EE[\partial_{s'} u_s(Y_1,\ldots,Y_{s-1},E)]$,
$\EE[\partial_{s'} v_s(Z_1,\ldots,Z_s,F)]$, and $\kappa_{2k}^\infty$.

The proof of this result is provided in Appendix
\ref{appendix:rectproof}.
As in Corollary \ref{cor:degenerate}, we may remove the non-degeneracy
condition in Assumption \ref{assump:rect}(f) if $v_t$ and $u_{t+1}$ are
continuously-differentiable and Lipschitz.
This is stated in the following corollary. The proof
follows the same argument as that of Corollary \ref{cor:degenerate}, and we omit
this for brevity.

\begin{corollary}\label{cor:degeneraterect}
Suppose Assumption \ref{assump:rect}(a--d) holds, $\limsup_{n \to \infty}
\|\blambda\|_\infty<\infty$, each function $v_t:\RR^{t+\ell} \to \RR$ and
$u_{t+1}:\RR^{t+k} \to \RR$ is continuously-differentiable, and
\[|v_t(z,f)-v_t(z',f)| \leq C\|z-z'\|, \qquad
|u_{t+1}(y,e)-u_{t+1}(y',e)| \leq C\|y-y'\|\]
for a constant $C>0$ and all $z,z',y,y' \in \RR^t$, $e \in \RR^k$, and $f \in
\RR^\ell$. Then for each fixed $t \geq 1$, almost surely as $n \to \infty$:
$\bSigma_t \to \bSigma_t^\infty$ and $\bOmega_t \to \bOmega_t^\infty$ for some
deterministic (possibly singular) matrices $\bSigma_t^\infty$ and
$\bOmega_t^\infty$, and
\begin{align*}
(\u_1,\ldots,\u_{t+1},\y_1,\ldots,\y_t,\E) &\toWtwo
(U_1,\ldots,U_{t+1},Y_1,\ldots,Y_t,E)\\
(\v_1,\ldots,\v_t,\z_1,\ldots,\z_t,\F) &\toWtwo
(V_1,\ldots,V_t,Z_1,\ldots,Z_t,F)
\end{align*}
where these limits are as defined in Theorem \ref{thm:rect}.
\end{corollary}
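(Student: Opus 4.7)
The plan is to adapt the perturbation argument used in the proof of Corollary \ref{cor:degenerate} to the rectangular coupled iteration. The general idea is that the only role of Assumption \ref{assump:rect}(f) in Theorem \ref{thm:rect} is to guarantee non-singularity of the state-evolution covariances, which is essential for Bolthausen's conditioning technique. When the denoisers are Lipschitz we can approximate any (possibly degenerate) sequence by a perturbed sequence that is non-degenerate, apply Theorem \ref{thm:rect} to the perturbed sequence, and then pass to the limit using a stability estimate obtained from Lipschitz continuity and the bounded operator norm of $\W$.

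Concretely, I would fix smooth bounded nonlinear functions $\zeta_t,\xi_{t+1}:\RR \to \RR$ (for instance $\zeta_t(z)=\xi_{t+1}(y)=\tanh(\cdot)$ of the last coordinate) and define perturbed denoisers
\[v_t^{\eps}(z,f)=v_t(z,f)+\eps\,\zeta_t(z_t),\qquad u_{t+1}^{\eps}(y,e)=u_{t+1}(y,e)+\eps\,\xi_{t+1}(y_t).\]
These are continuously differentiable and Lipschitz, so they satisfy Assumption \ref{assump:rect}(a--e). One checks inductively that the nonlinear perturbation $\eps\,\zeta_t(Z_t)$ cannot be written as an affine combination of $Z_1,\ldots,Z_t$ and the previous $v_s^{\eps}(Z_1,\ldots,Z_s,F)$ for Lebesgue-a.e. $\eps>0$ (and analogously for the $u^{\eps}_{t+1}$ side), so that the non-degeneracy Assumption \ref{assump:rect}(f) holds for all $\eps$ outside a countable exceptional set. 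For such $\eps$, Theorem \ref{thm:rect} produces non-singular limit covariances $\bSigma_t^{\eps,\infty}$, $\bOmega_t^{\eps,\infty}$ and a joint $\toW$ limit for the perturbed iterates $\u_t^{\eps},\v_t^{\eps},\y_t^{\eps},\z_t^{\eps}$.

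Next I would establish a deterministic stability estimate: using $\limsup_n\|\W\|\leq\limsup_n\|\blambda\|_\infty<\infty$, the Lipschitz bounds on $v_t$ and $u_{t+1}$, uniform bounds on the empirical second moments $\langle\u_t^2\rangle,\langle\v_t^2\rangle$, and uniform bounds on the debiasing coefficients (which are polynomial functions of these empirical quantities and of $\kappa_{2k}$ for $k\le 2t$, hence uniformly bounded along the sequence of $n$), there exists a constant $C_t$ independent of $n$ and $\eps$ such that
\[\tfrac{1}{\sqrt{m}}\|\u_s^{\eps}-\u_s\|+\tfrac{1}{\sqrt{n}}\|\v_s^{\eps}-\v_s\|+\tfrac{1}{\sqrt{m}}\|\y_s^{\eps}-\y_s\|+\tfrac{1}{\sqrt{n}}\|\z_s^{\eps}-\z_s\|\leq C_t\,\eps\]
for all $s\leq t+1$. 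This is proved by induction on $s$, propagating the perturbation through (\ref{eq:AMPrectz}--\ref{eq:AMPrectu}) using the Lipschitz bounds and the operator norm of $\W$. A parallel induction on $\eps$ shows that $\bSigma_t^{\eps,\infty}$ and $\bOmega_t^{\eps,\infty}$ are jointly continuous in $\eps$ at $\eps=0$, so they admit limits $\bSigma_t^\infty,\bOmega_t^\infty$ (allowed to be singular). Combining the stability estimate with Theorem \ref{thm:rect} applied at each admissible $\eps$ and letting $\eps\to 0$ along an admissible sequence then yields the $\toWtwo$ convergence claimed.

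The main obstacle is carrying out the inductive stability bound and the induction for continuity of $\bSigma_t^{\eps,\infty},\bOmega_t^{\eps,\infty}$ at $\eps=0$ simultaneously through the two interleaved iterations in the rectangular setting: at each step one must control the perturbation of $\v_t$, $\y_t$, $\u_{t+1}$, $\z_{t+1}$ in turn, and the Lipschitz constants depend on $\bPhi_t,\bPsi_t,\bDelta_t,\bGamma_t$ whose boundedness must itself be checked uniformly in $\eps$ and $n$. The only delicate point, which also appears in the symmetric case, is the interchange of $\lim_{n\to\infty}$ and $\lim_{\eps\to 0}$; this is legitimized precisely by the uniform-in-$n$ deterministic stability estimate above, together with a standard truncation argument that upgrades weak convergence to $W_2$ convergence using the uniform bound on the second moments of the iterates.
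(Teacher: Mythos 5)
Your perturbation strategy differs from the paper's in a way that leaves a genuine gap. The paper proves Corollary~\ref{cor:degenerate} by perturbing \emph{three} things simultaneously: the eigenvalues (or singular values) $\blambda \mapsto \blambda + \eps\bgamma$, the initialization $\u_1 \mapsto \u_1 + \eps\w_1$, and each iterate $\u_{t+1} \mapsto u_{t+1}(\cdots) + \eps\w_{t+1}$, where $\bgamma$ and the $\w_j$ are fresh independent randomness. Your proposal perturbs only the denoisers, by adding a deterministic nonlinear function $\eps\zeta_t(z_t)$. This cannot restore Assumption~\ref{assump:rect}(f) in the two cases it is explicitly designed to drop: if $\Var[\Lambda]=0$ or $\EE[U_1^2]=0$, then Theorem~\ref{thm:rect} is simply inapplicable to your perturbed sequence no matter how the denoisers are modified, because the condition $\Var[\Lambda]>0$ and $\EE[U_1^2]>0$ is part of (f) and does not involve the denoisers at all. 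You must perturb $\blambda$ and $\u_1$ as the paper does.

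Even setting that aside, the choice of perturbation $\eps\zeta_t(z_t)$ is less robust than the paper's additive fresh Gaussian noise. Your claim that the perturbed condition (f) fails only for a countable set of $\eps$ is not substantiated and is delicate: the underlying state evolution covariance $\bOmega_t^{\eps,\infty}$ itself depends on $\eps$, so the random variables $Z_1^\eps,\ldots,Z_t^\eps$ against which linear independence is tested vary with $\eps$, and the argument is not a clean ``linear-in-$\eps$ constraint has at most one root'' statement. With the paper's construction, the perturbed denoiser is $u_{t+1}(\cdots) + \eps\w_{t+1}$ where $\w_{t+1}$ enters via augmented side information that is independent of everything else; the nondegeneracy then holds \emph{for every} $\eps>0$ with no exceptional set, because independent Gaussian noise is never in the linear span of the other variables.

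Finally, your deterministic stability bound $C_t\eps$ uniformly over $n$ and $\eps$ overclaims. The debiasing coefficients $a_{ts}^\eps,b_{ts}^\eps$ are empirical functionals of the iterates (e.g.\ $\langle\partial_s v_t(\g_1^\eps,\ldots,\g_t^\eps)\rangle$) whose dependence on $\eps$ for fixed $n$ is not Lipschitz under the stated assumptions --- you assume first but not second derivatives. The paper instead proves a double-limit statement, $\lim_{\eps\to 0}\limsup_{n\to\infty} n^{-1}\|\z_s^\eps-\z_s\|^2 = 0$, relying on $n\to\infty$ to bring the coefficients to their deterministic limits before letting $\eps\to 0$. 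Your argument should be rewritten in that double-limit form, propagating the bounds $\limsup_n n^{-1}\|\u_s\|^2<\infty$, $\limsup_n n^{-1}\|\v_s\|^2<\infty$ and the corresponding difference estimates through the four-step rectangular cycle (\ref{eq:AMPrectz})--(\ref{eq:AMPrectu}), exactly as in Appendix~\ref{appendix:degenerate}, rather than asserting a uniform-in-$n$ Lipschitz estimate.
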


\section{Proof ideas}\label{sec:proof}

We describe here the main ideas of the proofs. In the
setting of a symmetric square matrix $\W \in \RR^{n \times n}$, the basic
strategy is to write $\W=\O^\top \bLambda \O$, and to express the AMP iterations
(\ref{eq:AMPintro1}--\ref{eq:AMPintro2}) in an expanded form as
\begin{align}
\r_t&=\O\u_t\label{eq:AMPintroexp1}\\
\s_t&=\O^\top \bLambda \r_t\label{eq:AMPintroexp2}\\
\z_t&=\s_t-b_{t1}\u_1-\ldots-b_{tt}\u_t\label{eq:AMPintroexp3}\\
\u_{t+1}&=u_{t+1}(\z_1,\ldots,\z_t).\label{eq:AMPintroexp4}
\end{align}
All analyses are performed conditional on $\u_1$ and $\bLambda$, so that the
only randomness is in the Haar-orthogonal matrix $\O$. We apply Bolthausen's
conditioning technique \cite{bolthausen2014iterative}, analyzing sequentially
each iterate $\r_1,\s_1,\z_1,\u_2,\r_2,\ldots$ conditional on all preceding
iterates. This requires understanding the law of $\O$ conditional on events of
the form
\[\O\X=\Y,\]
which was shown in \cite{rangan2019vector,takeuchi2017rigorous} to be 
\begin{equation}\label{eq:Oconditioningintro}
\O|_{\O\X=\Y}
\overset{L}{=}\X(\X^\top \X)^{-1}\Y^\top+\proj_{\X^\perp} \tilde{\O}
\proj_{\Y^\perp}^\top.
\end{equation}
Here, $\proj_{\X^\perp}$ and $\proj_{\Y^\perp}$ are matrices with
orthonormal columns spanning the orthogonal complements of
the column spans of $\X$ and $\Y$, and $\tilde{\O}$ is an independent
Haar-orthogonal matrix. Applying
(\ref{eq:Oconditioningintro}) to the appearances of $\O$ in
(\ref{eq:AMPintroexp1}--\ref{eq:AMPintroexp2}), we will exhibit decompositions
\[\r_t=\r_\parallel+\r_\perp, \qquad \s_t=\s_\parallel+\s_\perp.\]
The vectors $\r_\perp$ and $\s_\perp$ arise from the second term of
(\ref{eq:Oconditioningintro}) and have empirical distributions that are
approximately Gaussian conditional on the preceding iterates. The vectors
$\r_\parallel$ and $\s_\parallel$ arise from the first term of
(\ref{eq:Oconditioningintro}), are deterministic conditional on the preceding
iterates, and represent biases respectively in the directions of
$(\r_1,\ldots,\r_{t-1},\bLambda\r_{t-1},\ldots,\bLambda\r_{t-1})$ and
$(\u_1,\ldots,\u_t,\z_1,\ldots,\z_{t-1})$. The Onsager correction
by $b_{t1}\u_1+\ldots+b_{tt}\u_t$ in (\ref{eq:AMPintroexp3}) is defined to
exactly cancel the component of this bias $\s_\parallel$ in $(\u_1,\ldots,\u_t)$, so that $(\z_1,\ldots,\z_t)$ has
an approximate joint Gaussian law. When the spectrum of $\W$ converges to
Wigner's semicircle law,
the forms of $\r_\parallel$ and $\s_\parallel$ and variances of
$\r_\perp$ and $\s_\perp$ are more straightforward to track across iterations,
and this produces a slightly different proof of the AMP analyses in
\cite{bayati2011dynamics,bolthausen2014iterative}.

When the spectrum of $\W$ does not converge to the
semicircle law, two difficulties arise in carrying out this conditional
analysis. First, the forms of $\r_\parallel,\r_\perp,\s_\parallel,\s_\perp$
in iteration $T$ will depend on
\[n^{-1} \u_s^\top\W^k\u_t \equiv n^{-1} \r_s^\top \bLambda^k\r_t
\qquad \text{ for } k=1,2 \text{ and } s,t \leq T.\]
These values will in turn depend on
\[n^{-1}\u_s^\top \W^k \u_t \equiv n^{-1}\r_s^\top \bLambda^k\r_t
\qquad \text{ for } k=1,\ldots,4 \text{ and } s,t \leq T-1,\]
which will in turn depend on
\[n^{-1}\u_s^\top \W^k \u_t \equiv n^{-1}\r_s^\top \bLambda^k\r_t
\qquad \text{ for } k=1,\ldots,6 \text{ and } s,t \leq T-2,\]
and so forth. The final dependence is
on $n^{-1}\u_1^\top \W^k \u_1$ for $k=1,\ldots,2T$, whose large-$n$
limits are given by
the first $2T$ moments of the limit spectral distribution of $\W$, because
the initialization $\u_1$ is independent of $\W$ which is rotationally invariant
in law. The free cumulants of $\W$ that appear in the final forms
of the Onsager correction and state evolution emerge by
tracking these dependences. To provide an inductive argument that can describe
these dependences for arbitrary iterations, our proof establishes a
precise form of
\[\lim_{n \to \infty} n^{-1} \u_s^\top \W^k \u_t\]
for every fixed moment $k \geq 0$ and all fixed iterates $s,t \geq 1$.
These forms depend on combinatorial coefficients that we call ``partial
moment coefficients'', defined by summing over certain subsets of the
non-crossing partition lattice, and which interpolate between the moments and
free cumulants of the spectral distribution of $\W$.
We define these coefficients in Appendix \ref{sec:partialmoments}.

A second technical difficulty which arises is that for the resulting
conditioning events $\O\X=\Y$, the form of the matrix $\X^\top \X$ in
(\ref{eq:Oconditioningintro}) becomes complicated, depending on series of
matrices with these partial moment coefficients, and $(\X^\top
\X)^{-1}$ does not admit a tractable description. Instead, we handle
matrix-vector products $(\X^\top \X)^{-1}\v$ arising in the computation by
``guessing'' the form $\w$ for this product, and then verifying that $(\X^\top
\X)\w=\v$. This type of verification is contained in Lemma \ref{lemma:upsilon},
and relies on combinatorial identities for these partial moment coefficients.

The proof ideas in the rectangular setting are similar: We write $\W=\O^\top
\bLambda \Q$ and express (\ref{eq:AMPintrorect1}--\ref{eq:AMPintrorect4}) in an
expanded form analogous to (\ref{eq:AMPintroexp1}--\ref{eq:AMPintroexp4}) above.
A key component of the proof is then to identify the large-$(m,n)$ limits of the
four quantities
\[m^{-1}\u_s^\top (\W\W^\top)^k\u_t,
\quad m^{-1}\v_s^\top \W^\top(\W\W^\top)^k\u_t,\]
\[n^{-1}\u_s^\top \W(\W^\top\W)^k\v_t,
\quad n^{-1}\v_s^\top (\W^\top\W)^k\v_t\]
for all fixed moments $k \geq 0$ and iterates $s,t \geq 1$. These will depend
on certain partial moment coefficients that interpolate between the moments and
rectangular free cumulants of the limit singular value distribution
of $\W$, and which are defined by summing over subsets of the lattice of
non-crossing partitions of sets with even cardinality. These coefficients are
defined in Appendix \ref{sec:partialmomentrect}, and the corresponding
identities involving $(\X^\top\X)^{-1}$ are contained in Lemma
\ref{lemma:upsilonrect}.

For the analyses of the Bayes-AMP algorithms for PCA in Section \ref{sec:PCA},
part (a) of Theorems \ref{thm:PCA} and \ref{thm:PCArect} are straightforward
consequences of the results for the general AMP algorithms. Part (b) of these
theorems require an analysis of the state evolutions for the single-iterate
posterior mean denoisers, which we carry out in Appendix
\ref{subsec:PCApartb}. This analysis applies a contractive
mapping argument to show that for sufficiently large signal strengths,
the matrices $\bDelta_t$, $\bSigma_t$, $\bGamma_t$, and $\bOmega_t$ all converge
as $t \to \infty$ in a space of ``infinite matrices'' equipped with a
weighted $\ell_\infty$ metric.

\appendix

\section{Proof for symmetric square matrices}\label{appendix:squareproof}

In this appendix, we prove Theorem \ref{thm:main}.
Recalling $\W=\O^\top \bLambda \O$ where $\bLambda=\diag(\blambda)$,
we may write the iterations (\ref{eq:AMPz}--\ref{eq:AMPu}) equivalently as
\begin{align}
\r_t&=\O \u_t\label{eq:AMPextr}\\
\s_t&=\O^\top \bLambda \r_t\label{eq:AMPexts}\\
\z_t&=\s_t-b_{t1}\u_1-\ldots-b_{tt}\u_t\label{eq:AMPextz}\\
\u_{t+1}&=u_{t+1}(\z_1,\ldots,\z_t,\E)\label{eq:AMPextu}
\end{align}

As discussed in Section \ref{sec:proof}, we will wish to
identify the almost-sure limits
\begin{equation}\label{eq:keylimits}
\lim_{n \to \infty} n^{-1}\r_s^\top \bLambda^k \r_{s'} \equiv 
\lim_{n \to \infty} n^{-1}\u_s^\top \W^k\u_{s'}
\end{equation}
for each fixed pair $s,s' \geq 1$ and fixed order $k \geq 0$.
In Section \ref{sec:partialmoments} below, we first
define certain ``partial moment'' coefficients $c_{k,j}$ corresponding to the
free cumulants $\{\kappa_k\}_{k \geq 1}$ of $\blambda$. We then define,
for each iteration $t \geq 1$ and each order $k \geq 0$, the $t \times t$ matrix
\begin{equation}\label{eq:Lk}
\L_t^{(k)}=\sum_{j=0}^\infty c_{k,j} \bTheta_t^{(j)}
\end{equation}
where $\bTheta_t^{(j)}$ is defined by (\ref{eq:Theta}). As in the definitions
(\ref{eq:BSigma}), this series is in fact finite because $\bTheta_t^{(j)}=0$
for $j \geq 2t-1$. The limits (\ref{eq:keylimits}) will be identified as the
entries of $\L_t^{(k,\infty)}=\lim_{n \to \infty} \L_t^{(k)}$.

\subsection{Coefficients for ``partial moments''}\label{sec:partialmoments}

Let $\{m_k\}_{k \geq 1}$ and $\{\kappa_k\}_{k \geq 1}$ be the moments and
free cumulants of $\blambda$, as defined in
Section \ref{sec:BSigma}. For notational convenience, we identify
\begin{equation}\label{eq:kappa0}
\kappa_0=1.
\end{equation}
We then define a doubly-indexed sequence of coefficients
$(c_{k,j})_{k,j \geq 0}$ by
\begin{equation}\label{eq:ckj}
c_{0,0}=1, \quad c_{0,j}=0 \text{ for } j \geq 1,
\quad c_{k,j}=\sum_{m=0}^{j+1} c_{k-1,m}\,\kappa_{j+1-m} \text{ for }
k \geq 1.
\end{equation}
These coefficients admit the following combinatorial interpretation: Let
\[\NC(k,\ell)=\Big\{\pi \in \NC(k):S \cap \{1,\ldots,\ell\} \neq S
\text{ for all } S \in \pi\Big\}.\]
This is the subset of non-crossing partitions $\pi \in \NC(k)$
where no set $S \in \pi$ is contained in $\{1,\ldots,\ell\}$.
For $\ell=0$, $\NC(k,0)=\NC(k)$ is the set of all non-crossing partitions.
The following lemma shows that
$c_{k,j}$ corresponds to the part of the sum (\ref{eq:momentcumulant})
that enumerates only over the partitions
belonging to the subset $\NC(k+j,j)$ of $\NC(k+j)$.

\begin{lemma}\label{lemma:ckj}
For each $k \geq 1$,
\begin{equation}\label{eq:ckjinterp}
c_{k,j}=\sum_{\pi \in \NC(k+j,j)} \prod_{S \in \pi} \kappa_{|S|}.
\end{equation}
In particular, $c_{1,j}=\kappa_{j+1}$ for each $j \geq 0$, and $c_{k,0}=m_k$
for each $k \geq 1$.
\end{lemma}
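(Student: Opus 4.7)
The plan is to prove the combinatorial identity by induction on $k$, matching the recursive definition of $c_{k,j}$ to a structural decomposition of $\NC(k+j,j)$. The base case $k=0$ is immediate: $\NC(j,j)$ is empty for $j\geq 1$ (every block of any partition of $\{1,\ldots,j\}$ is contained in $\{1,\ldots,j\}$), while $\NC(0,0)$ contains only the empty partition, whose empty product equals $1=c_{0,0}$.

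For the inductive step, I would set up a weight-preserving bijection between $\NC(k+j,j)$ and the disjoint union $\bigsqcup_{m=0}^{j+1}\NC(k-1+m,m)$, where a pair $(m,\pi')$ carries an extra weight $\kappa_{j+1-m}$. Given $\pi\in\NC(k+j,j)$, let $S^*$ be the block containing $j+1$. If $S^*\not\subseteq\{1,\ldots,j+1\}$, then any other block $B$ satisfies $B\not\subseteq\{1,\ldots,j+1\}$ as well (it does not contain $j+1$ and cannot lie in $\{1,\ldots,j\}$), so $\pi\in\NC(k+j,j+1)$ itself, and I map it to $(j+1,\pi)$ with weight $\kappa_0=1$. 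If instead $S^*\subseteq\{1,\ldots,j+1\}$, set $\ell=|S^*|$ and $m=j+1-\ell$, and map $\pi$ to $(m,\pi')$ where $\pi'$ is obtained by deleting $S^*$ and order-preservingly relabeling the remaining $k-1+m$ positions onto $\{1,\ldots,k-1+m\}$. The first $m$ relabeled positions correspond to $\{1,\ldots,j+1\}\setminus S^*$, and since any block of $\pi$ other than $S^*$ contained in $\{1,\ldots,j+1\}$ would actually lie in $\{1,\ldots,j\}$ (forbidden), one gets $\pi'\in\NC(k-1+m,m)$.

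The main obstacle, and the one essential combinatorial input, is pinning down the shape of $S^*$ in the second case: I claim $S^*$ must be the consecutive run $\{j-\ell+2,\ldots,j+1\}$. The proof is a crossing argument: if some $a\in(\min S^*,\,j+1)$ were missing from $S^*$, then $a$ lies in some other block $B$, and the forbidden-block condition forces $B$ to contain some $c>j+1$; the sequence $\min S^*<a<j+1<c$ then exhibits an alternating crossing of $B$ and $S^*$, contradicting $\pi\in\NC(k+j)$. This pins $S^*$ down uniquely given $\ell$, so the bijection is well-defined. The inverse map reinserts the block $\{j-\ell+2,\ldots,j+1\}$ into $\pi'$; non-crossingness of the result follows because any block of $\pi'$ containing positions both $\leq m$ and $>m$ can only enclose the inserted run from both sides, not interleave with it.

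Summing weights across the bijection and invoking the inductive hypothesis for $c_{k-1,m}$ yields $c_{k,j}=\sum_{m=0}^{j+1}c_{k-1,m}\,\kappa_{j+1-m}$, matching the defining recursion and closing the induction. The two ``in particular'' claims then follow easily: $c_{1,j}=\kappa_{j+1}$ because $\NC(j+1,j)$ contains only the single-block partition $\{\{1,\ldots,j+1\}\}$ (any smaller block would lie in $\{1,\ldots,j\}$ or would force a crossing), and $c_{k,0}=m_k$ because $\NC(k,0)=\NC(k)$ (the forbidden set is empty), reducing the sum to the standard moment--cumulant identity (\ref{eq:momentcumulant}).
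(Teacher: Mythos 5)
Your proof is correct and follows essentially the same strategy as the paper: induct on $k$ via the defining recursion, and decompose $\NC(k+j,j)$ according to the block $S^*$ containing $j+1$, showing that when $S^*\subseteq\{1,\ldots,j+1\}$ it must be the consecutive run $\{m+1,\ldots,j+1\}$ so that removal gives a bijection with $\NC(k-1+m,m)$. The only cosmetic differences are that you anchor the induction at $k=0$ rather than $k=1$, and you argue consecutivity of $S^*$ via an explicit four-term crossing $(\min S^*<a<j+1<c)$ whereas the paper argues that elements in a gap would be covered by blocks contained in $\{1,\ldots,j\}$; both are valid contradictions of the same hypotheses.
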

\begin{proof}
For $k=1$, the only non-zero term in the sum (\ref{eq:ckj})
corresponds to $m=0$. This gives $c_{1,j}=\kappa_{j+1}$.
The only partition of $\{1,\ldots,j+1\}$ where
no set belongs to $\{1,\ldots,j\}$ is the partition consisting of a single
set with all $j+1$ elements. Thus $\NC(j+1,j)$ consists of this single
partition, so the right side of (\ref{eq:ckjinterp}) is simply $\kappa_{j+1}$.
This verifies (\ref{eq:ckjinterp}) for $k=1$.

Suppose inductively that (\ref{eq:ckjinterp}) holds for $k-1$ (and all $j$).
Consider
\begin{equation}\label{eq:ckjdecomp}
c_{k,j}=c_{k-1,j+1}+\sum_{m=0}^j c_{k-1,m}\kappa_{j+1-m}.
\end{equation}
By this induction hypothesis, the first term is
\begin{equation}\label{eq:ck1j1}
c_{k-1,j+1}=\sum_{\pi \in \NC(k+j,j+1)} \prod_{S \in \pi} \kappa_{|S|}.
\end{equation}
To analyze the second term of (\ref{eq:ckjdecomp}), note that
if $\pi \in \NC(k+j,j)$ but $\pi \notin \NC(k+j,j+1)$, then there is some set
$S \in \pi$ containing $j+1$ and also belonging to $\{1,\ldots,j+1\}$.
This set $S \in \pi$ must consist of consecutive elements of
$\{1,\ldots,j+1\}$, because if there is a gap in the elements of $S$, then the
elements in this gap must form their own sets of $\pi$ as
$\pi$ is non-crossing, and this contradicts $\pi \in \NC(k+j,j)$. Thus
$S=\{m+1,\ldots,j+1\}$ for some $m \in \{0,\ldots,j\}$.
Removing $S$ from $\pi$ establishes a
bijection between such partitions $\pi$ and the
non-crossing partitions $\pi' \in \NC(k-1+m,m)$ of the $k-1+m$ remaining
elements, such that
no set of $\pi'$ is contained in $\{1,\ldots,m\}$. Summing over such partitions
$\pi'$ and applying the induction hypothesis, we have
\[c_{k-1,m}=\sum_{\pi' \in \NC(k-1+m,m)} \prod_{S' \in \pi'} \kappa_{|S'|}.\]
Then applying this bijection and including back $\{m+1,\ldots,j+1\}$
(of size $j+1-m$) into $\pi$,
\[c_{k-1,m}\kappa_{j+1-m}=\mathop{\sum_{\pi \in \NC(k+j,j) \setminus
\NC(k+j,j+1)}}_{\{m+1,\ldots,j+1\} \in \pi}\;\;\prod_{S \in \pi} \kappa_{|S|}.\]
Summing this over all possible values $m \in \{0,\ldots,j\}$ gives
\[\sum_{m=0}^j c_{k-1,m}\kappa_{j+1-m}=\sum_{\pi \in \NC(k+j,j) \setminus
\NC(k+j,j+1)}\;\;\prod_{S \in \pi} \kappa_{|S|},\]
and combining with (\ref{eq:ckjdecomp}) and
(\ref{eq:ck1j1}) yields (\ref{eq:ckjinterp}). This completes
the induction, establishing (\ref{eq:ckjinterp}) for all $k$.

Finally, the statement $c_{k,0}=m_k$ follows from specializing
(\ref{eq:ckjinterp}) to $j=0$, and applying
$\NC(k,0)=\NC(k)$ and the moment-cumulant
relations (\ref{eq:momentcumulant}).
\end{proof}

\subsection{Partial moment identities}

Recalling the definition of $\L_t^{(k)}$ in (\ref{eq:Lk}), 
we now establish several identities that are derived from the recursion for
$c_{k,j}$ in (\ref{eq:ckj}).

\begin{lemma}\label{lemma:Lidentities}
For every $t \geq 1$,
\begin{align}
\L_t^{(0)}&=\bDelta_t\label{eq:L0}\\
\L_t^{(1)}&=\bDelta_t\B_t+\bPhi_t\bSigma_t\nonumber\\
&=\B_t^\top \bDelta_t+\bSigma_t\bPhi_t^\top\label{eq:L1}\\
\L_t^{(2)}&=\B_t^\top \bDelta_t\B_t
+\B_t^\top\bPhi_t\bSigma_t+\bSigma_t\bPhi_t^\top \B_t+\bSigma_t\label{eq:L2}
\end{align}
\end{lemma}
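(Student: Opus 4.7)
My approach is to unfold every matrix appearing in the three identities as a linear combination of generic monomials $\bPhi_t^r \bDelta_t (\bPhi_t^s)^\top$ for $r,s \geq 0$, and then match scalar coefficients on each side. Using $\bTheta_t^{(j)} = \sum_{i=0}^j \bPhi_t^i \bDelta_t (\bPhi_t^{j-i})^\top$ and the substitution $(i,j-i) \mapsto (i,j')$, the key closed forms are
\[
\L_t^{(k)} = \sum_{i,j' \geq 0} c_{k,\,i+j'}\, \bPhi_t^i \bDelta_t (\bPhi_t^{j'})^\top, \quad
\bSigma_t = \sum_{i,j' \geq 0} \kappa_{i+j'+2}\, \bPhi_t^i \bDelta_t (\bPhi_t^{j'})^\top, \quad
\B_t^\top = \sum_{i \geq 0} \kappa_{i+1}\, \bPhi_t^i.
\]
All series terminate because $\bPhi_t$ is strictly lower triangular, so $\bPhi_t^t = 0$.

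Identity (\ref{eq:L0}) is immediate from $c_{0,0}=1$ and $c_{0,j}=0$ for $j\geq 1$. For (\ref{eq:L1}) I would split the doubly indexed sum for $\L_t^{(1)}$, whose coefficient is $\kappa_{i+j'+1}$, according to $i=0$ versus $i\geq 1$. The $i=0$ piece gives directly $\bDelta_t \B_t$, while shifting $i \mapsto i-1$ in the $i \geq 1$ piece produces $\bPhi_t \bSigma_t$. The alternative form $\B_t^\top \bDelta_t + \bSigma_t \bPhi_t^\top$ follows by the dual split on $j'=0$ versus $j'\geq 1$, or equivalently by transposition, since $\L_t^{(1)}$, $\bDelta_t$, and $\bSigma_t$ are all symmetric.

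For (\ref{eq:L2}) I would fix $r,s \geq 0$ and compare the coefficient of $\bPhi_t^r \bDelta_t (\bPhi_t^s)^\top$ on both sides. The LHS coefficient is $c_{2,r+s}$, which by the recursion (\ref{eq:ckj}) together with $\kappa_0=1$ expands to $\sum_{\alpha=1}^{r+s+2} \kappa_\alpha \kappa_{r+s+2-\alpha}$. Expanding the RHS term by term yields: $\kappa_{r+s+2}$ from $\bSigma_t$ (the $\alpha = r+s+2$ summand); $\kappa_{r+1}\kappa_{s+1}$ from $\B_t^\top \bDelta_t \B_t$ (the $\alpha = r+1$ summand); $\sum_{\alpha=1}^{r} \kappa_\alpha \kappa_{r+s+2-\alpha}$ from $\B_t^\top \bPhi_t \bSigma_t$ (empty if $r=0$); and $\sum_{\beta=1}^{s} \kappa_\beta \kappa_{r+s+2-\beta}$ from $\bSigma_t \bPhi_t^\top \B_t$ (empty if $s=0$). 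The substitution $\alpha = r+s+2-\beta$ converts this last sum into the range $\alpha \in \{r+2,\ldots,r+s+1\}$, whereupon the four $\alpha$-ranges are seen to partition $\{1,2,\ldots,r+s+2\}$ exactly once, matching the LHS.

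The only real obstacle is the combinatorial bookkeeping in (\ref{eq:L2}): one must verify that the four $\alpha$-ranges produced on the RHS are pairwise disjoint and jointly exhaustive, with no double-counting on the boundaries $\alpha=r+1$ and $\alpha=r+s+2$. Everything else reduces to series expansion and index relabeling, driven by the recursion (\ref{eq:ckj}) for $c_{k,j}$. The same scheme would extend, by induction on $k$, to analogous identities for higher $\L_t^{(k)}$, though only $k \leq 2$ is needed for the lemma.
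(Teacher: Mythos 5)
Your proof is correct and follows essentially the same route as the paper: expand all matrices in the monomial basis $\bPhi_t^r\bDelta_t(\bPhi_t^s)^\top$, use $c_{1,j}=\kappa_{j+1}$ and the recursion (\ref{eq:ckj}) to write $c_{2,r+s}$ as the convolution $\sum_{\alpha=1}^{r+s+2}\kappa_\alpha\kappa_{r+s+2-\alpha}$, and partition the index range into four blocks matching $\B_t^\top\bPhi_t\bSigma_t$, $\B_t^\top\bDelta_t\B_t$, $\bSigma_t\bPhi_t^\top\B_t$, and $\bSigma_t$. The paper's proof carries out this same coefficient split (there indexed by $m=\alpha-1$) and your partition-disjointness check, including the degenerate cases $r=0$ and $s=0$, covers the bookkeeping it handles implicitly.
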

\begin{proof}
For $k=0$, we have $c_{0,0}=1$ and $c_{0,j}=0$ for all $j \geq 1$. We also have
$\bTheta_t^{(0)}=\bDelta_t$. Hence (\ref{eq:L0}) follows from (\ref{eq:Lk}).

For $k=1$, we have $c_{1,j}=\kappa_{j+1}$ by Lemma \ref{lemma:ckj}. Then
\[\L_t^{(1)}=\sum_{j=0}^\infty \kappa_{j+1}\bTheta_t^{(j)}
=\sum_{j=0}^\infty \kappa_{j+1}\sum_{i=0}^j \bPhi_t^i \bDelta_t
(\bPhi_t^{j-i})^\top.\]
Note that all series throughout this proof are actually finite, so we may freely
exchange orders of summation.
Separating the terms that begin with $\bDelta_t$ from those that begin with
$\bPhi_t$,
\begin{align*}
\L_t^{(1)}&=\sum_{j=0}^\infty \kappa_{j+1}\bDelta_t (\bPhi_t^j)^\top
+\sum_{j=1}^\infty \kappa_{j+1}\sum_{i=1}^j \bPhi_t^i \bDelta_t
(\bPhi_t^{j-i})^\top\\
&=\bDelta_t\B_t+\bPhi_t\sum_{j=0}^\infty \kappa_{j+2}\sum_{i=0}^j \bPhi_t^i
\bDelta_t(\bPhi_t^{j-i})^\top=\bDelta_t\B_t+\bPhi_t\bSigma_t.
\end{align*}
Since $\L_t^{(1)}$, $\bDelta_t$, and $\bSigma_t$ are symmetric,
we must also have $\L_t^{(1)}=\B_t^\top
\bDelta_t+\bSigma_t\bPhi_t^\top$, and this yields both identities in
(\ref{eq:L1}).

For $k=2$, applying $c_{1,m}=\kappa_{m+1}$ and the recursion (\ref{eq:ckj}),
we have
\[\L_t^{(2)}=\sum_{j=0}^\infty c_{2,j}\bTheta_t^{(j)}
=\sum_{j=0}^\infty \left(\sum_{m=0}^{j+1} \kappa_{m+1}\kappa_{j+1-m}\right)
\cdot \left(\sum_{i=0}^j \bPhi_t^i \bDelta_t(\bPhi_t^{j-i})^\top\right).\]
Collecting terms by powers of $\bPhi_t$ and $\bPhi_t^\top$,
\begin{align*}
\L_t^{(2)}&=\sum_{i=0}^\infty \sum_{p=0}^\infty
\left(\sum_{m=0}^{i+p+1} \kappa_{m+1}\kappa_{i+p+1-m}\right)
\bPhi_t^i \bDelta_t (\bPhi_t^p)^\top.
\end{align*}
Substituting $q=i+p-m$, we may write
\[\sum_{m=0}^{i+p+1} \kappa_{m+1}\kappa_{i+p+1-m}
=\kappa_{i+1}\kappa_{p+1}+\kappa_{i+p+2}\kappa_0
+\sum_{m=0}^{i-1} \kappa_{m+1}\kappa_{i+p+1-m}
+\sum_{q=0}^{p-1} \kappa_{i+p+1-q}\kappa_{q+1}\]
where the last two sums may be empty if $i=0$ or $p=0$. Recalling the notation
$\kappa_0=1$ from (\ref{eq:kappa0}), and identifying
\begin{align*}
\B_t^\top \bDelta_t\B_t&=\left(\sum_{i=0}^\infty \kappa_{i+1}\bPhi_t^i\right)
\bDelta_t\left(\sum_{p=0}^\infty \kappa_{p+1} (\bPhi_t^p)^\top\right)
=\sum_{i=0}^\infty \sum_{p=0}^\infty
\kappa_{i+1}\kappa_{p+1} \bPhi_t^i \bDelta_t (\bPhi_t^p)^\top\\
\bSigma_t&=\sum_{j=0}^\infty \kappa_{j+2}\sum_{i=0}^j \bPhi_t^i \bDelta_t
(\bPhi_t^{j-i})^\top=\sum_{i=0}^\infty \sum_{p=0}^\infty
\kappa_{i+p+2}\kappa_0 \bPhi_t^i \bDelta_t (\bPhi_t^p)^\top\\
\B_t^\top \bPhi_t\bSigma_t&=
\left(\sum_{m=0}^\infty \kappa_{m+1}\bPhi_t^m\right)\bPhi_t
\left(\sum_{j=0}^\infty \kappa_{j+2}\sum_{p=0}^j
\bPhi_t^{j-p}\bDelta_t(\bPhi_t^p)^\top\right)\\
&=\sum_{i=1}^\infty \sum_{p=0}^\infty
\left(\sum_{m=0}^{i-1} \kappa_{m+1}\kappa_{i+p+1-m}\right)
\bPhi_t^i \bDelta_t (\bPhi_t^p)^\top\\
\bSigma_t\bPhi_t^\top \B_t&=\left(\sum_{j=0}^\infty \kappa_{j+2}
\sum_{i=0}^j \bPhi_t^i \bDelta_t (\bPhi_t^{j-i})^\top\right)
\bPhi_t^\top \left(\sum_{q=0}^\infty \kappa_{q+1} (\bPhi_t^q)^\top\right)\\
&=\sum_{i=0}^\infty \sum_{p=1}^\infty
\left(\sum_{q=0}^{p-1} \kappa_{i+p+1-q}\kappa_{q+1}\right)
\bPhi_t^i \bDelta_t (\bPhi_t^p)^\top,
\end{align*}
this yields (\ref{eq:L2}).
\end{proof}

\begin{lemma}\label{lemma:upsilon}
Define
\begin{equation}\label{eq:upsilondef}
\bUpsilon_t=\begin{pmatrix} \bDelta_t & \bDelta_t\B_t+\bPhi_t\bSigma_t \\
\bPhi_t^\top & \bPhi_t^\top \B_t+\Id \end{pmatrix}.
\end{equation}
For every $t \geq 1$ and $k \geq 0$,
\begin{align}
\begin{pmatrix} \L_t^{(k)} & \L_t^{(k+1)} \end{pmatrix}
&=\begin{pmatrix}
\sum_{j=0}^\infty c_{k,j} \bPhi_t^j &
\sum_{j=0}^\infty c_{k,j+1} \bTheta_t^{(j)}
\end{pmatrix}\bUpsilon_t\label{eq:upsiloninv}\\
\begin{pmatrix} \L_t^{(k)} & \L_t^{(k+1)} \\
\L_t^{(k+1)} & \L_t^{(k+2)} \end{pmatrix}
&=c_{k,0}\begin{pmatrix} \L_t^{(0)} & \L_t^{(1)} \\
\L_t^{(1)} & \L_t^{(2)} \end{pmatrix}
+\bUpsilon_t^\top \begin{pmatrix} 0 & \sum_{j=0}^\infty
c_{k,j+1}(\bPhi_t^j)^\top \\
\sum_{j=0}^\infty c_{k,j+1} \bPhi_t^j & \sum_{j=0}^\infty
c_{k,j+2}\bTheta_t^{(j)} \end{pmatrix}\bUpsilon_t \label{eq:upsiloninv2}
\end{align}
\end{lemma}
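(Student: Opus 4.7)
The plan is to prove (\ref{eq:upsiloninv}) first by a direct expansion, and then deduce (\ref{eq:upsiloninv2}) by applying (\ref{eq:upsiloninv}) at two consecutive levels $k, k+1$ combined with a blockwise intermediate identity. Throughout, I abbreviate $X_k=\sum_j c_{k,j}\bPhi_t^j$, $Y_k=\sum_j c_{k,j+1}\bTheta_t^{(j)}$, $\tilde X_k=\sum_j c_{k,j+1}\bPhi_t^j$, and $\tilde Y_k=\sum_j c_{k,j+2}\bTheta_t^{(j)}$; all series are finite, and $\B_t^\top$, $X_k$, $\tilde X_k$ are polynomials in $\bPhi_t$ and hence mutually commute.

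For (\ref{eq:upsiloninv}), the first block entry of $(X_k,Y_k)\bUpsilon_t$ is $X_k\bDelta_t+Y_k\bPhi_t^\top$. Using $\bTheta_t^{(j)}\bPhi_t^\top=\sum_{i=0}^j\bPhi_t^i\bDelta_t(\bPhi_t^{j-i+1})^\top$ from the definition of $\bTheta_t^{(j)}$, an index shift in $Y_k\bPhi_t^\top$ recombines with $X_k\bDelta_t$ to reassemble $\sum_j c_{k,j}\bTheta_t^{(j)}=\L_t^{(k)}$. For the second entry $X_k(\bDelta_t\B_t+\bPhi_t\bSigma_t)+Y_k(\bPhi_t^\top\B_t+\Id)$, I would expand all four pieces via $\B_t^\top=\sum_\ell\kappa_{\ell+1}\bPhi_t^\ell$ and $\bSigma_t=\sum_\ell\kappa_{\ell+2}\bTheta_t^{(\ell)}$, then collect the coefficient of the basis monomial $\bPhi_t^a\bDelta_t(\bPhi_t^b)^\top$. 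The $Y_k$-piece contributes $c_{k,a+b+1}$; the three remaining pieces partition the complementary sum $\sum_{m=0}^{a+b}c_{k,m}\kappa_{a+b+1-m}$ across the index ranges $m<a$, $m=a$, $m>a$. Adding and applying the recursion (\ref{eq:ckj}) gives $c_{k,a+b+1}+\sum_{m=0}^{a+b}c_{k,m}\kappa_{a+b+1-m}=c_{k+1,a+b}$, which is exactly the coefficient of $\bPhi_t^a\bDelta_t(\bPhi_t^b)^\top$ in $\L_t^{(k+1)}$.

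To deduce (\ref{eq:upsiloninv2}), applying (\ref{eq:upsiloninv}) at levels $k$ and $k+1$ yields
\[\begin{pmatrix}\L_t^{(k)} & \L_t^{(k+1)} \\ \L_t^{(k+1)} & \L_t^{(k+2)}\end{pmatrix}=\begin{pmatrix}X_k & Y_k \\ X_{k+1} & Y_{k+1}\end{pmatrix}\bUpsilon_t,\]
while Lemma \ref{lemma:Lidentities} shows that the block matrix of $\L_t^{(i+j)}$ for $i,j\in\{0,1\}$ factors as $\begin{pmatrix}\Id & 0 \\ \B_t^\top & \bSigma_t\end{pmatrix}\bUpsilon_t$. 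Factoring $\bUpsilon_t$ on the right, it therefore suffices to establish the stronger non-multiplied identity
\[\begin{pmatrix}X_k & Y_k \\ X_{k+1} & Y_{k+1}\end{pmatrix}=c_{k,0}\begin{pmatrix}\Id & 0 \\ \B_t^\top & \bSigma_t\end{pmatrix}+\bUpsilon_t^\top\begin{pmatrix}0 & \tilde X_k^\top \\ \tilde X_k & \tilde Y_k\end{pmatrix}.\]
The $(1,1)$ block is the trivial split $X_k=c_{k,0}\Id+\bPhi_t\tilde X_k$; the $(1,2)$ block $Y_k=\bDelta_t\tilde X_k^\top+\bPhi_t\tilde Y_k$ follows from $\bPhi_t\bTheta_t^{(j)}=\bTheta_t^{(j+1)}-\bDelta_t(\bPhi_t^{j+1})^\top$ applied termwise; the $(2,1)$ block reduces via $X_{k+1}=\tilde X_k+X_k\B_t^\top$, which is immediate from (\ref{eq:ckj}).

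The main obstacle is the $(2,2)$ block, which requires $Y_{k+1}=c_{k,0}\bSigma_t+(\B_t^\top\bDelta_t+\bSigma_t\bPhi_t^\top)\tilde X_k^\top+(\B_t^\top\bPhi_t+\Id)\tilde Y_k$. Using the $(1,1)$ and $(1,2)$ identities to eliminate $\bPhi_t\tilde X_k$ and $\bPhi_t\tilde Y_k$, and using $\bPhi_t^\top\tilde X_k^\top=X_k^\top-c_{k,0}\Id$, this collapses after cancellation to $Y_{k+1}-\tilde Y_k=\bSigma_tX_k^\top+\B_t^\top Y_k$. I would verify this last identity by comparing coefficients of $\bPhi_t^a\bDelta_t(\bPhi_t^b)^\top$: the recursion for $c_{k+1,a+b+1}$ makes the LHS-coefficient $\sum_{m=0}^{a+b+1}c_{k,m}\kappa_{a+b+2-m}$, while $\bSigma_tX_k^\top$ contributes $\sum_{m=0}^{b}c_{k,m}\kappa_{a+b+2-m}$ and $\B_t^\top Y_k$ contributes $\sum_{m=b+1}^{a+b+1}c_{k,m}\kappa_{a+b+2-m}$, so the two ranges partition the same total sum. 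This range-partitioning is the one step requiring delicate bookkeeping.
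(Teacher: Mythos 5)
Your proof is correct, and the mathematics matches the paper's: both prove (\ref{eq:upsiloninv}) first and then obtain (\ref{eq:upsiloninv2}) by establishing a pre-multiplied identity and right-multiplying by $\bUpsilon_t$. Your ``stronger non-multiplied identity'' is in fact the same intermediate step the paper uses, since $X_0=\Id$, $Y_0=0$, $X_1=\B_t^\top$, $Y_1=\bSigma_t$, so $c_{k,0}\,(\Id,\, 0;\; \B_t^\top,\, \bSigma_t)$ is exactly the paper's $c_{k,0}\,(X_0,\,Y_0;\;X_1,\,Y_1)$; the paper arrives at this block via (\ref{eq:upsiloninv}) with $k=0,1$, whereas you invoke Lemma~\ref{lemma:Lidentities} directly, which is equivalent. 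The packaging of the hard part differs a bit: the paper redoes the double-sum calculation of (\ref{eq:Lkplus1prod}) for the second column of $\bUpsilon_t^\top(\cdots)$, while you propagate the easy $(1,1)$, $(1,2)$, $(2,1)$ blocks through the algebra (together with $X_{k+1}=\tilde X_k + X_k\B_t^\top$) to cancel most terms and concentrate all of the bookkeeping in the single compact identity $Y_{k+1}-\tilde Y_k=\bSigma_t X_k^\top+\B_t^\top Y_k$, verified by coefficient-matching. This is a modestly cleaner organization, but the essential ingredient in both is the recursion (\ref{eq:ckj}) applied on the basis $\bPhi_t^a\bDelta_t(\bPhi_t^b)^\top$, and all of your intermediate steps check out.
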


\begin{proof}
Applying (\ref{eq:L1}) and the definitions of $\L_t^{(1)}$ and $\B_t$,
and recalling the notation $\kappa_0=1$
from (\ref{eq:kappa0}) and $c_{1,j}=\kappa_{j+1}$ from Lemma
\ref{lemma:ckj},
\begin{equation}\label{eq:upsilonform}
\bUpsilon_t^\top=\begin{pmatrix} \bDelta_t & \bPhi_t \\
\L_t^{(1)} & \B_t^\top \bPhi_t+\Id
\end{pmatrix}
=\begin{pmatrix} \bDelta_t & \bPhi_t \\
\sum_{j=0}^\infty \kappa_{j+1}\bTheta_t^{(j)} &
\sum_{j=0}^\infty \kappa_j \bPhi_t^j \end{pmatrix}.
\end{equation}

For (\ref{eq:upsiloninv}), applying the definition of $\bTheta_t^{(j)}$,
we compute
\begin{align}
\begin{pmatrix} \bDelta_t & \bPhi_t \end{pmatrix}
\begin{pmatrix} \sum_{j=0}^\infty c_{k,j}(\bPhi_t^j)^\top \\
\sum_{j=0}^\infty c_{k,j+1}\bTheta_t^{(j)} \end{pmatrix}
&=\bDelta_t \cdot \sum_{j=0}^\infty c_{k,j} (\bPhi_t^j)^\top
+\bPhi_t \cdot \sum_{j=0}^\infty c_{k,j+1} \sum_{i=0}^j \bPhi_t^i \bDelta_t
(\bPhi_t^{j-i})^\top\nonumber\\
&=\sum_{j=0}^\infty c_{k,j} \sum_{i=0}^j \bPhi_t^i
\bDelta_t(\bPhi_t^{j-i})^\top\nonumber\\
&=\sum_{j=0}^\infty c_{k,j} \bTheta_t^{(j)}=\L_t^{(k)}.\label{eq:Lkprod}
\end{align}
We also compute
\begin{align}
&\begin{pmatrix} \sum_{j=0}^\infty \kappa_{j+1}\bTheta_t^{(j)}
& \sum_{j=0}^\infty \kappa_j \bPhi_t^j \end{pmatrix}
\begin{pmatrix} \sum_{j=0}^\infty c_{k,j}(\bPhi_t^j)^\top \nonumber\\
\sum_{j=0}^\infty c_{k,j+1}\bTheta_t^{(j)} \end{pmatrix}\\
&=\left(\sum_{j=0}^\infty \kappa_{j+1} \sum_{i=0}^j \bPhi_t^i \bDelta_t
(\bPhi_t^{j-i})^\top\right)
\cdot \left(\sum_{p=0}^\infty c_{k,p}(\bPhi_t^p)^\top\right)\nonumber\\
&\hspace{1in}+\left(\sum_{j=0}^\infty \kappa_j \bPhi_t^j\right)
\cdot \left(\sum_{p=0}^\infty c_{k,p+1}\sum_{q=0}^p \bPhi_t^{p-q} \bDelta_t
(\bPhi_t^q)^\top\right)\nonumber\\
&=\sum_{i=0}^\infty \sum_{j=i}^\infty \sum_{p=0}^\infty
\kappa_{j+1}c_{k,p} \bPhi_t^i \bDelta_t(\bPhi_t^{j-i+p})^\top
+\sum_{q=0}^\infty \sum_{p=q}^\infty\sum_{j=0}^\infty 
\kappa_j c_{k,p+1} \bPhi_t^{j+p-q}\bDelta_t(\bPhi_t^q)^\top\nonumber\\
&=\sum_{i=0}^\infty\sum_{r=0}^\infty
\left(\sum_{j=i}^{r+i} \kappa_{j+1}c_{k,r-j+i}\right)
\bPhi_t^i \bDelta_t(\bPhi_t^r)^\top
+\sum_{q=0}^\infty \sum_{\ell=0}^\infty \left(\sum_{p=q}^{\ell+q}
\kappa_{\ell-p+q}c_{k,p+1}\right) \bPhi_t^\ell\bDelta_t(\bPhi_t^q)^\top
\nonumber\\
&=\sum_{i=0}^\infty \sum_{r=0}^\infty
\Big((\kappa_{i+1}c_{k,r}+\kappa_{i+2}c_{k,r-1}+
+\ldots+\kappa_{i+r+1}c_{k,0})\nonumber\\
&\hspace{1in}+(\kappa_ic_{k,r+1}+\kappa_{i-1}c_{k,r+2}
+\ldots+\kappa_0c_{k,i+r+1})\Big)\bPhi_t^i\bDelta_t(\bPhi_t^r)^\top\nonumber\\
&=\sum_{i=0}^\infty \sum_{r=0}^\infty
\left(\sum_{j=0}^{i+r+1} \kappa_j c_{k,i+r+1-j}\right)
\bPhi_t^i\bDelta_t(\bPhi_t^r)^\top.\label{eq:Lkplus1prod}
\end{align}
From the recursion for $c_{k,j}$ in (\ref{eq:ckj}), this is equal to
\[\sum_{i=0}^\infty \sum_{r=0}^\infty
c_{k+1,i+r}\bPhi_t^i\bDelta_t(\bPhi_t^r)^\top=\L_t^{(k+1)}.\]
Combining this with (\ref{eq:Lkprod}) and (\ref{eq:upsilonform}) and taking the
transpose yields (\ref{eq:upsiloninv}).

For (\ref{eq:upsiloninv2}), applying (\ref{eq:upsilonform}),
first observe that
\begin{align*}
\bUpsilon_t^\top\begin{pmatrix} 0 \\
\sum_{j=0}^\infty c_{k,j+1}\bPhi_t^j \end{pmatrix}
&=\begin{pmatrix} \sum_{j=0}^\infty c_{k,j+1}\bPhi_t^{j+1} \\
\sum_{j=0}^\infty \kappa_j \bPhi_t^j \cdot \sum_{p=0}^\infty
c_{k,p+1}\bPhi_t^p
\end{pmatrix}\\
&=\begin{pmatrix} \sum_{\ell=1}^\infty c_{k,\ell}\bPhi_t^\ell \\
\sum_{\ell=0}^\infty \left(\sum_{j=0}^\ell \kappa_j c_{k,\ell-j+1}\right)
\bPhi_t^\ell \end{pmatrix}
=\begin{pmatrix} \sum_{\ell=0}^\infty (c_{k,\ell}-c_{k,0}c_{0,\ell})\bPhi_t^\ell \\
\sum_{\ell=0}^\infty (c_{k+1,\ell}-c_{k,0}c_{1,\ell})\bPhi_t^\ell \end{pmatrix}
\end{align*}
where the last equality applies $c_{0,0}=1$, $c_{0,\ell}=0$ for $\ell \geq 1$,
$c_{1,\ell}=\kappa_{\ell+1}$, and the recursion (\ref{eq:ckj}).
Next, applying the same computations as leading to
(\ref{eq:Lkprod}) and (\ref{eq:Lkplus1prod}), we obtain
\begin{align*}
\bUpsilon_t^\top \begin{pmatrix}
\sum_{j=0}^\infty c_{k,j+1}(\bPhi_t^j)^\top \\
\sum_{j=0}^\infty c_{k,j+2}\bTheta_t^{(j)} \end{pmatrix}
&=\begin{pmatrix}
\sum_{j=0}^\infty c_{k,j+1}\bTheta_t^{(j)}\\
\sum_{i=0}^\infty \sum_{r=0}^\infty \sum_{j=0}^{i+r+1}
\kappa_j c_{k,i+r+2-j}\bPhi_t^i\bDelta_t(\bPhi_t^r)^\top
\end{pmatrix}\\
&=\begin{pmatrix}
\sum_{\ell=0}^\infty (c_{k,\ell+1}-c_{k,0}c_{0,\ell+1})\bTheta_t^{(\ell)} \\
\sum_{\ell=0}^\infty (c_{k+1,\ell+1}-c_{k,0}c_{1,\ell+1})\bTheta_t^{(\ell)}
\end{pmatrix}
\end{align*}
where the second equality again applies $c_{0,\ell+1}=0$ for $\ell \geq 0$,
$c_{1,\ell+1}=\kappa_{\ell+2}$, and the recursion (\ref{eq:ckj}).
Combining these two identities, we get
\begin{align*}
&\bUpsilon_t^\top \begin{pmatrix}
0 & \sum_{j=0}^\infty c_{k,j+1}(\bPhi_t^j)^\top \\
\sum_{j=0}^\infty c_{k,j+1}\bPhi_t^j
& \sum_{j=0}^\infty c_{k,j+2}\bTheta_t^{(j)} \end{pmatrix}\\
&=\begin{pmatrix} \sum_{\ell=0}^\infty c_{k,\ell}\bPhi_t^\ell
& \sum_{\ell=0}^\infty c_{k,\ell+1} \bTheta_t^{(\ell)} \\
\sum_{\ell=0}^\infty c_{k+1,\ell}\bPhi_t^\ell
& \sum_{\ell=0}^\infty c_{k+1,\ell+1} \bTheta_t^{(\ell)} \end{pmatrix}
-c_{k,0}\begin{pmatrix} \sum_{\ell=0}^\infty c_{0,\ell}\bPhi_t^\ell
& \sum_{\ell=0}^\infty c_{0,\ell+1} \bTheta_t^{(\ell)} \\
\sum_{\ell=0}^\infty c_{1,\ell}\bPhi_t^\ell
& \sum_{\ell=0}^\infty c_{1,\ell+1} \bTheta_t^{(\ell)} \end{pmatrix}
\end{align*}
Then (\ref{eq:upsiloninv2}) follows from multiplying on the right
by $\bUpsilon_t$, and applying (\ref{eq:upsiloninv}) to the right side
with $k$ and also with $0,1,k+1$ in place of $k$.
\end{proof}

\subsection{Conditioning argument}

We now prove Theorem \ref{thm:main}, applying the conditioning argument
described in Section \ref{sec:proof}. Theorem \ref{thm:main} follows directly
from the following extended lemma, where part (b) identifies the
limits (\ref{eq:keylimits}) with the limit of $\L_t^{(k)}$.

\begin{lemma}\label{lemma:main}
Suppose Assumption \ref{assump:main} holds. Almost surely
for each $t=1,2,3,\ldots$:
\begin{enumerate}[(a)]
\item There exist deterministic matrices
$(\bDelta_t^\infty,\bPhi_t^\infty,\bTheta_t^{(j,\infty)},\B_t^\infty,\bSigma_t^\infty,\L_t^{(k,\infty)})$
for all fixed $j,k \geq 0$ such that
\[(\bDelta_t^\infty,\bPhi_t^\infty,\bTheta_t^{(j,\infty)},\B_t^\infty,\bSigma_t^\infty,\L_t^{(k,\infty)})=\lim_{n
\to \infty} (\bDelta_t,\bPhi_t,\bTheta_t^{(j)},\B_t,\bSigma_t,\L_t^{(k)}).\]
\item For some random variables $R_1,\ldots,R_t$ having finite moments of all
orders,
\[(\r_1,\ldots,\r_t,\blambda) \toW (R_1,\ldots,R_t,\Lambda).\]
Furthermore, for each $k \geq 0$,
\[\EE[(R_1,\ldots,R_t)^\top \Lambda^k(R_1,\ldots,R_t)]
\equiv \lim_{n \to \infty} n^{-1}
(\r_1,\ldots,\r_t)^\top \bLambda^k(\r_1,\ldots,\r_t)=\L_t^{(k,\infty)}.\]
\item We have
\[(\u_1,\ldots,\u_{t+1},\z_1,\ldots,\z_t,\E) \toW
(U_1,\ldots,U_{t+1},Z_1,\ldots,Z_t,E)\]
as described in Theorem \ref{thm:main}.
\item The matrix
\[\begin{pmatrix} \bDelta_t^\infty & \bPhi_t^\infty\bSigma_t^\infty \\
\bSigma_t^\infty(\bPhi_t^\infty)^\top & \bSigma_t^\infty \end{pmatrix}\]
is non-singular.
\end{enumerate}
\end{lemma}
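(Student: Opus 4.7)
I would prove Lemma \ref{lemma:main} by induction on $t$, running all four parts (a)--(d) together as a single inductive statement. The workhorse is Bolthausen's conditioning scheme: every claim is established conditional on the $\sigma$-algebra $\cF_t$ generated by $\u_1,\bLambda,\E$ and all of $\r_1,\s_1,\z_1,\u_2,\ldots,\z_{t-1},\u_t$, so that the only remaining randomness is the conditional law of $\O$ given the linear constraints already imposed by $\r_s=\O\u_s$ and $\bLambda\r_s = \O\s_s$ for $s<t$. Using (\ref{eq:Oconditioningintro}) with $\X_t=(\u_1,\ldots,\u_t,\s_1,\ldots,\s_{t-1})$ and $\Y_t=(\r_1,\ldots,\r_t,\bLambda\r_1,\ldots,\bLambda\r_{t-1})$ (so that $\O\X_t=\Y_t$ automatically from the construction), I would decompose
\[
\r_t = \r_\parallel + \r_\perp,\qquad \s_t=\s_\parallel + \s_\perp,
\]
where the parallel pieces come from the deterministic term $\X_t(\X_t^\top\X_t)^{-1}\Y_t^\top$ (applied to $\u_t$ and to $\bLambda\r_t$ respectively) and the perpendicular pieces come from an independent Haar copy acting between the orthogonal complements of $\X_t$ and $\Y_t$.

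The base case $t=1$ is immediate: $\r_1=\O\u_1$ is uniform on the sphere of radius $\|\u_1\|$ by rotational invariance and independence, so $\r_1\toW \N(0,\EE[U_1^2])$ independently of $\bLambda$, giving $n^{-1}\r_1^\top\bLambda^k\r_1 \to m_k^\infty\,\EE[U_1^2]=\L_1^{(k,\infty)}$ (since $\bPhi_1=0$ forces $\L_1^{(k)}=c_{k,0}\bDelta_1=m_k\bDelta_1$ by Lemma \ref{lemma:ckj}). The parallel component of $\s_1$ is $\u_1\cdot\r_1^\top\bLambda\r_1/\|\u_1\|^2\to \kappa_1 \u_1$, which is precisely the Onsager term $b_{11}\u_1$, so $\z_1=\s_1-\kappa_1\u_1$ has Gaussian empirical limit with variance $\kappa_2^\infty\EE[U_1^2]=\sigma_{11}^\infty$ by the second-moment identity $\L_1^{(2,\infty)}-\kappa_1^2\EE[U_1^2]=\kappa_2^\infty\EE[U_1^2]$. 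Part (d) at $t=1$ is the assumption $\EE[U_1^2]>0$, $\Var[\Lambda]>0$.

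For the inductive step, the heart of the matter is computing the parallel parts of $\r_t$ and $\s_t$. This requires inverting $\X_t^\top\X_t$, which has a $2\times 2$ block form whose blocks are $\bDelta_t$, $\bDelta_t\B_t+\bPhi_t\bSigma_t$, and $\bPhi_t^\top\B_t+\Id$ in the limit --- exactly the matrix $\bUpsilon_t^\infty$ of (\ref{eq:upsilondef}). Rather than invert $\bUpsilon_t^\infty$ directly, I would follow the ``guess and verify'' strategy hinted at in Section \ref{sec:proof}: for each vector $\v$ that arises (in our case $\X_t^\top\u_t$ and $\X_t^\top\bLambda\r_t$), I guess a vector $\w$ and check $\bUpsilon_t^\infty\w=\v$ using the algebraic identities (\ref{eq:upsiloninv}) and (\ref{eq:upsiloninv2}) of Lemma \ref{lemma:upsilon}. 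The induction hypothesis from part (b) supplies the needed entries of $\L_t^{(k,\infty)}$ so that $\X_t^\top\u_t$ and $\X_t^\top\bLambda\r_t$ are expressible in terms of $\bDelta_t^\infty,\bPhi_t^\infty,\L_t^{(j,\infty)}$. The outcome is that the $\u_s$-components of $\s_\parallel$ sum to precisely $b_{t1}\u_1+\cdots+b_{tt}\u_t$ by the definition (\ref{eq:BSigma}) of $\B_t$, so these are exactly canceled by the Onsager correction in (\ref{eq:AMPextz}), while the $\s_s$-components (for $s<t$) combine into a bias term proportional to the previous $\z_s$'s that, together with the Gaussian fluctuation from $\s_\perp$, produces the joint Gaussian limit $(\z_1,\ldots,\z_t)\toW \N(0,\bSigma_t^\infty)$ with covariance matched by identity (\ref{eq:L2}).

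Then part (c) at level $t+1$ follows by applying the Wasserstein continuity results of Appendix \ref{appendix:wasserstein} to the function $u_{t+1}$, using Assumption \ref{assump:main}(d) for polynomial growth and a.e.\ continuity; this simultaneously upgrades part (b) from iterate $t$ to iterate $t+1$ by computing $\lim n^{-1}\r_s^\top\bLambda^k\r_{t+1}$ from the parallel-perpendicular decomposition. Finally, the non-singularity part (d) at $t+1$ is propagated by writing the block matrix as a Schur-complement-based product involving $\bSigma_t^\infty$ (non-singular by the inductive hypothesis) and a ``new'' Schur complement that reduces to the conditional variance of $u_{t+1}(Z_1,\ldots,Z_t,E)$ after projecting out its linear dependence on $U_1,\ldots,U_t,Z_1,\ldots,Z_t$; this variance is strictly positive exactly by the non-degeneracy Assumption \ref{assump:main}(e). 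The main obstacle is the bookkeeping in the parallel bias: verifying that the algebraic identities of Lemma \ref{lemma:upsilon} (which are consequences of the combinatorial identity (\ref{eq:ckj})) produce exactly the coefficients $b_{ts}$ coming from the R-transform series, so that cancellation happens to infinite order in the free-cumulant expansion --- this is where the non-crossing partition combinatorics of Section \ref{sec:partialmoments} enter the analytic proof.
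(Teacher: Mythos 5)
Your proposal is correct in its overall architecture and follows essentially the same route as the paper's proof: a joint induction on $t$ over claims (a)--(d), Bolthausen conditioning via Proposition \ref{prop:orthogconditioning}, the decompositions $\r=\r_\parallel+\r_\perp$ and $\s=\s_\parallel+\s_\perp$, the role of the $\L_t^{(k)}$ partial-moment matrices in tracking $n^{-1}\r_s^\top\bLambda^k\r_{t}$ across iterates, the ``guess and verify'' resolution of $(\X^\top\X)^{-1}$ via the identities of Lemma \ref{lemma:upsilon}, and the structural cancellation of the $\u_s$-bias by the Onsager term. The base case is handled correctly.

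Two points where the sketch is slightly imprecise and would need care when written out. First, $\X_t^\top\X_t$ does not itself converge to $\bUpsilon_t^\infty$: its blocks converge to the $\L_t^{(0)},\L_t^{(1)},\L_t^{(2)}$ matrices, whereas $\bUpsilon_t$ is the image of the limit $\M_t^\infty=\begin{pmatrix}\bDelta_t^\infty & \bPhi_t^\infty\bSigma_t^\infty \\ \bSigma_t^\infty(\bPhi_t^\infty)^\top & \bSigma_t^\infty\end{pmatrix}$ under left- and right-multiplication by unit-triangular matrices encoding $\B_t$ and $\bSigma_t^{-1}$ (cf.\ the manipulation leading to (\ref{eq:upsilonform})). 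The paper therefore invokes Stein's lemma (Proposition \ref{prop:stein}) to pin down the off-diagonal block $n^{-1}\U_t^\top\Z_t\to\bPhi_t^\infty\bSigma_t^\infty$, and the invertibility statement in part (d) is phrased in terms of $\M_t^\infty$, not $\bUpsilon_t^\infty$, before being transported across these triangular factors. Second, your non-degeneracy argument describes only half of what is needed. Assumption \ref{assump:main}(e) directly delivers $\Var[R_\perp]>0$ (positivity of the Schur complement after conditioning on $(\U_t,\Z_t)$), but part (d) also requires $\Var[S_\perp]>0$, which is a different Schur complement (of the second matrix in (\ref{eq:partialL}) inside (\ref{eq:fullL})). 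That second positivity is \emph{not} a direct consequence of Assumption (e); the paper derives it by a contradiction argument that additionally uses $\Var[\Lambda]>0$: if $\Var[S_\perp]=0$ then $\Lambda R_{t+1}$ would lie in the span of $(R_1,\ldots,R_{t+1},\Lambda R_1,\ldots,\Lambda R_t)$, and substituting $R_{t+1}=R_\parallel+R_\perp$ and using independence of $R_\perp$ from $(R_1,\ldots,R_t,\Lambda)$ forces $R_\perp$ to be a.s.\ constant, contradicting $\Var[R_\perp]>0$. You should include this step; otherwise the propagation of (d) does not close.
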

\begin{proof}
Denote by $t^{(a)},t^{(b)},t^{(c)},t^{(d)}$ the claims of parts (a--d) up to and
including iteration $t$. We induct on $t$.
Note that since $\blambda \toW \Lambda$ by Assumption
\ref{assump:main}(b), the empirical
moments $m_k$ of $\blambda$ satisfy $m_k \to m_k^\infty \equiv
\EE[\Lambda^k]$ for each $k \geq 0$. Then also
\[\kappa_k \to \kappa_k^\infty, \qquad c_{k,j} \to c_{k,j}^\infty\]
for all $j,k \geq 0$, where $\kappa_k^\infty$ and $c_{k,j}^\infty$ are the free
cumulants and partial moment coefficients of $\Lambda$.\\

{\bf Step 1: $t=1$.} We have
$\bDelta_1=\langle \u_1^2 \rangle \to \bDelta_1^\infty \equiv
\EE[U_1^2]$ by Assumption \ref{assump:main}(c),
$\kappa_k \to \kappa_k^\infty$ and $c_{k,j} \to
c_{k,j}^\infty$ by the above, and $\bPhi_1=0$. Then $1^{(a)}$ follows from the
definitions.

Noting that $\r_1=\O\u_1$ and
applying Proposition \ref{prop:orthognormal} with $\proj=\Id$, we have
\[(\blambda,\r_1) \toW (\Lambda,R_1)\]
where $R_1 \sim \N(0,\EE[U_1^2])$ is independent of $\Lambda$. Then
for any $k \geq 0$,
\[n^{-1}\r_1^\top \bLambda^k \r_1
=n^{-1}\sum_{i=1}^n \lambda_i^k r_{i1}^2
\to \EE[\Lambda^kR_1^2]=m_k^\infty\EE[U_1^2].\]
Note that $m_k^\infty=c_{k,0}^\infty$ by Lemma
\ref{lemma:ckj}. Furthermore, $\bPhi_1=0$ so that
$\bTheta_1^{(0)}=\bDelta_1=\langle \u_1^2 \rangle$ and
$\bTheta_1^{(j)}=0$ for all $j \geq 1$. Hence
$\L_1^{(k,\infty)}=m_k^\infty\EE[U_1^2]$ for each $k \geq 0$.
This shows $1^{(b)}$.

For $1^{(c)}$, conditioning on $\u_1,\r_1,\blambda,\E$, 
the conditional law of $\O$ is that of $\O$ conditioned on the event
\[\r_1=\O\u_1.\]
Since $n^{-1}\|\r_1\|^2 \to \EE[R_1^2]=\EE[U_1^2]$, and this is non-zero by
Assumption \ref{assump:main}(e), we must have $\r_1 \neq 0$
for all large $n$.
Then by Proposition \ref{prop:orthogconditioning}, this conditional law of
$\O$ is equal to
\[\r_1(\r_1^\top\r_1)^{-1}\u_1^\top+\proj_{\r_1^\perp}\tilde{\O}\proj_{\u_1^\perp}^\top\]
where $\tilde{\O} \in \RR^{(n-1) \times (n-1)}$ is Haar-uniform and independent of
$(\u_1,\r_1,\blambda,\E)$, and $\Pi_{\r_1^\perp},\Pi_{\u_1^\perp} \in
\RR^{n \times (n-1)}$ have orthonormal columns spanning the orthogonal
complements of $\r_1,\u_1$. Thus, to analyze the joint behavior of
$(\u_1,\u_2,\z_1,\E)$, we may replace the update $\s_1=\O^\top \bLambda \r_1$
in this first iteration $t=1$ by the update
\begin{align*}
\s_1&=\s_\parallel+\s_\perp\\
\s_\parallel&=\u_1(\r_1^\top\r_1)^{-1} \r_1^\top \bLambda \r_1\\
\s_\perp&=\proj_{\u_1^\perp} \tilde{\O}^\top \proj_{\r_1^\perp}^\top \bLambda \r_1
\end{align*}
as this will not change the joint law of $(\u_1,\u_2,\z_1,\E)$.

For $\s_\parallel$, applying $1^{(b)}$,
we have $n^{-1}\r_1^\top \bLambda \r_1/(n^{-1}\r_1^\top \r_1)
\to m_1^\infty\EE[R_1^2]/\EE[R_1^2]=\kappa_1^\infty$. Then applying
$(\u_1,\E) \toW (U_1,E)$ and Proposition \ref{prop:scalarprod},
\[(\u_1,\E,\s_\parallel) \toW (U_1,E,S_\parallel), \qquad
S_\parallel=\kappa_1^\infty U_1.\]
For $\s_\perp$, applying $1^{(b)}$ again and identifying
$\kappa_2^\infty=m_2^\infty-(m_1^\infty)^2$, observe that
\[n^{-1}\|\proj_{\r_1^\perp}^\top \bLambda \r_1\|^2
=n^{-1}\|\bLambda \r_1\|^2-\frac{(n^{-1} \r_1^\top \bLambda
\r_1)^2}{n^{-1}\|\r_1\|^2} \to m_2^\infty\EE[U_1^2]
-\frac{(m_1^\infty \EE[U_1^2])^2}{\EE[U_1^2]}
=\kappa_2^\infty \EE[U_1^2].\]
Then applying Proposition \ref{prop:orthognormal},
\[\s_\perp \toW S_\perp \sim \N(0,\kappa_2^\infty \EE[U_1^2]),\]
where this limit $S_\perp$ is independent of
$(U_1,E)$. Observe that $\B_1=\kappa_1$, so
\[\z_1=\s_1-\kappa_1 \u_1=(\s_\parallel-\kappa_1 \u_1)+\s_\perp.\]
Applying $\kappa_1 \to \kappa_1^\infty$, $S_\parallel=\kappa_1^\infty U_1$,
and Propositions \ref{prop:composition} and \ref{prop:scalarprod}, we obtain
\[(\z_1,\u_1,\E) \toW (Z_1,U_1,E), \qquad Z_1=S_\perp.\]
Then also $(\z_1,\u_1,\u_2,\E) \toW (Z_1,U_1,U_2,E)$
where $U_2=u_2(Z_1,E)$, by Proposition \ref{prop:composition} and the polynomial
growth condition for $u_2(\cdot)$ in Assumption \ref{assump:main}(d).
Identifying $\bSigma_1^\infty=\kappa_2^\infty \EE[U_1^2]$ as the variance of
$S_\perp$, this shows $1^{(c)}$.

Finally, we have $\bPhi_1^\infty=0$, and $\kappa_2^\infty=\Var[\Lambda]>0$
and $\EE[U_1^2]>0$ by Assumption \ref{assump:main}(e). This implies $1^{(d)}$.\\

{\bf Step 2: Analysis of $\r_{t+1}$}. Suppose that
$t^{(a)},t^{(b)},t^{(c)},t^{(d)}$ all hold,
and consider iteration $t+1$. Note that $t^{(c)}$ implies $\langle \u_s\u_{s'}
\rangle \to \EE[U_sU_{s'}]$ for all $s,s' \leq t+1$,
so $\bDelta_{t+1} \to \bDelta_{t+1}^\infty$.
By Assumption \ref{assump:main}(d), for all $s'<s \leq t+1$, each
derivative $\partial_{s'} u_s$
satisfies the growth condition (\ref{eq:growth}) and is also continuous on a set
of probability 1 under $(Z_1,\ldots,Z_{s-1},E)$, since $\bSigma_{s-1}^\infty$ is
non-singular by $t^{(d)}$. Then by $t^{(c)}$ and Proposition
\ref{prop:discontinuous}, we also have $\langle \partial_{s'} \u_s \rangle
\to \EE[\partial_{s'} u_s(Z_1,\ldots,Z_{s-1},E)]$, so $\bPhi_{t+1} \to
\bPhi_{t+1}^\infty$. Combining with the
convergence $\kappa_k \to \kappa_k^\infty$ and $c_{k,j} \to c_{k,j}^\infty$ and
the definitions, this yields $t+1^{(a)}$.

Let us now show $t+1^{(b)}$ by analyzing the iterate $\r_{t+1}$.
We define the $n \times t$ matrices
\[\U_t=(\u_1,\ldots,\u_t), \qquad \R_t=(\r_1,\ldots,\r_t),
\qquad \Z_t=(\z_1,\ldots,\z_t).\]
Then the updates (\ref{eq:AMPextr}--\ref{eq:AMPextz}) up to iteration $t$ may be
written as
\[\R_t=\O\U_t, \qquad \Z_t=\O^\top \bLambda \R_t-\U_t\B_t,\]
or equivalently,
\[\R_t=\O\U_t, \qquad \O\Z_t=\bLambda \R_t-\R_t\B_t.\]
Thus, conditioning on $\U_t,\R_t,\Z_t,\u_{t+1},\blambda,\E$,
the law of $\O$ is conditioned on the event
\[\begin{pmatrix} \R_t & \bLambda \R_t \end{pmatrix}
\begin{pmatrix} \Id & -\B_t \\ 0 & \Id \end{pmatrix}
=\O\begin{pmatrix} \U_t & \Z_t \end{pmatrix}.\]

Let us introduce
\[\M_t=n^{-1}\begin{pmatrix} \U_t^\top \U_t & \U_t^\top \Z_t \\
\Z_t^\top \U_t & \Z_t^\top \Z_t \end{pmatrix}.\]
By $t^{(c)}$, we have $n^{-1}\U_t^\top \U_t \to \bDelta_t^\infty$
and $n^{-1}\Z_t^\top \Z_t \to \bSigma_t^\infty$.
Applying Proposition \ref{prop:stein} (derived from Stein's lemma) entrywise to
$n^{-1}\U_t^\top \Z_t$, and recalling the definition of $\bPhi_t$ in
(\ref{eq:DeltaPhi}), we also have
$n^{-1}\U_t^\top \Z_t \to \bPhi_t^\infty \bSigma_t^\infty$. So
\begin{equation}\label{eq:Mt}
\M_t \to \M_t^\infty
=\begin{pmatrix} \bDelta_t^\infty & \bPhi_t^\infty\bSigma_t^\infty \\
\bSigma_t^\infty(\bPhi_t^\infty)^\top & \bSigma_t^\infty \end{pmatrix}.
\end{equation}
This limit $\M_t^\infty$ is invertible by $t^{(d)}$. Then $\M_t$
must have full rank $2t$ for all large $n$, so $(\U_t,\Z_t)$ also has full
column rank $2t$ for all large $n$. Then by Proposition
\ref{prop:orthogconditioning}, the above conditional law of $\O$ is given by
\[\begin{pmatrix} \R_t & \bLambda \R_t \end{pmatrix}
\begin{pmatrix} \Id & -\B_t \\ 0 & \Id \end{pmatrix}
\M_t^{-1} \cdot n^{-1}\begin{pmatrix} \U_t^\top \\ \Z_t^\top \end{pmatrix}
+\proj_{(\R_t,\bLambda \R_t)^\perp}
\tilde{\O}\proj_{(\U_t,\Z_t)^\perp}^\top\]
where $\tilde{\O} \in \RR^{(n-2t) \times (n-2t)}$ is again an independent Haar-orthogonal matrix.
To analyze $(\r_1,\ldots,\r_{t+1},\blambda)$, we may then replace the update
$\r_{t+1}=\O\u_{t+1}$ by
\begin{align*}
\r_{t+1}&=\r_\parallel+\r_\perp\\
\r_\parallel&=\begin{pmatrix} \R_t & \bLambda \R_t \end{pmatrix}
\begin{pmatrix} \Id & -\B_t \\ 0 & \Id \end{pmatrix}
\M_t^{-1} \cdot n^{-1}
\begin{pmatrix} \U_t^\top \\ \Z_t^\top \end{pmatrix}\u_{t+1}\\
\r_\perp&=\proj_{(\R_t,\bLambda \R_t)^\perp}
\tilde{\O}\proj_{(\U_t,\Z_t)^\perp}^\top \u_{t+1},
\end{align*}
as this does not change the joint law of $(\r_1,\ldots,\r_{t+1},\blambda)$.

To analyze $\r_\parallel$, let us define
\[\bdelta_t^\infty=\begin{pmatrix} \EE[U_1U_{t+1}] \\
\vdots \\ \EE[U_tU_{t+1}] \end{pmatrix},
\qquad \bphi_t^\infty=\begin{pmatrix} \EE[\partial_1 u_{t+1}(Z_1,\ldots,Z_t,E)]
\\ \vdots \\ \EE[\partial_t u_{t+1}(Z_1,\ldots,Z_t,E)] \end{pmatrix}.\]
These are the last columns of $\bDelta_{t+1}^\infty$ and
$(\bPhi_{t+1}^\infty)^\top$ with their last entries removed.
Then, applying again $t^{(c)}$ and Proposition \ref{prop:stein},
\[n^{-1}\U_t^\top \u_{t+1} \to \bdelta_t^\infty, \qquad n^{-1} \Z_t^\top \u_{t+1}
\to \bSigma_t^\infty \bphi_t^\infty.\]
Noting that $\bSigma_t^\infty$ is invertible by $t^{(d)}$, this yields
\begin{align*}
&\begin{pmatrix} \Id & -\B_t \\ 0 & \Id \end{pmatrix}\M_t^{-1}
\cdot n^{-1}\begin{pmatrix} \U_t^\top \\ \Z_t^\top \end{pmatrix}\u_{t+1}\\
&\to \left(\begin{pmatrix} \Id & 0 \\ 0 & (\bSigma_t^\infty)^{-1} \end{pmatrix}
\M_t^\infty
\begin{pmatrix} \Id & \B_t^\infty \\ 0 & \Id \end{pmatrix} \right)^{-1}
\begin{pmatrix} \bdelta_t^\infty \\ \bphi_t^\infty \end{pmatrix}
=(\bUpsilon_t^\infty)^{-1}
\begin{pmatrix} \bdelta_t^\infty \\ \bphi_t^\infty \end{pmatrix}
\end{align*}
where $\bUpsilon_t^\infty$ is the limit of $\bUpsilon_t$
defined in (\ref{eq:upsilondef}). This shows also that
$\bUpsilon_t^\infty$ is invertible.
Then applying Proposition \ref{prop:scalarprod} and
$(\r_1,\ldots,\r_t,\blambda) \to (R_1,\ldots,R_t,\Lambda)$
by $t^{(b)}$, we have
\[\r_\parallel \toW R_\parallel
=\begin{pmatrix} R_1 & \cdots & R_t & \Lambda R_1 & \cdots &
\Lambda R_t \end{pmatrix} (\bUpsilon_t^\infty)^{-1}
\begin{pmatrix} \bdelta_t^\infty \\ \bphi_t^\infty \end{pmatrix}.\]

For $\r_\perp$, observe that
\begin{align}
&n^{-1}\|\proj_{(\U_t,\Z_t)^\perp}^\top \u_{t+1}\|^2\nonumber\\
&=n^{-1}\|\u_{t+1}\|^2-n^{-1}\u_{t+1}^\top \begin{pmatrix} \U_t & \Z_t
\end{pmatrix} \cdot \M_t^{-1} \cdot n^{-1}\begin{pmatrix} \U_t^\top \\ \Z_t^\top
\end{pmatrix} \u_{t+1}\nonumber\\
& \to \EE[U_{t+1}^2]
-\left(\begin{pmatrix} \bdelta_t \\ \bSigma_t\bphi_t
\end{pmatrix}^\top
\begin{pmatrix} \bDelta_t & \bPhi_t \bSigma_t \\
\bSigma_t\bPhi_t^\top & \bSigma_t \end{pmatrix}^{-1}
\begin{pmatrix} \bdelta_t \\ \bSigma_t\bphi_t \end{pmatrix}\right)^\infty\label{eq:schur}
\end{align}
where we use the condensed notation $(\cdots)^\infty$ to indicate that all
quantities in the parentheses are evaluated at their
$n \to \infty$ limits. Then by Proposition \ref{prop:orthognormal},
\[\r_\perp \toW R_\perp \sim \N\left(0,\;\EE[U_{t+1}^2]
-\left(\begin{pmatrix} \bdelta_t \\ \bSigma_t\bphi_t
\end{pmatrix}^\top
\begin{pmatrix} \bDelta_t & \bPhi_t \bSigma_t \\
\bSigma_t\bPhi_t^\top & \bSigma_t \end{pmatrix}^{-1}
\begin{pmatrix} \bdelta_t \\ \bSigma_t\bphi_t
\end{pmatrix}\right)^\infty\right),\]
where this limit $R_\perp$ is independent of $(R_1,\ldots,R_t,\Lambda)$.
Combining these, we have
$(\r_1,\ldots,\r_{t+1},\blambda) \toW (R_1,\ldots,R_{t+1},\Lambda)$ where
\begin{equation}\label{eq:RTplus1}
R_{t+1}=\begin{pmatrix} R_1 & \cdots & R_t & \Lambda R_1 & \cdots &\Lambda R_t
\end{pmatrix} (\bUpsilon_t^\infty)^{-1}
\begin{pmatrix} \bdelta_t^\infty \\ \bphi_t^\infty \end{pmatrix}+R_\perp.
\end{equation}

We will require later in the argument that $\Var[R_\perp]$ given by
(\ref{eq:schur}) is strictly positive. Let us verify this here:
Identifying the entries of
\[\bdelta_t^\infty, \qquad \bSigma_t^\infty\bphi_t^\infty, \qquad
\text{ and } \qquad \M_t^\infty\]
as the quantities $\EE[U_sU_{s'}]$, $\EE[Z_sU_{s'}]$, and $\EE[Z_sZ_{s'}]$ for
indices $1 \leq s,s' \leq t+1$, observe that this variance of $R_\perp$ given by
(\ref{eq:schur}) is the variance of the residual of the
projection of $U_{t+1}$ onto the
linear span of the random variables $(Z_1,\ldots,Z_t,U_1,\ldots,U_t)$ with
respect to the $L_2$-inner-product $(X,Y) \mapsto \EE[XY]$. Thus if
$\Var[R_\perp]=0$, then there would exist scalar constants
$\alpha_1,\ldots,\alpha_t,\beta_1,\ldots,\beta_t$ such that
\[U_{t+1}=\alpha_1Z_1+\ldots+\alpha_tZ_t+\beta_1U_1+\ldots+\beta_tU_t\]
almost surely, but this contradicts Assumption \ref{assump:main}(e). So
\begin{equation}\label{eq:Rperpnondegenerate}
\Var[R_\perp]>0.
\end{equation}

Let us now introduce a block notation for $\L_{t+1}^{(k,\infty)}$ (with blocks
of sizes $t$ and 1) given by
\begin{equation}\label{eq:blockL}
\L_{t+1}^{(k,\infty)}=\begin{pmatrix} \L_t^{(k,\infty)} & \l_t^{(k,\infty)} \\
(\l_t^{(k,\infty)})^\top & l_{t+1,t+1}^{(k,\infty)} \end{pmatrix}.
\end{equation}
To conclude the proof of $t+1^{(b)}$,
it remains to compute the two quantities
\[\EE\left[\begin{pmatrix} R_1 & \cdots & R_t \end{pmatrix}^\top \Lambda^k
R_{t+1} \right] \qquad \text{ and } \qquad \EE[\Lambda^kR_{t+1}^2]\]
and show that they are given by $\l_t^{(k,\infty)}$ and
$l_{t+1,t+1}^{(k,\infty)}$.

For the first quantity, observe that $\EE[R_s \Lambda^k R_{\perp}]=0$ for all $s
\leq t$, because $R_\perp$ has mean 0 and is independent of $(R_s,\Lambda)$.
Then applying (\ref{eq:RTplus1}) and $t^{(b)}$,
\[\EE\left[\begin{pmatrix} R_1 & \cdots & R_t \end{pmatrix}^\top \Lambda^k
R_{t+1} \right]
=\left(\begin{pmatrix} \L_t^{(k)} & \L_t^{(k+1)} \end{pmatrix}
\bUpsilon_t^{-1}
\begin{pmatrix} \bdelta_t \\ \bphi_t \end{pmatrix}\right)^\infty.\]
Applying the identity (\ref{eq:upsiloninv}), we get
\begin{align*}
\EE\left[\begin{pmatrix} R_1 & \cdots & R_t \end{pmatrix}^\top \Lambda^k
R_{t+1} \right]
&=\left(\sum_{j=0}^\infty c_{k,j}\bPhi_t^j \bdelta_t
+\sum_{j=0}^\infty c_{k,j+1}\bTheta_t^{(j)}\bphi_t\right)^\infty\\
&=\left(\sum_{j=0}^\infty c_{k,j} \bPhi_{t+1}^j\bDelta_{t+1}
+\sum_{j=0}^\infty c_{k,j+1}\bTheta_{t+1}^{(j)}\bPhi_{t+1}^\top
\right)_{1:t,\,t+1}^\infty.
\end{align*}
Here, we use the notation $(\cdot)_{1:t,\,t+1}$ to indicate the entries of rows
1 to $t$ of column $t+1$. This last equality holds by writing
$\bPhi_{t+1}^j\bDelta_{t+1}$ and $\bTheta_{t+1}^{(j)}\bPhi_{t+1}^\top$ in block
form, and noting that we have the blocks
\[\bPhi_{t+1}^j=\begin{pmatrix} \bPhi_t^j & 0 \\ * & * \end{pmatrix},
\qquad \bDelta_{t+1}=\begin{pmatrix} * & \bdelta \\ * & * \end{pmatrix},
\qquad \bTheta_{t+1}^{(j)}=\begin{pmatrix} \bTheta_t^{(j)} & * \\ * & *
\end{pmatrix}, \qquad \bPhi_{t+1}^\top
=\begin{pmatrix} * & \bphi_t \\ * & 0 \end{pmatrix}.\]
Finally, from the definitions
of $\bTheta_{t+1}^{(j)}$ and $\L_{t+1}^{(k)}$, the above is simply
\[\EE\left[\begin{pmatrix} R_1 & \cdots & R_t \end{pmatrix}^\top \Lambda^k
R_{t+1} \right]=(\L_{t+1}^{(k,\infty)})_{1:t,\,t+1}=\l_t^{(k,\infty)}.\]

For $\EE[\Lambda^k R_{t+1}^2]$, we again apply (\ref{eq:RTplus1}) and the
independence of $R_\perp$ and $(R_1,\ldots,R_t,\Lambda)$ to obtain
similarly
\begin{align}
\EE[\Lambda^k R_{t+1}^2]
&=\left(\begin{pmatrix} \bdelta_t \\ \bphi_t \end{pmatrix}^\top
(\bUpsilon_t^{-1})^\top
\begin{pmatrix} \L_t^{(k)} & \L_t^{(k+1)} \\
\L_t^{(k+1)} & \L_t^{(k+2)} \end{pmatrix}
\bUpsilon_t^{-1}\begin{pmatrix} \bdelta_t \\ \bphi_t \end{pmatrix}
\right)^\infty+\EE[\Lambda^kR_\perp^2].\label{eq:rTplus1rTplus1}
\end{align}
Applying independence of $\Lambda$ and $R_\perp$ and
taking the expected square on both sides of (\ref{eq:RTplus1}), we also have
\begin{align*}
\EE[\Lambda^k R_\perp^2]
&=m_k^\infty \EE[R_\perp^2]\\
&=m_k^\infty \left(\EE[R_{t+1}^2]-\EE\left[\left(
\begin{pmatrix} R_1 & \cdots & R_t & \Lambda R_1 & \cdots &\Lambda R_t
\end{pmatrix} (\bUpsilon_t^\infty)^{-1}
\begin{pmatrix} \bdelta_t^\infty \\ \bphi_t^\infty
\end{pmatrix}\right)^2\right]\right)\\
&=c_{k,0}^\infty\left(\EE[U_{t+1}^2]
-\left(\begin{pmatrix} \bdelta_t \\ \bphi_t \end{pmatrix}^\top
(\bUpsilon_t^{-1})^\top
\begin{pmatrix} \L_t^{(0)} & \L_t^{(1)} \\ \L_t^{(1)} &
\L_t^{(2)} \end{pmatrix}
\bUpsilon_t^{-1}\begin{pmatrix} \bdelta_t \\ \bphi_t \end{pmatrix}\right)^\infty
\right),
\end{align*}
the last line identifying $m_k^\infty=c_{k,0}^\infty$ by Lemma \ref{lemma:ckj}
and using
\[\EE[R_{t+1}^2]=\lim_{n \to \infty} n^{-1}\|\r_{t+1}\|^2=
\lim_{n \to \infty} n^{-1}\|\u_{t+1}\|^2=\EE[U_{t+1}^2].\]
Applying this to (\ref{eq:rTplus1rTplus1}),
and then applying the identity (\ref{eq:upsiloninv2}), we get
\begin{align*}
\EE[\Lambda^k R_{t+1}^2]
&=c_{k,0}^\infty \EE[U_{t+1}^2]
+\left(\begin{pmatrix}\bdelta_t \\ \bphi_t \end{pmatrix}^\top
\begin{pmatrix} 0 & \sum_{j=0}^\infty c_{k,j+1} (\bPhi_t^j)^\top\\
\sum_{j=0}^\infty c_{k,j+1} \bPhi_t^j & \sum_{j=0}^\infty
c_{k,j+2} \bTheta_t^{(j)} \end{pmatrix}
\begin{pmatrix}\bdelta_t \\ \bphi_t \end{pmatrix}\right)^\infty\\
&=c_{k,0}^\infty \EE[U_{t+1}^2]
+\left(\sum_{j=0}^\infty c_{k,j+1} \bdelta_t^\top (\bPhi_t^j)^\top \bphi_t
+c_{k,j+1}\bphi_t^\top \bPhi_t^j \bdelta_t+
c_{k,j+2} \bphi_t^\top \bTheta_t^{(j)}\bphi_t\right)^\infty\\
&=\left(c_{k,0} \bDelta_{t+1}
+\sum_{j=0}^\infty c_{k,j+1} \bDelta_{t+1} (\bPhi_{t+1}^{j+1})^\top
+c_{k,j+1}\bPhi_{t+1}^{j+1}\bDelta_{t+1}+c_{k,j+2}\bPhi_{t+1}
\bTheta_{t+1}^{(j)}\bPhi_{t+1}^\top\right)^\infty_{t+1,t+1}.
\end{align*}
Here, we use $(\cdot)_{t+1,t+1}$ to denote the lower-right entry, and this last
equality follows again from writing the matrix products in block form and
observing that
\[\bDelta_{t+1}=\begin{pmatrix} * & \bdelta_t
\\ \bdelta_t^\top & \EE[U_{t+1}^2] \end{pmatrix},
\quad (\bPhi_{t+1}^{j+1})^\top=\begin{pmatrix} * & (\bPhi_t^j)^\top\bphi_t \\
* & 0 \end{pmatrix},
\quad \bPhi_{t+1}^{j+1}=\begin{pmatrix} * & * \\ \bphi_t^\top \bPhi_t^j & 0
\end{pmatrix}, 
\quad \bTheta_{t+1}^{(j)}=\begin{pmatrix} \bTheta_t^{(j)} & * \\ * & *
\end{pmatrix}.\]
Applying the definitions of $\bTheta_{t+1}^{(j)}$ and $\L_{t+1}^{(k)}$,
this is just
\[\EE[\Lambda^k R_{t+1}^2]=\left(\sum_{j=0}^\infty c_{k,j}\bTheta_{t+1}^{(j)}
\right)_{t+1,t+1}^\infty=l_{t+1,t+1}^{(k,\infty)}.\]
This concludes the proof of $t+1^{(b)}$.\\

{\bf Step 3: Analysis of $\z_{t+1}$.}
Assuming $t+1^{(a)},t+1^{(b)},t^{(c)},t^{(d)}$,
we now show $t+1^{(c)}$ and $t+1^{(d)}$. Define the $(t+1) \times t$ matrices
\begin{equation}\label{eq:tildeB}
\tilde{\bPhi}_t=\begin{pmatrix} \bPhi_t \\ \bphi_t^\top \end{pmatrix},
\qquad \tilde{\B}_t=\begin{pmatrix} \B_t \\ \0 \end{pmatrix}.
\end{equation}
These are the first $t$ columns of $\bPhi_{t+1}$ and
$\B_{t+1}$, and we have $\R_t\B_t=\R_{t+1}\tilde{\B}_t$.
Conditional on $\U_t,\R_t,\Z_t,\u_{t+1},\r_{t+1},\blambda,\E$, the law of $\O$
is conditioned on the event
\begin{equation}\label{eq:sconditioning}
\begin{pmatrix} \R_{t+1} & \bLambda \R_t \end{pmatrix}
\begin{pmatrix} \Id & -\tilde{\B}_t \\ 0 & \Id \end{pmatrix}
=\O\begin{pmatrix} \U_{t+1} & \Z_t \end{pmatrix}.
\end{equation}
Let $\tilde{\bPhi}_t^\infty$ and $\tilde{\B}_t^\infty$ be the $n \to \infty$
limits of $\tilde{\bPhi}_t$ and $\tilde{\B}_t$, and let us introduce
\[\tilde{\M}_t=n^{-1}\begin{pmatrix} \U_{t+1}^\top \U_{t+1} & \U_{t+1}^\top \Z_t
\\ \Z_t^\top \U_{t+1} & \Z_t^\top \Z_t \end{pmatrix}.\]
Then by $t^{(c)}$ and Proposition \ref{prop:stein},
\begin{equation}\label{eq:tildeM}
\tilde{\M}_t \to \tilde{\M}_t^\infty
=\begin{pmatrix} \bDelta_{t+1}^\infty & \tilde{\bPhi}_t^\infty
\bSigma_t^\infty \\ \bSigma_t^\infty (\tilde{\bPhi}_t^\infty)^\top &
\bSigma_t^\infty \end{pmatrix}.
\end{equation}

To check that this limit $\tilde{\M}_t^\infty$
is invertible, observe that its $2t \times 2t$
submatrix removing row and column $t+1$ is just $\M_t^\infty$ from
(\ref{eq:Mt}), which is invertible by $t^{(d)}$. The
Schur-complement of the $(t+1,t+1)$ entry is exactly (\ref{eq:schur}), which we
have shown is positive in (\ref{eq:Rperpnondegenerate}). Thus
$\tilde{\M}_t^\infty$ is invertible,
so $(\U_{t+1},\Z_t)$ has full column rank $2t+1$
for all large $n$. Then by Proposition
\ref{prop:orthogconditioning}, the conditional law of $\O$ is
\[\begin{pmatrix} \R_{t+1} & \bLambda \R_t \end{pmatrix}
\begin{pmatrix} \Id & -\tilde{\B}_t \\ \0 & \Id \end{pmatrix}
\tilde{\M}_t^{-1} \cdot n^{-1}\begin{pmatrix} \U_{t+1} & \Z_t \end{pmatrix}^\top
+\proj_{(\R_{t+1},\bLambda \R_t)^\perp}\tilde{\O}
\proj_{(\U_{t+1},\Z_t)^\perp}^\top\]
where $\tilde{\O}$ is an independent Haar-orthogonal matrix. Thus, to analyze
the joint behavior of $(\u_1,\ldots,\u_{t+2},\z_1,\ldots,\z_{t+1},\E)$,
we may replace the update $\s_{t+1}=\O^\top \bLambda \r_{t+1}$ by
\begin{align*}
\s_{t+1}&=\s_\parallel+\s_\perp\\
\s_\parallel&=\begin{pmatrix} \U_{t+1} & \Z_t \end{pmatrix} \tilde{\M}_t^{-1}
\begin{pmatrix} \Id & 0 \\ -\tilde{\B}_t^\top & \Id \end{pmatrix}
\cdot n^{-1}
\begin{pmatrix} \R_{t+1}^\top \\ \R_t^\top \bLambda \end{pmatrix}
\bLambda \r_{t+1}\\
\s_\perp&=\proj_{(\U_{t+1},\Z_t)^\perp} \tilde{\O}^\top \proj_{(\R_{t+1},
\bLambda \R_t)^\perp}^\top \bLambda \r_{t+1}.
\end{align*}

To analyze $\s_\parallel$, recall the notation $\l_t^{(k,\infty)}$
from (\ref{eq:blockL}) and set
\[\tilde{\L}_t^{(k,\infty)}=\begin{pmatrix} \L_t^{(k,\infty)} \\
(\l_t^{(k,\infty)})^\top \end{pmatrix}, \qquad
\tilde{\l}_t^{(k,\infty)}=\begin{pmatrix} \l_t^{(k,\infty)} \\
l_{t+1,t+1}^{(k,\infty)}
\end{pmatrix}.\]
Then applying $t+1^{(b)}$,
\[n^{-1}\begin{pmatrix} \R_{t+1}^\top \bLambda \r_{t+1}\\
\R_t^\top \bLambda^2 \r_{t+1} \end{pmatrix}
\to \begin{pmatrix} \tilde{\l}_t^{(1,\infty)} \\ \l_t^{(2,\infty)} \end{pmatrix}.\]
Introducing the matrices
\[\tilde{\bUpsilon}_t=\begin{pmatrix} \bDelta_{t+1} &
\bDelta_{t+1}\tilde{\B}_t+\tilde{\bPhi}_t\bSigma_t \\
\tilde{\bPhi}_t^\top & \tilde{\bPhi}_t^\top \tilde{\B}_t+\Id_{t \times t}
\end{pmatrix}, \qquad \tilde{\bUpsilon}_t^\infty=\lim_{n \to \infty}
\tilde{\bUpsilon}_t,\]
we have
\begin{align}
\tilde{\M}_t^{-1}
\begin{pmatrix} \Id & 0 \\ -\tilde{\B}_t^\top & \Id \end{pmatrix}
\cdot n^{-1}
\begin{pmatrix} \R_{t+1}^\top \\ \R_t^\top \bLambda \end{pmatrix}
\bLambda \r_{t+1}
&\to
\left(\begin{pmatrix} \Id & 0 \\ (\tilde{\B}_t^\infty)^\top & \Id \end{pmatrix}
\tilde{\M}_t\right)^{-1}
\begin{pmatrix} \tilde{\l}_t^{(1,\infty)} \\ \l_t^{(2,\infty)}
\end{pmatrix}\nonumber\\
&=\begin{pmatrix} \Id & 0 \\ 0 & (\bSigma_t^\infty)^{-1}\end{pmatrix}
((\tilde{\bUpsilon}_t^\infty)^{-1})^\top
\begin{pmatrix} \tilde{\l}_t^{(1,\infty)} \\ \l_t^{(2,\infty)} \end{pmatrix}.
\label{eq:tildeupsiloninv}
\end{align}
This also shows that $\tilde{\bUpsilon}_t^\infty$ is invertible.

Let us introduce the block notations
\[\B_{t+1}=\begin{pmatrix} \B_t & \b_t \\ 0 &
b_{t+1,t+1} \end{pmatrix},
\qquad \tilde{\b}_t=\begin{pmatrix} \b_t \\ b_{t+1,t+1}
\end{pmatrix},
\qquad \bSigma_{t+1}=\begin{pmatrix} \bSigma_t & \bsigma_t \\
\bsigma_t^\top & \sigma_{t+1,t+1} \end{pmatrix}\]
and denote with $^\infty$ their $n \to \infty$ limits.
Defining $\bUpsilon_{t+1}$ by (\ref{eq:upsilondef}) and writing this in block
form, it may be checked that
\[\bUpsilon_{t+1}=\begin{pmatrix} \tilde{\bUpsilon}_t & * \\
0 & * \end{pmatrix}\]
where $\tilde{\bUpsilon}_t$ constitutes the first $2t+1$ rows and columns.
Applying the identity (\ref{eq:upsiloninv}) with $t+1$ and $k=1$ yields
\[\begin{pmatrix} \L_{t+1}^{(1)} \\ \L_{t+1}^{(2)} \end{pmatrix}
=\bUpsilon_{t+1}^\top\begin{pmatrix}
\sum_{j=0}^\infty c_{1,j}(\bPhi_{t+1}^j)^\top \\
\sum_{j=0}^\infty c_{1,j+1}\bTheta_{t+1}^{(j)} \end{pmatrix}
=\bUpsilon_{t+1}^\top \begin{pmatrix} \B_{t+1} \\ \bSigma_{t+1} \end{pmatrix},\]
the second equality identifying $c_{1,j}=\kappa_{j+1}$ and applying
the definitions of $\B_{t+1}$ and $\bSigma_{t+1}$ in (\ref{eq:BSigma}).
Then equating the first $2t+1$ entries of the last column on both sides, and
taking the limit $n \to \infty$, we get
\[\begin{pmatrix} \tilde{\l}_t^{(1,\infty)} \\ \l_t^{(2,\infty)} \end{pmatrix}
=(\tilde{\bUpsilon}_t^\infty)^\top\begin{pmatrix}
\tilde{\b}_t^\infty \\ \bsigma_t^\infty \end{pmatrix}.\]
Inverting $(\tilde{\bUpsilon}_t^\infty)^\top$ and applying this to
(\ref{eq:tildeupsiloninv}),
\[\s_\parallel \toW \begin{pmatrix} U_1 & \cdots & U_{t+1} \end{pmatrix}
\tilde{\b}_t^\infty+\begin{pmatrix} Z_1 & \cdots & Z_t \end{pmatrix}
(\bSigma_t^\infty)^{-1}\bsigma_t^\infty.\]

For $\s_\perp$, note that we have shown $\tilde{\M}_t^\infty$ in
(\ref{eq:tildeM}) is invertible.
Applying the definition of $\tilde{\M}_t$ and the
identity (\ref{eq:sconditioning}), we also have
\begin{align*}
\tilde{\M}_t^\infty&=\lim_{n \to \infty} n^{-1}
\begin{pmatrix} \Id & 0 \\
-\tilde{\B}_t^\top & 0 \end{pmatrix} 
\begin{pmatrix} \R_{t+1}^\top \R_{t+1} & \R_{t+1}^\top \bLambda
\R_t \\ \R_t^\top \bLambda \R_{t+1} & \R_t^\top \bLambda^2 \R_t
\end{pmatrix} \begin{pmatrix} \Id & -\tilde{\B}_t \\ 0 & \Id
\end{pmatrix}\\
&=\begin{pmatrix} \Id & 0 \\ -(\tilde{\B}_t^\infty)^\top & 0 \end{pmatrix}
\begin{pmatrix} \L_{t+1}^{(0,\infty)} & \tilde{\L}_t^{(1,\infty)} \\
(\tilde{\L}_t^{(1,\infty)})^\top & \L_t^{(2,\infty)} \end{pmatrix}
\begin{pmatrix} \Id & -\tilde{\B}_t^\infty \\ 0 & \Id \end{pmatrix}.
\end{align*}
Thus the matrices
\begin{equation}\label{eq:partialL}
n^{-1}\begin{pmatrix} \R_{t+1}^\top \R_{t+1} & \R_{t+1}^\top \bLambda
\R_t \\ \R_t^\top \bLambda \R_{t+1} & \R_t^\top \bLambda^2 \R_t
\end{pmatrix}, \qquad
\begin{pmatrix} \L_{t+1}^{(0,\infty)} & \tilde{\L}_t^{(1,\infty)} \\
(\tilde{\L}_t^{(1,\infty)})^\top & \L_t^{(2,\infty)} \end{pmatrix}
\end{equation}
are also invertible (the former almost surely
for all large $n$). Observe then that
\begin{align*}
&n^{-1}\|\proj_{(\R_{t+1},\bLambda\R_t)^\perp}^\top \bLambda \r_{t+1}\|^2\\
&\qquad=n^{-1}\|\bLambda \r_{t+1}\|^2-n^{-1}
\begin{pmatrix} \R_{t+1}^\top\bLambda \r_{t+1} \\ \R_t^\top \bLambda^2 \r_{t+1}
\end{pmatrix}^\top
\begin{pmatrix} \R_{t+1}^\top \R_{t+1} & \R_{t+1}^\top \bLambda
\R_t \\ \R_t^\top \bLambda \R_{t+1} & \R_t^\top \bLambda^2 \R_t
\end{pmatrix}^{-1} \begin{pmatrix} \R_{t+1}^\top\bLambda \r_{t+1} \\
\R_t^\top \bLambda^2 \r_{t+1}\end{pmatrix}\\
&\qquad\to \left(l_{t+1,t+1}^{(2)}-
\begin{pmatrix} \tilde{\l}_t^{(1)} \\ \l_t^{(2)} \end{pmatrix}^\top
\begin{pmatrix} \L_{t+1}^{(0)} & \tilde{\L}_t^{(1)} \\
(\tilde{\L}_t^{(1)})^\top & \L_t^{(2)} \end{pmatrix}^{-1}
\begin{pmatrix} \tilde{\l}_t^{(1)} \\ \l_t^{(2)} \end{pmatrix}\right)^\infty
\end{align*}
Then by Proposition \ref{prop:orthognormal},
\begin{equation}\label{eq:sperp}
\s_\perp \toW S_\perp \sim \N\left(0,\;
\left(l_{t+1,t+1}^{(2)}-
\begin{pmatrix} \tilde{\l}_t^{(1)} \\ \l_t^{(2)} \end{pmatrix}^\top
\begin{pmatrix} \L_{t+1}^{(0)} & \tilde{\L}_t^{(1)} \\
(\tilde{\L}_t^{(1)})^\top & \L_t^{(2)} \end{pmatrix}^{-1}
\begin{pmatrix} \tilde{\l}_t^{(1)} \\ \l_t^{(2)} \end{pmatrix}\right)^\infty
\right)
\end{equation}
where this limit $S_\perp$ is independent of
$(U_1,\ldots,U_{t+1},Z_1,\ldots,Z_t,E)$.
Combining the above, we obtain
\begin{equation}\label{eq:Stplus1}
\s_{t+1} \toW S_{t+1}=
\begin{pmatrix} U_1 & \cdots & U_{t+1} \end{pmatrix}
\tilde{\b}_t^\infty+\begin{pmatrix} Z_1 & \cdots & Z_t \end{pmatrix}
(\bSigma_t^\infty)^{-1}\bsigma_t^\infty+S_\perp.
\end{equation}
Then, since
\[\z_{t+1}=\s_{t+1}-(\u_1,\ldots,\u_{t+1})\begin{pmatrix} \tilde{\b}_t
\end{pmatrix},\]
applying Propositions \ref{prop:composition} and \ref{prop:scalarprod},
this shows
\[(\u_1,\ldots,\u_{t+2},\z_1,\ldots,\z_{t+1},\E)
\toW (U_1,\ldots,U_{t+2},Z_1,\ldots,Z_{t+1},E)\]
where $U_{t+2}=u_{t+2}(Z_1,\ldots,Z_{t+1},E)$ and
\[Z_{t+1}=\begin{pmatrix} Z_1 & \cdots & Z_t \end{pmatrix}
(\bSigma_t^\infty)^{-1}\bsigma_t^\infty+S_\perp.\]
In particular, $(Z_1,\ldots,Z_{t+1})$ has a multivariate normal limit 
independent of $(U_1,E)$.

To conclude the proof of $t+1^{(c)}$, it remains to compute
\[\EE[\begin{pmatrix}
Z_1 & \cdots & Z_t \end{pmatrix}^\top Z_{t+1}], \qquad \EE[Z_{t+1}^2]\]
and show that these are given by $\bsigma_t^\infty$ and $\sigma_{t+1,t+1}^\infty$. Observe
that $\EE[Z_s S_\perp]=0$ for all $s \leq t$, since $S_\perp$ has mean 0 and is
independent of $Z_s$. Then
\[\EE[\begin{pmatrix} Z_1 & \cdots & Z_t \end{pmatrix}^\top Z_{t+1}]
=\bSigma_t^\infty (\bSigma_t^\infty)^{-1}\bsigma_t^\infty=\bsigma_t^\infty.\]

To compute $\EE[Z_{t+1}^2]$, note that (\ref{eq:Stplus1}) may be written as
\[S_{t+1}=\begin{pmatrix} U_1 & \cdots & U_{t+1} \end{pmatrix}
\tilde{\b}_t^\infty+Z_{t+1}.\]
Taking the expected square on both sides,
\[\EE[S_{t+1}^2]=\EE\Big[\Big(\begin{pmatrix} U_1 & \cdots & U_{t+1} \end{pmatrix}
\tilde{\b}_t^\infty\Big)^2\Big]+2\EE\Big[\Big(\begin{pmatrix} U_1 & \cdots & U_{t+1} \end{pmatrix}
\tilde{\b}_t^\infty\Big)Z_{t+1}\Big]+\EE[Z_{t+1}^2].\]
Since $\s_{t+1}=\O^\top \bLambda \r_{t+1}$, we have
\[\EE[S_{t+1}^2]=\lim_{n \to \infty}
n^{-1}\|\s_{t+1}\|^2=\lim_{n \to \infty}
n^{-1} \r_{t+1}^\top \bLambda^2 \r_{t+1}=(\L_{t+1}^{(2,\infty)})_{t+1,t+1}.\]
Identifying $\tilde{\b}_t$ as the last column of $\B_{t+1}$, we have also
\[\EE\big[\big(\begin{pmatrix} U_1 & \cdots & U_{t+1} \end{pmatrix}
\tilde{\b}_t^\infty\big)^2\big]
=(\tilde{\b}_t^\infty)^\top \bDelta_{t+1}^\infty\tilde{\b}_t^\infty
=((\B_{t+1}^\infty)^\top \bDelta_{t+1}^\infty\B_{t+1}^\infty)_{t+1,t+1}.\]
Applying
\[\EE[(U_1,\ldots,U_{t+1})^\top Z_{t+1}]
=\lim_{n \to \infty} n^{-1}(\u_1,\ldots,\u_{t+1})^\top \z_{t+1}
=\tilde{\bPhi}_t^\infty\bsigma_t^\infty,\]
we get
\begin{align*}
\EE\big[\big(\begin{pmatrix} U_1 & \cdots & U_{t+1} \end{pmatrix}
\tilde{\b}_t^\infty\big)Z_{t+1}\big]
&=(\tilde{\b}_t^\infty)^\top \tilde{\bPhi}_t^\infty\bsigma_t^\infty
=((\B_{t+1}^\infty)^\top \bPhi_{t+1}^\infty\bSigma_{t+1}^\infty)_{t+1,t+1}\\
&=(\bsigma_t^\infty)^\top(\tilde{\bPhi}_t^\infty)^\top\tilde{\b}_t^\infty
=(\bSigma_{t+1}^\infty(\bPhi_{t+1}^\infty)^\top \B_{t+1}^\infty)_{t+1,t+1}
\end{align*}
Thus
\begin{align*}
\EE[Z_{t+1}^2]&=\Bigg(\Big(\L^{(2)}_{t+1}-\B_{t+1}^\top \bDelta_{t+1} \B_{t+1}
-\B_{t+1}^\top \bPhi_{t+1}\bSigma_{t+1}
-\bSigma_{t+1}\bPhi_{t+1}^\top \B_{t+1}\Big)^\infty\Bigg)_{t+1,t+1}\\
&=(\bSigma_{t+1}^\infty)_{t+1,t+1}=\sigma_{t+1,t+1}^\infty.
\end{align*}
where the second equality applies the identity (\ref{eq:L2}).
This concludes the proof of $t+1^{(c)}$.

Finally, to show $t+1^{(d)}$, observe that
\begin{equation}\label{eq:fullL}
\left(\begin{pmatrix} \Id & 0 \\ \B_{t+1}^\top & \Id \end{pmatrix}
\begin{pmatrix} \bDelta_{t+1} & \bPhi_{t+1}\bSigma_{t+1} \\
\bSigma_{t+1}\bPhi_{t+1}^\top & \bSigma_{t+1} \end{pmatrix}
\begin{pmatrix} \Id & \B_{t+1} \\ 0 & \Id \end{pmatrix}\right)^\infty
=\begin{pmatrix} \L_{t+1}^{(0,\infty)} & \L_{t+1}^{(1,\infty)} \\
\L_{t+1}^{(1,\infty)} & \L_{t+1}^{(2,\infty)}
\end{pmatrix}
\end{equation}
by Lemma \ref{lemma:Lidentities}. The upper-left $(2t+1) \times (2t+1)$
submatrix of (\ref{eq:fullL}) is exactly the second matrix of
(\ref{eq:partialL}), which we have already shown is invertible. So to check
invertibility of (\ref{eq:fullL}), it suffices to show that the Schur
complement of the lower-right entry is non-zero. By (\ref{eq:sperp}), this Schur
complement is equal to $\Var[S_\perp]$. Thus, we must show that
$\Var[S_\perp]>0$.

Interpreting the $(s,s')$
entry of $\L_t^{(k,\infty)}$ as $\EE[\Lambda^k R_s R_{s'}]$, note that
$\Var[S_\perp]$ in (\ref{eq:sperp}) is the variance of the residual of
the projection of $\Lambda R_{t+1}$ onto the linear span of
$(R_1,\ldots,R_{t+1},\Lambda R_1,\ldots,\Lambda R_t)$ with respect to the
$L_2$-inner-product $(X,Y) \mapsto \EE[XY]$.  
Thus, if $\Var[S_\perp]=0$, then
\[\Lambda R_{t+1}=\alpha_1 R_1+\ldots+\alpha_{t+1}R_{t+1}
+\beta_1 \Lambda R_1+\ldots+\beta_t \Lambda R_t\]
for some scalar constants $\alpha_1,\ldots,\alpha_{t+1},\beta_1,\ldots,\beta_t$
almost surely. Substituting (\ref{eq:RTplus1}) and rearranging to isolate
$R_\perp$, we get
\[(\Lambda-\alpha_{t+1})R_\perp=f(R_1,\dots,R_t,\Lambda)\]
for some quantity $f(R_1,\dots,R_t,\Lambda)$ that does not depend on $R_\perp$.
By Assumption \ref{assump:main}(e), $\Lambda$ is not a constant random
variable, so on an event of positive probability, we have $\Lambda \neq
\alpha_{t+1}$. Then conditioning on $(R_1,\ldots,R_t,\Lambda)$ and on this
event, we have
$R_\perp=f(R_1,\ldots,R_t,\Lambda)/(\Lambda-\alpha_{t+1})$, implying that
the conditional law of $R_\perp$ is constant. Recall
that $R_\perp$ is independent of $(R_1,\ldots,R_t,\Lambda)$---thus $R_\perp$
must be a constant random variable unconditionally. However, $R_\perp$ is a
mean-zero normal variable with positive variance by
(\ref{eq:Rperpnondegenerate}). This is a contradiction, so
$\Var[S_\perp]>0$. This shows $t+1^{(d)}$, concluding the induction.
\end{proof}

\section{Proof for rectangular matrices}\label{appendix:rectproof}

In this appendix, we prove Theorem \ref{thm:rect} using similar ideas.
Let us write the iterations (\ref{eq:AMPrectz}--\ref{eq:AMPrectu}) as
\begin{align*}
\r_t&=\O \u_t\\
\s_t&=\Q^\top \bLambda^\top \r_t\\
\z_t&=\s_t-b_{t1}\v_1-\ldots-b_{t,t-1}\v_{t-1}\\
\v_t&=v_t(\z_1,\ldots,\z_t,\F)\\
\p_t&=\Q \v_t\\
\q_t&=\O^\top \bLambda \p_t\\
\y_t&=\q_t-a_{t1}\u_1-\ldots-a_{tt}\u_t\\
\u_{t+1}&=u_{t+1}(\y_1,\ldots,\y_t,\E)
\end{align*}
Note that $\u_t,\r_t,\q_t,\y_t \in \RR^m$ while $\v_t,\p_t,\s_t,\z_t \in \RR^n$.

In the proof, we will identify the limits of the quantities
\begin{align}
m^{-1} \r_s^\top (\bLambda \bLambda^\top)^k \r_{s'}
&\equiv m^{-1} \u_s^\top (\W \W^\top)^k \u_{s'}\label{eq:keylimitrect1}\\
m^{-1} \p_s^\top \bLambda^\top (\bLambda \bLambda^\top)^k \r_{s'}
&\equiv m^{-1} \v_s^\top \W^\top (\W \W^\top)^k \u_{s'}\\
n^{-1} \r_s^\top \bLambda (\bLambda^\top \bLambda)^k \p_{s'}
&\equiv n^{-1} \u_s^\top\W(\W^\top \W)^k \v_{s'}\\
n^{-1} \p_s^\top (\bLambda^\top \bLambda)^k \p_{s'}
&\equiv n^{-1} \v_s^\top (\W^\top \W)^k \v_{s'}.\label{eq:keylimitrect4}
\end{align}
In addition to the matrices $\bTheta_t^{(j)}$ and $\bXi_t^{(j)}$ in
(\ref{eq:Thetarect}--\ref{eq:Xirect}), let us define
\begin{equation}\label{eq:Xrect}
\X_t^{(j)}=\sum_{i=0}^j (\bPsi_t\bPhi_t)^i \bPsi_t \bDelta_t
(\bPsi_t^\top \bPhi_t^\top)^{j-i}
+\sum_{i=0}^j (\bPsi_t\bPhi_t)^i\bGamma_t \bPhi_t^\top
(\bPsi_t^\top \bPhi_t^\top)^{j-i}.
\end{equation}
For example,
\begin{align*}
\X_t^{(0)}&=\bPsi_t\bDelta_t+\bGamma_t\bPhi_t^\top\\
\X_t^{(1)}&=\bPsi_t\bPhi_t\bPsi_t\bDelta_t+\bPsi_t\bPhi_t\bGamma_t\bPhi_t^\top
+\bPsi_t\bDelta_t\bPsi_t^\top\bPhi_t^\top
+\bGamma_t\bPhi_t^\top\bPsi_t^\top\bPhi_t^\top.
\end{align*}
Corresponding to (\ref{eq:keylimitrect1}--\ref{eq:keylimitrect4}),
we then define four families of matrices
\begin{align}
\H_t^{(2k)}=\sum_{j=0}^\infty c_{2k,j}\bTheta_t^{(j)}, \qquad
\I_t^{(2k+1)}=\sum_{j=0}^\infty c_{2k+1,j}\X_t^{(j)},\nonumber\\
\J_t^{(2k+1)}=\sum_{j=0}^\infty \bar{c}_{2k+1,j}(\X_t^{(j)})^\top, \qquad
\L_t^{(2k)}=\sum_{j=0}^\infty \bar{c}_{2k,j}\bXi_t^{(j)}
\label{eq:HIJLrect}
\end{align}
where $c_{2k,j},c_{2k+1,j},\bar{c}_{2k,j},\bar{c}_{2k+1,j}$ are certain
rectangular partial moment coefficients, defined in
Section \ref{sec:partialmomentrect} below. We show in (\ref{eq:cbarc}) below
that $\bar{c}_{2k+1,j}=\gamma \cdot c_{2k+1,j}$, so that
\begin{equation}\label{eq:IJ}
\J_t^{(2k+1)}=\gamma \cdot (\I_t^{(2k+1)})^\top.
\end{equation}
The limits of (\ref{eq:keylimitrect1}--\ref{eq:keylimitrect4})
will be identified as the entries of
$\lim_{m,n \to \infty} \H_t^{(2k)},\I_t^{(2k+1)},\J_t^{(2k+1)},\L_t^{(2k)}$.

\subsection{Coefficients for ``partial moments''}\label{sec:partialmomentrect}

Let $\{\kappa_{2k}\}_{k \geq 1}$ be the rectangular free cumulants for
the moment sequence (\ref{eq:momentsrect}) with aspect ratio $\gamma=m/n$.
Recall from Section \ref{sec:cumulantsrect}
the second cumulant sequence $\bar{\kappa}_{2k}=\gamma \cdot \kappa_{2k}$ for
all $k \geq 1$. For notational convenience, we set
\[\kappa_0=1, \qquad \bar{\kappa}_0=1.\]
We define four sequences of combinatorial coefficients, denoted by
\[c_{2k,j},\qquad \bar{c}_{2k,j},
\qquad c_{2k+1,j}, \qquad \bar{c}_{2k+1,j}\]
for integers $k,j \geq 0$. These sequences are defined by the initializations
\begin{equation}\label{eq:ckjrectinit}
c_{0,0}=\bar{c}_{0,0}=1,
\qquad c_{0,j}=\bar{c}_{0,j}=0 \quad \text{ for } j \geq 1
\end{equation}
and by the recursions, for all $j,k \geq 0$,
\begin{align}
c_{2k+1,j}&=\sum_{m=0}^{j+1} c_{2k,m}\kappa_{2(j+1-m)}\label{eq:ckjrectodd}\\
\bar{c}_{2k+1,j}&=\sum_{m=0}^{j+1} \bar{c}_{2k,m}\bar{\kappa}_{2(j+1-m)}\label{eq:barckjrectodd}\\
c_{2k+2,j}&=\sum_{m=0}^j c_{2k+1,m}\bar{\kappa}_{2(j-m)}\label{eq:ckjrecteven}\\
\bar{c}_{2k+2,j}&=\sum_{m=0}^j
\bar{c}_{2k+1,m}\kappa_{2(j-m)}.\label{eq:barckjrecteven}
\end{align}

Let
\[\NC'(2k,\ell)=\Big\{\pi \in \NC'(2k):S \cap \{1,\ldots,\ell\} \neq S
\text{ for all } S \in \pi\Big\}\]
be the subset of non-crossing partitions $\pi \in \NC'(2k)$ where no
set $S \in \pi$ is contained in $\{1,\ldots,\ell\}$. We set
$\NC'(2k,0)=\NC'(2k)$. Recall the moment-cumulant
relations (\ref{eq:momentcumulantrect}),
where $e(\pi)$ and $o(\pi)$ count the number of sets $S \in \pi$ whose smallest
element is even and odd, and $\bar{m}_{2k}$ is defined from $m_{2k}$ by
(\ref{eq:barmu}). Then these coefficients admit the following interpretations.

\begin{lemma}\label{lemma:ckjrect}
For each $k \geq 0$,
\begin{equation}\label{eq:ckjrectinterpodd}
c_{2k+1,j}=\sum_{\pi \in \NC'(2k+2j+2,2j+1)} \gamma^{e(\pi)}
\prod_{S \in \pi} \kappa_{|S|},
\qquad \bar{c}_{2k+1,j}=\sum_{\pi \in \NC'(2k+2j+2,2j+1)} \gamma^{o(\pi)}
\prod_{S \in \pi} \kappa_{|S|},
\end{equation}
and for each $k \geq 1$,
\begin{equation}\label{eq:ckjrectinterpeven}
c_{2k,j}=\sum_{\pi \in \NC'(2k+2j,2j)} \gamma^{e(\pi)}
\prod_{S \in \pi} \kappa_{|S|}, \qquad
\bar{c}_{2k,j}=\sum_{\pi \in \NC'(2k+2j,2j)} \gamma^{o(\pi)}
\prod_{S \in \pi} \kappa_{|S|}.
\end{equation}
In particular, for all $j,k \geq 0$, we have
\[c_{1,j}=\kappa_{2(j+1)}, \qquad \bar{c}_{1,j}=\bar{\kappa}_{2(j+1)},
\qquad c_{2k,0}=m_{2k}, \qquad \bar{c}_{2k,0}=\bar{m}_{2k}.\]
Finally, for all $j,k \geq 0$, we have
\begin{equation}\label{eq:cbarc}
\bar{c}_{2k+1,j}=\gamma \cdot c_{2k+1,j}.
\end{equation}
\end{lemma}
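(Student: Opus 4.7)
The plan is to prove Lemma \ref{lemma:ckjrect} by mimicking the proof of Lemma \ref{lemma:ckj}, treating the two combinatorial interpretations (\ref{eq:ckjrectinterpodd}) and (\ref{eq:ckjrectinterpeven}) in tandem via induction on the first index, and then deducing the special-case identifications $c_{1,j},\bar c_{1,j},c_{2k,0},\bar c_{2k,0}$ by specialization. The novel feature compared to the square case is the identity (\ref{eq:cbarc}), which I would handle at the end via generating functions rather than combinatorially.

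For the base case $k=0$ of (\ref{eq:ckjrectinterpodd}), observe that in a non-crossing even-block partition $\pi$ of $\{1,\ldots,2j+2\}$ with no block contained in $\{1,\ldots,2j+1\}$, every block must contain an element of $\{2j+2,\ldots,2j+2\}=\{2j+2\}$, so only the singleton coarsest partition $\{\{1,\ldots,2j+2\}\}$ qualifies. Its unique block has smallest element $1$ (odd), so $e(\pi)=0$ and $o(\pi)=1$, giving weights $\kappa_{2j+2}$ and $\gamma\kappa_{2j+2}$ matching $c_{1,j}=\kappa_{2(j+1)}$ and $\bar c_{1,j}=\bar\kappa_{2(j+1)}$ computed from the initialization (\ref{eq:ckjrectinit}) together with (\ref{eq:ckjrectodd})--(\ref{eq:barckjrectodd}).

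For the inductive step, I would extend the bijection argument from Lemma \ref{lemma:ckj}. Given $\pi\in\NC'(2k+2j+2,2j)\setminus\NC'(2k+2j+2,2j+1)$, some block lies in $\{1,\ldots,2j+1\}$ but not in $\{1,\ldots,2j\}$; as in the square case, non-crossing together with the constraint $\pi\in\NC'(2k+2j+2,2j)$ forces this block to be an interval ending at $2j+1$, and even cardinality then forces it to have the form $S=\{2(j-\ell+1),\ldots,2j+1\}$ with $|S|=2\ell$ for some $\ell\geq 1$, whose smallest element $2(j-\ell+1)$ is even. Removing $S$ and relabeling gives a bijection with $\NC'(2k+2(j-\ell)+2,2(j-\ell)+1)$, and since $S$ contributes $+1$ to $e(\pi)$ but $0$ to $o(\pi)$, the induction hypothesis paired with the factor $\gamma\kappa_{2\ell}=\bar\kappa_{2\ell}$ (resp.\ $\kappa_{2\ell}$) reproduces the recursion (\ref{eq:ckjrecteven}) for $c$ (resp.\ (\ref{eq:barckjrecteven}) for $\bar c$). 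The symmetric argument for $\pi\in\NC'(2k+2j+2,2j+1)\setminus\NC'(2k+2j+2,2j+2)$ removes the unique interval $S=\{2j+3-2\ell,\ldots,2j+2\}$ ending at $2j+2$, whose smallest element $2j+3-2\ell$ is odd; this contributes $+1$ to $o(\pi)$ but $0$ to $e(\pi)$, yielding the recursions (\ref{eq:ckjrectodd}) and (\ref{eq:barckjrectodd}). Specializing these interpretations to $j=0$ gives $c_{2k,0}=m_{2k}$ and $\bar c_{2k,0}=\bar m_{2k}$ by (\ref{eq:momentcumulantrect}), and specializing to $k=0$ gives $c_{1,j}=\kappa_{2(j+1)}$ and $\bar c_{1,j}=\bar\kappa_{2(j+1)}$.

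For the identity (\ref{eq:cbarc}), I would set $C_r(z)=\sum_{j\geq 0}c_{r,j}z^j$, $\bar C_r(z)=\sum_{j\geq 0}\bar c_{r,j}z^j$, $K(z)=\sum_{\ell\geq 0}\kappa_{2\ell}z^\ell$, and $\bar K(z)=1+\gamma(K(z)-1)$. The recursions (\ref{eq:ckjrectodd})--(\ref{eq:barckjrecteven}) translate into
\[C_{2k+1}(z)=z^{-1}\bigl[C_{2k}(z)K(z)-c_{2k,0}\bigr],\qquad C_{2k+2}(z)=C_{2k+1}(z)\bar K(z),\]
and analogously for $\bar C_r$ with $K$ and $\bar K$ swapped. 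Inducting on odd levels with base $\bar C_1(z)=\gamma C_1(z)$, the key observation is that while $\bar C_{2k+2}=\bar C_{2k+1}K=\gamma C_{2k+1}K$ generally differs from $\gamma C_{2k+2}=\gamma C_{2k+1}\bar K$, the constant coefficients agree: $\bar c_{2k+2,0}=\bar c_{2k+1,0}=\gamma c_{2k+1,0}=\gamma c_{2k+2,0}$. Substituting into the next odd recursion,
\[\bar C_{2k+3}(z)=z^{-1}\bigl[\gamma C_{2k+1}(z)K(z)\bar K(z)-\gamma c_{2k+2,0}\bigr]=\gamma\cdot z^{-1}\bigl[C_{2k+2}(z)K(z)-c_{2k+2,0}\bigr]=\gamma C_{2k+3}(z),\]
closing the induction. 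The main subtleties I would anticipate are bookkeeping around the asymmetry $\kappa_0=\bar\kappa_0=1$ versus $\bar\kappa_{2\ell}=\gamma\kappa_{2\ell}$ for $\ell\geq 1$ (which makes the constant-coefficient argument above nontrivial), and correctly tracking the parity of the smallest element of the removed block through the bijection; both are routine once set up carefully.
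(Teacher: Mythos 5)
Your combinatorial induction for (\ref{eq:ckjrectinterpodd}) and (\ref{eq:ckjrectinterpeven}) — base case at $k=0$, then alternating between the odd and even interpretations by a bijection that strips an interval block ending at $2j+1$ (respectively $2j+2$), whose smallest element has the appropriate parity — is exactly the paper's argument, correctly carried out including the identification $\gamma\kappa_{2\ell}=\bar\kappa_{2\ell}$ of the removed block's weight and the relabeling to $\NC'(2k+2(j-\ell)+2,2(j-\ell)+1)$.

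For (\ref{eq:cbarc}) you take a genuinely different route. The paper composes (\ref{eq:ckjrectodd}) and (\ref{eq:ckjrecteven}) into a double sum and observes that the inner sum $\sum_m\bar\kappa_{2m}\kappa_{2(J-m)}$ is symmetric under swapping $\kappa\leftrightarrow\bar\kappa$ (by reversing the index), then applies induction. You instead encode the same two recursions as the generating-function relations $C_{2k+1}=z^{-1}(C_{2k}K-c_{2k,0})$, $C_{2k+2}=C_{2k+1}\bar K$ (and the $K\leftrightarrow\bar K$-swapped ones for $\bar C$), and close the induction by commuting $K$ and $\bar K$ while tracking the constant coefficients: $\bar c_{2k+2,0}=\bar c_{2k+1,0}=\gamma c_{2k+1,0}=\gamma c_{2k+2,0}$. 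The underlying observation — commutativity plus the $j=0$ boundary identity — is the same, but the generating-function packaging makes the bookkeeping around $\kappa_0=\bar\kappa_0=1$ versus $\bar\kappa_{2\ell}=\gamma\kappa_{2\ell}$ for $\ell\ge 1$ more transparent. Either write-up is acceptable; yours is arguably cleaner for a reader comfortable with formal power series, while the paper's is more self-contained.

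Two presentational notes that would be worth making explicit if you turned this into a full write-up. First, the argument that the offending block $S$ must be an interval (no gaps) uses the fact that any gap inside $S$ would contain blocks of $\pi$ wholly inside $\{1,\ldots,2j\}$ (respectively $\{1,\ldots,2j+1\}$), contradicting $\pi\in\NC'(\cdot,2j)$ (respectively $\NC'(\cdot,2j+1)$) — you gesture at this with "as in the square case" but the detail is what makes the parity count come out right, since it forces $|S|$ even to pin down the smallest element's parity. Second, the even $\to$ odd step implicitly invokes (\ref{eq:ckjrectinterpeven}) at the larger index $j+1$ to account for the $\NC'(2k+2j+2,2j+2)$ piece contributing $c_{2k,j+1}\kappa_0$; this is fine because the induction is on the first index $k$ uniformly in $j$, but it is worth saying.
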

\begin{proof}
Let us show (\ref{eq:ckjrectinterpodd}--\ref{eq:ckjrectinterpeven})
by induction on $k$. By the initialization (\ref{eq:ckjrectinit})
and the recursions (\ref{eq:ckjrectodd}) and (\ref{eq:barckjrectodd}), we have
$c_{1,j}=\kappa_{2(j+1)}$ and $\bar{c}_{1,j}=\bar{\kappa}_{2(j+1)}$
for all $j \geq 0$. Since the sets of each partition in $\NC'(2j+2)$ must have
even cardinality, $\NC'(2j+2,2j+1)$ consists of only the
partition $\pi$ with the single set $\{1,\ldots,2j+2\}$ and this partition has
$e(\pi)=0$ and $o(\pi)=1$. Applying $\bar{\kappa}_{2j+2}=\gamma \cdot
\kappa_{2j+2}$, this shows both identities of
(\ref{eq:ckjrectinterpodd}) for $k=0$.

Assuming that (\ref{eq:ckjrectinterpodd}) holds for some $k \geq 0$, we now
check (\ref{eq:ckjrectinterpeven}) for $k+1$.
If $\pi \in \NC'(2k+2j+2,2j) \setminus \NC'(2k+2j+2,2j+1)$, then there is a set
$S \in \pi$ containing $2j+1$ that is a subset of $\{1,\ldots,2j+1\}$. Since
$\pi$ is non-crossing and $S$ has even cardinality, this set must be of the form
$S=\{2m+2,\ldots,2j+1\}$ for some $m \in \{0,\ldots,j-1\}$. 
This set $S$ has cardinality $2(j-m)$, and its smallest element is even.
Removing $S$ from $\pi$ yields a bijection between all such partitions $\pi$
and the partitions $\pi' \in \NC(2k+2m+2,2m+1)$. Thus,
applying the induction hypothesis (\ref{eq:ckjrectinterpodd}) with $m$ in
place of $j$,
\begin{align*}
c_{2k+1,m} \cdot \gamma \kappa_{2(j-m)}&=
\mathop{\sum_{\pi \in \NC'(2k+2j+2,2j) \setminus \NC'(2k+2j+2,2j+1)
}}_{\{2m+2,\ldots,2j+1\} \in \pi}
\gamma^{e(\pi)} \prod_{S \in \pi} \kappa_{|S|},\\
\bar{c}_{2k+1,m} \cdot \kappa_{2(j-m)}&=
\mathop{\sum_{\pi \in \NC'(2k+2j+2,2j) \setminus \NC'(2k+2j+2,2j+1)
}}_{\{2m+2,\ldots,2j+1\} \in \pi}
\gamma^{o(\pi)} \prod_{S \in \pi} \kappa_{|S|}.
\end{align*}
Summing over $m=0,\ldots,j-1$, combining with the induction hypothesis
(\ref{eq:ckjrectinterpodd}) applied for $j$,
and recalling that $\kappa_0=\bar{\kappa}_0=1$
while $\bar{\kappa}_{2j}=\gamma \cdot \kappa_{2j}$ for $j \geq 1$, we obtain
\begin{align*}
\sum_{m=0}^j c_{2k+1,m}\bar{\kappa}_{2(j-m)}
&=\sum_{\pi \in \NC'(2k+2j+2,2j)}
\gamma^{e(\pi)} \prod_{S \in \pi} \kappa_{|S'|},\\
\sum_{m=0}^j \bar{c}_{2k+1,m}\kappa_{2(j-m)}
&=\sum_{\pi \in \NC'(2k+2j+2,2j)}
\gamma^{o(\pi)} \prod_{S \in \pi} \kappa_{|S'|}.
\end{align*}
Recognizing the left sides as $c_{2(k+1),j}$ and $\bar{c}_{2(k+1),j}$
by (\ref{eq:ckjrecteven}--\ref{eq:barckjrecteven}),
this shows (\ref{eq:ckjrectinterpeven}) for $k+1$.

Now assuming that (\ref{eq:ckjrectinterpeven}) holds for some $k \geq 1$, we
check (\ref{eq:ckjrectinterpodd}) for $k$.
If $\pi \in \NC'(2k+2j+2,2j+1) \setminus \NC'(2k+2j+2,2j+2)$, then similar to
the above, there is some
set $S=\{2m+1,\ldots,2j+2\} \in \pi$ for some $m \in \{0,\ldots,j\}$,
with cardinality $2(j-m)+2$ and whose smallest element is odd. Removing $S$
from $\pi$ yields a bijection between such partitions $\pi$
and the partitions $\pi' \in \NC(2k+2m,2m)$. Then applying the induction
hypothesis (\ref{eq:ckjrectinterpeven}) with $m$ in place of $k$,
\begin{align*}
c_{2k,m} \cdot \kappa_{2(j-m)+2}
&=\mathop{\sum_{\pi \in \NC'(2k+2j+2,2j+2) \setminus \NC'(2k+2j+2,2j+1)
}}_{\{2m+1,\ldots,2j+2\} \in \pi}
\gamma^{e(\pi)} \prod_{S \in \pi} \kappa_{|S|},\\
\bar{c}_{2k,m} \cdot \gamma\kappa_{2(j-m)+2}
&=\mathop{\sum_{\pi \in \NC'(2k+2j+2,2j+2) \setminus \NC'(2k+2j+2,2j+1)
}}_{\{2m+1,\ldots,2j+2\} \in \pi}
\gamma^{o(\pi)} \prod_{S \in \pi} \kappa_{|S|}.
\end{align*}
Summing over $m=0,\ldots,j$, combining with the induction hypothesis
(\ref{eq:ckjrectinterpeven}) applied for $j+1$, and
applying again $\kappa_0=\bar{\kappa}_0=1$ and $\bar{\kappa}_{2j}=\gamma
\cdot \kappa_{2j}$ for $j \geq 1$, we obtain
(\ref{eq:ckjrectinterpodd}) for $k$. This concludes the induction,
showing (\ref{eq:ckjrectinterpodd}) for all $k \geq 0$
and (\ref{eq:ckjrectinterpeven}) for all $k \geq 1$.

The statements $c_{1,j}=\kappa_{2(j+1)}$ and
$\bar{c}_{1,j}=\bar{\kappa}_{2(j+1)}$ are already shown.
The statements $c_{2k,0}=m_{2k}$ and $\bar{c}_{2k,0}=\bar{m}_{2k}$
follow from $\NC'(2k,0)=\NC'(2k)$, together with the
moment-cumulant relations (\ref{eq:momentcumulantrect}). Finally, for the
identity (\ref{eq:cbarc}), note that this holds for
$k=0$ because $\bar{\kappa}_{2(j+1)}=\gamma \cdot \kappa_{2(j+1)}$. Supposing
that it holds for $k-1$, we may compose (\ref{eq:ckjrectodd}) and
(\ref{eq:ckjrecteven}) to get
\begin{align*}
c_{2k+1,j}&=\sum_{m=0}^{j+1} \sum_{p=0}^m c_{2k-1,p}\bar{\kappa}_{2(m-p)}
\kappa_{2(j+1-m)}\\
&=\sum_{p=0}^{j+1} c_{2k-1,p} \sum_{m=p}^{j+1} \bar{\kappa}_{2(m-p)}
\kappa_{2(j+1-m)}
=\sum_{p=0}^{j+1} c_{2k-1,p} \sum_{m=0}^{j-p+1} \bar{\kappa}_{2m}
\kappa_{2(j-p+1-m)}.
\end{align*}
Similarly
\[\bar{c}_{2k+1,j}=\sum_{p=0}^{j+1} \bar{c}_{2k-1,p} \sum_{m=0}^{j-p+1}
\kappa_{2m}\bar{\kappa}_{2(j-p+1-m)}.\]
Comparing these two expressions and applying $\bar{c}_{2k-1,p}=\gamma \cdot
c_{2k-1,p}$ for each $p=0,\ldots,j+1$, we get $\bar{c}_{2k+1,j}=\gamma \cdot
c_{2k+1,j}$. This shows (\ref{eq:cbarc}) for all $k$.
\end{proof}

\subsection{Partial moment identities}

Recalling the definitions of
$\H_t^{(2k)},\I_t^{(2k+1)},\J_t^{(2k+1)},\L_t^{(2k)}$ from (\ref{eq:HIJLrect}),
we now collect several identities derived from the recursions for
$c_{2k,j},c_{2k+1,j},\bar{c}_{2k,j},\bar{c}_{2k+1,j}$.

\begin{lemma}
For every $t \geq 1$,
\begin{align}
\H_t^{(0)}&=\bDelta_t\label{eq:H0rect}\\
\L_t^{(0)}&=\bGamma_t\label{eq:L0rect}\\
\I_t^{(1)}&=\A_t^\top \bDelta_t+\bSigma_t\bPhi_t^\top\label{eq:I1rectA}\\
&=\gamma^{-1} \cdot (\bGamma_t\B_t+\bPsi_t\bOmega_t)\label{eq:I1rectB}\\
\J_t^{(1)}&=\B_t^\top \bGamma_t+\bOmega_t\bPsi_t^\top\label{eq:J1rectA}\\
&=\gamma \cdot (\bDelta_t\A_t+\bPhi_t\bSigma_t)\label{eq:J1rectB}\\
\L_t^{(2)}&=\gamma \cdot (\A_t^\top \bDelta_t\A_t+\A_t^\top\bPhi_t\bSigma_t
+\bSigma_t\bPhi_t^\top\A_t+\bSigma_t)\label{eq:L2rect}\\
\H_t^{(2)}&=\gamma^{-1} \cdot
(\B_t^\top \bGamma_t\B_t+\B_t^\top \bPsi_t\bOmega_t
+\bOmega_t\bPsi_t^\top \B_t+\bOmega_t)\label{eq:H2rect}
\end{align}
\end{lemma}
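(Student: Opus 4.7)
The plan is to follow the template of Lemma~\ref{lemma:Lidentities}: substitute the closed forms for $c_{2k,j},c_{2k+1,j},\bar c_{2k,j},\bar c_{2k+1,j}$ from Lemma~\ref{lemma:ckjrect} into the defining series (\ref{eq:HIJLrect}), expand the auxiliary matrices via (\ref{eq:Thetarect})--(\ref{eq:Xirect}) and (\ref{eq:Xrect}), and regroup the resulting finite double sums so as to recognize the defining series (\ref{eq:ABrect})--(\ref{eq:SigmaOmegarect}). The order-zero identities $\H_t^{(0)}=\bDelta_t$ and $\L_t^{(0)}=\bGamma_t$ are immediate from $c_{0,0}=\bar c_{0,0}=1$, $c_{0,j}=\bar c_{0,j}=0$ for $j\ge 1$, together with $\bTheta_t^{(0)}=\bDelta_t$ and $\bXi_t^{(0)}=\bGamma_t$.

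For (\ref{eq:I1rectA}) I substitute $c_{1,j}=\kappa_{2(j+1)}$ into $\I_t^{(1)}=\sum_j c_{1,j}\X_t^{(j)}$ and expand each $\X_t^{(j)}$ by (\ref{eq:Xrect}). Using $(\bPsi_t\bPhi_t)^i\bPsi_t=\bPsi_t(\bPhi_t\bPsi_t)^i$, the boundary contributions with $i=j$ in the $\bPsi_t\bDelta_t$-summand collapse to $\A_t^\top\bDelta_t$, and the remaining $i<j$ contributions, taken together with the entire $\bGamma_t\bPhi_t^\top$-summand, reassemble as $\bSigma_t\bPhi_t^\top$ after one invokes the identity
\[\bXi_t^{(j)}\bPhi_t^\top=\sum_{i=0}^j(\bPsi_t\bPhi_t)^i\bGamma_t\bPhi_t^\top(\bPsi_t^\top\bPhi_t^\top)^{j-i}+\sum_{i=0}^{j-1}(\bPsi_t\bPhi_t)^i\bPsi_t\bDelta_t(\bPsi_t^\top\bPhi_t^\top)^{j-i},\]
which follows by direct expansion of (\ref{eq:Xirect}). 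The complementary split --- peeling off the $i=0$ boundary of the $\bGamma_t\bPhi_t^\top$-summand to form $\gamma^{-1}\bGamma_t\B_t$ and reassembling everything else as $\gamma^{-1}\bPsi_t\bOmega_t$ via the analogous identity for $\bPsi_t\bTheta_t^{(j)}$ --- yields (\ref{eq:I1rectB}). The two forms for $\J_t^{(1)}$ follow by transposing (\ref{eq:I1rectA})--(\ref{eq:I1rectB}), using the symmetry of $\bDelta_t,\bGamma_t,\bSigma_t,\bOmega_t$, and invoking $\J_t^{(1)}=\gamma(\I_t^{(1)})^\top$ from (\ref{eq:IJ}).

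For the order-two identities I compose (\ref{eq:ckjrectodd}) and (\ref{eq:ckjrecteven}) to write
\[\bar c_{2,j}=\sum_{p=0}^j\bar c_{1,p}\,\kappa_{2(j-p)}=\sum_{p=0}^j\sum_{m=0}^{p+1}\bar\kappa_{2m}\,\bar\kappa_{2(p+1-m)}\,\kappa_{2(j-p)},\]
with the analogous formula for $c_{2,j}$ obtained by swapping $\kappa\leftrightarrow\bar\kappa$. Substituting into $\L_t^{(2)}=\sum_j\bar c_{2,j}\bXi_t^{(j)}$ (resp.\ $\H_t^{(2)}=\sum_j c_{2,j}\bTheta_t^{(j)}$) and expanding by (\ref{eq:Xirect}) (resp.\ (\ref{eq:Thetarect})), I collect the resulting quadruple sums by the powers $(\bPsi_t\bPhi_t)^a$ on the left and $(\bPhi_t^\top\bPsi_t^\top)^b$ on the right (resp.\ by $(\bPhi_t\bPsi_t)^a$ on the left and $(\bPsi_t^\top\bPhi_t^\top)^b$ on the right). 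Matching the resulting $(a,b)$ coefficients against the expansions of $\gamma(\A_t^\top\bDelta_t\A_t+\A_t^\top\bPhi_t\bSigma_t+\bSigma_t\bPhi_t^\top\A_t+\bSigma_t)$ and $\gamma^{-1}(\B_t^\top\bGamma_t\B_t+\B_t^\top\bPsi_t\bOmega_t+\bOmega_t\bPsi_t^\top\B_t+\bOmega_t)$ respectively --- using (\ref{eq:cbarc}) to supply the outer factors of $\gamma$ and $\kappa_0=\bar\kappa_0=1$ to absorb boundary terms --- verifies (\ref{eq:L2rect}) and (\ref{eq:H2rect}).

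The main obstacle is the bookkeeping at order two: each of the four ``core'' monomials produced by expanding $\bXi_t^{(j)}$ (or $\bTheta_t^{(j)}$) carries a distinct triple convolution in the $\kappa/\bar\kappa$ indices, and matching these against the four terms on the right requires carefully isolating the boundary cases $a=0$ and $b=0$ --- exactly as in the identification following (\ref{eq:L2}) in Lemma~\ref{lemma:Lidentities}. Once all terms are written out, the match is forced by the recursions (\ref{eq:ckjrectodd})--(\ref{eq:barckjrecteven}).
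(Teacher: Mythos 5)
Your overall strategy is the same as the paper's: expand the defining series via the recursions for the partial moment coefficients, regroup by powers of $\bPsi_t\bPhi_t$ (and transposes), and match against the definitions of $\A_t,\B_t,\bSigma_t,\bOmega_t$. The order-zero identities and the two forms of $\I_t^{(1)}$ and $\J_t^{(1)}$ are handled exactly as the paper does, with your packaging of the regrouping into the identities for $\bXi_t^{(j)}\bPhi_t^\top$ and $\bPsi_t\bTheta_t^{(j)}$ being a clean and verifiably correct way to express what the paper writes out directly.

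However, your displayed expansion of $\bar c_{2,j}$ is wrong, and if followed literally it would derail the coefficient matching for (\ref{eq:L2rect}) and (\ref{eq:H2rect}). You wrote
\[\bar c_{1,p}\stackrel{?}{=}\sum_{m=0}^{p+1}\bar\kappa_{2m}\,\bar\kappa_{2(p+1-m)},\]
but (\ref{eq:barckjrectodd}) with $k=0$ gives $\bar c_{1,p}=\sum_{m=0}^{p+1}\bar c_{0,m}\bar\kappa_{2(p+1-m)}$, and by the initialization $\bar c_{0,m}=\1\{m=0\}$ this collapses to $\bar c_{1,p}=\bar\kappa_{2(p+1)}$; you have effectively replaced $\bar c_{0,m}$ by $\bar\kappa_{2m}$, which are distinct for $m\geq 1$ (already at $p=0$ your formula gives $2\bar\kappa_2$ instead of $\bar\kappa_2$). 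The correct one-step composition is
\[\bar c_{2,j}=\sum_{m=0}^j\bar\kappa_{2(m+1)}\kappa_{2(j-m)}=\gamma\sum_{m=0}^j\kappa_{2(m+1)}\kappa_{2(j-m)},\]
which is precisely what the paper substitutes (and similarly $c_{2,j}=\sum_{m=0}^j\kappa_{2(m+1)}\bar\kappa_{2(j-m)}$ for (\ref{eq:H2rect}); note that here one of the factors carries the bar and the mixed form $\bar\kappa_0=1$ is what produces the differing prefactor on $\bOmega_t$ relative to the other three terms). With this correction, the bookkeeping you describe is a triple, not quadruple, sum, and the matching proceeds exactly along the lines of Lemma~\ref{lemma:Lidentities}'s order-two computation. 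Fixing this slip is all that is needed; the plan itself is sound.
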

\begin{proof}
The identities (\ref{eq:H0rect}) and (\ref{eq:L0rect}) follow immediately from
the initializations $c_{0,0}=\bar{c}_{0,0}=1$ and
$c_{0,j}=\bar{c}_{0,j}=0$ for all $j \geq 1$, and the
observations $\bTheta_t^{(0)}=\bDelta_t$ and $\bXi_t^{(0)}=\bGamma_t$.

For (\ref{eq:I1rectA}), let us separate the terms of $\I_t^{(1)}$ ending with
$\bDelta_t$ from those ending with $\bPhi_t^\top$. Applying
$c_{1,j}=\kappa_{2(j+1)}$, this yields
\[\I_t^{(1)}=\sum_{j=0}^\infty \kappa_{2(j+1)}
\left((\bPsi_t\bPhi_t)^j \bPsi_t\bDelta_t
+\sum_{i=0}^{j-1} (\bPsi_t\bPhi_t)^i \bPsi_t\bDelta_t
(\bPsi_t^\top \bPhi_t^\top)^{j-i}
+\sum_{i=0}^j (\bPsi_t\bPhi_t)^i\bGamma_t\bPhi_t^\top
(\bPsi_t^\top \bPhi_t^\top)^{j-i}\right).\]
Observe that
\[\sum_{j=0}^\infty \kappa_{2(j+1)} (\bPsi_t\bPhi_t)^j \bPsi_t\bDelta_t
=\A_t^\top \bDelta_t,\]
while
\begin{align*}
&\sum_{j=0}^\infty \kappa_{2(j+1)}
\left(\sum_{i=0}^{j-1} (\bPsi_t\bPhi_t)^i \bPsi_t\bDelta_t
(\bPsi_t^\top \bPhi_t^\top)^{j-i}
+\sum_{i=0}^j (\bPsi_t\bPhi_t)^i\bGamma_t\bPhi_t^\top
(\bPsi_t^\top \bPhi_t^\top)^{j-i}\right)\\
&=\sum_{j=0}^\infty \kappa_{2(j+1)}
\left(\sum_{i=0}^{j-1}
(\bPsi_t\bPhi_t)^i \bPsi_t\bDelta_t\bPsi_t^\top(\bPhi_t^\top
\bPsi_t^\top)^{j-1-i}+\sum_{i=0}^j (\bPsi_t\bPhi_t)^i
\bGamma_t(\bPhi_t^\top \bPsi_t^\top)^{j-i}\right)\bPhi_t^\top
=\bSigma_t\bPhi_t^\top.
\end{align*}
Thus we obtain (\ref{eq:I1rectA}). The identity (\ref{eq:I1rectB}) follows
analogously by separating the terms of $\I_t^{(1)}$ starting with $\bGamma_t$
from those starting with $\bPsi_t$. (The factor $\gamma^{-1}$ cancels the factor
of $\gamma$ in the definitions of $\B_t$ and $\bOmega_t$.) The identities
(\ref{eq:J1rectA}--\ref{eq:J1rectB}) follow from
(\ref{eq:I1rectA}--\ref{eq:I1rectB}) and the relation
$\J_t^{(1)}=\gamma \cdot (\I_t^{(1)})^\top$ from (\ref{eq:IJ}).

For (\ref{eq:L2rect}), applying (\ref{eq:barckjrecteven}) and the identity
$\bar{c}_{1,m}=\bar{\kappa}_{2(m+1)}=\gamma \cdot \kappa_{2(m+1)}$, we have
\begin{align*}
\L_t^{(2)}=\sum_{j=0}^\infty \bar{c}_{2,j}\bXi_t^{(j)}
&=\gamma \cdot \sum_{j=0}^\infty \left(\sum_{m=0}^j
\kappa_{2(m+1)}\kappa_{2(j-m)} \right)\\
&\qquad\cdot \left(\sum_{i=0}^j (\bPsi_t\bPhi_t)^i
\bGamma_t(\bPhi_t^\top \bPsi_t^\top)^{j-i}
+\sum_{i=0}^{j-1} (\bPsi_t\bPhi_t)^i
\bPsi_t\bDelta_t\bPsi_t^\top(\bPhi_t^\top \bPsi_t^\top)^{j-1-i}\right).
\end{align*}
Collecting terms by powers of $\bPsi_t\bPhi_t$ and its transpose, this is
\begin{align}
\L_t^{(2)}&=\gamma \cdot \sum_{i=0}^\infty \sum_{p=0}^\infty \Bigg[
\left(\sum_{m=0}^{i+p} \kappa_{2(m+1)}\kappa_{2(i+p-m)}\right)
(\bPsi_t\bPhi_t)^i\bGamma_t(\bPhi_t^\top \bPsi_t^\top)^p\nonumber\\
&\hspace{1in}
+\left(\sum_{m=0}^{i+p+1} \kappa_{2(m+1)}\kappa_{2(i+p+1-m)}\right)
(\bPsi_t\bPhi_t)^i\bPsi_t\bDelta_t\bPsi_t^\top(\bPhi_t^\top \bPsi_t^\top)^p
\Bigg].\label{eq:L2tmp}
\end{align}
From the definitions and the notation $\kappa_0=1$, we now identify
\begin{align*}
\A_t^\top \bDelta_t\A_t&=\sum_{i=0}^\infty \sum_{p=0}^\infty
\kappa_{2(i+1)}\kappa_{2(p+1)}
(\bPsi_t\bPhi_t)^i\bPsi_t\bDelta_t\bPsi_t^\top(\bPhi_t^\top \bPsi_t^\top)^p\\
\bSigma_t&=\sum_{i=0}^\infty \sum_{p=0}^\infty \kappa_{2(i+p+1)}\kappa_0
(\bPsi_t\bPhi_t)^i\bGamma_t(\bPhi_t^\top \bPsi_t^\top)^p
+\sum_{i=0}^\infty \sum_{p=0}^\infty \kappa_{2(i+p+2)}\kappa_0
(\bPsi_t\bPhi_t)^i\bPsi_t\bDelta_t\bPsi_t^\top(\bPhi_t^\top \bPsi_t^\top)^p\\
\A_t^\top \bPhi_t\bSigma_t&=\sum_{m=0}^\infty \kappa_{2(m+1)}
(\bPsi_t\bPhi_t)^{m+1} \\
&\hspace{0.5in}\cdot \sum_{q=0}^\infty \kappa_{2(q+1)}\left(\sum_{p=0}^q
(\bPsi_t\bPhi_t)^{q-p}\bGamma_t(\bPhi_t^\top \bPsi_t^\top)^p
+\sum_{p=0}^{q-1}
(\bPsi_t\bPhi_t)^{q-1-p}\bPsi_t\bDelta_t\bPsi_t^\top
(\bPhi_t^\top \bPsi_t^\top)^p\right)\\
&=\sum_{i=0}^\infty \sum_{p=0}^\infty
\left(\sum_{m=0}^{i-1} \kappa_{2(m+1)}\kappa_{2(i+p-m)}\right)
(\bPsi_t\bPhi_t)^i\bGamma_t(\bPhi_t^\top \bPsi_t^\top)^p\\
&\hspace{0.5in}+\sum_{i=0}^\infty \sum_{p=0}^\infty 
\left(\sum_{m=0}^{i-1} \kappa_{2(m+1)}\kappa_{2(i+p+1-m)}\right)
(\bPsi_t\bPhi_t)^i\bPsi_t\bDelta_t\bPsi_t^\top(\bPhi_t^\top \bPsi_t^\top)^p\\
\bSigma_t \bPhi_t^\top \A_t&=(\A_t^\top \bPhi_t\bSigma_t)^\top\\
&=\sum_{i=0}^\infty \sum_{p=0}^\infty
\left(\sum_{m=i}^{i+p-1} \kappa_{2(m+1)}\kappa_{2(i+p-m)}\right)
(\bPsi_t\bPhi_t)^i\bGamma_t(\bPhi_t^\top \bPsi_t^\top)^p\\
&\hspace{0.5in}+\sum_{i=0}^\infty \sum_{p=0}^\infty 
\left(\sum_{m=i+1}^{i+p} \kappa_{2(m+1)}\kappa_{2(i+p+1-m)}\right)
(\bPsi_t\bPhi_t)^i\bPsi_t\bDelta_t\bPsi_t^\top(\bPhi_t^\top \bPsi_t^\top)^p
\end{align*}
Summing these four expressions and comparing with (\ref{eq:L2tmp}) 
yields the identity (\ref{eq:L2rect}).

For (\ref{eq:H2rect}), we may write similarly
\begin{align*}
\H_t^{(2)}&=\gamma \cdot \sum_{j=0}^\infty \left(\sum_{m=0}^j
\kappa_{2(m+1)}\kappa_{2(j-m)}\gamma^{-\1\{m=j\}}\right)\\
&\qquad\cdot \left(\sum_{i=0}^j (\bPhi_t\bPsi_t)^i
\bDelta_t(\bPsi_t^\top \bPhi_t^\top)^{j-i}
+\sum_{i=0}^{j-1} (\bPhi_t\bPsi_t)^i
\bPhi_t\bGamma_t\bPhi_t^\top(\bPsi_t^\top \bPhi_t^\top)^{j-1-i}\right),
\end{align*}
where the factor $\gamma^{-\1\{m=j\}}$ comes from the fact that
$\bar{\kappa}_{2(j-m)}=\gamma \kappa_{2(j-m)}$ if $m<j$, but
$\bar{\kappa}_{2(j-m)}=\kappa_{2(j-m)}=1$ for $m=j$.
This is matched by the observation that $\B_t^\top \bGamma_t\B_t$,
$\B_t^\top\bPsi_t\bOmega_t$, and $\bOmega_t\bPsi_t^\top \B_t$ on the right of
(\ref{eq:H2rect}) have a factor of $\gamma^2$, whereas the last term $\bOmega_t$
has a factor of only $\gamma$. Then (\ref{eq:H2rect}) follows from an argument
analogous to the above, and we omit this for brevity.
\end{proof}

\begin{lemma}\label{lemma:upsilonrect}
Define
\begin{equation}\label{eq:Upsilonrect}
\bUpsilon_t=\begin{pmatrix} \bDelta_t & \bDelta_t \A_t+\bPhi_t\bSigma_t \\
\bPhi_t^\top & \bPhi_t^\top \A_t+\Id \end{pmatrix},
\qquad
\T_t=\begin{pmatrix} \bGamma_t & \bGamma_t \B_t+\bPsi_t\bOmega_t \\
\bPsi_t^\top & \bPsi_t^\top \B_t+\Id \end{pmatrix}.
\end{equation}
Then for every $t \geq 1$ and $k \geq 0$,
\begin{align}
&\begin{pmatrix} \H_t^{(2k)} & (\I_t^{(2k+1)})^\top \\
\I_t^{(2k+1)} & \gamma^{-1} \cdot \L_t^{(2k+2)} \end{pmatrix}\nonumber\\
&=\begin{pmatrix} \sum_{j=0}^\infty c_{2k,j} (\bPhi_t\bPsi_t)^j &
\sum_{j=0}^\infty c_{2k,j+1} (\X_t^{(j)})^\top \\
\sum_{j=0}^\infty c_{2k+1,j} (\bPsi_t\bPhi_t)^j\bPsi_t
& \sum_{j=0}^\infty c_{2k+1,j} \bXi_t^{(j)} \end{pmatrix} \bUpsilon_t
\label{eq:Upsiloninvrect}\\
&=c_{2k,0}\begin{pmatrix} \H_t^{(0)} & (\I_t^{(1)})^\top \\
\I_t^{(1)} & \gamma^{-1} \cdot \L_t^{(2)} \end{pmatrix}
+\bUpsilon_t^\top \begin{pmatrix} 0 &
\sum_{j=0}^\infty c_{2k,j+1} \bPsi_t^\top (\bPhi_t^\top \bPsi_t^\top)^j \\
\sum_{j=0}^\infty c_{2k,j+1} (\bPsi_t\bPhi_t)^j\bPsi_t &
\sum_{j=0}^\infty c_{2k,j+1} \bXi_t^{(j)} \end{pmatrix} \bUpsilon_t,
\label{eq:Upsiloninvrect2}
\end{align}
and
\begin{align}
&\begin{pmatrix} \L_t^{(2k)} & (\J_t^{(2k+1)})^\top \\
\J_t^{(2k+1)} & \gamma \cdot \H_t^{(2k+2)} \end{pmatrix}\nonumber\\
&=\begin{pmatrix} \sum_{j=0}^\infty \bar{c}_{2k,j} (\bPsi_t\bPhi_t)^j &
\sum_{j=0}^\infty \bar{c}_{2k,j+1} \X_t^{(j)} \\
\sum_{j=0}^\infty \bar{c}_{2k+1,j} (\bPhi_t\bPsi_t)^j\bPhi_t
& \sum_{j=0}^\infty \bar{c}_{2k+1,j} \bTheta_t^{(j)} \end{pmatrix} \T_t
\label{eq:Tinvrect}\\
&=\bar{c}_{2k,0}\begin{pmatrix} \L_t^{(0)} & (\J_t^{(1)})^\top \\
\J_t^{(1)} & \gamma \cdot \H_t^{(2)} \end{pmatrix}
+\T_t^\top \begin{pmatrix} 0 &
\sum_{j=0}^\infty \bar{c}_{2k,j+1} \bPhi_t^\top (\bPsi_t^\top \bPhi_t^\top)^j \\
\sum_{j=0}^\infty \bar{c}_{2k,j+1} (\bPhi_t\bPsi_t)^j\bPhi_t &
\sum_{j=0}^\infty \bar{c}_{2k,j+1} \bTheta_t^{(j)} \end{pmatrix} \T_t.
\label{eq:Tinvrect2}
\end{align}
\end{lemma}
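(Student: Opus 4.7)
The plan is to follow closely the template of Lemma \ref{lemma:upsilon}, expanding each identity as a block matrix product and verifying the four $t \times t$ blocks individually. In each block, after substituting the series definitions of $\A_t,\B_t,\bSigma_t,\bOmega_t$ from (\ref{eq:ABrect})--(\ref{eq:SigmaOmegarect}) and of $\bTheta_t^{(j)},\bXi_t^{(j)},\X_t^{(j)}$ from (\ref{eq:Thetarect})--(\ref{eq:Xirect}) and (\ref{eq:Xrect}), the entry becomes a triply-indexed sum in powers of the alternating products $(\bPhi_t\bPsi_t)^i$ and $(\bPsi_t\bPhi_t)^i$, with inner ``cores'' drawn from $\{\bDelta_t,\bGamma_t,\bPsi_t\bDelta_t,\bPhi_t\bGamma_t\bPhi_t^\top\}$. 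Collecting terms by total power and core type, the inner coefficients are convolutions of a partial-moment sequence $\{c_{k,m}\}$ (or $\{\bar c_{k,m}\}$) with the cumulant sequence $\{\kappa_{2\ell}\}$ (or $\{\bar\kappa_{2\ell}\}$), which collapse via the recursions (\ref{eq:ckjrectodd})--(\ref{eq:barckjrecteven}) and the conventions $\kappa_0 = \bar\kappa_0 = 1$.

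Concretely, for (\ref{eq:Upsiloninvrect}) the upper-left block of the right-hand side reads
\[\sum_{j=0}^\infty c_{2k,j}(\bPhi_t\bPsi_t)^j \bDelta_t + \sum_{j=0}^\infty c_{2k,j+1}(\X_t^{(j)})^\top \bPhi_t^\top,\]
and using the commutation $(\bPhi_t\bPsi_t)^i\bPhi_t = \bPhi_t(\bPsi_t\bPhi_t)^i$ together with the definition of $\bTheta_t^{(j)}$, the two sums reassemble into $\sum_j c_{2k,j}\bTheta_t^{(j)} = \H_t^{(2k)}$. The remaining three blocks are handled analogously, with (\ref{eq:ckjrectodd}) collapsing the convolutions into $c_{2k+1,m}$ (matching $\I_t^{(2k+1)}$) for the off-diagonal blocks and into $\bar c_{2k+1,m}/\gamma$ (matching $\gamma^{-1}\L_t^{(2k+2)}$ via (\ref{eq:cbarc})) for the lower-right block.

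For (\ref{eq:Upsiloninvrect2}), I would mimic the second half of the proof of Lemma \ref{lemma:upsilon}. The crucial algebraic fact is
\[\Id + \A_t^\top\bPhi_t = \sum_{j \geq 0}\kappa_{2j}(\bPsi_t\bPhi_t)^j\]
(using $\kappa_0 = 1$), which lets the convolution inside $\bUpsilon_t^\top(\cdots)$ telescope via (\ref{eq:ckjrectodd}) into $\sum_\ell (c_{2k+1,\ell} - c_{2k,0}c_{1,\ell})(\bPsi_t\bPhi_t)^\ell\bPsi_t$, and similarly for the other column. After multiplication on the right by $\bUpsilon_t$, identity (\ref{eq:Upsiloninvrect}) applied at indices $0$, $k$, $k+1$ reassembles the shifted matrix on the left; the additive correction $c_{2k,0} \cdot (\text{LHS at } k=0)$ exactly cancels the subtractions $c_{2k,0}c_{0,\ell}$ and $c_{2k,0}c_{1,\ell}$ introduced by the rewriting. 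The identities (\ref{eq:Tinvrect}) and (\ref{eq:Tinvrect2}) then follow by the same two-step procedure with $(\bDelta_t,\bPhi_t,\A_t,\bSigma_t,c_{k,j},\bTheta_t^{(j)})$ replaced by $(\bGamma_t,\bPsi_t,\B_t,\bOmega_t,\bar c_{k,j},\bXi_t^{(j)})$; the definitions of $\B_t$ and $\bOmega_t$ carry an extra factor of $\gamma$ that is consistent with (\ref{eq:cbarc}) and $\bar\kappa_{2k} = \gamma\kappa_{2k}$ for $k \geq 1$.

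The main obstacle is purely the combinatorial bookkeeping. Every step that was a single index-shift in the square case now requires distinguishing the two interleaved coefficient sequences $(c_{k,j},\bar c_{k,j})$ and the two noncommuting alternating products $(\bPhi_t\bPsi_t)^j$ versus $(\bPsi_t\bPhi_t)^j$. The recursions (\ref{eq:ckjrectodd})--(\ref{eq:barckjrecteven}) are also asymmetric at the $m = j+1$ and $m = 0$ endpoints, and the factor $\gamma$ is introduced by $\bar\kappa_{2k} = \gamma\kappa_{2k}$ only for $k \geq 1$ (not $k = 0$). Ensuring that all factors of $\gamma$ land consistently so that the $\gamma^{-1}$ in front of $\L_t^{(2k+2)}$ and the $\gamma$ in front of $\H_t^{(2k+2)}$ appear correctly in each block is where essentially all the care goes; no new conceptual idea beyond the proof of Lemma \ref{lemma:upsilon} is required.
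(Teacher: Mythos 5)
Your proposal matches the paper's proof in all essential respects: both verify the identities blockwise by expanding the series definitions of $\A_t,\B_t,\bSigma_t,\bOmega_t,\bTheta_t^{(j)},\bXi_t^{(j)},\X_t^{(j)}$, collapsing convolutions via the recursions (\ref{eq:ckjrectodd})--(\ref{eq:barckjrecteven}) with $\kappa_0=\bar\kappa_0=1$, and both hinge on writing $\Id+\A_t^\top\bPhi_t=\sum_{j\geq 0}\kappa_{2j}(\bPsi_t\bPhi_t)^j$ (the paper records this as the $(2,2)$ entry of $\bUpsilon_t^\top$ in its explicit form before computing products). The only superficial difference is that you compute $M\bUpsilon_t$ directly while the paper computes $\bUpsilon_t^\top M^\top$ and transposes at the end; for (\ref{eq:Upsiloninvrect2}) the paper applies (\ref{eq:Upsiloninvrect}) only at $k$ and $k=0$ (rather than the $0,k,k+1$ you list, which is the bookkeeping from the square Lemma~\ref{lemma:upsilon}), but this does not affect the correctness of your approach.
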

\begin{proof}
The arguments are similar to those of Lemma \ref{lemma:upsilon}.
Applying (\ref{eq:I1rectA}), the definitions of $\I_t^{(1)}$ and $\A_t$ from
(\ref{eq:HIJLrect}) and (\ref{eq:ABrect}), and the
notation $\kappa_0=1$, we have
\begin{equation}\label{eq:Upsilonrectform}
\bUpsilon_t^\top=\begin{pmatrix} \bDelta_t & \bPhi_t \\
\I_t^{(1)} & \A_t^\top \bPhi_t+\Id \end{pmatrix}
=\begin{pmatrix} \bDelta_t & \bPhi_t \\
\sum_{j=0}^\infty \kappa_{2(j+1)} \X_t^{(j)} & \sum_{j=0}^\infty
\kappa_{2j} (\bPsi_t\bPhi_t)^j \end{pmatrix}.
\end{equation}
Then applying the definitions of $\bTheta_t^{(j)}$, $\bXi_t^{(j)}$, and
$\X_t^{(j)}$ from (\ref{eq:Thetarect}--\ref{eq:Xirect}) and (\ref{eq:Xrect}),
we may compute
\begin{align}
&\begin{pmatrix} \bDelta_t & \bPhi_t \end{pmatrix}
\begin{pmatrix} \sum_{j=0}^\infty c_{2k,j}(\bPsi_t^\top\bPhi_t^\top)^j \\
\sum_{j=0}^\infty c_{2k,j+1}\X_t^{(j)} \end{pmatrix}\nonumber\\
&=\sum_{j=0}^\infty c_{2k,j} \bDelta_t (\bPsi_t^\top\bPhi_t^\top)^j
+\sum_{j=0}^\infty c_{2k,j+1}
\left(\sum_{i=0}^j
(\bPhi_t\bPsi_t)^{i+1}\bDelta_t(\bPsi_t^\top\bPhi_t^\top)^{j-i}
+\sum_{i=0}^j (\bPhi_t\bPsi_t)^i\bPhi_t\bGamma_t\bPhi_t^\top
(\bPsi_t^\top\bPhi_t^\top)^{j-i}\right)\nonumber\\
&=\sum_{j=0}^\infty c_{2k,j}\bTheta_t^{(j)}=\H_t^{(2k)},\label{eq:Upsinvrect1}\\
&\begin{pmatrix} \bDelta_t & \bPhi_t \end{pmatrix}
\begin{pmatrix} \sum_{j=0}^\infty c_{2k+1,j}
\bPsi_t^\top(\bPhi_t^\top \bPsi_t^\top)^j
\\ \sum_{j=0}^\infty c_{2k+1,j}\bXi_t^{(j)} \end{pmatrix}\nonumber\\
&=\sum_{j=0}^\infty c_{2k+1,j}\left(
\bDelta_t\bPsi_t^\top(\bPhi_t^\top \bPsi_t^\top)^j
+\sum_{i=0}^j (\bPhi_t\bPsi_t)^i
\bPhi_t\bGamma_t(\bPhi_t^\top\bPsi_t^\top)^{j-i}
+\sum_{i=0}^{j-1} (\bPhi_t\bPsi_i)^{i+1}\bDelta_t\bPsi_i^\top
(\bPhi_t^\top\bPsi_t^\top)^{j-1-i}\right)\nonumber\\
&=\sum_{j=0}^\infty
c_{2k+1,j}(\X_t^{(j)})^\top=(\I_t^{(2k+1)})^\top.\label{eq:Upsinvrect2}
\end{align}
We may also compute, analogously to (\ref{eq:Lkplus1prod}),
\begin{align}
&\begin{pmatrix}
\sum_{j=0}^\infty \kappa_{2(j+1)} \X_t^{(j)} & \sum_{j=0}^\infty
\kappa_{2j} (\bPsi_t\bPhi_t)^j \end{pmatrix}
\begin{pmatrix} \sum_{j=0}^\infty c_{2k,j}(\bPsi_t^\top\bPhi_t^\top)^j \\
\sum_{j=0}^\infty c_{2k,j+1}\X_t^{(j)} \end{pmatrix}\nonumber\\
&\quad=\left(\sum_{j=0}^\infty \kappa_{2(j+1)} \sum_{i=0}^j (\bPsi_t\bPhi_t)^i
(\bPsi_t\bDelta_t+\bGamma_t\bPhi_t^\top)(\bPsi_t^\top \bPhi_t^\top)^{j-i}\right)
\cdot \left(\sum_{p=0}^\infty c_{2k,p}(\bPsi_t^\top
\bPhi_t^\top)^p\right)\nonumber\\
&\qquad\qquad+\left(\sum_{j=0}^\infty \kappa_{2j} (\bPsi_t\bPhi_t)^j\right)
\left(\sum_{p=0}^\infty c_{2k,p+1}\sum_{q=0}^p (\bPsi_t\bPhi_t)^q
(\bPsi_t\bDelta_t+\bGamma_t\bPhi_t^\top)(\bPsi_t^\top
\bPhi_t^\top)^{p-q}\right)\nonumber\\
&\quad=\sum_{i=0}^\infty \sum_{r=0}^\infty \left(\sum_{m=0}^{i+r+1}
\kappa_{2m}c_{2k,i+r+1-m}\right)
(\bPsi_t\bPhi_t)^i (\bPsi_t\bDelta_t+\bGamma_t\bPhi_t^\top)
(\bPsi_t^\top\bPhi_t^\top)^r\nonumber\\
&\quad=\sum_{i=0}^\infty \sum_{r=0}^\infty c_{2k+1,i+r}
(\bPsi_t\bPhi_t)^i (\bPsi_t\bDelta_t+\bGamma_t\bPhi_t^\top)
(\bPsi_t^\top\bPhi_t^\top)^r=\I_t^{(2k+1)}.\label{eq:Upsinvrect3}
\end{align}
In the last two equalities above, we used the recursion (\ref{eq:ckjrectodd})
and the definitions of $\X_t^{(j)}$ and $\I_t^{(2k+1)}$. Similarly,
\begin{align}
&\begin{pmatrix}
\sum_{j=0}^\infty \kappa_{2(j+1)} \X_t^{(j)} & \sum_{j=0}^\infty
\kappa_{2j} (\bPsi_t\bPhi_t)^j \end{pmatrix}
\begin{pmatrix} \sum_{j=0}^\infty c_{2k+1,j}
\bPsi_t^\top(\bPhi_t^\top \bPsi_t^\top)^j
\\ \sum_{j=0}^\infty c_{2k+1,j}\bXi_t^{(j)} \end{pmatrix}\nonumber\\
&\quad=\left(\sum_{j=0}^\infty \kappa_{2(j+1)} \sum_{i=0}^j (\bPsi_t\bPhi_t)^i
(\bPsi_t\bDelta_t+\bGamma_t\bPhi_t^\top)(\bPsi_t^\top \bPhi_t^\top)^{j-i}\right)
\cdot \left(\sum_{p=0}^\infty c_{2k+1,p} \bPsi_t^\top(\bPhi_t^\top
\bPsi_t^\top)^p\right)\nonumber\\
&\quad\quad+\left(\sum_{j=0}^\infty \kappa_{2j} (\bPsi_t\bPhi_t)^j\right)
\cdot \sum_{p=0}^\infty c_{2k+1,p}\Bigg(\sum_{q=0}^p
(\bPsi_t\bPhi_t)^q\bGamma_t(\bPhi_t^\top\bPsi_t^\top)^{p-q}\nonumber\\
&\hspace{3in}+\sum_{q=0}^{p-1} (\bPsi_t\bPhi_t)^q\bPsi_t\bDelta_t\bPsi_t^\top
(\bPhi_t^\top \bPsi_t^\top)^{p-1-q}\Bigg)\nonumber\\
&\quad=\sum_{i=0}^\infty \sum_{r=0}^\infty \Bigg(
\left(\sum_{m=0}^{i+r+1}\kappa_{2m}
c_{2k+1,i+r+1-m}\right)(\bPsi_t\bPhi_t)^i\bPsi_t\bDelta_t\bPsi_t^\top
(\bPhi_t^\top \bPsi_t^\top)^r\nonumber\\
&\hspace{2in}+\left(\sum_{m=0}^{i+r}\kappa_{2m}
c_{2k+1,i+r-m}\right)(\bPsi_t\bPhi_t)^i\bGamma_t(\bPhi_t^\top \bPsi_t^\top)^r
\Bigg)\nonumber\\
&\quad=\gamma^{-1} \sum_{i=0}^\infty \sum_{r=0}^\infty
\left(\bar{c}_{2k+2,i+r+1}(\bPsi_t\bPhi_t)^i\bPsi_t\bDelta_t\bPsi_t^\top
(\bPhi_t^\top \bPsi_t^\top)^r
+\bar{c}_{2k+2,i+r}(\bPsi_t\bPhi_t)^i\bGamma_t(\bPhi_t^\top
\bPsi_t^\top)^r\right)\nonumber\\
&\quad=\gamma^{-1} \L_t^{(2k+2)}.\label{eq:Upsinvrect4}
\end{align}
In the last two equalities above, we used the identity
$c_{2k+1,j}=\gamma^{-1}\bar{c}_{2k+1,j}$, the recursion
(\ref{eq:barckjrecteven}), and the definitions of $\bXi_t^{(j)}$ and
$\L_t^{(2k+2)}$. Recalling (\ref{eq:Upsilonrectform}),
stacking (\ref{eq:Upsinvrect1}), (\ref{eq:Upsinvrect2}),
(\ref{eq:Upsinvrect3}), and (\ref{eq:Upsinvrect4}),
and taking the transpose yields (\ref{eq:Upsiloninvrect}).

For (\ref{eq:Upsiloninvrect2}), applying again
the form (\ref{eq:Upsilonrectform}) for $\bUpsilon_t^\top$, we may compute
\[\bUpsilon_t^\top \begin{pmatrix} 0 \\
\sum_{j=0}^\infty c_{2k,j+1}(\bPsi_t\bPhi_t)^j\bPsi_t \end{pmatrix}
=\begin{pmatrix}
\sum_{j=0}^\infty c_{2k,j+1}(\bPhi_t\bPsi_t)^{j+1} \\
\sum_{\ell=0}^\infty \left(\sum_{j=0}^\ell \kappa_{2j}c_{2k,\ell+1-j}\right)
(\bPsi_t\bPhi_t)^\ell\bPsi_t\end{pmatrix}.\]
Let us apply $c_{0,0}=1$, $c_{0,\ell}=0$ for $\ell \geq 1$,
$c_{1,\ell}=\kappa_{2(\ell+1)}$, and the recursion (\ref{eq:ckjrectodd}) in the
form
\[c_{2k+1,\ell}-c_{2k,0}\kappa_{2(\ell+1)}=\sum_{j=0}^\ell
c_{2k,\ell+1-j}\kappa_{2j}.\]
This gives
\begin{align}
\bUpsilon_t^\top \begin{pmatrix} 0 \\
\sum_{j=0}^\infty c_{2k,j+1}(\bPsi_t\bPhi_t)^j\bPsi_t \end{pmatrix}
&=\begin{pmatrix} \sum_{\ell=0}^\infty (c_{2k,\ell}-c_{2k,0}c_{0,\ell})
(\bPhi_t\bPsi_t)^\ell \\
\sum_{\ell=0}^\infty (c_{2k+1,\ell}-c_{2k,0}c_{1,\ell})
(\bPsi_t\bPhi_t)^\ell\bPsi_t \end{pmatrix}\label{eq:Upsinvrect5}
\end{align}
Applying the same computations as leading to (\ref{eq:Upsinvrect2}) and
(\ref{eq:Upsinvrect4}), with the recursion (\ref{eq:ckjrectodd}) in the forms
\[\sum_{m=0}^{i+r+1} \kappa_{2m}c_{2k,i+r+2-m}
=c_{2k+1,i+r+1}-\kappa_{2(i+r+2)}c_{2k,0},
\quad \sum_{m=0}^{i+r} \kappa_{2m}c_{2k,i+r+1-m}
=c_{2k+1,i+r}-\kappa_{2(i+r+1)}c_{2k,0}\]
replacing the final two steps of (\ref{eq:Upsinvrect4}),
we have also
\begin{align}
\bUpsilon_t^\top \begin{pmatrix} \sum_{j=0}^\infty c_{2k,j+1}\bPsi_t^\top
(\bPhi_t^\top \bPsi_t^\top)^j \\
\sum_{j=0}^\infty c_{2k,j+1}\bXi_t^{(j)} \end{pmatrix}
&=\begin{pmatrix} \sum_{j=0}^\infty c_{2k,j+1}(\X_t^{(j)})^\top \\
\sum_{j=0}^\infty
(c_{2k+1,j}-\kappa_{2(j+1)}c_{2k,0})\bXi_t^{(j)}
\end{pmatrix}\nonumber\\
&=\begin{pmatrix} \sum_{\ell=0}^\infty (c_{2k,\ell+1}-c_{2k,0}c_{0,\ell+1})
(\X_t^{(\ell)})^\top \\
\sum_{\ell=0}^\infty (c_{2k+1,\ell}-c_{2k,0}c_{1,\ell})\bXi_t^{(\ell)}
\end{pmatrix}.\label{eq:Upsinvrect6}
\end{align}
Stacking (\ref{eq:Upsinvrect5}) and (\ref{eq:Upsinvrect6}), multiplying on the
right by $\bUpsilon_t$, and then applying
(\ref{eq:Upsiloninvrect}) for $k$ and also for $k=0$, yields
\begin{align*}
&\bUpsilon_t^\top \begin{pmatrix} 0 & \sum_{j=0}^\infty c_{2k,j+1}\bPsi_t^\top
(\bPhi_t^\top \bPsi_t^\top)^j \\
\sum_{j=0}^\infty c_{2k,j+1}(\bPsi_t\bPhi_t)^j\bPsi_t &
\sum_{j=0}^\infty c_{2k,j+1}\bXi_t^{(j)} \end{pmatrix}\bUpsilon_t\\
&\qquad\qquad=\begin{pmatrix} \H_t^{(2k)} & (\I_t^{(2k+1)})^\top \\
\I_t^{(2k+1)} & \gamma^{-1} \cdot \L_t^{(2k+2)} \end{pmatrix}
-c_{2k,0}\begin{pmatrix} \H_t^{(0)} & (\I_t^{(1)})^\top \\
\I_t^{(1)} & \gamma^{-1} \cdot \L_t^{(2)} \end{pmatrix}.
\end{align*}
Rearranging this yields (\ref{eq:Upsiloninvrect2}).

The identities (\ref{eq:Tinvrect}) and (\ref{eq:Tinvrect2}) follow from writing
\[\T_t^\top=\begin{pmatrix} \bGamma_t & \bPsi_t \\
\J_t^{(1)} & \B_t^\top \bPsi_t+\Id \end{pmatrix}
=\begin{pmatrix} \bGamma_t & \bPsi_t \\
\sum_{j=0}^\infty \bar{\kappa}_{2(j+1)}(\X_t^{(j)})^\top &
\sum_{j=0}^\infty \bar{\kappa}_{2j}(\bPhi_t\bPsi_t)^j \end{pmatrix}\]
and applying the same arguments, which we omit for brevity.
\end{proof}

\subsection{Conditioning argument}

We now prove Theorem \ref{thm:rect} using a conditioning argument similar to the
symmetric square setting.
Recall the definition of $\blambda \in \RR^{\min(m,n)}$ from
(\ref{eq:lambdarect}). Let us define
\[\blambda_m \in \RR^m, \qquad \blambda_n \in \RR^n\]
to be this vector extended by $m-n$ and $n-m$ additional 0's,
respectively. Thus $\blambda=\blambda_m$ if $m \leq n$, and
$\blambda=\blambda_n$ if $n \leq m$. By Assumption \ref{assump:rect}(c), we then
have
\[\blambda_m \toW \Lambda_m, \qquad \blambda_n \toW \Lambda_n\]
where $\Lambda_m$ denotes a mixture of $\Lambda$ and the point mass at 0
when $\gamma>1$, and $\Lambda_n$ denotes such a mixture when $\gamma<1$.

Let $\tilde{\I}_{t-1}^{(2k+1)} \in \RR^{(t-1) \times t}$ denote the first $t-1$
rows of $\I_t^{(2k+1)} \in \RR^{t \times t}$. The following extended
lemma implies Theorem \ref{thm:rect}, where parts (b) and (e) identify the
almost sure limits of (\ref{eq:keylimitrect1}--\ref{eq:keylimitrect4}).

\begin{lemma}
Suppose Assumption \ref{assump:rect} holds. Almost surely for each
$t=1,2,3,\ldots$:
\begin{enumerate}[(a)]
\item For all fixed $j,k \geq 0$, there exist deterministic limit matrices
\begin{align*}
(\bDelta_t^\infty,\bPhi_t^\infty,\bTheta_t^{(j,\infty)},\B_t^\infty,\bOmega_t^\infty,\H_t^{(2k,\infty)},\tilde{\I}_{t-1}^{(2k+1,\infty)})
&=\lim_{m,n \to \infty}
(\bDelta_t,\bPhi_t,\bTheta_t^{(j)},\B_t,\bOmega_t,\H_t^{(2k)},\tilde{\I}_{t-1}^{(2k+1)})
\end{align*}
\item For some random variables $R_1,\ldots,R_t,\bar{P}_1,\ldots,\bar{P}_{t-1}$
with finite moments of all orders,
\[(\r_1,\ldots,\r_t,\bLambda\p_1,\ldots,\bLambda\p_{t-1},\blambda_m) \toW
(R_1,\ldots,R_t,\bar{P}_1,\ldots,\bar{P}_{t-1},\Lambda_m).\]
For each $k \geq 0$,
\begin{align*}
\EE[(R_1,\ldots,R_t)^\top \Lambda_m^{2k}(R_1,\ldots,R_t)]&=\H_t^{(2k,\infty)}\\
\EE[(\bar{P}_1,\ldots,\bar{P}_{t-1})^\top \Lambda_m^{2k}
(R_1,\ldots,R_t)]&=\tilde{\I}_{t-1}^{(2k+1,\infty)}.
\end{align*}
\item $(\v_1,\ldots,\v_t,\z_1,\ldots,\z_t,\F) \toW (V_1,\ldots,V_t,Z_1,\ldots,Z_t,F)$
as described in Theorem \ref{thm:rect}.
\item For all fixed $j,k \geq 0$, there exist deterministic limit matrices
\begin{align*}
(\bGamma_t^\infty,\bPsi_t^\infty,\bXi_t^{(j,\infty)},\A_t^\infty,\bSigma_t^\infty,\L_t^{(2k,\infty)},\J_t^{(2k+1,\infty)})
&=\lim_{m,n \to \infty}
(\bGamma_t,\bPsi_t,\bXi_t^{(j)},\A_t,\bSigma_t,\L_t^{(2k)},\J_t^{(2k+1)})
\end{align*}
\item For some random variables $P_1,\ldots,P_t,\bar{R}_1,\ldots,\bar{R}_t$ with
finite moments of all orders,
\[(\p_1,\ldots,\p_t,\bLambda^\top \r_1,\ldots,\bLambda^\top \r_t,\blambda_n)
\toW (P_1,\ldots,P_t,\bar{R}_1,\ldots,\bar{R}_t,\Lambda_n).\]
For each $k \geq 0$,
\begin{align*}
\EE[(\bar{R}_1,\ldots,\bar{R}_t)^\top \Lambda_n^{2k}(P_1,\ldots,P_t)]
&=\J_t^{(2k+1,\infty)},\\
\EE[(P_1,\ldots,P_t)^\top \Lambda_n^{2k}(P_1,\ldots,P_t)]
&=\L_t^{(2k,\infty)}.
\end{align*}
\item $(\u_1,\ldots,\u_{t+1},\y_1,\ldots,\y_t,\E)
\toW (U_1,\ldots,U_{t+1},Y_1,\ldots,Y_t,E)$
as described in Theorem \ref{thm:rect}.
\item The matrices
\[\begin{pmatrix} \bDelta_t^\infty & \bPhi_t^\infty \bSigma_t^\infty \\
\bSigma_t^\infty (\bPhi_t^\infty)^\top & \bSigma_t^\infty \end{pmatrix},
\qquad
\begin{pmatrix} \bGamma_t^\infty & \bPsi_t^\infty \bOmega_t^\infty \\
\bOmega_t^\infty (\bPsi_t^\infty)^\top & \bOmega_t^\infty \end{pmatrix}\]
are both non-singular.
\end{enumerate}
\end{lemma}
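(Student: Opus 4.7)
The plan is to prove the seven claims (a)--(g) simultaneously by induction on $t$, in close parallel to the symmetric square argument of Lemma \ref{lemma:main}, but with two conditioning steps per iteration corresponding to the two independent Haar factors $\O$ and $\Q$. At iteration $t$ the sub-claims will be established in the sequence $(a) \Rightarrow (b) \Rightarrow (c) \Rightarrow (d) \Rightarrow (e) \Rightarrow (f) \Rightarrow (g)$: the matrix-limit claims (a) and (d) will follow from the joint Wasserstein convergence of the iterates inherited from iteration $t-1$ together with Proposition \ref{prop:discontinuous} (to handle non-Lipschitz $\partial_s u_{t+1}$ and $\partial_s v_t$), where the non-singularity claim $t-1^{(g)}$ guarantees Lebesgue continuity at a.e.\ point of the appropriate Gaussian laws. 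The base case $t=1$ is handled directly: $\r_1 = \O \u_1$ has an empirical Gaussian limit by Proposition \ref{prop:orthognormal}; conditioning on $\r_1$ and applying Proposition \ref{prop:orthogconditioning} to $\Q$ lets us write $\s_1 = \Q^\top \bLambda^\top \r_1$ as the sum of a deterministic bias in the span of $\u_1$ (which equals $0$ in the rectangular setting since $b_{1s}$ is undefined and there is no Onsager term for $\z_1$) and a Haar-orthogonal rotation of the residual; the other iterates $\v_1, \p_1, \q_1, \y_1, \u_2$ then follow analogously, yielding the $t=1$ versions of (b)--(g).

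For the inductive step, the key move is to express each update as an Onsager-corrected rotation. To analyze $\r_{t+1} = \O \u_{t+1}$ we condition on all preceding iterates; Bolthausen's conditioning (Proposition \ref{prop:orthogconditioning}) shows $\O$ is conditioned on an event $\O(\U_t,\Y_t) = (\R_t, \bLambda \P_t)(\Id \oplus \cdots)$ for an appropriate change-of-basis, giving the decomposition $\r_{t+1} = \r_\parallel + \r_\perp$. The crucial computation is to identify the limit of the matrix $\bUpsilon_t$ appearing in $\r_\parallel$, and here is where Lemma \ref{lemma:upsilonrect}, specifically identity (\ref{eq:Upsiloninvrect}), does the essential bookkeeping: it expresses the products $\H_t^{(2k)}$ and $\I_t^{(2k+1)}$ arising in the limit as explicit matrix products involving $\bUpsilon_t$, $\bTheta_t^{(j)}$, and the partial moment coefficients $c_{2k,j}$. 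This lets us read off the forms of $\EE[\Lambda_m^{2k} R_{t+1} R_s]$ and $\EE[\Lambda_m^{2k} R_{t+1}^2]$ by equating appropriate block entries and then applying (\ref{eq:Upsiloninvrect2}) with $k$ replaced by $k+1$ to extract the extra $c_{2k+2,0}$ contribution from the Schur complement giving $\Var[R_\perp]$. This closes $(t+1)^{(b)}$. The passages from $(b) \to (c)$, $(d) \to (e)$, and $(e) \to (f)$ are all the same mechanism: condition on $\Q$ (resp.\ $\O$) given all preceding iterates, use Proposition \ref{prop:orthogconditioning}, and subtract the Onsager correction $a_{ts}$ or $b_{ts}$, which by construction of $\A_t, \B_t$ in (\ref{eq:ABrect}) and the identities (\ref{eq:I1rectA})--(\ref{eq:H2rect}) exactly cancels the projection of the bias onto the span of the previous $\U$ (resp.\ $\V$) iterates, leaving only a bias in the $\Y$ (resp.\ $\Z$) directions that produces the claimed multivariate Gaussian limit.

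The non-degeneracy claim (g) is verified, as in the square case, by showing the Schur complement of the appropriate lower-right entry of the limiting moment matrix is strictly positive: expressing this Schur complement via (\ref{eq:Upsiloninvrect}) and (\ref{eq:Tinvrect}) as the variance of the residual $R_\perp$ (resp.\ the analogous $P_\perp, S_\perp, Q_\perp$) in the projection onto the span of the preceding iterates, a degeneracy would produce an almost sure linear relation among $(U_1,\ldots,U_t, Y_1,\ldots,Y_{t-1}, U_{t+1})$ or among $(V_s, Z_s)$, contradicting Assumption \ref{assump:rect}(f). A small additional subtlety compared to the square case is that positivity of the relevant $\Var[S_\perp]$ or $\Var[Q_\perp]$ must be verified \emph{before} it is used in the subsequent conditioning step, so the logical ordering of (a)--(g) within the inductive step is forced.

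The main obstacle will be the combinatorial bookkeeping: one must propagate four families $\H_t^{(2k)}, \I_t^{(2k+1)}, \J_t^{(2k+1)}, \L_t^{(2k)}$ of ``partial moment'' matrices simultaneously, and at each conditioning step verify that the guessed form for the solution of the linear system $(\X^\top\X)\w = \v$ is correct via the identities of Lemma \ref{lemma:upsilonrect}; the recursion (\ref{eq:cbarc}) $\bar{c}_{2k+1,j} = \gamma \, c_{2k+1,j}$ and the asymmetry between the definitions of $\bXi_t^{(j)}$ and $\bTheta_t^{(j)}$ (reflecting the parity distinction in the partition sums) mean that each step has a $\gamma$-factor to track. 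Once these identities are in hand, however, the algebraic manipulations follow the template of Steps 2 and 3 of the proof of Lemma \ref{lemma:main} with essentially no new conceptual ingredients, and the induction closes.
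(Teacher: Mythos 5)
Your proposal is correct and mirrors the paper's proof closely: it is the same double induction on claims (a)--(g), with two alternating conditioning steps per iteration (one for $\O$ producing $\r_{t+1}$ and $\y_t$, one for $\Q$ producing $\z_{t+1}$ and $\p_{t+1}$), the same use of Lemma \ref{lemma:upsilonrect} to propagate the four families $\H_t^{(2k)}$, $\I_t^{(2k+1)}$, $\J_t^{(2k+1)}$, $\L_t^{(2k)}$ by equating block entries of $\bUpsilon_t$- and $\T_t$-products, and the same verification of non-degeneracy via strict positivity of the residual variances $\Var[R_\perp]$, $\Var[P_\perp]$, $\Var[S_\perp]$, $\Var[Q_\perp]$ (each shown by contradiction against Assumption \ref{assump:rect}(f) and the previously established positivity). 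The logical ordering you describe within the inductive step also matches the paper.

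Two small imprecisions in the sketch, neither of which affects your plan. First, in the base case $t=1$, when analyzing $\s_1 = \Q^\top \bLambda^\top \r_1$ there is no bias term and nothing to condition $\Q$ on: at that point $\Q$ is still independent of all preceding quantities, so one applies Proposition \ref{prop:orthognormal} directly rather than Proposition \ref{prop:orthogconditioning}; the first nontrivial conditioning of $\Q$ arises only at the update $\p_1=\Q\v_1$ on the event $\bLambda^\top\r_1=\Q\z_1$. Second, in the computation of $\EE[\Lambda_m^{2k}R_{t+1}^2]$ the contribution of the residual term is $c_{2k,0}^\infty\,\Var[R_\perp]$ (via $\EE[\Lambda_m^{2k}]=m_{2k}^\infty=c_{2k,0}^\infty$ and independence), and one then applies (\ref{eq:Upsiloninvrect2}) at the fixed order $k$ -- not an extra $c_{2k+2,0}$ contribution obtained by replacing $k$ with $k+1$.
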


\begin{proof}
Denote by $t^{(a)},t^{(b)},\ldots$ the claims of part (a), part (b), etc.\ up
to and including iteration $t$. We induct on $t$.
We will omit details of the argument that are similar to the proof
of Theorem \ref{thm:main}.\\

{\bf Step 1: $t=1$.} We have
$\bDelta_1 \to \bDelta_1^\infty=\EE[U_1^2]$, $\bPhi_1=0$, $\B_1=0$,
and $\kappa_{2k} \to \kappa_{2k}^\infty$ where
$\kappa_{2k}^\infty$ are the rectangular free cumulants defined by the limiting
moments
\begin{equation}\label{eq:mu2kinfty}
m_{2k}^\infty=\lim_{m,n \to \infty} \frac{1}{m}\sum_{i=1}^{\min(m,n)}
\lambda_i^{2k}=\EE[\Lambda_m^{2k}].
\end{equation}
Then also $(c_{2k,j},c_{2k+1,j},\bar{c}_{2k,j},\bar{c}_{2k+1,j}) \to
(c_{2k,j}^\infty,c_{2k+1,j}^\infty,\bar{c}_{2k,j}^\infty,
\bar{c}_{2k+1,j}^\infty)$,
and claim $1^{(a)}$ follows from the definitions.

Since $\r_1=\O\u_1$, by Proposition \ref{prop:orthognormal},
\begin{equation}\label{eq:r1rect}
(\r_1,\blambda_m) \toW (R_1,\Lambda_m)
\end{equation}
where $R_1 \sim \N(0,\EE[U_1^2])$ is independent of $\Lambda_m$. Then
$\EE[\Lambda_m^{2k}R_1^2]=\EE[\Lambda_m^{2k}]\EE[R_1^2]
=m_{2k}^\infty\EE[U_1^2]$. Note that $\bTheta_1^{(j)}=0$ for $j \geq 1$, so
$\H_1^{(2k)}=c_{2k,0}\bTheta_1^{(0)}=m_{2k}\langle \u_1^2 \rangle$,
the last equality using $m_{2k}=c_{2k,0}$ by Lemma \ref{lemma:ckjrect}.
Thus $\H_1^{(2k,\infty)}=m_{2k}^\infty \EE[U_1^2]$, and 
this shows $1^{(b)}$.

For $1^{(c)}$, note that $c_{2,0}=m_2=\kappa_2$. Then $1^{(b)}$ implies
\[\frac{1}{n}\|\bLambda^\top \r_1\|^2
=\frac{m}{n} \cdot \frac{1}{m}\sum_{i=1}^{\min(m,n)}
\lambda_i^2 r_{i1}^2 \to \gamma \cdot \EE[\Lambda_m^2R_1^2]
=\gamma \kappa_2^\infty\bDelta_1^{\infty}.\]
Since $\z_1=\Q^\top \bLambda^\top \r_1$,
Proposition \ref{prop:orthognormal} shows
\begin{equation}\label{eq:z1rect}
(\z_1,\F) \toW (Z_1,F)
\end{equation}
where $Z_1 \sim \N(0,\gamma \kappa_2^\infty \bDelta_1^\infty)$ is
independent of
$F$. Identifying $\bOmega_1^\infty=\gamma \kappa_2^\infty
\bDelta_1^\infty$ and applying Proposition \ref{prop:composition} for the joint 
convergence with $\v_1=v_1(\z_1,\F)$, this shows $1^{(c)}$.

Observe that $1^{(c)}$ implies
$\bGamma_1 \to \bGamma_1^\infty=\EE[V_1^2]$. As $\partial_1 v_1$
satisfies (\ref{eq:growth}) and is continuous on a set of probability 1 under
the limit law $(Z_1,F)$, we have also
$\bPsi_1 \to \bPsi_1^\infty=\EE[\partial_1 v_1(Z_1,F)]$
by Proposition \ref{prop:discontinuous}. Then $1^{(d)}$ follows from the
definitions.

For $1^{(e)}$, define $\check{\r}_1 \in \RR^n$ as the first $n$ entries of
$\r_1$ if $n \leq m$, or $\r_1$ extended by $n-m$ additional i.i.d.\
$\N(0,\EE[U_1^2])$ random variables if $n>m$. By Proposition
\ref{prop:orthognormal}(b),
\[(\check{\r}_1,\blambda_n) \toW (\check{R}_1,\Lambda_n)\]
where $\check{R}_1 \sim \N(0,\EE[U_1^2])$ is independent of $\Lambda_n$. Note
that $\bLambda^\top \r_1 \in \RR^n$ may be written as the entrywise product of
$\blambda_n$ with $\check{\r}_1$, in both cases $n \leq m$ and $n>m$. Thus
\[(\bLambda^\top \r_1,\blambda_n) \toW (\bar{R}_1,\Lambda_n),
\qquad \bar{R}_1=\Lambda_n \check{R}_1.\]
To analyze the joint convergence with $\p_1$, we now condition on
$\u_1,\r_1,\z_1,\v_1,\blambda,\E,\F$.
The law of $\Q$ is then conditioned on the event
$\bLambda^\top \r_1=\Q\z_1$. As $\Var[Z_1]=\gamma \kappa_2^\infty
\EE[U_1^2]>0$ by the given assumptions, we have
$n^{-1}\z_1^\top \z_1 \neq 0$ for all large $n$.
Then by Proposition \ref{prop:orthogconditioning},
the conditional law of $\Q$ is
\[\bLambda^\top \r_1(\z_1^\top \z_1)^{-1}\z_1^\top+\proj_{(\bLambda^\top
\r_1)^\perp} \tilde{\Q}\proj_{\z_1^\perp}^\top\]
where $\tilde{\Q} \in \RR^{(n-1) \times (n-1)}$ is an independent
Haar-orthogonal matrix. So we may replace the update
$\p_1=\Q\v_1$ by
\[\p_1=\p_\parallel+\p_\perp, \qquad
\p_\parallel=\bLambda^\top \r_1(\z_1^\top \z_1)^{-1}\z_1^\top \v_1, \qquad
\p_\perp=\proj_{(\bLambda^\top \r_1)^\perp} \tilde{\Q}
\proj_{\z_1^\perp}^\top\v_1.\]
By $1^{(c)}$ and Proposition \ref{prop:stein},
$n^{-1} \z_1^\top \z_1 \to \bOmega_1^\infty$ and $n^{-1} \z_1^\top \v_1 \to
\bPsi_1^\infty\bOmega_1^\infty$. Then by Proposition \ref{prop:scalarprod},
\[(\p_\parallel,\bLambda^\top \r_1,\blambda_n) \toW
(P_\parallel,\bar{R}_1,\Lambda_n), \qquad P_\parallel=\bPsi_1^\infty
\cdot \bar{R}_1.\]
For $\p_\perp$, observe that
\[n^{-1}\|\proj_{\z_1^\perp}^\top\v_1\|^2
=n^{-1}\left(\|\v_1\|^2-\frac{(\v_1^\top \z_1)^2}{\|\z_1\|^2}\right)
\to \bGamma_1^\infty-(\bPsi_1^\infty)^2\bOmega_1^\infty,\]
so Proposition \ref{prop:orthognormal} shows
\begin{equation}\label{eq:pperpiter1}
\p_\perp \toW P_\perp \sim
\N\Big(0,\,\bGamma_1^\infty-(\bPsi_1^\infty)^2\bOmega_1^\infty\Big)
\end{equation}
where $P_\perp$ is independent of $(\bar{R}_1,\Lambda_n)$. Then
\[(\p_1,\bLambda^\top \r_1,\blambda_n) \toW (P_1,\bar{R}_1,\Lambda_n), \qquad
P_1=\bPsi_1^\infty \cdot \bar{R}_1+P_\perp.\]
Since $P_\perp$ has mean 0 and is independent of $(\bar{R}_1,\Lambda_n)$, we
have
\begin{align*}
\EE[\Lambda_n^{2k}P_1\bar{R}_1]
&=\bPsi_1^\infty \cdot \EE[\Lambda_n^{2k}\bar{R}_1^2]\\
&=\bPsi_1^\infty \cdot
\lim_{n \to \infty} n^{-1} \r_1^\top \bLambda(\bLambda^\top
\bLambda)^k\bLambda^\top \r_1\\
&=\gamma \bPsi_1^\infty \cdot \lim_{n \to \infty} m^{-1}
\r_1^\top (\bLambda\bLambda^\top)^{k+1}\r_1\\
&=\gamma\bPsi_1^\infty  \cdot \EE[\Lambda_m^{2k+2}R_1^2]
=\gamma c_{2k+2,0}^\infty \bPsi_1^\infty \bDelta_1^\infty
\end{align*}
where the last equality applies $1^{(b)}$.
By the recursion (\ref{eq:ckjrecteven}) and identity (\ref{eq:cbarc}), we have
$c_{2k+2,0}=c_{2k+1,0}=\gamma^{-1} \bar{c}_{2k+1,0}$. We have also
$\X_1^{(0)}=\bPsi_1\bDelta_1$ and $\X_1^{(j)}=0$ for $j \geq 1$, because
$\bPhi_1=0$. Then $\J_1^{(2k+1)}=\bar{c}_{2k+1,0}\bPsi_1\bDelta_1$,
so the above is simply
\[\EE[\Lambda_n^{2k}P_1\bar{R}_1]=\J_1^{(2k+1,\infty)}.\]
Similarly, applying the above identity and also
$\EE[\Lambda_n^{2k}]=\bar{m}_{2k}^\infty=\bar{c}_{2k,0}^\infty$ by Lemma
\ref{lemma:ckjrect}, we have
\begin{align*}
\EE[\Lambda_n^{2k}P_1^2]&=(\bPsi_1^\infty)^2 \cdot
\EE[\Lambda_n^{2k}\bar{R}_1^2]+\EE[\Lambda_n^{2k}P_\perp^2]\\
&=\bar{c}_{2k+1,0}^\infty (\bPsi_1^\infty)^2\bDelta_1^\infty
+\bar{c}_{2k,0}^\infty \cdot
(\bGamma_1^\infty-(\bPsi_1^\infty)^2\bOmega_1^\infty).
\end{align*}
Identifying $\bOmega_1=\gamma \kappa_2 \bDelta_1=\bar{\kappa}_2\bDelta_1$,
applying $\bar{c}_{2k+1,0}=\bar{c}_{2k,0}\bar{\kappa}_2
+\bar{c}_{2k,1}$ by (\ref{eq:barckjrectodd}), and then identifying
$\bXi_1^{(0)}=\bGamma_1$, $\bXi_1^{(1)}=\bPsi_1^2\bDelta_1$,
and $\bXi_1^{(j)}=0$ for $j \geq 2$, this is
\[\EE[\Lambda_n^{2k}P_1^2]=
(\bar{c}_{2k+1,0}^\infty-\bar{c}_{2k,0}^\infty
\bar{\kappa}_2^\infty)(\bPsi_1^\infty)^2\bDelta_1^\infty
+\bar{c}_{2k,0}^\infty\bGamma_1^\infty
=\bar{c}_{2k,1}^\infty \bXi_1^{(1,\infty)}+\bar{c}_{2k,0}^\infty
\bXi_1^{(0,\infty)}=\L_1^{(2k,\infty)}.\]
This shows $1^{(e)}$.

For $1^{(f)}$, we now condition on
$\u_1,\r_1,\z_1,\v_1,\p_1,\blambda,\E,\F$. Then the law of $\O$ is
conditioned on the event $\r_1=\O\u_1$. By assumption,
$\EE[U_1^2]>0$, so $m^{-1}\u_1^\top \u_1 \neq 0$ for all large $m$.
Then the conditional law of $\O$ is
\[\r_1(\u_1^\top
\u_1)^{-1}\u_1^\top+\proj_{\r_1^\perp}\tilde{\O}\proj_{\u_1^\perp}^\top.\]
So the update for $\q_1=\O^\top \bLambda \p_1$ may be replaced by
\[\q_1=\q_\parallel+\q_\perp, \qquad 
\q_\parallel=\u_1(\u_1^\top \u_1)^{-1}\r_1^\top \bLambda \p_1, \qquad
\q_\perp=\proj_{\u_1^\perp}\tilde{\O}^\top\proj_{\r_1^\perp}^\top\bLambda \p_1.\]
Applying $1^{(e)}$ and the above observation
$\J_1^{(1)}=\bar{c}_{1,0}\bPsi_1\bDelta_1=\gamma \kappa_2 \bPsi_1\bDelta_1$,
we have
\begin{align*}
\frac{1}{m}\r_1^\top \bLambda \p_1
&=\frac{n}{m} \cdot \frac{1}{n} \r_1^\top \bLambda \p_1
\to \gamma^{-1} \J_1^{(1,\infty)}=
\kappa_2^\infty \bPsi_1^\infty \bDelta_1^\infty
=a_{11}^\infty \bDelta_1^\infty,\\
\frac{1}{m}\|\proj_{\r_1^\perp}^\top \bLambda \p_1\|^2
&=\frac{n}{m} \cdot \frac{1}{n}\left(\|\bLambda \p_1\|^2
-\frac{(\r_1^\top \bLambda \p_1)^2}{\|\r_1\|^2} \right)\\
&\to \gamma^{-1}
\cdot\left(\L_1^{(2,\infty)}-\frac{(\J_1^{(1,\infty)})^2}{\gamma \cdot
\bDelta_1^\infty}\right)
=\left(\gamma^{-1} \cdot
\left(\bar{c}_{2,1}\bXi_1^{(1)}+\bar{c}_{2,0} \bXi_1^{(0)}
-\gamma \kappa_2^2 \bPsi_1^2\bDelta_1\right)\right)^{\infty}.
\end{align*}
Recalling from the above that $\bXi_1^{(1)}=\bPsi_1^2\bDelta_1$, and applying
$\gamma^{-1}\bar{c}_{2,1}=\gamma^{-1}(\bar{c}_{1,0}\kappa_2+\bar{c}_{1,1})
=\kappa_2^2+\kappa_4$ and
$\gamma^{-1}\bar{c}_{2,0}=\gamma^{-1}\bar{c}_{1,0}=\kappa_2$, this yields
\begin{equation}\label{eq:Sigma1iter1}
\frac{1}{m}\|\proj_{\r_1^\perp}^\top \bLambda \p_1\|^2
\to \kappa_4^\infty\bXi_1^{(1,\infty)}+\kappa_2^\infty
\bXi_1^{(0,\infty)}=\bSigma_1^\infty.
\end{equation}
So $\q_\perp \to Q_\perp \sim \N(0,\bSigma_1^\infty)$ where this is independent
of $(U_1,E)$, and
\[(\q_1,\u_1,\E) \toW (a_{11}^\infty U_1+Q_\perp,U_1,E).\]
Then applying $\y_1=\q_1-a_{11}\u_1$, $a_{11} \to a_{11}^\infty$ by $1^{(d)}$,
and Propositions \ref{prop:scalarprod} and \ref{prop:composition},
\[(\u_1,\u_2,\y_1,\E) \toW (U_1,U_2,Y_1,E)\]
where $Y_1=Q_\perp \sim \N(0,\bSigma_1^\infty)$ and $U_2=u_2(Y_1,E)$. This
yields $1^{(f)}$.

For $1^{(g)}$, observe first that $\bDelta_1^\infty=\EE[U_1^2]>0$, $\bPhi_1=0$,
and $\bOmega_1^\infty=\gamma \kappa_2^\infty \EE[U_1^2]>0$ by the given
assumptions. The Schur-complement
$\bGamma_1^\infty-(\bPsi_1^\infty)^2\bOmega_1^\infty$ in the second matrix of
$1^{(g)}$
is the residual variance of projecting of $V_1$ onto the span of $Z_1$, which is
positive by Assumption \ref{assump:rect}(f), so the second matrix of $1^{(g)}$
is invertible. By (\ref{eq:pperpiter1}), this shows also that
\[\Var[P_\perp]>0.\]
For the first matrix of $1^{(g)}$, it remains to show that $\bSigma_1^\infty>0$.
Note that by (\ref{eq:Sigma1iter1}), $\bSigma_1^\infty$ is the residual variance
of projecting $\bar{P}_1$ onto the span of $R_1$. If this were 0, then
$\bar{P}_1=\alpha R_1$ for some constant $\alpha \in \RR$ with probability 1.
Applying $1^{(b)}$ and $1^{(e)}$, we then have
\[0=\EE[\Lambda_m^2(\bar{P}_1-\alpha R_1)^2]
=\lim_{m,n \to \infty} m^{-1}\|\bLambda^\top \bLambda \p_1-\alpha \bLambda^\top \r_1\|^2
=\gamma^{-1} \cdot \EE[(\Lambda_n^2 P_1-\alpha \bar{R}_1)^2],\]
so also $\Lambda_n^2P_1=\alpha \bar{R}_1$ with probability 1.
Recalling $P_1=\bPsi_1^\infty \cdot \bar{R}_1+P_\perp$, this shows
$\Lambda_n^2 P_\perp=(\alpha-\bPsi_1^\infty \cdot \Lambda_n^2)\bar{R}_1$.
Since $\Lambda_n^2$ is not identically 0, and $P_\perp$ is
independent of $(\Lambda_n,\bar{R}_1)$, we must then have that $P_\perp$ is
constant with probability 1, but this contradicts that $P_\perp \sim
\N(0,\Var[P_\perp])$ where $\Var[P_\perp]>0$ as argued above. So
$\bSigma_1^\infty>0$, and the first matrix of $1^{(g)}$ is also invertible.\\

{\bf Step 2: Analysis of $\r_{t+1}$.} Suppose that $t^{(a-g)}$ hold. To show
$t+1^{(a)}$, observe that the limits $\bDelta_{t+1}^\infty$ and
$\bPhi_{t+1}^\infty$ exist by $t^{(f)}$,
Proposition \ref{prop:discontinuous}, and the given
conditions for the functions $\partial_{s'} u_s$. Furthermore,
by $t^{(d)}$ and $t^{(f)}$, the limits
$\bTheta_{t+1}^{(j,\infty)}$, $\B_{t+1}^\infty$, $\bOmega_{t+1}^\infty$, and
$\H_{t+1}^{(2k,\infty)}$ also exist because these matrices do not depend on
$\v_{t+1}$ or its derivatives. Each term constituting $\X_{t+1}^{(j)}$
in (\ref{eq:Xrect})
may be written as either $\bPsi_{t+1}$ or $\bGamma_{t+1}\bPhi_{t+1}^\top$ times
a matrix that depends only on $\bDelta_{t+1}$, $\bPhi_{t+1}\bPsi_{t+1}$,
and $\bPhi_{t+1}\bGamma_{t+1}\bPhi_{t+1}^\top$.
Then the first $t$ rows of $\X_{t+1}^{(j)}$ also do not depend on $\v_{t+1}$ or
its derivatives, so $\tilde{\I}_t^{(2k+1,\infty)}$ exists. This establishes
$t+1^{(a)}$.

Let us now show $t+1^{(b)}$. Define the matrices
\[\U_t=\begin{pmatrix} \u_1 & \cdots & \u_t \end{pmatrix},
\quad \R_t=\begin{pmatrix} \r_1 & \cdots & \r_t \end{pmatrix}, 
\quad \Z_t=\begin{pmatrix} \z_1 & \cdots & \z_t \end{pmatrix},\]
\[\V_t=\begin{pmatrix} \v_1 & \cdots & \v_t \end{pmatrix},
\quad \P_t=\begin{pmatrix} \p_1 & \cdots & \p_t \end{pmatrix}, 
\quad \Y_t=\begin{pmatrix} \y_1 & \cdots & \y_t \end{pmatrix}.\]
Conditional on the AMP iterates up to $\u_{t+1}$,
the law of $\O$ is conditioned on
\[\begin{pmatrix} \R_t & \bLambda \P_t \end{pmatrix}
\begin{pmatrix} \Id & -\A_t \\ \0 & \Id \end{pmatrix}
=\O\begin{pmatrix} \U_t & \Y_t \end{pmatrix}.\]
Let us introduce
\[\M_t=m^{-1}\begin{pmatrix} \U_t^\top \U_t & \U_t^\top \Y_t \\
\Y_t^\top \U_t & \Y_t^\top \Y_t \end{pmatrix},\]
noting that by $t^{(f)}$, $t^{(g)}$, and Proposition \ref{prop:stein},
\[\M_t \to \M_t^\infty
=\begin{pmatrix} \bDelta_t^\infty & \bPhi_t^\infty \bSigma_t^\infty \\
\bSigma_t^\infty (\bPhi_t^\infty)^\top & \bSigma_t^\infty \end{pmatrix}\]
where $\M_t^\infty$ is invertible. Then by
Proposition \ref{prop:orthogconditioning}, for all large $n$,
the conditional law of $\O$ is
\[\begin{pmatrix} \R_t & \bLambda \P_t \end{pmatrix}
\begin{pmatrix} \Id & -\A_t \\ \0 & \Id \end{pmatrix}
\M_t^{-1} \cdot m^{-1}\begin{pmatrix} \U_t^\top \\ \Y_t^\top \end{pmatrix}
+\proj_{(\R_t,\bLambda \P_t)^\perp}
\tilde{\O}\proj_{(\U_t,\Y_t)^\perp}^\top\]
where $\tilde{\O} \in \RR^{(m-2t) \times (m-2t)}$ is an independent
Haar-orthogonal matrix. We may thus replace the
update for $\r_{t+1}$ by
\begin{align*}
\r_{t+1}&=\r_\parallel+\r_\perp\\
\r_\parallel&=\begin{pmatrix} \R_t & \bLambda \P_t \end{pmatrix}
\begin{pmatrix} \Id & -\A_t \\ \0 & \Id \end{pmatrix} \M_t^{-1} \cdot m^{-1}
\begin{pmatrix} \U_t^\top \\ \Y_t^\top \end{pmatrix}\u_{t+1}\\
\r_\perp&=\proj_{(\R_t,\bLambda \P_t)^\perp}
\tilde{\O}\proj_{(\U_t,\Y_t)^\perp}^\top \u_{t+1}.
\end{align*}

For $\r_\parallel$, define
\[\bdelta_t^\infty=\begin{pmatrix} \EE[U_1U_{t+1}] \\ \vdots \\ \EE[U_tU_{t+1}]
\end{pmatrix} \in \RR^t, \qquad
\bphi_t^\infty=\begin{pmatrix} \EE[\partial_1 u_{t+1}(Y_1,\ldots,Y_t,E)] \\
\vdots \\ \EE[\partial_t u_{t+1}(Y_1,\ldots,Y_t,E)] \end{pmatrix} \in \RR^t,\]
which are the last columns of $\bDelta_{t+1}^\infty$ and
$(\bPhi_{t+1}^\infty)^\top$ with their last entries removed. Observe that
\[m^{-1}\U_t^\top \u_{t+1} \to \bdelta_t^\infty,
\qquad m^{-1} \Y_t^\top \u_{t+1}
\to \bSigma_t^\infty \bphi_t^\infty.\]
Then by arguments similar to the proof of Theorem \ref{thm:main},
\begin{equation}\label{eq:Rparallelrect}
\r_\parallel \toW R_\parallel=\begin{pmatrix} R_1 & \cdots & R_t &
\bar{P}_1 & \cdots & \bar{P}_t \end{pmatrix}
(\bUpsilon_t^\infty)^{-1}\begin{pmatrix} \bdelta_t^\infty \\ \bphi_t^\infty
\end{pmatrix}
\end{equation}
and $\bUpsilon_t^\infty$ is the limit of $\bUpsilon_t$ defined in
(\ref{eq:Upsilonrect}). Also,
\begin{equation}\label{eq:Rperprect}
\r_\perp \toW R_\perp \sim \N\left(0,\;\EE[U_{t+1}^2]
-\left(\begin{pmatrix} \bdelta_t \\ \bSigma_t\bphi_t \end{pmatrix}^\top
\begin{pmatrix} \bDelta_t & \bPhi_t \bSigma_t \\
\bSigma_t\bPhi_t^\top & \bSigma_t \end{pmatrix}^{-1}
\begin{pmatrix} \bdelta_t \\ \bSigma_t\bphi_t
\end{pmatrix}\right)^\infty\right)
\end{equation}
where this limit $R_\perp$ is independent of
$(R_1,\ldots,R_t,\bar{P}_1,\ldots,\bar{P}_t,\Lambda_m)$. So
\begin{equation}\label{eq:Rconvergencerect}
(\r_1,\ldots,\r_{t+1},\bLambda \p_1,\ldots,\bLambda \p_t,\blambda_m)
\toW (R_1,\ldots,R_{t+1},\bar{P}_1,\ldots,\bar{P}_t,\Lambda_m), 
\qquad R_{t+1}=R_\parallel+R_\perp.
\end{equation}
We have
\begin{equation}\label{eq:varRperprect}
\Var[R_\perp]>0
\end{equation}
because this is the residual variance of projecting
$U_{t+1}$ onto the span of $(U_1,\ldots,U_t,Y_1,\ldots,Y_t)$, which is positive
by Assumption \ref{assump:rect}(f).

Let us now introduce the block notation
\begin{equation}\label{eq:HIblockrect}
\H_{t+1}^{(2k,\infty)}=\begin{pmatrix} \H_t^{(2k,\infty)} &
\h_t^{(2k,\infty)} \\ (\h_t^{(2k,\infty)})^\top & h_{t+1,t+1}^{(2k,\infty)}
\end{pmatrix}, \qquad
\tilde{\I}_t^{(2k+1,\infty)}=\begin{pmatrix} \I_t^{(2k+1,\infty)} &
\i_t^{(2k+1,\infty)} \end{pmatrix}.
\end{equation}
To conclude the proof of $t+1^{(b)}$, it remains to show that
\begin{align}
\EE[(R_1,\ldots,R_t)^\top \Lambda_m^{2k}R_{t+1}]
&=\h_t^{(2k,\infty)}\label{eq:RtRtplus1rect}\\
\EE[(\bar{P}_1,\ldots,\bar{P}_t)^\top \Lambda_m^{2k}R_{t+1}]
&=\i_t^{(2k+1,\infty)}\label{eq:PtRtplus1rect}\\
\EE[\Lambda_m^{2k}R_{t+1}^2]&=h_{t+1,t+1}^{(2k,\infty)}.\label{eq:Rtplus1rect}
\end{align}

For (\ref{eq:RtRtplus1rect}), observe that by $t^{(b)}$ and $t^{(e)}$, we have
\[\EE[(R_1,\ldots,R_t)^\top \Lambda_m^{2k} (R_1,\ldots,R_t)]
=\H_t^{(2k,\infty)}\]
and
\begin{align*}
\EE[(R_1,\ldots,R_t)^\top \Lambda_m^{2k} (\bar{P}_1,\ldots,\bar{P}_t)]
&=\lim_{m,n \to \infty} \frac{1}{m}\R_t^\top (\bLambda\bLambda^\top)^{2k}
\bLambda \P_t\\
&=\gamma^{-1} \lim_{m,n \to \infty}
\frac{1}{n}\R_t^\top\bLambda (\bLambda^\top\bLambda)^{2k} \P_t\\
&=\gamma^{-1}
\EE[(\bar{R}_1,\ldots,\bar{R}_t)^\top \Lambda_n^{2k} (P_1,\ldots,P_t)]\\
&=\gamma^{-1} \J_t^{(2k+1,\infty)}=(\I_t^{(2k+1,\infty)})^\top,
\end{align*}
the last equality applying (\ref{eq:IJ}).
Then applying (\ref{eq:Rconvergencerect}),
(\ref{eq:Rparallelrect}), and the independence of $R_\perp$ from
$(R_1,\ldots,R_t,\bar{P}_1,\ldots,\bar{P}_t,\Lambda_m)$, we have
\[\EE[(R_1,\ldots,R_t)^\top \Lambda_m^{2k}R_{t+1}]
=\left(\begin{pmatrix} \H_t^{2k} & (\I_t^{2k+1})^\top
\end{pmatrix} \bUpsilon_t^{-1} \begin{pmatrix}
\bdelta_t \\ \bphi_t \end{pmatrix}\right)^\infty.\]
Applying the first row of the identity (\ref{eq:Upsiloninvrect}),
and the definitions of $\bTheta_{t+1}^{(j)}$, $\X_{t+1}^{(j)}$,
and $\H_{t+1}^{(2k)}$ from (\ref{eq:Thetarect}), (\ref{eq:Xrect}),
and (\ref{eq:HIJLrect}),
\begin{align*}
\EE[(R_1,\ldots,R_t)^\top \Lambda_m^{2k}R_{t+1}]
&=\left(\sum_{j=0}^\infty c_{2k,j} (\bPhi_t\bPsi_t)^j \bdelta_t
+\sum_{j=0}^\infty c_{2k,j+1} (\X_t^{(j)})^\top\bphi_t\right)^\infty\\
&=\left(\sum_{j=0}^\infty c_{2k,j} (\bPhi_{t+1}\bPsi_{t+1})^j \bDelta_{t+1}
+\sum_{j=0}^\infty c_{2k,j+1} (\X_{t+1}^{(j)})^\top\bPhi_{t+1}^\top
\right)_{1:t,\,t+1}^\infty\\
&=\left(\sum_{j=0}^\infty c_{2k,j}\bTheta_{t+1}^{(j)}\right)_{1:t,\,t+1}^\infty
=\h_t^{(2k,\infty)}.
\end{align*}

For (\ref{eq:PtRtplus1rect}), observe that also by $t^{(e)}$,
\begin{align*}
\EE[(\bar{P}_1,\ldots,\bar{P}_t)^\top \Lambda_m^{2k} (\bar{P}_1,\ldots,\bar{P}_t)]
&=\lim_{m,n \to \infty} \frac{1}{m}\P_t^\top \bLambda^\top
(\bLambda\bLambda^\top)^{2k} \bLambda \P_t\\
&=\gamma^{-1} \lim_{m,n \to \infty}
\frac{1}{n}\P_t^\top(\bLambda^\top\bLambda)^{2k+2} \P_t\\
&=\gamma^{-1} \cdot \EE[(P_1,\ldots,P_t)^\top \Lambda_n^{2k+2}(P_1,\ldots,P_t)]
=\gamma^{-1} \cdot \L_t^{(2k+2,\infty)}.
\end{align*}
So
\[\EE[(\bar{P}_1,\ldots,\bar{P}_t)^\top \Lambda_m^{2k}R_{t+1}]
=\left(\begin{pmatrix} \I_t^{(2k+1)} & \gamma^{-1} \cdot
\L_t^{(2k+2)} \end{pmatrix}
\bUpsilon_t^{-1} \begin{pmatrix} \bdelta_t \\ \bphi_t
\end{pmatrix}\right)^\infty.\]
Applying the second row of the identity (\ref{eq:Upsiloninvrect}),
\begin{align*}
\EE[(\bar{P}_1,\ldots,\bar{P}_t)^\top \Lambda_m^{2k}R_{t+1}]
&=\left(\sum_{j=0}^\infty c_{2k+1,j} (\bPsi_t\bPhi_t)^j \bPsi_t \bdelta_t
+\sum_{j=0}^\infty c_{2k+1,j} \bXi_t^{(j)}\bphi_t\right)^\infty\\
&=\left(\sum_{j=0}^\infty c_{2k+1,j} (\bPsi_{t+1}\bPhi_{t+1})^j
\bPsi_{t+1} \bDelta_{t+1}+\sum_{j=0}^\infty c_{2k+1,j}
\bXi_{t+1}^{(j)}\bPhi_{t+1}^\top\right)_{1:t,\,t+1}^\infty\\
&=\left(\sum_{j=0}^\infty c_{2k+1,j}\X_{t+1}^{(j)}\right)_{1:t,\,t+1}^\infty
=\i_t^{(2k+1,\infty)}.
\end{align*}

For (\ref{eq:Rtplus1rect}), applying again (\ref{eq:Rconvergencerect}) and a 
computation similar to the proof of Theorem \ref{thm:main}, we have
\[\EE[\Lambda_m^{2k}R_{t+1}^2]
=\left(\begin{pmatrix} \bdelta_t \\ \bphi_t \end{pmatrix}^\top
(\bUpsilon_t^{-1})^\top \begin{pmatrix} \H_t^{(2k)} & (\I_t^{(2k+1)})^\top \\
\I_t^{(2k+1)} & \gamma^{-1} \cdot \L_t^{(2k+2)} \end{pmatrix} \bUpsilon_t^{-1}
\begin{pmatrix} \bdelta_t \\ \bphi_t \end{pmatrix}\right)^\infty
+\EE[\Lambda_m^{2k}R_\perp^2]\]
where, by independence of $R_\perp$ and $\Lambda_m$,
\[\EE[\Lambda_m^{2k}R_\perp^2]
=c_{2k,0}^\infty \left(\EE[U_{t+1}^2]-
\left(\begin{pmatrix} \bdelta_t \\ \bphi_t \end{pmatrix}^\top
(\bUpsilon_t^{-1})^\top \begin{pmatrix} \H_t^{(0)} & (\I_t^{(1)})^\top \\
\I_t^{(1)} & \gamma^{-1} \cdot \L_t^{(2)} \end{pmatrix} \bUpsilon_t^{-1}
\begin{pmatrix} \bdelta_t \\ \bphi_t \end{pmatrix}\right)^\infty\right).\]
Combining these and applying the identity (\ref{eq:Upsiloninvrect2}),
\begin{align*}
&\EE[\Lambda_m^{2k}R_{t+1}^2]\\
&=c_{2k,0}^\infty\EE[U_{t+1}^2]+\left(\sum_{j=0}^\infty c_{2k,j+1}
\left(\bdelta_t^\top \bPsi_t^\top (\bPhi_t^\top \bPsi_t^\top)^j \bphi_t
+\bphi_t^\top (\bPsi_t\bPhi_t)^j\bPsi_t\bdelta_t
+\bphi_t^\top \bXi_t^{(j)}\bphi_t\right)\right)^\infty\\
&=\left(c_{2k,0}\bDelta_{t+1}
+\sum_{j=1}^\infty c_{2k,j}
\Big(\bDelta_{t+1}(\bPsi_{t+1}^\top\bPhi_{t+1}^\top)^j
+(\bPhi_{t+1}\bPsi_{t+1})^j \bDelta_{t+1}
+\bPhi_{t+1} \bXi_{t+1}^{(j-1)}\bPhi_{t+1}^\top\Big)\right)_{t+1,t+1}^\infty\\
&=\left(\sum_{j=0}^\infty c_{2k,j} \bTheta_{t+1}^{(j)}\right)_{t+1,t+1}^\infty
=h_{t+1,t+1}^{(2k,\infty)}.
\end{align*}
This completes the proof of $t+1^{(b)}$.

Let us make here the following additional observation: This also shows
\begin{equation}\label{eq:PbarRpartial}
(\p_1,\ldots,\p_t,\bLambda^\top \r_1,\ldots,\bLambda^\top \r_{t+1},
\blambda_n) \toW (P_1,\ldots,P_t,\bar{R}_1,\ldots,\bar{R}_{t+1},\Lambda_n)
\end{equation}
for a certain limit $\bar{R}_{t+1}$, which is part of the claim in $t+1^{(e)}$.
This is because from the decomposition
$\r_{t+1}=\r_\parallel+\r_\perp$, we have
$\bLambda^\top \r_{t+1}=\bLambda^\top \r_\parallel+\bLambda^\top \r_\perp$.
From the form of $\r_\parallel$ and claim $t^{(e)}$, we have
\[\bLambda^\top \r_\parallel \toW
\bar{R}_\parallel=\begin{pmatrix} \bar{R}_1 & \cdots & \bar{R}_t &
\Lambda_n^2 P_1 & \cdots & \Lambda_n^2 P_t \end{pmatrix}
(\bUpsilon_t^\infty)^{-1} \begin{pmatrix} \bdelta_t^\infty \\ \bphi_t^\infty
\end{pmatrix}.\]
For $\bLambda^\top \r_\perp$, let us define $\check{\r}_\perp \in \RR^n$
to be the first $n$ entries of $\r_\perp$ if $n \leq m$, or
$\r_\perp$ extended by an additional $n-m$
i.i.d.\ $\N(0,\Var[R_\perp])$ variables if $n>m$.
By Proposition \ref{prop:orthognormal}(b), $\check{\r}_\perp \toW
\check{R}_\perp$
in both cases, where this limit $\check{R}_\perp$ has the same normal law as
$R_\perp$ above,
and is independent of $(P_1,\ldots,P_t,\bar{R}_1,\ldots,\bar{R}_t,\Lambda_n)$. 
Since $\bLambda^\top \r_\perp$ is the entrywise product of $\blambda_n$
with $\check{\r}_\perp$, this shows that (\ref{eq:PbarRpartial}) holds where
\[\bar{R}_{t+1}=\bar{R}_\parallel+\Lambda_n \check{R}_\perp.\]
Furthermore,
\begin{align}
\EE[(P_1,\ldots,P_t)^\top \Lambda_n^{2k}(\bar{R}_1,\ldots,\bar{R}_{t+1})]
&=\lim_{m,n \to \infty} n^{-1} \P_t^\top (\bLambda^\top \bLambda)^{2k}
\bLambda^\top \R_{t+1}\nonumber\\
&=\gamma \cdot \EE[(\bar{P}_1,\ldots,\bar{P}_t)^\top
\Lambda_m^{2k}(R_1,\ldots,R_{t+1})]\nonumber\\
&=\gamma \cdot \tilde{\I}_t^{(2k+1,\infty)},\label{eq:PbarRpartialmoment}\\
\EE[(\bar{R}_1,\ldots,\bar{R}_{t+1})^\top \Lambda_n^{2k}
(\bar{R}_1,\ldots,\bar{R}_{t+1})]
&=\lim_{m,n \to \infty} n^{-1}
\R_{t+1}^\top \bLambda(\bLambda^\top\bLambda)^k \bLambda^\top \R_{t+1}\nonumber\\
&=\gamma \cdot
\EE[(R_1,\ldots,R_{t+1})^\top\Lambda_m^{2k+2}(R_1,\ldots,R_{t+1})]\nonumber\\
&=\gamma \cdot \H_{t+1}^{(2k+2,\infty)}.\label{eq:RbarRpartialmoment}
\end{align}

{\bf Step 3: Analysis of $\z_{t+1}$.} We now show $t+1^{(c)}$.
Conditioning on the iterates up to $\r_{t+1}$, the law of $\Q$ is conditioned on
\[\begin{pmatrix} \P_t & \bLambda^\top \R_t \end{pmatrix}
\begin{pmatrix} \Id & -\B_t \\ \0 & \Id \end{pmatrix}
=\Q\begin{pmatrix} \V_t & \Z_t \end{pmatrix}\]
Set
\[\bN_t=n^{-1}\begin{pmatrix} \V_t^\top \V_t & \V_t^\top \Z_t \\
\Z_t^\top \V_t & \Z_t^\top \Z_t \end{pmatrix},\]
and note that by $t^{(c)}$, $t^{(g)}$, and Proposition \ref{prop:stein},
\[\bN_t \to \bN_t^\infty
=\begin{pmatrix} \bGamma_t^\infty & \bPsi_t^\infty \bOmega_t^\infty
\\ \bOmega_t^\infty (\bPsi_t^\infty)^\top & \bOmega_t^\infty \end{pmatrix}\]
where $\bN_t^\infty$ is invertible. Thus, for all large $n$,
the conditional law of $\Q$ is
\[\begin{pmatrix} \P_t & \bLambda^\top \R_t \end{pmatrix}
\begin{pmatrix} \Id & -\B_t \\ \0 & \Id \end{pmatrix}
\bN_t^{-1} \cdot n^{-1}\begin{pmatrix} \V_t^\top \\ \Z_t^\top \end{pmatrix}
+\proj_{(\P_t,\bLambda^\top \R_t)^\perp}
\tilde{\Q}\proj_{(\V_t,\Z_t)^\perp}^\top\]
where $\tilde{\Q} \in \RR^{(n-2t) \times (n-2t)}$ is an independent
Haar-orthogonal matrix. So we may replace the update
for $\s_{t+1}=\Q^\top \bLambda^\top \r_{t+1}$ by
\begin{align*}
\s_{t+1}&=\s_\parallel+\s_\perp\\
\s_\parallel&=\begin{pmatrix} \V_t & \Z_t \end{pmatrix} \bN_t^{-1}
\begin{pmatrix} \Id & \0 \\ -\B_t^\top & \Id \end{pmatrix}
\cdot n^{-1} \begin{pmatrix} \P_t^\top \\ \R_t^\top \bLambda
\end{pmatrix} \bLambda^\top \r_{t+1},\\
\s_\perp&=\proj_{(\V_t,\Z_t)^\perp} \tilde{\Q}^\top
\proj_{(\P_t,\bLambda^\top \R_t)^\perp}^\top \bLambda^\top \r_{t+1}
\end{align*}
Applying $t+1^{(b)}$ shown above, and recalling the block notation
(\ref{eq:HIblockrect}), observe that
\begin{align*}
n^{-1}\P_t^\top \bLambda^\top \r_{t+1} &\to
\gamma \cdot \EE[(\bar{P}_1,\ldots,\bar{P}_t)^\top R_{t+1}]
=\gamma \cdot \i_t^{(1,\infty)}\\
n^{-1} \R_t^\top \bLambda\bLambda^\top \r_{t+1}
&\to \gamma \cdot \EE[(R_1,\ldots,R_t)^\top \Lambda_m^2 R_{t+1}]
=\gamma \cdot \h_t^{(2,\infty)}.
\end{align*}
Then by a computation analogous to the proof of Theorem \ref{thm:main},
\[\bN_t^{-1}
\begin{pmatrix} \Id & \0 \\ -\B_t^\top & \Id \end{pmatrix}
\cdot n^{-1} \begin{pmatrix} \P_t^\top \\ \R_t^\top \bLambda
\end{pmatrix} \bLambda^\top \r_{t+1}
\to \left(\begin{pmatrix} \Id & 0 \\ 0 & \bOmega_t^{-1}
\end{pmatrix} (\T_t^{-1})^\top \begin{pmatrix} \gamma \cdot \i_t^{(1)} \\ \gamma \cdot \h_t^{(2)} \end{pmatrix}\right)^\infty\]
where $\T_t$ is as defined in (\ref{eq:Upsilonrect}). Applying the second row of
the identity (\ref{eq:Tinvrect}) with $t+1$ and with $k=0$,
and recalling $\bar{c}_{1,j}=\bar{\kappa}_{2(j+1)}=\gamma \cdot \kappa_{2(j+1)}$, we have
\[\begin{pmatrix} (\J_{t+1}^{(1)})^\top
\\ \gamma \cdot \H_{t+1}^{(2)} \end{pmatrix}
=\T_{t+1}^\top \begin{pmatrix} \sum_{j=0}^\infty \gamma \cdot \kappa_{2(j+1)}
\bPhi_{t+1}^\top (\bPsi_{t+1}^\top \bPhi_{t+1}^\top)^j \\
\sum_{j=0}^\infty \gamma \cdot \kappa_{2(j+1)} \bTheta_{t+1}^{(j)} \end{pmatrix}
=\T_{t+1}^\top \begin{pmatrix} \B_{t+1} \\ \bOmega_{t+1} \end{pmatrix}.\]
Writing the block forms
\[\B_{t+1}=\begin{pmatrix} \B_t & \b_t \\ 0 & 0 \end{pmatrix},
\qquad \bOmega_{t+1}=\begin{pmatrix} \bOmega_t & \bomega_t \\ \bomega_t^\top &
\omega_{t+1,t+1} \end{pmatrix},\]
and applying $(\J_{t+1}^{(1)})^\top=\gamma \cdot \I_{t+1}^{(1)}$,
this yields
\[\gamma \cdot \begin{pmatrix} \i_t^{(1)} \\ \h_t^{(2)} \end{pmatrix}
=\left(\T_{t+1}^\top \begin{pmatrix} \B_{t+1} \\ \bOmega_{t+1} \end{pmatrix}
\right)_{(1:t)\cup (t+2:2t+1),\,t+1}
=\T_t^\top \begin{pmatrix} \b_t \\ \bomega_t \end{pmatrix},\]
where the second equality follows because $\B_{t+1}$ is 0 in its lower-right
entry while $\T_{t+1}^\top$ is 0 in rows $1:t$ and $t+2:2t+1$ of its last
column. Inverting $\T_t^\top$ and applying this above,
\[\s_\parallel \toW S_\parallel=\begin{pmatrix} V_1 & \cdots & V_t \end{pmatrix}
\b_t^\infty+\begin{pmatrix} Z_1 & \cdots & Z_t \end{pmatrix}
(\bOmega_t^\infty)^{-1}\bomega_t^\infty.\]
Similar to the proof of Theorem \ref{thm:main}, we have also
\[\s_\perp \toW S_\perp \sim \N\left(0,\;
\left(\gamma \cdot h_{t+1,t+1}^{(2)}-
\begin{pmatrix} \gamma \cdot \i_t^{(1)} \\ \gamma \cdot \h_t^{(2)}
\end{pmatrix}^\top
\begin{pmatrix} \L_t^{(0)} & (\J_t^{(1)})^\top \\ \J_t^{(1)} &
\gamma \cdot \H_t^{(2)} \end{pmatrix}^{-1}\begin{pmatrix}
\gamma \cdot \i_t^{(1)} \\ \gamma \cdot \h_t^{(2)} \end{pmatrix}
\right)^\infty\right)\]
where $S_\perp$ is independent of $(V_1,\ldots,V_t,Z_1,\ldots,Z_t)$. So
\begin{equation}\label{eq:stplus1rect}
\s_{t+1} \toW S_{t+1}=\begin{pmatrix} V_1 & \cdots & V_t \end{pmatrix}
\b_t^\infty+\begin{pmatrix} Z_1 & \cdots & Z_t \end{pmatrix}
(\bOmega_t^\infty)^{-1}\bomega_t^\infty+S_\perp.
\end{equation}
Since $\z_{t+1}=\s_{t+1}-\V_t\b_t$, this yields
\[(\v_1,\ldots,\v_{t+1},\z_1,\ldots,\z_{t+1},\F)
\toW (V_1,\ldots,V_{t+1},Z_1,\ldots,Z_{t+1},F)\]
where $V_{t+1}=v_{t+1}(Z_1,\ldots,Z_{t+1},F)$ and
\[Z_{t+1}=\begin{pmatrix} Z_1 & \cdots & Z_t \end{pmatrix}
(\bOmega_t^\infty)^{-1}\bomega_t^\infty+S_\perp.\]

Thus $(Z_1,\ldots,Z_t,Z_{t+1})$ has a multivariate normal distribution.
To compute the covariance, observe that
since $S_\perp$ is independent of $(Z_1,\ldots,Z_t)$, we have
\[\EE[(Z_1,\ldots,Z_t)^\top
Z_{t+1}]=\bOmega_t^\infty(\bOmega_t^\infty)^{-1}\bomega_t^\infty=\bomega_t^\infty.\]
For $\EE[Z_{t+1}^2]$, squaring both sides of (\ref{eq:stplus1rect}),
applying
\[\EE[S_{t+1}^2]=\lim_{m,n \to \infty} n^{-1}\|\s_{t+1}\|^2
=\lim_{m,n \to \infty} n^{-1}\|\bLambda^\top\r_{t+1}\|^2
=\gamma \cdot h_{t+1,t+1}^{(2,\infty)},\]
and rearranging yields
\[\EE[Z_{t+1}^2]=\left(\Big(\gamma \cdot \H_{t+1}^{(2)}
-\B_{t+1}^\top \bGamma_{t+1} \B_{t+1}
-\B_{t+1}^\top \bPsi_{t+1}\bOmega_{t+1}
-\bOmega_{t+1}\bPsi_{t+1}^\top \B_{t+1}\Big)^\infty\right)_{t+1,t+1}.\]
Applying the identity (\ref{eq:H2rect}), this gives
$\EE[Z_{t+1}^2]=\omega_{t+1,t+1}^\infty$, and this concludes the proof of
$t+1^{(c)}$.

Let us make here the following additional observation: We have
\begin{equation}\label{eq:varSperprect}
\Var[S_\perp]>0
\end{equation}
above. This is because by (\ref{eq:PbarRpartialmoment}) and
(\ref{eq:RbarRpartialmoment}) for $k=0$,
the quantity $\Var[S_\perp]$ above may be seen to be the residual variance of
projecting $\bar{R}_{t+1}$ onto the linear span of
$(P_1,\ldots,P_t,\bar{R}_1,\ldots,\bar{R}_t)$. If this residual variance were 0,
then for some constants $\alpha_1,\ldots,\alpha_t,\beta_1,\ldots,\beta_t$
we would have $\bar{R}_{t+1}=\alpha_1 P_1+\ldots+\alpha_t P_t
+\beta_1 \bar{R}_1+\ldots+\beta_t \bar{R}_t$ with probability 1, so that
\begin{align*}
0&=\EE[\Lambda_n^2 \cdot (\bar{R}_{t+1}-\alpha_1 P_1-\ldots-\alpha_t P_t
-\beta_1 \bar{R}_1-\ldots-\beta_t \bar{R}_t)^2]\\
&=\lim_{m,n \to \infty}
n^{-1} \|\bLambda \bLambda^\top \r_{t+1}-\alpha_1 \bLambda \p_1-\ldots-\alpha_t
\bLambda\p_t-\beta_1 \bLambda \bLambda^\top \r_1-\ldots-\beta_t\bLambda
\bLambda^\top \r_t\|^2\\
&=\gamma \cdot \EE[(\Lambda_m^2 R_{t+1}-\alpha_1\bar{P}_1-\ldots
-\alpha_t\bar{P}_t-\beta_1\Lambda_m^2 R_1-\ldots-\beta_t \Lambda_m^2 R_t)^2].
\end{align*}
Thus also
\[\Lambda_m^2 R_{t+1}=\alpha_1\bar{P}_1+\ldots
+\alpha_t\bar{P}_t+\beta_1\Lambda_m^2 R_1+\ldots+\beta_t \Lambda_m^2 R_t\]
with probability 1. However, recall the decomposition
$R_{t+1}=R_\parallel+R_\perp$ where
$R_\perp$ is independent of
$(R_1,\ldots,R_t,\bar{P}_1,\ldots,\bar{P}_t,\Lambda_m)$. Thus we have
\[\Lambda_m^2 R_\perp=f(R_1,\ldots,R_t,\bar{P}_1,\ldots,\bar{P}_t,\Lambda_m)\]
where the quantity on the right does not depend on $R_\perp$. Since $\Lambda_m$
is not identically 0 by the condition $\Var[\Lambda]>0$ in
Assumption \ref{assump:rect}(f), this implies that $R_\perp$ must be constant
almost surely, contradicting (\ref{eq:varRperprect}) already shown. Thus,
(\ref{eq:varSperprect}) holds.\\

{\bf Step 4: Analysis of $\p_{t+1}$.} Note that $t^{(f)}$, $t+1^{(c)}$, and
the given conditions for the functions $\partial_{s'} v_s$
imply the existence of all limits in $t+1^{(d)}$. Let us now show
$t+1^{(e)}$. The joint convergence with $\bLambda^\top \r_{t+1}$ has been
established already in (\ref{eq:PbarRpartial}), so we proceed to analyze
$\p_{t+1}$.

For this, let
\[\tilde{\B}_t=\begin{pmatrix} \B_t & \b_t \end{pmatrix} \in \RR^{t \times
(t+1)},
\qquad \tilde{\bPsi}_t=\begin{pmatrix} \bPsi_t & \0 \end{pmatrix} \in \RR^{t
\times (t+1)}\]
be the first $t$ rows of $\B_{t+1}$ and $\bPsi_{t+1}$, and let
\[\tilde{\A}_t=\begin{pmatrix} \A_t \\ 0 \end{pmatrix} \in \RR^{(t+1) \times t},
\qquad \tilde{\bPhi}_t=\begin{pmatrix} \bPhi_t \\ (\bphi_t)^\top \end{pmatrix}
\in \RR^{(t+1) \times t}\]
be the first $t$ columns of $\A_{t+1}$ and $\bPhi_{t+1}$. Conditional on
the iterates up to $\v_{t+1}$, the law of $\Q$ is now conditioned on
\[\begin{pmatrix} \P_t & \bLambda^\top \R_{t+1} \end{pmatrix}
\begin{pmatrix} \Id & -\tilde{\B}_t \\ \0 & \Id \end{pmatrix}
=\Q \begin{pmatrix} \V_t & \Z_{t+1} \end{pmatrix}.\]
Let us denote
\[\tilde{\bN}_t=n^{-1}\begin{pmatrix} \V_t^\top \V_t & \V_t^\top \Z_{t+1} \\
\Z_{t+1}^\top \V_t & \Z_{t+1}^\top \Z_{t+1} \end{pmatrix},\]
noting that by $t+1^{(c)}$ and Proposition \ref{prop:stein},
\begin{equation}\label{eq:tildeNrect}
\tilde{\bN}_t \to \tilde{\bN}_t^\infty=\begin{pmatrix} \bGamma_t^\infty &
\tilde{\bPsi}_t^\infty\bOmega_{t+1}^\infty \\ \bOmega_{t+1}^\infty
(\tilde{\bPsi}_t^\infty)^\top & \bOmega_{t+1}^\infty \end{pmatrix}.
\end{equation}
The upper-left $2t \times 2t$ submatrix of $\tilde{\bN}_t^\infty$ is
$\bN_t^\infty$, which is
invertible by $t^{(g)}$. The Schur-complement of its lower-right entry is
the residual variance of projecting $Z_{t+1}$ onto the span of
$(V_1,\ldots,V_t,Z_1,\ldots,Z_t)$. Since $\z_{t+1}=\s_{t+1}-\V_t\b_t$, this is
the same as the residual variance of projecting $S_{t+1}$ onto the 
span of $(V_1,\ldots,V_t,Z_1,\ldots,Z_t)$, which is exactly $\Var[S_\perp]$ by
the convergence (\ref{eq:stplus1rect}) and the fact that $S_\perp$ is a
mean-zero variable independent of $(V_1,\ldots,V_t,Z_1,\ldots,Z_t)$. This was
shown to be non-zero in (\ref{eq:varSperprect}), so $\tilde{\bN}_t^\infty$
is invertible. Thus for all large $n$, the conditional law of $\Q$ is
\[\begin{pmatrix} \P_t & \bLambda^\top \R_{t+1} \end{pmatrix}
\begin{pmatrix} \Id & -\tilde{\B}_t \\ \0 & \Id \end{pmatrix}
\tilde{\bN}_t^{-1} \cdot n^{-1}
\begin{pmatrix} \V_t^\top \\ \Z_{t+1}^\top \end{pmatrix}
+\proj_{(\P_t,\bLambda^\top \R_{t+1})^\perp}
\tilde{\Q}\proj_{(\V_t,\Z_{t+1})^\perp}^\top\]
So we may replace the update for $\p_{t+1}=\Q\v_{t+1}$ by
\begin{align*}
\p_{t+1}&=\p_\parallel+\p_\perp\\
\p_\parallel&=\begin{pmatrix} \P_t & \bLambda^\top \R_{t+1} \end{pmatrix}
\begin{pmatrix} \Id & -\tilde{\B}_t \\ \0 & \Id \end{pmatrix}
\tilde{\bN}_t^{-1} \cdot n^{-1} \begin{pmatrix} \V_t^\top \\
\Z_{t+1}^\top \end{pmatrix} \v_{t+1}\\
\p_\perp&=\proj_{(\P_t,\bLambda^\top \R_{t+1})^\perp}
\tilde{\Q}\proj_{(\V_t,\Z_{t+1})^\perp}^\top\v_{t+1}
\end{align*}

For $\p_\parallel$, define
\[\bgamma_t^\infty=\begin{pmatrix} \EE[V_1V_{t+1}] \\ \vdots \\
\EE[V_tV_{t+1}] \end{pmatrix} \in \RR^t, \qquad
\tilde{\bpsi}_t^\infty=\begin{pmatrix}
\EE[\partial_1 v_{t+1}(Z_1,\ldots,Z_{t+1},F)] \\ \vdots \\
\EE[\partial_{t+1} v_{t+1}(Z_1,\ldots,Z_{t+1},F)] \end{pmatrix} \in \RR^{t+1}\]
and observe that
\[n^{-1} \V_t^\top \v_{t+1} \to \bgamma_t^\infty, \qquad
n^{-1}\Z_{t+1}^\top \v_{t+1} \to \bOmega_{t+1}^\infty \tilde{\bpsi}_t^\infty.\]
Then by a computation similar to the proof of $t+1^{(b)}$ above,
\[\p_\parallel \toW \begin{pmatrix}
P_1 & \ldots & P_t & \bar{R}_1 & \ldots & \bar{R}_{t+1} \end{pmatrix}
(\tilde{\T}_t^\infty)^{-1} \begin{pmatrix} \bgamma_t^\infty \\ \bpsi_t^\infty
\end{pmatrix}\]
where
\begin{equation}\label{eq:tildeTrect}
\tilde{\T}_t=\begin{pmatrix} \bGamma_t & \bGamma_t
\tilde{\B}_t+\tilde{\bPsi}_t\bOmega_{t+1} \\
\tilde{\bPsi}_t^\top & \tilde{\bPsi}_t^\top \tilde{\B}_t\end{pmatrix}.
\end{equation}
Also,
\[\p_\perp \toW P_\perp \sim \N\left(0,\; \EE[V_{t+1}^2]
-\left(\begin{pmatrix} \bgamma_t \\ \bOmega_{t+1} \tilde{\bpsi}_t
\end{pmatrix}^\top \begin{pmatrix} \bGamma_t & \tilde{\bPsi}_t\bOmega_{t+1} \\
\bOmega_{t+1}\tilde{\bPsi}_t^\top & \bOmega_{t+1} \end{pmatrix}^{-1}
\begin{pmatrix} \bgamma_t \\ \bOmega_{t+1} \tilde{\bpsi}_t \end{pmatrix}
\right)^\infty\right)\]
where $P_\perp$ is independent of
$(P_1,\ldots,P_t,\bar{R}_1,\ldots,\bar{R}_{t+1},\Lambda_n)$. So
\begin{equation}\label{eq:Pconvergencerect}
(\p_1,\ldots,\p_{t+1},\bLambda^\top \r_1,\ldots,\bLambda^\top
\r_{t+1},\blambda_n) \toW (P_1,\ldots,P_{t+1},\bar{R}_1,\ldots,\bar{R}_{t+1},
\Lambda_n), \qquad P_{t+1}=P_\parallel+P_\perp.
\end{equation}
We have
\begin{equation}\label{eq:varPperprect}
\Var[P_\perp]>0
\end{equation}
because this is the residual variance of projecting $V_{t+1}$ onto the span of
$(V_1,\ldots,V_t,Z_1,\ldots,Z_{t+1})$, which is positive by Assumption
\ref{assump:rect}(f).

Let us introduce the block notation
\begin{equation}\label{eq:LJblockrect}
\L_{t+1}^{(2k)}=\begin{pmatrix} \L_t^{(2k)} &
\l_t^{(2k)} \\ (\l_t^{(2k)})^\top & l_{t+1,t+1}^{(2k)}
\end{pmatrix}, \qquad 
\J_{t+1}^{(2k+1)}=\begin{pmatrix} \gamma \cdot (\tilde{\I}_t^{(2k+1)})^\top
& \tilde{\j}_t^{(2k+1)} \end{pmatrix}
\end{equation}
where $\tilde{\I}_t^{(2k+1)} \in \RR^{t \times (t+1)}$ forms the first $t$
rows of $\I_{t+1}^{(2k+1)}$ as previously defined, and thus
$\gamma \cdot (\tilde{\I}_t^{(2k+1)})^\top$ forms the first $t$ columns
of $\J_{t+1}^{(2k+1)}$ by the identity (\ref{eq:IJ}). To conclude the proof of
$t+1^{(e)}$, it remains to show that
\begin{align}
\EE[(\bar{R}_1,\ldots,\bar{R}_{t+1})^\top \Lambda_n^{2k}P_{t+1}]
&=\tilde{\j}_t^{(2k+1,\infty)}\label{eq:barRPrect}\\
\EE[(P_1,\ldots,P_t)^\top \Lambda_n^{2k} P_{t+1}]&=\l_t^{(2k,\infty)}
\label{eq:PtPtplus1rect}\\
\EE[\Lambda_n^{2k}P_{t+1}^2]&=l_{t+1,t+1}^{(2k,\infty)}.
\label{eq:Ptplus1rect}
\end{align}

The arguments are similar to those for $t+1^{(b)}$: For
(\ref{eq:barRPrect}), by the convergence (\ref{eq:Pconvergencerect}), the form
of $P_\parallel$, and the
identities (\ref{eq:PbarRpartialmoment}--\ref{eq:RbarRpartialmoment}), we have
\[\EE[(\bar{R}_1,\ldots,\bar{R}_{t+1})^\top \Lambda_n^{2k}P_{t+1}]
=\left(\begin{pmatrix} \gamma \cdot 
(\tilde{\I}_t^{(2k+1)})^\top & \gamma \cdot \H_{t+1}^{(2k+2)}
\end{pmatrix} \tilde{\T}_t^{-1}\begin{pmatrix} \bgamma_t \\
\tilde{\bpsi}_t \end{pmatrix}\right)^\infty.\]
Applying the second row of the identity (\ref{eq:Tinvrect}),
\[\begin{pmatrix} \J_{t+1}^{(2k+1)} & \gamma \cdot \H_{t+1}^{(2k+2)} \end{pmatrix}
=\begin{pmatrix}
\sum_{j=0}^\infty \bar{c}_{2k+1,j}(\bPhi_{t+1}\bPsi_{t+1})^j\bPhi_{t+1}
& \sum_{j=0}^\infty \bar{c}_{2k+1,j}\bTheta_{t+1}^{(j)} \end{pmatrix} \T_{t+1}.\]
Note that $\tilde{\T}_t$ defined in (\ref{eq:tildeTrect}) is the submatrix of
$\T_{t+1}$ with row and column $t+1$ removed. 
Furthermore, column $t+1$ of $\bPhi_{t+1}$ is 0. Thus, removing column $t+1$
from both sides of this identity yields
\[\begin{pmatrix}
\gamma \cdot (\tilde{\I}_t^{(2k+1)})^\top & \gamma \cdot
\H_{t+1}^{(2k+2)} \end{pmatrix}
=\begin{pmatrix} \sum_{j=0}^\infty \bar{c}_{2k+1,j} (\bPhi_{t+1}\bPsi_{t+1})^j
\tilde{\bPhi}_t & \sum_{j=0}^\infty
\bar{c}_{2k+1,j} \bTheta_{t+1}^{(j)} \end{pmatrix} \tilde{\T}_t.\]
Taking the limit $m,n \to \infty$, 
inverting $\tilde{\T}_t^\infty$, and applying this above,
\begin{align*}
\EE[(\bar{R}_1,\ldots,\bar{R}_{t+1})^\top \Lambda_n^{2k}P_{t+1}]
&=\left(\sum_{j=0}^\infty \bar{c}_{2k+1,j}\Big(
(\bPhi_{t+1}\bPsi_{t+1})^j\tilde{\bPhi}_t\bgamma_t
+\bTheta_{t+1}^{(j)}\tilde{\bpsi}_t\Big)\right)^\infty\\
&=\left(\sum_{j=0}^\infty \bar{c}_{2k+1,j} (\X_{t+1}^{(j)})^\top
\right)_{1:t+1,\,t+1}^\infty=\tilde{\j}_t^{(2k+1,\infty)}.
\end{align*}
Here, the last two equalities apply again the fact that the last column of
$\bPhi_{t+1}$ is 0, and the definitions of $\X_{t+1}^{(j)}$ and
$\J_{t+1}^{(2k+1)}$ in (\ref{eq:Xrect}) and (\ref{eq:HIJLrect}).

For (\ref{eq:PtPtplus1rect}), we have
\[\EE[(P_1,\ldots,P_t)^\top \Lambda_n^{2k} P_{t+1}]
=\left(\begin{pmatrix} \L_t^{(2k)} &
\gamma \cdot \tilde{\I}_t^{(2k+1)} \end{pmatrix}
\tilde{\T}_t^{-1} \begin{pmatrix} \bgamma_t \\
\tilde{\bpsi}_t \end{pmatrix}\right)^\infty.\]
Applying the first row of the identity (\ref{eq:Tinvrect}) with $t+1$,
and a similar argument of removing the $t+1^\text{th}$ rows and columns from
both sides, we obtain
\[\begin{pmatrix} \L_t^{(2k)} & \gamma \cdot \tilde{\I}_t^{(2k+1)}
\end{pmatrix}=\begin{pmatrix} \sum_{j=0}^\infty \bar{c}_{2k,j}
(\bPsi_t\bPhi_t)^j & \sum_{j=0}^\infty \bar{c}_{2k,j+1}\tilde{\X}_t^{(j)}
\end{pmatrix} \tilde{\T}_t\]
where $\tilde{\X}_t^{(j)}$ are the first $t$ rows of $\X_{t+1}^{(j)}$. Then
\[\EE[(P_1,\ldots,P_t)^\top \Lambda_n^{2k} P_{t+1}]
=\left(\sum_{j=0}^\infty (\bar{c}_{2k,j}
(\bPsi_t\bPhi_t)^j\bgamma_t+\sum_{j=0}^\infty \bar{c}_{2k,j+1}\tilde{\X}_t^{(j)}
\tilde{\bpsi}_t)\right)^\infty
=\l_{t+1}^{(2k,\infty)}.\]

For (\ref{eq:Ptplus1rect}), squaring both sides of (\ref{eq:Pconvergencerect})
and recalling $\EE[\Lambda_n^{2k}]=\bar{m}_{2k}^\infty=\bar{c}_{2k,0}^\infty$
from Lemma \ref{lemma:ckjrect}, we have
\begin{align*}
\EE[\Lambda_n^{2k}P_{t+1}^2]
&=\left(\begin{pmatrix} \bgamma_t \\ \tilde{\bpsi}_t \end{pmatrix}^\top
(\tilde{\T}_t^{-1})^\top
\begin{pmatrix} \L_t^{(2k)} & \gamma \cdot \tilde{\I}_t^{(2k+1)} \\
\gamma \cdot (\tilde{\I}_t^{(2k+1)})^\top & \gamma \cdot
\H_t^{(2k+2)} \end{pmatrix} \tilde{\T}_t^{-1}
\begin{pmatrix} \bgamma_t \\ \tilde{\bpsi}_t \end{pmatrix}\right)^\infty
+\EE[\Lambda_n^{2k}P_\perp^2],\\
\EE[\Lambda_n^{2k}P_\perp^2]
&=\bar{c}_{2k,0}^\infty
\left(\EE[V_{t+1}^2]
-\begin{pmatrix} \bgamma_t \\ \tilde{\bpsi}_t \end{pmatrix}^\top
(\tilde{\T}_t^{-1})^\top
\begin{pmatrix} \L_t^{(0)} & \gamma \cdot \tilde{\I}_t^{(1)} \\
\gamma \cdot (\tilde{\I}_t^{(1)})^\top & \gamma \cdot
\H_t^{(2)} \end{pmatrix} \tilde{\T}_t^{-1}
\begin{pmatrix} \bgamma_t \\ \tilde{\bpsi}_t \end{pmatrix}\right)^\infty.
\end{align*}
Applying (\ref{eq:Tinvrect2}) with $t+1$, and removing the $t+1^\text{th}$ rows
and columns from both sides, we have
\begin{align*}
&\begin{pmatrix} \L_t^{(2k)} & \gamma \cdot \tilde{\I}_t^{(2k+1)} \\
\gamma \cdot (\tilde{\I}_t^{(2k+1)})^\top & \gamma \cdot \H_{t+1}^{(2k+2)}
\end{pmatrix}-\bar{c}_{2k,0}\begin{pmatrix} \L_t^{(0)} & \gamma \cdot \tilde{\I}_t^{(1)} \\
\gamma \cdot (\tilde{\I}_t^{(1)})^\top & \gamma \cdot \H_{t+1}^{(2)}
\end{pmatrix}\\
&=\tilde{\T}_t^\top
\begin{pmatrix} 0 & \sum_{j=0}^\infty \bar{c}_{2k,j+1}\tilde{\bPhi}_t^\top
(\bPsi_{t+1}^\top\bPhi_{t+1}^\top)^j \\
\sum_{j=0}^\infty \bar{c}_{2k,j+1}(\bPhi_{t+1}\bPsi_{t+1})^j\tilde{\bPhi}_t
& \sum_{j=0}^\infty \bar{c}_{2k,j+1}\bTheta_{t+1}^{(j)} \end{pmatrix}
\tilde{\T}_t.
\end{align*}
Then combining the above,
\begin{align*}
&\EE[\Lambda_n^{2k}P_{t+1}^2]\\
&=\left(\bar{c}_{2k,0} \EE[V_{t+1}^2]
+\sum_{j=0}^\infty \bar{c}_{2k,j+1}(\bgamma_t^\top \tilde{\bPhi}_t^\top
(\bPsi_{t+1}^\top\bPhi_{t+1}^\top)^j \tilde{\bpsi}_t
+\tilde{\bpsi}_t^\top (\bPhi_{t+1}\bPsi_{t+1})^j\tilde{\bPhi}_t\bgamma_t
+\tilde{\bpsi}_t^\top \bTheta_{t+1}^{(j)}\tilde{\bpsi}_t)\right)^\infty\\
&=\left(\sum_{j=0}^\infty \bar{c}_{2k,j}\bXi_{t+1}^{(j)}\right)_{t+1,t+1}^\infty
=l_{t+1,t+1}^{(2k,\infty)}.
\end{align*}
This concludes the proof of $t+1^{(e)}$.\\

{\bf Step 5: Analysis of $\y_{t+1}$.}
Finally, let us show $t+1^{(f)}$ and $t+1^{(g)}$. Conditional on
the iterates up to $\p_{t+1}$, the law of $\O$ is now conditioned on
\[\begin{pmatrix} \R_{t+1} & \bLambda \P_t \end{pmatrix}
\begin{pmatrix} \Id & -\tilde{\A}_t \\ \0 & \Id \end{pmatrix}
=\O\begin{pmatrix} \U_{t+1} & \Y_t \end{pmatrix}.\]
Let us set
\[\tilde{\M}_t=m^{-1}
\begin{pmatrix} \U_{t+1}^\top \U_{t+1} & \U_{t+1}^\top \Y_t \\
\Y_t^\top \U_{t+1} & \Y_t^\top \Y_t \end{pmatrix},\]
noting that by $t+1^{(c)}$,
\[\tilde{\M}_t \to \tilde{\M}_t^\infty
=\begin{pmatrix} \bDelta_{t+1} & \tilde{\bPhi}_t \bSigma_t \\
\bSigma_t \tilde{\bPhi}_t^\top & \bSigma_t \end{pmatrix}.\]
This limit is invertible because its submatrix removing row and column
$t+1$ is $\tilde{\M}_t^\infty$ which is
invertible by $t^{(g)}$, and the Schur complement of the $(t+1,t+1)$
entry is exactly $\Var[R_\perp]$ from (\ref{eq:Rperprect}), which we have shown
is non-zero in (\ref{eq:varRperprect}). Then for all large $n$,
the conditional law of $\O$ is
\[\begin{pmatrix} \R_{t+1} & \bLambda \P_t \end{pmatrix}
\begin{pmatrix} \Id & -\tilde{\A}_t \\ \0 & \Id \end{pmatrix}
\tilde{\M}_t^{-1} \cdot m^{-1}
\begin{pmatrix} \U_{t+1}^\top \\ \Y_t^\top \end{pmatrix}
+\proj_{(\R_{t+1},\bLambda\P_t)^\perp}\tilde{\O}\proj_{(\U_{t+1},\Y_t)^\perp}^\top\]
So we may replace the update for $\q_{t+1}=\O^\top \bLambda \p_{t+1}$ by
\begin{align*}
\q_{t+1}&=\q_\parallel+\q_\perp\\
\q_\parallel&=\begin{pmatrix} \U_{t+1} & \Y_t \end{pmatrix}
\tilde{\M}_t^{-1}
\begin{pmatrix} \Id & \0 \\ -\tilde{\A}_t^\top & \Id \end{pmatrix}
\cdot m^{-1}
\begin{pmatrix} \R_{t+1}^\top \\ \P_t^\top \bLambda^\top \end{pmatrix}
\bLambda \p_{t+1}\\
\q_\perp&=\proj_{(\U_{t+1},\Y_t)^\perp}\tilde{\O}^\top
\proj_{(\R_{t+1},\bLambda\P_t)^\perp}^\top\bLambda \p_{t+1}
\end{align*}
Setting
\[\tilde{\bUpsilon}_t=\begin{pmatrix} \bDelta_{t+1} &
\bDelta_{t+1}\tilde{\A}_t+\tilde{\bPhi}_t\bSigma_t \\
\tilde{\bPhi}_t^\top & \tilde{\bPhi}_t^\top \tilde{\A}_t \end{pmatrix}\]
and recalling $\tilde{\j}_1^{(1)}$ from (\ref{eq:LJblockrect}),
by a computation similar to the proof of $t+1^{(c)}$ above, we have
\[\tilde{\M}_t^{-1}
\begin{pmatrix} \Id & \0 \\ -\tilde{\A}_t^\top & \Id \end{pmatrix}
\cdot m^{-1}
\begin{pmatrix} \R_{t+1}^\top \\ \P_t^\top \bLambda^\top \end{pmatrix}
\bLambda \p_{t+1}
\to \left(\gamma^{-1} \cdot
\begin{pmatrix} \Id & 0 \\ 0 & \bSigma_t^{-1} \end{pmatrix}
(\tilde{\bUpsilon}_t^{-1})^\top \begin{pmatrix}
\tilde{\j}_t^{(1)} \\ \l_t^{(2)} \end{pmatrix}\right)^\infty.\]
Applying the second row of (\ref{eq:Upsiloninvrect}) with $t+1$ and with $k=0$,
and recalling
$\I_{t+1}^{(2k+1)}=\gamma^{-1} \cdot (\J_{t+1}^{(2k+1)})^\top$ and
$c_{1,j}=\kappa_{2(j+1)}$, we get
\[\gamma^{-1} \cdot
\begin{pmatrix} \J_{t+1}^{(1)} \\ \L_{t+1}^{(2)} \end{pmatrix}
=\bUpsilon_{t+1}^\top \begin{pmatrix} \sum_{j=0}^\infty 
\kappa_{2(j+1)} \bPsi_{t+1}^\top (\bPhi_{t+1}^\top\bPsi_{t+1}^\top)^j \\
\sum_{j=0}^\infty \kappa_{2(j+1)} \bXi_{t+1}^{(j)} \end{pmatrix}
=\bUpsilon_{t+1}^\top \begin{pmatrix} \A_{t+1} \\ \bSigma_{t+1} \end{pmatrix}.\]
Hence, writing
\[\A_{t+1}=\begin{pmatrix} \tilde{\A}_t & \tilde{\a}_t \end{pmatrix},
\qquad \bSigma_{t+1}=\begin{pmatrix} \bSigma_t & \bsigma_t \\
\bsigma_t^\top & \sigma_{t+1,t+1}\end{pmatrix},\]
noting that $\tilde{\bUpsilon}_t$ is the matrix $\bUpsilon_{t+1}$ with the
last row and column removed, and that $\bUpsilon_{t+1}^\top$ is
0 in entries $1:2t+1$ of its last column, this yields
\[\gamma^{-1} \cdot \begin{pmatrix} \tilde{\j}_t^{(1)} \\
\l_t^{(2)} \end{pmatrix}=\tilde{\bUpsilon}_t^\top \begin{pmatrix} \tilde{\a}_t
\\ \bsigma_t \end{pmatrix}.\]
Taking the limit $m,n \to \infty$, inverting
$(\tilde{\bUpsilon}_t^\infty)^\top$, and applying this above,
\[\q_\parallel \toW \begin{pmatrix} U_1 & \cdots & U_{t+1} \end{pmatrix}
\tilde{\a}_t^\infty+\begin{pmatrix} Y_1 & \cdots & Y_t \end{pmatrix}
(\bSigma_t^\infty)^\infty \bsigma_t^\infty.\]
We have also
\[\q_\perp \toW Q_\perp \sim \N\left(0,\;
\left(\gamma^{-1} \cdot l_{t+1,t+1}^{(2)}-\begin{pmatrix}
\gamma^{-1} \cdot \tilde{\j}_t^{(1)} \\ \gamma^{-1} \cdot
\l_t^{(2)} \end{pmatrix}^\top
\begin{pmatrix} \H_{t+1}^{(0)} & (\tilde{\I}_t^{(1)})^\top \\
\tilde{\I}_t^{(1)} & \gamma^{-1} \L_t^{(2)} \end{pmatrix}^{-1}
\begin{pmatrix} \gamma^{-1} \cdot \tilde{\j}_t^{(1)} \\ \gamma^{-1} \cdot
\l_t^{(2)} \end{pmatrix}\right)^\infty\right)\]
where $\q_\perp$ is independent of
$(U_1,\ldots,U_{t+1},Y_1,\ldots,Y_t)$. Then
\begin{equation}\label{eq:Qlimitrect}
\q_{t+1} \toW Q_{t+1}=\begin{pmatrix} U_1 & \cdots & U_{t+1} \end{pmatrix}
\tilde{\a}_t^\infty+\begin{pmatrix} Y_1 & \cdots & Y_t \end{pmatrix}
(\bSigma_t^\infty)^{-1}\bsigma_t^\infty+Q_\perp.
\end{equation}
Recalling $\y_{t+1}=\q_{t+1}-\U_{t+1}\tilde{\a}_t$, this yields
\[(\u_1,\ldots,\u_{t+2},\y_1,\ldots,\y_{t+1},\E) \toW
(U_1,\ldots,U_{t+2},Y_1,\ldots,Y_{t+1},E)\]
where $U_{t+2}=u_{t+2}(Y_1,\ldots,Y_{t+1},E)$ and
\[Y_{t+1}=\begin{pmatrix} Y_1 & \cdots & Y_t \end{pmatrix}
(\bSigma_t^\infty)^{-1} \bsigma_t^\infty+Q_\perp.\]
So $(Y_1,\ldots,Y_t,Y_{t+1})$ has a multivariate normal limit.
To compute the covariance, note that
\[\EE[(Y_1,\ldots,Y_t)^\top Y_{t+1}]=\bSigma_t^\infty(\bSigma_t^\infty)^{-1}
\bsigma_t^\infty=\bsigma_t^\infty.\]
Squaring both sides of (\ref{eq:Qlimitrect}), applying $\EE[Q_{t+1}^2]=\lim_{m,n
\to \infty} m^{-1}\|\bLambda \p_{t+1}\|^2=\gamma^{-1}
l_{t+1,t+1}^{(2,\infty)}$,
and rearranging,
\[\EE[Y_{t+1}^2]=\left(\gamma^{-1} \cdot
\L_{t+1}^{(2)}-\A_{t+1}^\top \bDelta_{t+1}
\A_{t+1}-\A_{t+1}^\top \bPhi_{t+1}\bSigma_{t+1}
-\bSigma_{t+1}\bPhi_{t+1}^\top \A_{t+1}\right)^\infty_{t+1,t+1}.\]
Applying (\ref{eq:L2rect}), this is $\sigma_{t+1,t+1}^\infty$.
This concludes the proof of $t+1^{(f)}$.

Finally, for the invertibility claim of $t+1^{(g)}$, let us first observe that
\begin{equation}\label{eq:varQperprect}
\Var[Q_\perp]>0
\end{equation}
above. This is because $\Var[Q_\perp]$ is the residual
variance of projecting $\bar{P}_{t+1}$ onto the span of
$(R_1,\ldots,R_{t+1},\bar{P}_1,\ldots,\bar{P}_t)$. If this were 0, then for some
constants $\alpha_1,\ldots,\alpha_{t+1},\beta_1,\ldots,\beta_t$, we would have
\begin{align*}
0&=\EE[\Lambda_m^2 \cdot (\bar{P}_{t+1}-\alpha_1 R_1-\ldots-\alpha_{t+1}
R_{t+1}-\beta_1\bar{P}_1-\ldots-\beta_t\bar{P}_t)^2]\\
&=\lim_{m,n \to \infty} m^{-1}\|\bLambda^\top \bLambda \p_{t+1}
-\alpha_1\bLambda^\top \r_1-\ldots-\alpha_{t+1}\bLambda^\top\r_{t+1}
-\beta_1\bLambda^\top\bLambda\p_1-\ldots-\beta_t\bLambda^\top\bLambda\p_t\|^2\\
&=\gamma^{-1} \cdot \EE[(\Lambda_n^2 P_{t+1}-\alpha_1
\bar{R}_1-\ldots-\alpha_{t+1}\bar{R}_{t+1}-\beta_1\Lambda_n^2 P_1-\ldots
-\beta_n\Lambda_n^2P_t)^2].
\end{align*}
So
\[\Lambda_n^2P_\perp=f(P_1,\ldots,P_t,\bar{R}_1,\ldots,\bar{R}_{t+1},\Lambda_n)\]
for some quantity on the right not depending on $P_\perp$. This contradicts the
independence of $P_\perp$ from
$P_1,\ldots,P_t,\bar{R}_1,\ldots,\bar{R}_{t+1},\Lambda_n$, the
assumption $\Var[\Lambda_n]>0$, and the condition $\Var[P_\perp]>0$ already
shown in (\ref{eq:varPperprect}). So (\ref{eq:varQperprect}) holds.

To show the invertibility of
\begin{equation}\label{eq:invert1rect}
\begin{pmatrix} \bDelta_{t+1}^\infty & \bPhi_{t+1}^\infty \bSigma_{t+1}^\infty
\\ \bSigma_{t+1}^\infty (\bPhi_{t+1}^\infty)^\top & \bSigma_{t+1}^\infty
\end{pmatrix},
\end{equation}
note that its upper-left $(2t+1) \times (2t+1)$ submatrix
is $\tilde{\M}_t^\infty$, which we have shown is invertible.
The Schur-complement of its lower-right entry is the residual variance of
projecting $Y_{t+1}$ onto $(U_1,\ldots,U_t,Y_1,\ldots,Y_{t+1})$. As
$\y_{t+1}=\q_{t+1}-\U_{t+1}\tilde{\a}_t$, this is equivalently the residual
variance of projecting $Q_{t+1}$ onto $(U_1,\ldots,U_t,Y_1,\ldots,Y_{t+1})$,
which is exactly $\Var[Q_\perp]$ by (\ref{eq:Qlimitrect}) and the fact that
$Q_\perp$ is a mean-zero variable independent of
$(U_1,\ldots,U_t,Y_1,\ldots,Y_{t+1})$. Since $\Var[Q_\perp]>0$, this shows that
(\ref{eq:invert1rect}) is invertible.

To show the invertibility of
\begin{equation}\label{eq:invert2rect}
\begin{pmatrix} \bGamma_{t+1}^\infty & \bPsi_{t+1}^\infty \bOmega_{t+1}^\infty
\\ \bOmega_{t+1}^\infty (\bPsi_{t+1}^\infty)^\top & \bOmega_{t+1}^\infty
\end{pmatrix},
\end{equation}
note that its submatrix removing row and column $t+1$ is 
$\tilde{\bN}_t^\infty$, which we have also shown is invertible. The
Schur-complement of the $(t+1,t+1)$ entry is the residual variance of projecting
$V_{t+1}$ onto the span of $(V_1,\ldots,V_t,Z_1,\ldots,Z_{t+1})$, which is
non-zero by Assumption \ref{assump:rect}(f). Thus (\ref{eq:invert2rect}) is
invertible. This shows $t+1^{(g)}$, and concludes the induction and the proof.
\end{proof}

\section{Analysis of AMP for PCA}\label{appendix:PCA}

In this appendix, we prove Theorems \ref{thm:PCA} and \ref{thm:PCArect}.
We also complete the verification of Eq.\ (\ref{eq:rectPCAimproves}), showing
that the rectangular AMP algorithm achieves lower matrix 
mean-squared error than the sample PCs.

\subsection{State evolution for PCA}\label{subsec:PCAparta}

We prove Theorems \ref{thm:PCA}(a) and \ref{thm:PCArect}(a), using the general
results of Corollaries \ref{cor:degenerate} and \ref{cor:degeneraterect}.

\begin{proof}[Proof of Theorem \ref{thm:PCA}(a)]
We may write the AMP iterations (\ref{eq:AMPPCA1}--\ref{eq:AMPPCA2}) as
\[\f_t=\u_* \cdot (\alpha/n)\u_*^\top \u_t+\W \u_t
-b_{t1}\u_1-\ldots-b_{tt}\u_t, \qquad \u_{t+1}=u_{t+1}(\f_t).\]
Approximating $(\alpha/n)\u_*^\top \u_t \approx
\alpha \cdot \EE[U_*U_t]=\mu_t^\infty$, we consider the auxiliary AMP sequence
initialized at $\tilde{\u}_1=\u_1$ and defined by
\[\tilde{\z}_t=\W \tilde{\u}_t-\tilde{b}_{t1}\tilde{\u}_1-\ldots
-\tilde{b}_{tt}\tilde{\u}_t, \qquad
\tilde{\u}_{t+1}=\tilde{u}_{t+1}(\tilde{\z}_t,\u_*)
\equiv u_{t+1}(\tilde{\z}_t+\mu_t^\infty\u_*).\]
Here, the debiasing coefficients $\tilde{b}_{t1},\ldots,\tilde{b}_{tt}$ are
the values of the last column of $\tilde{\B}_t$, defined by (\ref{eq:DeltaPhi})
and (\ref{eq:BSigma}) with the iterates $\tilde{\u}_t$ and the
free cumulants $\tilde{\kappa}_k$ of $\W$. The partial derivatives that define
(\ref{eq:DeltaPhi}) are given by $\partial_s
\tilde{u}_{t+1}(\cdot)=\tilde{u}_{t+1}'(\cdot)$ if $s=t$ and
$\partial_s \tilde{u}_{t+1}(\cdot)=0$ otherwise, where $\tilde{u}_{t+1}'(\cdot)$
denotes the derivative in its first argument $\tilde{z}_t$.

This auxiliary AMP sequence is of the general form
(\ref{eq:AMPz}--\ref{eq:AMPu}) with side information $\E=\u_*$.
By the given differentiability and Lipschitz assumption for $u_{t+1}(\cdot)$,
the conditions of Corollary \ref{cor:degenerate} are satisfied for
$\tilde{u}_{t+1}(\cdot)$, so we have for each fixed $T \geq 1$ that
\[(\tilde{\u}_1,\ldots,\tilde{\u}_{T+1},\tilde{\z}_1,\ldots,\tilde{\z}_T,\u_*)
\toWtwo
(\tilde{U}_1,\ldots,\tilde{U}_{T+1},\tilde{Z}_1,\ldots,\tilde{Z}_T,U_*).\]
Here, $(\tilde{Z}_1,\ldots,\tilde{Z}_T) \sim \N(0,\tilde{\bSigma}_T^\infty)$
where $\tilde{\bSigma}_T^\infty$ is defined by (\ref{eq:Theta}) and
(\ref{eq:BSigma}) for this auxiliary AMP sequence, $\tilde{U}_1=U_1$,
$\tilde{U}_{s+1}=\tilde{u}_{s+1}(\tilde{Z}_s,U_*)$ for $s \geq 1$,
and $(\tilde{Z}_1,\ldots,\tilde{Z}_T)$ is independent of $(\tilde{U}_1,U_*)$. 
Defining
\[\tilde{\f}_t=\tilde{\z}_t+\mu_t^\infty \u_*,
\qquad \tilde{F}_t=\tilde{Z}_t+\mu_t^\infty U_*,\]
this implies
\begin{equation}\label{eq:PCAtildeconvergence}
(\tilde{\u}_1,\ldots,\tilde{\u}_{T+1},\tilde{\f}_1,\ldots,\tilde{\f}_T,\u_*)
\toWtwo
(\tilde{U}_1,\ldots,\tilde{U}_{T+1},\tilde{F}_1,\ldots,\tilde{F}_T,U_*).
\end{equation}
Since each derivative $\partial_s \tilde{u}_{t+1}$ is non-zero only for $s=t$,
the covariance matrix $\tilde{\bSigma}_T^\infty$ has the entries
\begin{equation}\label{eq:tildeSigmaPCA}
\tilde{\sigma}_{st}^\infty=\sum_{j=0}^{s-1}\sum_{k=0}^{t-1}
\tilde{\kappa}_{j+k+2}^\infty \left(\prod_{i=s-j+1}^s
\EE[\tilde{u}_i'(\tilde{Z}_{i-1},U_*)]\right)
\left(\prod_{i=t-k+1}^t \EE[\tilde{u}_i'(\tilde{Z}_{i-1},U_*)]\right)
\EE[\tilde{U}_{s-j}\tilde{U}_{t-k}]
\end{equation}
where the summand for $(j,k)$ corresponds to
$\bPhi_T^j\bDelta_T(\bPhi_T^k)^\top$ in the definitions (\ref{eq:Theta}) and
(\ref{eq:BSigma}).

We conclude the proof by showing that the joint law of this limit in
(\ref{eq:PCAtildeconvergence}) coincides with the limit described in Theorem
\ref{thm:PCA}(a), and that $(\u_1,\ldots,\u_{T+1},\f_1,\ldots,\f_T,\u_*)$ for
the original AMP algorithm converges to the same limit.
Observe first that the $n-1$ smallest eigenvalues of $\X$ are interlaced with
the $n$ eigenvalues of $\W$. Letting $m_k$ be as defined in
(\ref{eq:PCAmk}), and denoting the moments of the empirical spectral
distribution of $\W$ by
\[\tilde{m}_k=\frac{1}{n}\sum_{i=1}^n \lambda_i^k,\]
this interlacing and the
condition $\|\W\| \leq C_0$ imply $|m_k-\tilde{m}_k| \to 0$ and $m_k,\tilde{m}_k
\to m_k^\infty=\EE[\Lambda^k]$ for each fixed $k \geq 1$ as $n \to \infty$.
Hence also for each fixed $k \geq 1$,
\begin{equation}\label{eq:PCAkappadiff}
|\kappa_k-\tilde{\kappa}_k| \to 0 \quad \text{ and } \quad
\kappa_k,\tilde{\kappa}_k \to \kappa_k^\infty
\end{equation}
where $\{\kappa_k^\infty\}_{k \geq 1}$ are the free cumulants of $\Lambda$.

We now check inductively that, almost surely for each fixed
$T=0,1,2,\ldots$ as $n \to \infty$,
\begin{equation}\label{eq:PCAclaim1}
n^{-1}\|\u_s-\tilde{\u}_s\|^2 \to 0 \text{ for all } s \leq T+1, \qquad
n^{-1}\|\f_s-\tilde{\f}_s\|^2 \to 0 \text{ for all } s \leq T,
\end{equation}
and
\begin{equation}\label{eq:PCAclaim2}
(\u_1,\ldots,\u_{T+1},\f_1,\ldots,\f_T,\u_*)
\toWtwo (U_1,\ldots,U_{T+1},F_1,\ldots,F_T,U_*)
\end{equation}
where the joint law of this limit is as in Theorem \ref{thm:PCA} and
coincides with the limit in (\ref{eq:PCAtildeconvergence})

For the base case $T=0$, we have $\|\u_1-\tilde{\u}_1\|=0$,
$(\u_1,\u_*) \toW (U_1,U_*)=(\tilde{U}_1,U_*)$
by assumption, and the remaining claims are
vacuous. Assume inductively that these claims hold for $T-1$. Then for all $s,s'
\leq T$,
\[\lim_{n \to \infty} \langle \u_s \u_{s'} \rangle=\EE[U_sU_{s'}]
=\EE[\tilde{U}_s\tilde{U}_{s'}]=\lim_{n \to \infty} \langle \tilde{\u}_s
\tilde{\u}_{s'} \rangle,\]
and similarly for all $s \leq T$,
\[\lim_{n \to \infty} \langle u_s'(\f_{s-1}) \rangle=\EE[u_s'(F_{s-1})]
=\EE[\tilde{u}_s'(\tilde{Z}_{s-1},U_*)]
=\lim_{n \to \infty} \langle \partial_{s-1} \tilde{\u}_s \rangle.\]
Combining with (\ref{eq:PCAkappadiff}) and comparing (\ref{eq:PCAsigma})
with (\ref{eq:tildeSigmaPCA}),
this shows that $\tilde{\bSigma}_T^\infty$ coincides with $\bSigma_T^\infty$,
and hence that the limit laws in (\ref{eq:PCAtildeconvergence})
and (\ref{eq:PCAclaim2}) coincide for $T$.

Comparing (\ref{eq:PCAb}) with the general definition of $\tilde{\B}_T$ from
(\ref{eq:BSigma}), this also shows that $|b_{st}-\tilde{b}_{st}| \to 0$
as $n \to \infty$, for all $s,t \leq T$. Denoting
\[\z_t=\W \u_t-b_{t1}\u_1-\ldots-b_{tt}\u_t,\]
and applying also $\|\W\| \leq C_0$ and
$n^{-1}\|\u_s-\tilde{\u}_s\|^2 \to 0$ for all $s \leq T$ by the
induction hypothesis, we obtain $n^{-1}\|\z_T-\tilde{\z}_T\|^2 \to 0$.
Since $\f_T=\u_* \cdot (\alpha/n)\u_*^\top \u_T+\z_T$,
$\tilde{\f}_T=\u_* \cdot \mu_T^\infty+\tilde{\z}_T$, and
$(\alpha/n)\u_*^\top \u_T \to \alpha \cdot \EE[U_*U_T]=\mu_T^\infty$
by the induction hypothesis (\ref{eq:PCAclaim2}), this shows
\begin{equation}\label{eq:gcompare}
n^{-1}\|\f_T-\tilde{\f}_T\|^2 \to 0.
\end{equation}
Then, as the function $u_{T+1}$ is Lipschitz,
\begin{equation}\label{eq:vcompare}
n^{-1}\|\u_{T+1}-\tilde{\u}_{T+1}\|^2
=n^{-1}\|u_{T+1}(\f_T)-u_{T+1}(\tilde{\f}_T)\|^2 \to 0.
\end{equation}
This shows (\ref{eq:PCAclaim1}) for $T$.
Applying Proposition \ref{prop:scalarprod}, this implies that
$(\u_1,\ldots,\u_{T+1},\f_1,\ldots,\f_T,\u_*)$ must have the same empirical
limit in $W_2$ as
$(\tilde{\u}_1,\ldots,\tilde{\u}_{T+1},\tilde{\f}_1,\ldots,\tilde{\f}_T,\u_*)$.
Together with (\ref{eq:PCAtildeconvergence}) and the coincidence of the two
joint limit laws in (\ref{eq:PCAtildeconvergence}) and (\ref{eq:PCAclaim2})
that was already established, this shows (\ref{eq:PCAclaim2})
for $T$, concluding the induction and the proof.
\end{proof}

\begin{proof}[Proof of Theorem \ref{thm:PCArect}(a)]
We may write the AMP iterations (\ref{eq:AMPPCArect1}--\ref{eq:AMPPCArect4}) as
\[\g_t=\v_* \cdot (\alpha/m)\u_*^\top \u_t+\W^\top
\u_t-b_{t1}\v_1-\ldots-b_{t,t-1}\v_{t-1}, \qquad\qquad \v_t=v_t(\g_t)\]
\[\f_t=\u_* \cdot (\alpha/m)\v_*^\top \v_t+\W\v_t-a_{t1}\u_1-\ldots-a_{tt}\u_t,
\qquad\qquad \u_{t+1}=u_{t+1}(\f_t).\]
Approximating $(\alpha/m)\u_*^\top \u_t \approx \alpha \cdot
\EE[U_*U_t]=\nu_t^\infty$ and $(\alpha/m)\v_*^\top \v_t \approx
\alpha/\gamma \cdot \EE[V_*V_t]=\mu_t^\infty$, we consider the
auxiliary AMP sequence initialized at $\tilde{\u}_1=\u_1$ and defined by
\[\tilde{\z}_t=\W^\top \tilde{\u}_t-\tilde{b}_{t1}\tilde{\v}_1-\ldots
-\tilde{b}_{t,t-1}\tilde{\v}_{t-1}, \qquad\qquad
\tilde{\v}_t=\tilde{v}_t(\tilde{\z}_t,\v_*)
\equiv v_t(\tilde{\z}_t+\nu_t^\infty \v_*),\]
\[\tilde{\y}_t=\W
\tilde{\v}_t-\tilde{a}_{t1}\tilde{\u}_t-\ldots-\tilde{a}_{tt}\tilde{\u}_t,
\qquad\qquad
\tilde{\u}_{t+1}=\tilde{u}_{t+1}(\tilde{\y}_t,\u_*)
\equiv u_{t+1}(\tilde{\y}_t+\mu_t^\infty\u_*).\]
Here, the debiasing coefficients are the last columns of $\tilde{\A}_t$ and
$\tilde{\B}_t$ defined by (\ref{eq:ABrect})
with the iterates $\tilde{\u}_t,\tilde{\v}_t$ and with
rectangular free cumulants $\tilde{\kappa}_{2k}$ of $\W$. The partial
derivatives in (\ref{eq:DeltaPhirect}) and (\ref{eq:GammaPsirect}) are
given by $\partial_s \tilde{u}_{t+1}(\cdot)=\tilde{u}_{t+1}'(\cdot)$
if $s=t$ and 0 otherwise, and $\partial_s
\tilde{v}_t(\cdot)=\tilde{v}_t'(\cdot)$ if $s=t$ and 0 otherwise,
where $\tilde{u}_{t+1}'(\cdot)$ and $\tilde{v}_t'(\cdot)$ denote their
derivatives with respect to the first arguments $\tilde{y}_t$ and $\tilde{z}_t$.

This auxiliary AMP sequence
is of the form (\ref{eq:AMPrectz}--\ref{eq:AMPrectu}) with side information
$\E=\u_*$ and $\F=\v_*$. Setting
\[\tilde{\f}_t=\tilde{\y}_t+\mu_t^\infty \u_*, \qquad
\tilde{\g}_t=\tilde{\z}_t+\nu_t^\infty \v_*,\]
Corollary \ref{cor:degeneraterect} then implies for each fixed $T \geq 1$ that
\begin{align*}
(\tilde{\v}_1,\ldots,\tilde{\v}_T,\tilde{\g}_1,\ldots,\tilde{\g}_T,\v_*)
&\toWtwo (\tilde{V}_1,\ldots,\tilde{V}_T,\tilde{G}_1,\ldots,\tilde{G}_T,V_*)\\
(\tilde{\u}_1,\ldots,\tilde{\u}_{T+1},\tilde{\f}_1,\ldots,\tilde{\f}_T,\u_*)
&\toWtwo (\tilde{U}_1,\ldots,\tilde{U}_{T+1},\tilde{F}_1,\ldots,\tilde{F}_T,U_*)
\end{align*}
where these limits are described by
$\tilde{F}_t=\tilde{Y}_t+\mu_t^\infty U_*$,
$\tilde{G}_t=\tilde{Z}_t+\nu_t^\infty V_*$,
$(\tilde{Y}_1,\ldots,\tilde{Y}_T) \sim \N(0,\tilde{\bSigma}_T^\infty)$
and $(\tilde{Z}_1,\ldots,\tilde{Z}_T) \sim \N(0,\tilde{\bOmega}_T^\infty)$.
Here, the forms for $\tilde{\bSigma}_T^\infty$ and $\tilde{\bOmega}_T^\infty$
are given by (\ref{eq:PCAsigmarect}) and (\ref{eq:PCAomegarect}) defined for
this auxiliary sequence: In the definitions (\ref{eq:SigmaOmegarect}),
summing the first terms of $\bTheta_T^{(j)}$ and
$\bXi_T^{(j)}$ in (\ref{eq:Thetarect}--\ref{eq:Xirect}) yields
the terms with coefficient $\kappa_{2(j+k+1)}$ in
(\ref{eq:PCAsigmarect}) and (\ref{eq:PCAomegarect}), while
summing the second terms of (\ref{eq:Thetarect}--\ref{eq:Xirect})
yields the terms with coefficient $\kappa_{2(j+k+2)}$.

The proof is concluded by a similar comparison argument as in the preceding
proof of Theorem \ref{thm:PCA}(a), showing that these joint limit laws
coincide with those of Theorem \ref{thm:PCArect} and that the original AMP
sequence converges also to these joint laws. We omit the details for brevity.
\end{proof}

\subsection{Analysis of state evolutions}\label{subsec:PCApartb}

We now prove Theorems \ref{thm:PCA}(b) and \ref{thm:PCArect}(b).
For notational simplicity, we will drop all
superscripts $^\infty$ in this section, so that
$\kappa_k,\bSigma_T,\bDelta_T$ etc.\ are all understood as their
deterministic $n \to \infty$ limits. We use the entrywise notation
\[\bDelta_T=(\delta_{st})_{s,t=1}^T,
\; \bSigma_T=(\sigma_{st})_{s,t=1}^T, \; \bmu_T=(\mu_t)_{t=1}^T,
\; \bGamma_T=(\gamma_{st})_{s,t=1}^T,
\; \bOmega_T=(\omega_{st})_{s,t=1}^T, \; \bnu_T=(\nu_t)_{t=1}^T.\]

The proofs will apply a contractive mapping argument to show that
the matrices $\bDelta_T$, $\bSigma_T$, $\bGamma_T$, and $\bOmega_T$ all
converge in a certain normed space. Fix an arbitrary constant $\zeta \in (0,1)$,
say
\[\zeta=1/2.\]
We consider the space of ``infinite matrices''
$\x=(x_{st}:s,t \leq 0)$, indexed by the non-positive integers. The index
$(0,0)$ should be interpreted as the lower-right corner of $\x$. We equip this
space with a weighted $\ell_\infty$-norm
\[\|\x\|_\zeta=\sup_{s,t \leq 0} \zeta^{|s| \vee |t|} |x_{st}|,
\qquad |s| \vee |t|=\max(|s|,|t|).\]
Thus the weight is $\zeta^k$ for the $2k+1$ coordinate pairs
\[(s,t)=(-k,0),(-k,-1),\ldots,(-k,-k+1),(-k,-k),(-k+1,-k),\ldots,
(-1,-k),(0,-k).\]
Define $\cX=\{\x:\|\x\|_\zeta<\infty\}$,
and observe that $\cX$ is complete under $\|\cdot\|_\zeta$.
For any compact interval $I \subset \RR$, denote
\begin{equation}\label{eq:Xsubset}
\cX_I=\{\x:\x_{st} \in I \text{ for all } s,t \leq 0\} \subset \cX.
\end{equation}
Then $\cX_I$ is closed in $\cX$, and hence $\cX_I$ is also
complete under the norm $\|\cdot\|_\zeta$.

We will embed the matrices $\bDelta_T,\bSigma_T,\bGamma_T,\bOmega_T$ as
elements $\x,\y,\z,\w \in \cX$, with the coordinate identifications
\[\delta_{st}=x_{s-T,t-T}, \qquad \sigma_{st}=y_{s-T,t-T},
\qquad \gamma_{st}=z_{s-T,t-T}, \qquad \omega_{st}=w_{s-T,t-T}\]
\begin{equation}\label{eq:Xembedding}
x_{st}=y_{st}=z_{st}=w_{st}=0 \quad \text{ if } \quad
s \leq -T \text{ or } t \leq -T.
\end{equation}
Thus $\bDelta_T$, $\bSigma_T$, $\bGamma_T$, and $\bOmega_T$ fill out the
lower-right $T \times T$ corners of the corresponding sequences in $\cX$,
with their lower-right $(T,T)$ entries identified with the coordinate $(0,0)$
of $\cX$. Zero-padding is applied for the remaining entries of $(\x,\y,\z,\w)$
not belonging to this corner. The proofs will then have two main steps:
\begin{enumerate}
\item For large $T$, the state evolution that maps these matrices from
iterate $T$ to iterate $T+1$ will be approximated by a fixed map that is
independent of $T$, where the approximation is in the norm $\|\cdot\|_\zeta$.
\item This map will be shown to be contractive over certain sub-domains of
$\cX$ with respect to $\|\cdot\|_\zeta$, and hence these matrices will
converge to a fixed point of this map.
\end{enumerate}

\subsubsection{Symmetric square matrices}

We first show Theorem \ref{thm:PCA}(b). Recall that the AMP algorithm is given
by (\ref{eq:AMPPCA1}--\ref{eq:AMPPCA2}), where we take $u_{t+1}(\cdot)$
to be the single-iterate posterior mean denoiser in (\ref{eq:denoiserexplicit}).
Differentiating (\ref{eq:denoiserexplicit}) in $f$, we obtain
\[\frac{\partial}{\partial f}
\eta(f \mid \mu,\sigma^2)=\Cov\left[U_*,\;\frac{\partial}{\partial f}
\left(-\frac{(f-\mu U_*)^2}{2\sigma^2}\right) \bigg|F=f\right]
=\frac{\mu}{\sigma^2}\Var[U_* \mid F=f].\]
Then
\[u_{t+1}'(f_t)=\frac{\mu_t}{\sigma_{tt}}\Var[U_* \mid F_t=f_t].\]
Observe that for all $t \geq 1$,
\[\EE[\Var[U_* \mid F_t]]=\EE[(U_*-U_{t+1})^2]
=\EE[U_*^2]-\EE[U_{t+1}^2]=1-\delta_{t+1,t+1}.\]
So (\ref{eq:PCAsigma}) may be written more explicitly as
\begin{equation}\label{eq:PCAsigmaexplicit}
\sigma_{st}=\sum_{j=0}^{s-1} \sum_{k=0}^{t-1}
\kappa_{j+k+2} \left(\prod_{i=s-j+1}^s \frac{\mu_{i-1}}
{\sigma_{i-1,i-1}}(1-\delta_{ii})\right)
\left(\prod_{i=t-k+1}^t \frac{\mu_{i-1}}
{\sigma_{i-1,i-1}}(1-\delta_{ii})\right)\delta_{s-j,t-k}.
\end{equation}
In this expression, we have
\begin{equation}\label{eq:PCAmu}
\mu_1=\alpha \cdot \EE[U_1U_*]=\alpha \eps, \qquad
\mu_i=\alpha \cdot \EE[U_iU_*]=\alpha\cdot\EE[U_i^2]=\alpha \delta_{ii}
\text{ for } i \geq 2.
\end{equation}

For a sufficiently large constant $C>0$ depending on $C_0$ and $\eps$,
we define the intervals
\[I_\Delta=\left[1-\frac{C}{\alpha^2},\;1\right],
\qquad I_\Sigma=\left[\frac{1}{2}\kappa_2,\frac{3}{2}\kappa_2\right],\]
and the corresponding domains
$\cX_{I_\Delta},\cX_{I_\Sigma} \subset \cX$ by (\ref{eq:Xsubset}).
Motivated by the forms (\ref{eq:PCAsigmaexplicit}) and (\ref{eq:PCAmu}),
we will approximate the map $(\bDelta_T,\bSigma_{T-1})
\mapsto \bSigma_T$ by a fixed map $h^\Sigma:\cX_{I_\Delta} \times \cX_{I_\Sigma}
\to \cX$, defined entrywise by
\begin{equation}\label{eq:PCASigmaapprox}
h^\Sigma_{st}(\x,\y)=\sum_{j=0}^\infty \sum_{k=0}^\infty \kappa_{j+k+2}
\left(\prod_{i=s-j+1}^s \frac{\alpha x_{i-1,i-1}}{y_{ii}}(1-x_{ii})\right)
\left(\prod_{i=t-k+1}^t \frac{\alpha x_{i-1,i-1}}
{y_{ii}}(1-x_{ii})\right)x_{s-j,t-k}.
\end{equation}
(Note that the embedding of $\bSigma_{T-1}$ in $\cX$ has indices that are
offset from those of $\bSigma_T$ and $\bDelta_T$ by 1, so $y_{ii}$ appears
instead of $y_{i-1,i-1}$.)
We will approximate the map $(\bDelta_T,\bSigma_T) \mapsto \bDelta_{T+1}$ by a
fixed map
$h^\Delta:\cX_{I_\Delta} \times \cX_{I_\Sigma} \to \cX$, defined as
\[h^\Delta_{st}(\x,\y)=\EE_{\x,\y}\Big[\EE_{\x,\y}[U_* \mid F_s]
\EE_{\x,\y}[U_* \mid F_t]\Big]\]
where the expectations are with respect to the $(\x,\y)$-dependent joint law
\[(F_s,F_t)=(\alpha x_{ss},\alpha x_{tt})U_*
+(Z_s,Z_t), \qquad (Z_s,Z_t) \sim \N\left(0,\;\Pi\begin{pmatrix} y_{ss} &
y_{st} \\ y_{ts} & y_{tt} \end{pmatrix}\right) \text{ independent of } U_*.\]
Here, we denote by $\Pi:I_\Sigma^{2 \times 2} \to I_\Sigma^{2 \times 2}$ the
map
\begin{equation}\label{eq:Pi}
\Pi\begin{pmatrix} a & b \\ c & d \end{pmatrix}
=\begin{pmatrix} a & \min(\sqrt{bc},\sqrt{ad}) \\
\min(\sqrt{bc},\sqrt{ad}) & d \end{pmatrix}
\end{equation}
whose image is always symmetric positive-semidefinite, so that the above
bivariate normal law is always well-defined. (If $M$ is already
symmetric positive-semidefinite, then $\Pi(M)=M$.)

The following lemma establishes the Lipschitz bounds for
$h^\Sigma$ and $h^\Delta$.

\begin{lemma}\label{lemma:PCAcontraction}
In the setting of Theorem \ref{thm:PCA}(b), there exist constants $C,\alpha_0>0$
such that for all $\alpha>\alpha_0$ and
$(\x,\y),(\x',\y') \in \cX_{I_\Delta} \times \cX_{I_\Sigma}$:
\begin{enumerate}[(a)]
\item $h^\Sigma(\x,\y) \in \cX_{I_\Sigma}$ and
$\|h^\Sigma(\x,\y)-h^\Sigma(\x',\y')\|_\zeta \leq C\alpha\|\x-\x'\|_\zeta
+(C/\alpha)\|\y-\y'\|_\zeta$.
\item $h^\Delta(\x,\y) \in \cX_{I_\Delta}$ and
$\|h^\Delta(\x,\y)-h^\Delta(\x',\y')\|_\zeta \leq (C/\alpha)\|\x-\x'\|_\zeta
+(C/\alpha^2)\|\y-\y'\|_\zeta$.
\end{enumerate}
\end{lemma}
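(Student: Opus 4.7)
The strategy exploits two basic smallness properties on the domain $\cX_{I_\Delta} \times \cX_{I_\Sigma}$: every gap $1 - x_{ii} \leq C/\alpha^2$, whence each factor $(\alpha x_{i-1,i-1}/y_{ii})(1 - x_{ii})$ appearing in (\ref{eq:PCASigmaapprox}) is of size $O(1/\alpha)$; and the free cumulants satisfy $|\kappa_k^\infty| \leq K^k$ by $\|\blambda\|_\infty \leq C_0$ together with Proposition \ref{prop:cumulantbound}. Choosing $\alpha_0$ so that $KC'/(\alpha\zeta) < 1/2$ for $\alpha > \alpha_0$ makes every infinite series below geometrically convergent. A second crucial ingredient is the combinatorial inequality, valid for all $s,t \leq 0$ and $j,k \geq 0$,
\[ |s-j| \vee |t-k| - |s|\vee|t| \leq \max(j,k) \leq j+k, \]
which yields $\zeta^{|s|\vee|t|}\,|x_{s-j,t-k}| \leq \zeta^{-j-k}\,\|\x\|_\zeta$. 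This lets the weighted $\zeta$-norm absorb ``distant'' index shifts at the cost of only $\zeta^{-j-k}$, which is dominated by $(KC'/\alpha)^{j+k}$ for $\alpha$ large.

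For part (a), the containment is clear once the $j=k=0$ term $\kappa_2 x_{st} \in [\kappa_2(1 - C/\alpha^2),\,\kappa_2]$ is split off: the remainder is bounded by $K^2 \sum_{j+k \geq 1} (KC'/\alpha)^{j+k} = O(1/\alpha)$, placing $h^\Sigma_{st}$ in $I_\Sigma$. For the Lipschitz estimate, I would decompose the difference $h^\Sigma(\x,\y) - h^\Sigma(\x',\y')$ via the mean value theorem into partial-derivative contributions. Two kinds arise in the $\x$-derivative: differentiating the outer factor $x_{s-j,t-k}$ gives a contribution of size $O(K^{j+k+2}/\alpha^{j+k})$, summable and bounded; but differentiating a $(1-x_{ii})$ inside a product removes the small $O(1/\alpha^2)$ factor and leaves behind $\alpha x_{i-1,i-1}/y_{ii} = O(\alpha)$, and the worst such term (at $j+k=1$) is $O(\alpha)$ while higher $j+k$ are suppressed by the same geometric bound. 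The $\y$-derivative acts only on higher-order terms (the leading $\kappa_2 x_{st}$ is $\y$-independent), so each contribution retains its $O(1/\alpha^{j+k})$ scaling, peaking at $O(1/\alpha)$. Summing over $(u,v)$ with the $\zeta^{-j-k}$ inflation yields $C\alpha\|\x-\x'\|_\zeta + (C/\alpha)\|\y-\y'\|_\zeta$.

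For part (b), the diagonal containment follows from the MMSE bound (\ref{eq:mmsebound}): $h^\Delta_{ss} = 1 - \mmse(\alpha^2 x_{ss}^2/y_{ss})$, and $\alpha^2 x_{ss}^2/y_{ss} \geq c\alpha^2$ for $\alpha$ large gives $h^\Delta_{ss} \geq 1 - C/\alpha^2$. Off-diagonally, letting $\mmse_s = \EE[(U_* - \EE[U_* \mid F_s])^2]$, the identity
\[ h^\Delta_{st} = 1 - \mmse_s - \mmse_t + \EE\bigl[(U_* - \EE[U_*\mid F_s])(U_* - \EE[U_*\mid F_t])\bigr] \]
combined with Cauchy--Schwarz for the last term yields $h^\Delta_{st} \in [1-C/\alpha^2,\,1]$. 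For the Lipschitz estimate, I would differentiate (\ref{eq:denoiserexplicit}) under the integral sign with respect to the five scalar parameters $(x_{ss}, x_{tt}, y_{ss}, y_{st}, y_{tt})$ entering the bivariate Gaussian channel. Stein-type identities in the high-SNR regime $\mu \sim \alpha x_{ii}$ yield derivatives of order $1/\alpha$ with respect to the mean parameters and of order $1/\alpha^2$ with respect to the covariance parameters (the extra $1/\alpha$ factor arises because the covariance perturbation acts through the $O(1/\alpha^2)$ posterior variance). Combined with the weighted-norm bound $|x_{uv} - x'_{uv}| \leq \zeta^{-|u|\vee|v|}\|\x-\x'\|_\zeta$ (and similarly for $\y$), these derivative bounds give the stated $C/\alpha$ and $C/\alpha^2$ Lipschitz constants.

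\textbf{Main obstacle.} The core difficulty lies in part (a): the $O(\alpha)$ growth from differentiating a single $(1-x_{ii})$ factor could a priori compound over the infinite series and destroy convergence, and reconciling this with the weighted $\zeta$-norm requires the sharp combinatorial inequality above so that the index-shift inflation $\zeta^{-\max(j,k)}$ is beaten by the geometric decay $(KC'/\alpha)^{j+k}$. Once this is in place, the remaining summations reduce to standard geometric-series estimates.
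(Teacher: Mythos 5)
Your approach for part (a) matches the paper's: the containment via splitting off $\kappa_2 x_{st}$ and bounding the geometric tail, and the Lipschitz estimate via the mean value theorem, explicit partial-derivative bounds, and the combinatorial inequality $|s-j|\vee|t-k|-|s|\vee|t|\leq j+k$ are precisely the paper's ingredients. This part is sound.

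For part (b), your containment argument is correct and coincides with the paper's (the identity $h^\Delta_{st}-1 = \EE[(U_s-U_*)(U_t-U_*)]-\mmse_s-\mmse_t$ is exactly (\ref{eq:UsUtidentity})), but the Lipschitz estimate has a gap. You claim that Stein-type identities directly yield derivatives of order $1/\alpha^2$ in the covariance parameters, attributing the extra $1/\alpha$ to the $O(1/\alpha^2)$ posterior variance. But the Tweedie/Stein formula gives
\[
\frac{\partial}{\partial\sigma^2}\eta(f\mid\mu,\sigma^2)= -\frac{f\mu}{\sigma^4}\,v(f\mid\mu,\sigma^2)+\frac{\mu^2}{2\sigma^4}\,k(f\mid\mu,\sigma^2),
\]
and in the high-SNR regime the prefactors $f\mu/\sigma^4$ and $\mu^2/\sigma^4$ are both $O(\alpha^2)$ while $v,k=O(1/\alpha^2)$, so each of the two terms is naively $O(1)$, not $O(1/\alpha^2)$. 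The paper only obtains the required scaling through an exact cancellation: writing $U_*=(F-\sigma W)/\mu$ shows $k = \frac{2\sigma^2 F}{\mu^3}\Var[W\mid F]-\frac{\sigma^3}{\mu^3}\Cov[W,W^2\mid F]$, and substituting this into the Stein formula makes the two $O(1)$ contributions cancel identically, leaving $\partial_{\sigma^2}\eta = -\frac{1}{2\sigma\mu}\Cov[W,W^2\mid F]=O(1/\alpha)$. Combined with the Cauchy--Schwarz factor $\EE[(U_t-U_*)^2]^{1/2}\lesssim 1/\alpha$, this yields the $O(1/\alpha^2)$ Lipschitz constant in $\y$. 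Without the cancellation the $\y$-Lipschitz bound degrades to $O(1/\alpha)$, which is insufficient: tracing through the composition $G_x(\x,\y)=h^\Delta(\x,h^\Sigma(\x,\y))$ used to prove Theorem \ref{thm:PCA}(b), a $C/\alpha$ bound for the $\y$-dependence of $h^\Delta$ produces an $O(1)$ coefficient in front of $\|\x-\x'\|_\zeta$, and the contraction fails for every choice of weighting of the product norm. So the cancellation is not a simplification but an essential step, and your plan as written does not surface it.

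A secondary technical point: you propose differentiating directly in the five scalar parameters $(x_{ss},x_{tt},y_{ss},y_{st},y_{tt})$, but a linear path in $(y_{ss},y_{st},y_{tt})$ need not stay in the positive-semidefinite cone, so the interpolated channel may be ill-defined. The paper instead interpolates linearly in the matrix square-root factors $(\beta_{ss},\beta_{st},\beta_{tt})$, which guarantees PSD throughout, and then uses the Lipschitz continuity of the square root on the compact domain $I_\Sigma^{2\times2}$ to convert back. This is a straightforward patch, but your write-up should make it.
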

\begin{proof}
Let $C,C',c,\ldots$ denote constants depending only on $C_0$ and $\eps$ and
changing from instance to instance.
For part (a), let us write (\ref{eq:PCASigmaapprox}) as
\[h^\Sigma_{st}(\x,\y)=\sum_{j,k=0}^\infty \kappa_{j+k+2}
h_{st}^{(j,k)}(\x,\y).\]
Observe that for $(j,k)=(0,0)$, we have simply
$h_{st}^{(0,0)}(\x,\y)=x_{st}$.
By the given domains of $\x$ and $\y$, for all other $(j,k)$, we have the bounds
$x_{i-1,i-1} \leq 1$, $y_{ii} \geq \kappa_2/2$, and $1-x_{ii} \leq C/\alpha^2$
in the products defining (\ref{eq:PCASigmaapprox}). There are $j+k$ factors of
the form $(\alpha x_{i-1,i-1}/y_{ii})(1-x_{ii})$, yielding
\[|h_{st}^{(j,k)}(\x,\y)| \leq (C/\alpha)^{j+k}.\]
Applying $|\kappa_{j+k+2}| \leq (16C_0)^{j+k+2}$ by
Proposition \ref{prop:cumulantbound}, for $\alpha>\alpha_0$ sufficiently large,
this implies
\[|h^\Sigma_{st}(\x,\y)-\kappa_2 x_{st}|
=\left|\sum_{(j,k) \neq (0,0)} \kappa_{j+k+2}
h_{st}^{(j,k)}(\x,\y)\right| \leq \kappa_2/3.\]
Applying $x_{st} \in [1-C/\alpha^2,1]$, this yields
$h^\Sigma_{st}(\x,\y) \in I_\Sigma=[\kappa_2/2,3\kappa_2/2]$ for
$\alpha>\alpha_0$ sufficiently large. Then $h^\Sigma(\x,\y) \in \cX_{I_\Sigma}$.

To show the Lipschitz bound for $h^\Sigma$,
for $\lambda \in [0,1]$ we set
\[\x^\lambda=\lambda \x+(1-\lambda)\x', \qquad \y^\lambda
=\lambda \y+(1-\lambda)\y'.\]
Then
\begin{equation}\label{eq:sigmadiffPCA}
|h^\Sigma_{st}(\x,\y)-h^\Sigma_{st}(\x',\y')|
=\left|\int_0^1 \frac{d}{d\lambda}
h^\Sigma_{st}(\x^\lambda,\y^\lambda)d\lambda\right|
\leq \sup_{\lambda \in [0,1]} \left|\frac{d}{d\lambda}
h^\Sigma_{st}(\x^\lambda,\y^\lambda)\right|.
\end{equation}
By the chain rule,
\begin{equation}\label{eq:dsigmaPCA}
\frac{d}{d\lambda} h^\Sigma_{st}(\x^\lambda,\y^\lambda)
=\sum_{p,q \leq 0} (x_{pq}-x_{pq}')\frac{\partial h^\Sigma_{st}}
{\partial x_{pq}} (\x^\lambda,\y^\lambda)
+\sum_{p \leq 0} (y_{pp}-y_{pp}')\frac{\partial h^\Sigma_{st}}{\partial y_{pp}}
(\x^\lambda,\y^\lambda).
\end{equation}

We establish a uniform bound for these partial derivatives.
For any $\x,\y \in \cX_{I_\Delta} \times \cX_{I_\Sigma}$,
applying the above bound for $(\alpha x_{i-1,i-1}/y_{ii})(1-x_{ii})$,
we have
\begin{align*}
\left|\frac{\partial h_{st}^{(j,k)}}{\partial x_{pp}}\right|
&\leq \begin{cases}
C\alpha^2 (C/\alpha)^{j+k} & \text{ if } p \in \{s-j+1,\ldots,s\} \cup
\{t-k+1,\ldots,t\} \\
(C/\alpha)^{j+k} & \text{ if } p=s-j \text{ or } p=t-k
\end{cases}\\
\left|\frac{\partial h_{st}^{(j,k)}}{\partial y_{pp}}\right|
&\leq (C/\alpha)^{j+k} \quad \text{ if } p \in \{s-j+1,\ldots,s\} \cup
\{t-k+1,\ldots,t\}\\
\left|\frac{\partial h_{st}^{(j,k)}}{\partial x_{s-j,t-k}}\right|
&\leq (C/\alpha)^{j+k},
\end{align*}
and all other partial derivatives of $h_{st}^{(j,k)}$ are 0.
Multiplying by $\kappa_{j+k+2}$, applying $|\kappa_{j+k+2}| \leq
(16C_0)^{j+k+2}$, and summing over $j,k \geq 0$, this implies
\begin{align*}
\left|\frac{\partial h^\Sigma_{st}}{\partial x_{pp}}\right|
&\leq \sum_{j,k \geq 0} |\kappa_{j+k+2}| \cdot
\left|\frac{\partial h_{st}^{(j,k)}}{\partial x_{pp}}\right|\\
&\leq \1\{p \leq s\} \sum_{k \geq
0}C\left(\frac{C'}{\alpha}\right)^{(s-p)+k}+
\1\{p \leq t\} \sum_{j \geq 0} C\left(\frac{C'}{\alpha}\right)^{j+(t-p)}\\
&\hspace{1in}+\1\{p \leq s\} \sum_{j \geq s+1-p} \sum_{k \geq 0}
C\alpha^2\left(\frac{C'}{\alpha}\right)^{j+k}
+\1\{p \leq t\} \sum_{k \geq t+1-p} \sum_{j \geq 0}
C\alpha^2\left(\frac{C'}{\alpha}\right)^{j+k}
\end{align*}
where the first two terms are the contributions from
$p=s-j$ and $p=t-k$, and the latter two terms are the contributions
from $p \in \{s-j+1,\ldots,s\}$ and $p \in \{t-k+1,\ldots,t\}$.
For $\alpha>\alpha_0$ sufficiently large, this simplifies to the bound
\[\left|\frac{\partial h^\Sigma_{st}}{\partial x_{pp}}\right|
\leq C\alpha \left(\1\{p \leq s\} \cdot \left(\frac{C'}{\alpha}\right)^{s-p}
+\1\{p \leq t\} \cdot \left(\frac{C'}{\alpha}\right)^{t-p}\right).\]
We have similarly
\begin{align*}
\left|\frac{\partial h^\Sigma_{st}}{\partial y_{pp}}\right|
&\leq \sum_{j,k \geq 0} |\kappa_{j+k+2}| \cdot
\left|\frac{\partial h_{st}^{(j,k)}}{\partial y_{pp}}\right|\\
&\leq \1\{p \leq s\} \sum_{j \geq s+1-p} \sum_{k \geq 0}
C\left(\frac{C'}{\alpha}\right)^{j+k}
+\1\{p \leq t\} \sum_{k \geq t+1-p} \sum_{j \geq 0}
C\left(\frac{C'}{\alpha}\right)^{j+k}\\
&\leq \frac{C}{\alpha}\left(\1\{p \leq s\} \cdot \left(\frac{C'}{\alpha}
\right)^{s-p}
+\1\{p \leq t\} \cdot \left(\frac{C'}{\alpha}\right)^{t-p}\right)
\end{align*}
and, for $p \neq q$,
\[\left|\frac{\partial h^\Sigma_{st}}{\partial x_{pq}}\right|
\leq \sum_{j,k \geq 0} |\kappa_{j+k+2}| \cdot
\left|\frac{\partial h_{st}^{(j,k)}}{\partial x_{pq}}\right|
\leq C \cdot \1\{p \leq s \text{ and } q \leq t\}
\cdot \left(\frac{C'}{\alpha}\right)^{s-p+t-q}.\]

Applying these bounds to (\ref{eq:sigmadiffPCA}) and (\ref{eq:dsigmaPCA}),
\begin{align*}
|h^\Sigma_{st}(\x,\y)-h^\Sigma_{st}(\x',\y')|
&\leq C\alpha \sum_{p \leq s}|x_{pp}-x_{pp}'|
\left(\frac{C'}{\alpha}\right)^{s-p}
+C\alpha\sum_{p \leq t}|x_{pp}-x_{pp}'|\left(\frac{C'}{\alpha}\right)^{t-p}\\
&\hspace{0.2in}
+\frac{C}{\alpha}
\sum_{p \leq s}|y_{pp}-y_{pp}'|\left(\frac{C'}{\alpha}\right)^{s-p}
+\frac{C}{\alpha}
\sum_{p \leq t}|y_{pp}-y_{pp}'|\left(\frac{C'}{\alpha}\right)^{t-p}\\
&\hspace{0.2in}+C\sum_{p \leq s}\sum_{q \leq t} |x_{pq}-x_{pq}'|
\left(\frac{C'}{\alpha}\right)^{s-p+t-q}.
\end{align*}
For $\alpha>\alpha_0$ large enough, we may bound the terms above using
\begin{align*}
\sum_{p \leq s} |x_{pp}-x_{pp}'| (C'/\alpha)^{s-p}
&\leq \sup_{p \leq s} |x_{pp}-x_{pp}'|\zeta^{|p|}
\cdot \sum_{p \leq s} \zeta^{-|p|} (C'/\alpha)^{s-p}
\leq C\|\x-\x'\|_\zeta \cdot \zeta^{-|s|},\\
\sum_{p \leq s} |y_{pp}-y_{pp}'| (C'/\alpha)^{s-p}
&\leq \sup_{p \leq s} |y_{pp}-y_{pp}'|\zeta^{|p|}
\cdot \sum_{p \leq s} \zeta^{-|p|} (C'/\alpha)^{s-p}
\leq C\|\y-\y'\|_\zeta \cdot \zeta^{-|s|},\\
\sum_{p \leq s}\sum_{q \leq t}
|x_{pq}-x_{pq}'|(C'/\alpha)^{s-p+t-q}
&\leq \sup_{p \leq s \text{ and } q \leq t}
|x_{pq}-x_{pq}'|\zeta^{|p| \vee |q|}
\cdot \sum_{p \leq s}\sum_{q \leq t}
\zeta^{-(|p| \vee |q|)}(C'/\alpha)^{s-p+t-q}\\
&\leq C\|\x-\x'\|_\zeta \cdot \zeta^{-(|s| \vee |t|)}.
\end{align*}
Then
\[\|h^\Sigma(\x,\y)-h^\Sigma(\x',\y')\|_\zeta
=\sup_{s,t \leq 0} |h^\Sigma_{st}(\x,\y)-h^\Sigma_{st}(\x',\y')|\zeta^{|s| \vee |t|}
\leq C\alpha\|\x-\x'\|_\zeta+\frac{C}{\alpha}\|\y-\y'\|_\zeta,\]
yielding the Lipschitz bound in part (a).

For part (b), let us denote
\begin{equation}\label{eq:posterioruvfuncs}
\eta(f \mid \mu,\sigma^2)=\EE[U_* \mid F=f], \qquad
v(f \mid \mu,\sigma^2)=\Var[U_* \mid F=f]
\end{equation}
in the scalar model $F=\mu \cdot U_*+Z$, where $Z \sim \N(0,\sigma^2)$ is
independent of $U_*$. Observe that
\[\Var[U_* \mid F]=\Var[\mu^{-1}(F-Z) \mid F]=(1/\mu)^2 \Var[Z \mid F],\]
so that
\begin{equation}\label{eq:posteriorvariance}
\EE[v(F \mid \mu,\sigma^2)]
=\EE[\Var[U_* \mid F]]=(1/\mu)^2 \EE[\Var[Z \mid F]]
\leq (1/\mu)^2 \Var[Z]=(\sigma/\mu)^2.
\end{equation}

For ease of notation, let us write $\EE$ for $\EE_{\x,\y}$ in the
definition of the function $h^\Delta$. We denote
\[U_t=\EE[U_* \mid F_t]=\eta(F_t \mid \alpha x_{tt},y_{tt}).\]
Observe that the above then implies
\[\EE[(U_*-U_t)^2]=\EE[v(F_t \mid \alpha x_{tt},y_{tt})]
\leq y_{tt}/(\alpha x_{tt})^2.\]
For $\x \in \cX_{I_\Delta}$ and $\y \in \cX_{I_\Sigma}$,
applying $y_{tt} \leq (3/2)\kappa_2$ and $x_{tt}^2 \geq 3/4$ for
all $\alpha>\alpha_0$ sufficiently large, this shows
\[\EE[(U_*-U_t)^2] \leq 2\kappa_2/\alpha^2.\]
Now applying $\EE[U_*U_t]=\EE[U_t^2]$ and
$\EE[U_*^2]=1$, we may write
\begin{align}
\EE[U_sU_t]-1&=\EE[(U_s-U_*)(U_t-U_*)]+(\EE[U_s^2]-1)+
(\EE[U_t^2]-1)\nonumber\\
&=\EE[(U_s-U_*)(U_t-U_*)]-\EE[(U_s-U_*)^2]
-\EE[(U_t-U_*)^2]\label{eq:UsUtidentity}\\
&\geq -\frac{3}{2}\Big(\EE[(U_s-U_*)^2]+\EE[(U_t-U_*)^2]\Big).
\nonumber
\end{align}
So $\EE[U_sU_t] \geq 1-6\kappa_2/\alpha^2$. By Cauchy-Schwarz, also
$\EE[U_sU_t] \leq \EE[U_s^2]^{1/2}\EE[U_t^2]^{1/2} \leq \EE[U_*^2]=1$,
so $h^\Delta(\x,\y) \in \cX_{I_\Delta}$.

To show the Lipschitz bound for $h^\Delta$, from (\ref{eq:UsUtidentity}) we have
\begin{align}
|h_{st}^\Delta(\x,\y)-h_{st}^\Delta(\x',\y')|
&=\big|\EE[U_sU_t]-\EE[U_s'U_t']\big|\nonumber\\
&\leq \big|\EE[(U_s-U_*)(U_t-U_*)]-\EE[(U_s'-U_*)(U_t'-U_*)]\big|\nonumber\\
&\hspace{0.5in}+\big|\EE[(U_s-U_*)^2]-\EE[(U_s'-U_*)^2]\big|
+\big|\EE[(U_t-U_*)^2]-\EE[(U_t'-U_*)^2]\big|\label{eq:UUdiff}
\end{align}
so it suffices to bound these three terms individually. We demonstrate the
bound for the first term: Let us write
\[\begin{pmatrix} F_s \\ F_t \end{pmatrix}=\begin{pmatrix} \mu_s \\ \mu_t
\end{pmatrix}U_*+\begin{pmatrix}
\beta_{ss} & \beta_{st} \\ \beta_{st} & \beta_{tt} \end{pmatrix}
\begin{pmatrix} W_s \\ W_t \end{pmatrix}\]
where
\[(\mu_s,\mu_t)=(\alpha x_{ss},\alpha x_{tt}), \qquad
\begin{pmatrix} \beta_{ss} & \beta_{st} \\ \beta_{st} & \beta_{tt}
\end{pmatrix}=\left(\Pi\begin{pmatrix} y_{ss} & y_{st} \\ y_{ts} & y_{tt}
\end{pmatrix}\right)^{1/2}, \qquad (W_s,W_t) \sim \N(0,\Id).\]
Here $(\cdot)^{1/2}$ denotes the positive-semidefinite matrix square-root,
given explicitly for $2 \times 2$ matrices by
\begin{equation}\label{eq:matrixsquareroot}
M^{1/2}=\frac{1}{\sqrt{\Tr M+2\sqrt{\det M}}}(M+\sqrt{\det M} \cdot
\Id_{2 \times 2}).
\end{equation}
Then
\[U_s=\eta(F_s \mid \alpha x_{ss},y_{ss})
=\eta(F_s \mid \mu_s,\beta_{ss}^2+\beta_{st}^2),
\qquad U_t=\eta(F_t \mid \alpha x_{tt},y_{tt})
=\eta(F_t \mid \mu_t,\beta_{st}^2+\beta_{tt}^2).\]
For $\lambda \in [0,1]$, writing the same forms for $F_s',F_t',U_s',U_t'$,
we define the linear interpolations
\[\mu_s^\lambda=\lambda \mu_s+(1-\lambda)\mu_s',
\qquad \beta_{ss}^\lambda=\lambda \beta_{ss}+(1-\lambda) \beta_{ss}',
\qquad F_s^\lambda=\lambda F_s+(1-\lambda) F_s',\]
and similarly for $\mu_t,\beta_{st},\beta_{tt},F_t$. Finally, we define
\[\sigma_{ss}^\lambda=(\beta_{ss}^\lambda)^2+(\beta_{st}^\lambda)^2,
\qquad \sigma_{tt}^\lambda=(\beta_{st}^\lambda)^2+(\beta_{tt}^\lambda)^2,\]
\begin{equation}\label{eq:Ulambda}
U_s^\lambda=\eta(F_s^\lambda \mid \mu_s^\lambda,\sigma_{ss}^\lambda),
\qquad
U_t^\lambda=\eta(F_t^\lambda \mid \mu_t^\lambda,\sigma_{tt}^\lambda).
\end{equation}
Denoting $\partial_\lambda$ as the derivative in $\lambda$, we then have
\[\EE[(U_s-U_*)(U_t-U_*)]-\EE[(U_s'-U_*)(U_t'-U_*)]
=\int_0^1 \partial_\lambda \EE[(U_s^\lambda-U_*)(U_t^\lambda-U_*)]d\lambda\]
where this latter expectation is over the underlying random variables
$(U_*,W_s,W_t)$. The law of
$(U_*,W_s,W_t)$ does not depend on $\lambda$, so we may take the derivative
inside this expectation, yielding
\begin{align}
&\Big|\EE[(U_s-U_*)(U_t-U_*)]-\EE[(U_s'-U_*)(U_t'-U_*)]\Big|\nonumber\\
&\leq \int_0^1
\EE\left[\big|\partial_\lambda U_s^\lambda \cdot
(U_t^\lambda-U_*)\big|+\big|(U_s^\lambda-U_*) \cdot \partial_\lambda U_t^\lambda
\big|\right]\;d\lambda\label{eq:UUdifftmp}
\end{align}
Observe from (\ref{eq:matrixsquareroot}) and the condition
$y_{ss},y_{st},y_{ts},y_{tt}>0$ that $\beta_{ss},\beta_{st},\beta_{tt}>0$. Then
\begin{align*}
\sigma_{ss}^\lambda &\leq 2\lambda^2(\beta_{ss}^2+\beta_{st}^2)
+2(1-\lambda)^2({\beta_{ss}'}^2+{\beta_{st}'}^2)
=2\lambda^2 y_{ss}+2(1-\lambda)^2y_{ss}'\\
\sigma_{ss}^\lambda &\geq \lambda^2(\beta_{ss}^2+\beta_{st}^2)
+(1-\lambda)^2({\beta_{ss}'}^2+{\beta_{st}'}^2)
=\lambda^2 y_{ss}+(1-\lambda)^2y_{ss}'.
\end{align*}
Recalling that $\x \in \cX_{I_\Delta}$ and $\y \in \cX_{I_\Sigma}$,
we get the bounds
\begin{equation}\label{eq:musigmabounds}
\mu_s^\lambda \leq \alpha, \qquad \mu_s^\lambda \geq \alpha-C/\alpha,
\qquad \sigma_{ss}^\lambda \leq C, \qquad \sigma_{ss}^\lambda \geq c.
\end{equation}
for some constants $C,c>0$.
Applying these bounds together with (\ref{eq:posteriorvariance}) yields
$\EE[(U_s^\lambda-U_*)^2] \leq C/\alpha^2$. The same argument holds for
$\EE[(U_t^\lambda-U_*)^2]$. Then applying Cauchy-Schwarz to
(\ref{eq:UUdifftmp}),
\begin{equation}\label{eq:interp}
\Big|\EE[(U_s-U_*)(U_t-U_*)]-\EE[(U_s'-U_*)(U_t'-U_*)]\Big|
\leq \frac{C}{\alpha} \int_0^1
\left(\EE\left[(\partial_\lambda U_s^\lambda)^2
\right]^{1/2}+\EE\left[(\partial_\lambda
U_t^\lambda)^2\right]^{1/2}\right)d\lambda.
\end{equation}

We proceed to bound $\EE[(\partial_\lambda U_s^\lambda)^2]$:
Recall $\eta(f \mid \mu,\sigma^2)$ and $v(f \mid \mu,\sigma^2)$
from (\ref{eq:posterioruvfuncs}), and define in addition
\[k(f \mid \mu,\sigma^2)=\Cov[U_*,U_*^2 \mid F=f].\]
Differentiating the explicit form for $\eta(f \mid \mu,\sigma^2)$
in (\ref{eq:denoiserexplicit}) yields
\begin{align*}
\frac{\partial}{\partial f} \eta(f \mid \mu,\sigma^2)
&=\Cov\left[U_*,\;\frac{\partial}{\partial f} \left(-\frac{(f-\mu
U_*)^2}{2\sigma^2}\right) \bigg| F=f\right]
=\frac{\mu}{\sigma^2} \cdot v(f \mid \mu,\sigma^2)\\
\frac{\partial}{\partial \mu} \eta(f \mid \mu,\sigma^2)
&=\Cov\left[U_*,\;\frac{\partial}{\partial \mu}\left(
-\frac{(f-\mu U_*)^2}{2\sigma^2}\right)\bigg|F=f\right]
=\frac{f}{\sigma^2}v(f \mid \mu,\sigma^2)-\frac{\mu}{\sigma^2}k(f \mid
\mu,\sigma^2)\\
\frac{\partial}{\partial \sigma^2} \eta(f \mid \mu,\sigma^2)
&=\Cov\left[U_*,\;\frac{\partial}{\partial \sigma^2}\left(
-\frac{(f-\mu U_*)^2}{2\sigma^2}\right)\bigg|F=f\right]
=-\frac{f\mu}{\sigma^4}v(f \mid \mu,\sigma^2)
+\frac{\mu^2}{2\sigma^4}k(f \mid \mu,\sigma^2).
\end{align*}
Let us write as shorthand
\[V_s^\lambda=v(F_s^\lambda \mid \mu_s^\lambda,\sigma_{ss}^\lambda),
\qquad K_s^\lambda=k(F_s^\lambda \mid \mu_s^\lambda,\sigma_{ss}^\lambda).\]
Then applying the chain rule to differentiate (\ref{eq:Ulambda}),
\begin{align}
\partial_\lambda U_s^\lambda
&=\frac{\partial \eta}{\partial f} \cdot \left(\partial_\lambda
\mu_s^\lambda \cdot U_*
+\partial_\lambda \beta_{ss}^\lambda \cdot W_s
+\partial_\lambda \beta_{st}^\lambda \cdot W_t\right)
+\frac{\partial \eta}{\partial \mu} \cdot \partial_\lambda \mu_s^\lambda
+\frac{\partial \eta}{\partial \sigma^2} \cdot
\left(2\beta_{ss}^\lambda \cdot \partial_\lambda \beta_{ss}^\lambda
+2\beta_{st}^\lambda \cdot \partial_\lambda \beta_{st}^\lambda\right)\nonumber\\
&=\left(\frac{\mu_s^\lambda}{\sigma_{ss}^\lambda}
U_*V_s^\lambda+\frac{1}{\sigma_{ss}^\lambda}F_s^\lambda V_s^\lambda
-\frac{\mu_s^\lambda}{\sigma_{ss}^\lambda}K_s^\lambda\right)
\partial_\lambda \mu_s^\lambda
+\left(\frac{\mu_s^\lambda}{\sigma_{ss}^\lambda}W_sV_s^\lambda
-\frac{2\beta_{ss}^\lambda \mu_s^\lambda}{(\sigma_{ss}^\lambda)^2}
F_s^\lambda V_s^\lambda+\frac{(\mu_s^\lambda)^2
\beta_{ss}^\lambda}{(\sigma_{ss}^\lambda)^2}K_s^\lambda \right)
\partial_\lambda \beta_{ss}^\lambda\nonumber\\
&\hspace{0.5in}+\left(\frac{\mu_s^\lambda}{\sigma_{ss}^\lambda}W_tV_s^\lambda
-\frac{2\beta_{st}^\lambda \mu_s^\lambda}{(\sigma_{ss}^\lambda)^2}
F_s^\lambda V_s^\lambda+\frac{(\mu_s^\lambda)^2
\beta_{st}^\lambda}{(\sigma_{ss}^\lambda)^2}K_s^\lambda \right)
\partial_\lambda \beta_{st}^\lambda\nonumber\\
&\equiv \mathrm{I} \cdot \partial_\lambda \mu_s^\lambda
+\mathrm{II} \cdot \partial_\lambda \beta_{ss}^\lambda
+\mathrm{III} \cdot \partial_\lambda \beta_{st}^\lambda.\label{eq:dpostmeanPCA}
\end{align}

We bound the expected squares of these coefficients
$\mathrm{I},\mathrm{II},\mathrm{III}$:
Applying (\ref{eq:musigmabounds}) and Cauchy-Schwarz,
\begin{align*}
\EE[(U_*V_s^\lambda)^2] &\leq \EE[U_*^4]^{1/2}\EE[(V_s^\lambda)^4]^{1/2}
\leq C \cdot \EE[(V_s^\lambda)^4]^{1/2}\\
\EE[(F_s^\lambda V_s^\lambda)^2]
&\leq \EE[(F_s^\lambda)^4]^{1/2}\EE[(V_s^\lambda)^4]^{1/2}
\leq C\alpha^2 \EE[(V_t^\lambda)^4]^{1/2}.
\end{align*}
Then the coefficient for $\partial_\lambda \mu_s^\lambda$ in
(\ref{eq:dpostmeanPCA}) has expected square bounded as
\[\EE[\mathrm{I}^2]=\EE\left[\left(\frac{\mu_s^\lambda}{\sigma_{ss}^\lambda}
U_*V_s^\lambda+\frac{1}{\sigma_{ss}^\lambda}F_s^\lambda V_s^\lambda
-\frac{\mu_s^\lambda}{\sigma_{ss}^\lambda}K_s^\lambda\right)^2\right]
\leq C\alpha^2\left(\EE[(V_s^\lambda)^4]^{1/2}+\EE[(K_s^\lambda)^2]\right).\]
Recalling the identity
\begin{equation}\label{eq:PCAvidentity}
v(F \mid \mu,\sigma^2)
=\Var[U_* \mid F]=\Var[\mu^{-1}(F-\sigma W) \mid F]=(\sigma/\mu)^2
\Var[W \mid F],
\end{equation}
we obtain
\begin{align}
\EE\big[v(F \mid \mu,\sigma^2)^4\big]
&=(\sigma/\mu)^8 \EE\big[\Var[W \mid F]^4\big]\nonumber\\
&=(\sigma/\mu)^8 \EE\Big[\EE[(W-\EE[W \mid F])^2 \mid F]^4\Big]\nonumber\\
&\leq (\sigma/\mu)^8 \EE[(W-\EE[W \mid F])^8] \leq
C(\sigma/\mu)^8,\label{eq:v4bound}
\end{align}
the last inequality applying $W \sim \N(0,1)$ and
Proposition \ref{prop:conditionalcentralmoment}. So $\EE[(V_s^\lambda)^4]
\leq C\alpha^{-8}$. We also have the identity
\begin{align}
k(F \mid \mu,\sigma^2)
=\Cov[U_*,U_*^2 \mid F]&=\mu^{-3}\Cov[F-\sigma W,(F-\sigma W)^2 \mid F]\nonumber\\
&=(2\sigma^2F/\mu^3)\Var[W \mid F]-(\sigma^3/\mu^3)\Cov[W,W^2 \mid
F],\label{eq:PCAkidentity}
\end{align}
so
\begin{align*}
&\EE[k(F \mid \mu,\sigma^2)^2]\\
&\leq (8\sigma^4/\mu^6)\EE[F^2\Var[W \mid F]^2]
+(2\sigma^6/\mu^6)\EE[\Cov[W,W^2 \mid F]^2]\\
&\leq (8\sigma^4/\mu^6)\EE[F^4]^{1/2}\EE[\Var[W \mid F]^4]^{1/2}
+(2\sigma^6/\mu^6)\EE[\Var[W \mid F]^2]^{1/2}\EE[\Var[W^2 \mid F]^2]^{1/2}\\
&\leq C(\sigma^4/\mu^6)\EE[F^4]^{1/2}+C(\sigma/\mu)^6
\end{align*}
by similar arguments. Then $\EE[(K_s^\lambda)^2] \leq C\alpha^{-4}$, and
we obtain
\begin{equation}\label{eq:xdiffcoef}
\EE[\mathrm{I}^2] \leq C/\alpha^2.
\end{equation}

For the coefficients of $\partial_\lambda \beta_{ss}^\lambda$ 
and $\partial_\lambda \beta_{st}^\lambda$ in (\ref{eq:dpostmeanPCA}), we first
apply a cancellation of the leading-order term: Comparing the identities
(\ref{eq:PCAvidentity}) and (\ref{eq:PCAkidentity}), we have
\[K_s^\lambda=\frac{2}{\mu_s^\lambda}F_s^\lambda V_s^\lambda
-\frac{(\sigma_{ss}^\lambda)^{3/2}}{(\mu_s^\lambda)^3}
\Cov[W,W^2 \mid F_s^\lambda]\]
where $W \sim \N(0,1)$ is the Gaussian variable such that
$F_s^\lambda=\mu_s^\lambda \cdot U_*+\sqrt{\sigma_{ss}^\lambda} \cdot W$.
Then the coefficient of $\partial_\lambda \beta_{ss}^\lambda$ in
(\ref{eq:dpostmeanPCA}) is
\[\mathrm{II}=\frac{\mu_s^\lambda}{\sigma_{ss}^\lambda}W_sV_s^\lambda
-\frac{2\beta_{ss}^\lambda \mu_s^\lambda}{(\sigma_{ss}^\lambda)^2}
F_s^\lambda V_s^\lambda+\frac{(\mu_s^\lambda)^2
\beta_{ss}^\lambda}{(\sigma_{ss}^\lambda)^2}K_s^\lambda 
=\frac{\mu_s^\lambda}{\sigma_{ss}^\lambda}W_sV_s^\lambda
-\frac{\beta_{ss}^\lambda}{\mu_s^\lambda(\sigma_{ss}^\lambda)^{1/2}}
\Cov[W,W^2 \mid F_s^\lambda].\]
Similar arguments as above yield $\EE[(W_s^2V_s^\lambda)^2]
\leq C\alpha^{-4}$ and $\EE[\Cov[W,W^2 \mid F_s^\lambda]^2] \leq C$.
Thus we obtain the bound
\begin{equation}\label{eq:ydiffcoef1}
\EE[\mathrm{II}^2] \leq C/\alpha^2.
\end{equation}
For the coefficient of $\partial_\lambda \beta_{st}^\lambda$, the same argument
shows
\begin{equation}\label{eq:ydiffcoef2}
\EE[\mathrm{III}^2] \leq C/\alpha^2.
\end{equation}
Applying (\ref{eq:xdiffcoef}), (\ref{eq:ydiffcoef1}), and (\ref{eq:ydiffcoef2})
to (\ref{eq:dpostmeanPCA}) yields
\[\EE\left[(\partial_\lambda U_s^\lambda)^2\right]
\leq \frac{C}{\alpha^2}\left((\partial_\lambda \mu_s^\lambda)^2
+(\partial_\lambda \beta_{ss}^\lambda)^2+(\partial_\lambda
\beta_{st}^\lambda)^2\right).\]
The same argument applies for $U_t^\lambda$ to show
\[\EE\left[(\partial_\lambda U_t^\lambda)^2\right]
\leq \frac{C}{\alpha^2}\left((\partial_\lambda \mu_t^\lambda)^2
+(\partial_\lambda \beta_{st}^\lambda)^2+(\partial_\lambda
\beta_{tt}^\lambda)^2\right).\]
By the definition of our linear interpolation,
$\partial_\lambda \mu_s^\lambda=\mu_s-\mu_s'$ which does not depend on
$\lambda$, and similarly for the other derivatives above. Then applying this to
(\ref{eq:interp}) yields a bound of
\[\frac{C}{\alpha^2}\Big(|\mu_s-\mu_s'|+|\mu_t-\mu_t'|
+|\beta_{ss}-\beta_{ss}'|+|\beta_{st}-\beta_{st}'|
+|\beta_{tt}-\beta_{tt}'|\Big)\]
for the first term in (\ref{eq:UUdiff}).

The same argument applied with $s=t$ bounds the other two terms
of (\ref{eq:UUdiff}), and we obtain
\begin{align*}
|h^\Delta_{st}(\x,\y)-h^\Delta_{st}(\x',\y')| &\leq
\frac{C}{\alpha^2}\Big(|\mu_s-\mu_s'|+|\mu_t-\mu_t'|
+|\beta_{ss}-\beta_{ss}'|+|\beta_{st}-\beta_{st}'|
+|\beta_{tt}-\beta_{tt}'|\Big)
\end{align*}
Observe that $|\mu_s-\mu_s'|=\alpha|x_{ss}-x_{ss}'|$. Furthermore, it may be
verified from (\ref{eq:Pi}) and (\ref{eq:matrixsquareroot}) that the map
$(y_{ss},y_{st},y_{ts},y_{tt}) \mapsto (\beta_{ss},\beta_{st},\beta_{tt})$
is Lipschitz over $y_{ss},y_{st},y_{ts},y_{tt} \in I_\Sigma$,
since $I_\Sigma$ is bounded away from 0. Then
\begin{align*}
|h^\Delta_{st}(\x,\y)-h^\Delta_{st}(\x',\y')| &\leq \frac{C}{\alpha}
\left(|x_{ss}-x_{ss}'|+|x_{tt}-x_{tt}'|\right)\\
&\hspace{0.5in}
+\frac{C}{\alpha^2}\left(|y_{ss}-y_{ss}'|+|y_{st}-y_{st}'|+|y_{ts}-y_{ts}'|
+|y_{tt}-y_{tt}'|\right).
\end{align*}
This implies
\[\|h^\Delta(\x,\y)-h^\Delta(\x',\y')\|_\zeta
=\sup_{s,t \leq 0} |h^\Delta(\x,\y)_{s,t}-h^\Delta(\x',\y')_{s,t}|\zeta^{|s| \vee |t|}
\leq \frac{C}{\alpha}\|\x-\x'\|_\zeta+\frac{C}{\alpha^2}\|\y-\y'\|_\zeta,\]
which shows part (b).
\end{proof}

The next lemma establishes the approximation of the state evolution for
$\bSigma_T$ by the fixed map $h^\Sigma$, and the state evolution for $\bDelta_T$
by the fixed map $h^\Delta$.

\begin{lemma}\label{lemma:PCAapproxmap}
In the setting of Theorem \ref{thm:PCA}(b), there exist constants $C,\alpha_0>0$
such that for all $\alpha>\alpha_0$ and $T \geq 1$:
\begin{enumerate}[(a)]
\item $\delta_{st} \in I_\Delta$ and $\sigma_{st} \in I_\Sigma$ for all $s,t
\in \{2,\ldots,T\}$ whereas $\delta_{st} \in [-1,1]$ and $\sigma_{st} \in
[-3\kappa_2/2,\;3\kappa_2/2]$ if $s=1$ and $t \in \{1,\ldots,T\}$ or $t=1$ and
$s \in \{1,\ldots,T\}$.
\item Consider $\bDelta_T,\bSigma_T$ as elements of
$\cX$, with the coordinate identifications and zero-padding
of (\ref{eq:Xembedding}). Then for any
$(\x,\y) \in \cX_{I_\Delta} \times \cX_{I_\Sigma}$,
\[\|\bSigma_T-h^\Sigma(\x,\y)\|_\zeta \leq C\alpha\|\x-\bDelta_T\|_\zeta
+(C/\alpha)\|\y-\bSigma_{T-1}\|_\zeta+C\zeta^{T/2}.\]
\item Similarly, for any
$(\x,\y) \in \cX_{I_\Delta} \times \cX_{I_\Sigma}$,
\[\|\bDelta_{T+1}-h^\Delta(\x,\y)\|_\zeta \leq (C/\alpha)\|\x-\bDelta_T\|_\zeta
+(C/\alpha^2)\|\y-\bSigma_T\|_\zeta+C\zeta^T.\]
\end{enumerate}
\end{lemma}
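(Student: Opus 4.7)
The plan is to induct on $T$, with the inductive hypothesis comprising all three conclusions up through iteration $T$. For parts (b) and (c), I would split by the triangle inequality as
\[\|\bSigma_T-h^\Sigma(\x,\y)\|_\zeta\le\|\bSigma_T-h^\Sigma(\bDelta_T,\bSigma_{T-1})\|_\zeta+\|h^\Sigma(\bDelta_T,\bSigma_{T-1})-h^\Sigma(\x,\y)\|_\zeta\]
and handle the second term via the Lipschitz bound in Lemma \ref{lemma:PCAcontraction}(a), which applies because part (a) of the hypothesis ensures $\bDelta_T\in\cX_{I_\Delta}$ and $\bSigma_{T-1}\in\cX_{I_\Sigma}$. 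The substance of the proof is thus to establish the two residual bounds $\|\bSigma_T-h^\Sigma(\bDelta_T,\bSigma_{T-1})\|_\zeta\le C\zeta^{T/2}$ and $\|\bDelta_{T+1}-h^\Delta(\bDelta_T,\bSigma_T)\|_\zeta\le C\zeta^T$.

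For part (a), in the inductive step I first bound the diagonal entries. The MMSE identity $1-\delta_{tt}=\EE[\Var[U_*\mid F_{t-1}]]\le\sigma_{t-1,t-1}/\mu_{t-1}^2$ from (\ref{eq:posteriorvariance}), combined with $\mu_1=\alpha\eps$ (for $t=2$) or $\mu_{t-1}=\alpha\delta_{t-1,t-1}\ge\alpha(1-C/\alpha^2)$ (for $t\ge 3$) and $\sigma_{t-1,t-1}\le 3\kappa_2/2$ from the inductive hypothesis, yields $1-\delta_{tt}\le C/\alpha^2$. Off-diagonal bounds on $\delta_{st}$ with $s,t\ge 2$ follow from the identity (\ref{eq:UsUtidentity}) together with the diagonal MMSE bounds; those with $s=1$ or $t=1$ use Cauchy-Schwarz. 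For $\sigma_{tt}$, the expansion (\ref{eq:PCAsigmaexplicit}) reads $\kappa_2\delta_{tt}$ plus corrections in which every summand contains at least one factor $\alpha\delta_{i-1,i-1}(1-\delta_{ii})/\sigma_{i-1,i-1}=O(1/\alpha)$ and is weighted by $|\kappa_{j+k+2}|\le(16C_0)^{j+k+2}$ via Proposition \ref{prop:cumulantbound}, so the total correction is geometrically small in $1/\alpha$, giving $\sigma_{tt}=\kappa_2+O(1/\alpha)\in I_\Sigma$ for $\alpha$ large. The off-diagonal $\sigma_{st}$ bounds follow from the same expansion with identical term-by-term estimates.

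For the residual bound in part (b), the key observation is that after applying the embedding (\ref{eq:Xembedding}) and the index shift $s\mapsto s-T$, the infinite series defining $h^\Sigma_{s-T,t-T}(\bDelta_T,\bSigma_{T-1})$ automatically truncates to the range $j\le s-1,\,k\le t-1$ used by the state-evolution formula (\ref{eq:PCAsigmaexplicit}), because $x_{s-T-j,t-T-k}=0$ once $j\ge s$ or $k\ge t$. The two expressions then agree termwise \emph{except} at inner indices $i=2$, where the state evolution uses $\mu_1=\alpha\eps$ while $h^\Sigma$ uses $\alpha x_{11}=\alpha\delta_{11}$. These discrepant terms are those with $j=s-1$ (requiring $s\ge 2$) or $k=t-1$ (requiring $t\ge 2$); each is bounded by $|\kappa_{j+k+2}|$ times a product of $O(1/\alpha)$ factors (one per $i\ge 3$ in each inner product) times the $O(1/\alpha)$ gap at the discrepant factor. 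Summing on the allowed $(j,k)$ yields a pointwise discrepancy at coordinate $(s-T,t-T)$ of order $(C/\alpha)^{\min(s,t)-1}$, and multiplying by the weight $\zeta^{T-\min(s,t)}$ gives at most $\zeta^{T/2}$ once $\alpha_0$ is chosen with $C/\alpha\le\zeta^{1/2}$. Part (c) proceeds analogously: the only discrepancies between $\delta_{s,T+1}=\EE[U_sU_{T+1}]$ and the embedded $h^\Delta$ formula arise at $s=1$, where the genuine $U_1$ replaces a Gaussian-channel posterior mean (pointwise discrepancy $O(1)$, weight $\zeta^T$), and at $s=2$, where $\mu_1=\alpha\eps$ replaces $\alpha\delta_{11}$ (pointwise discrepancy $O(1/\alpha)$ by the Lipschitz-in-$(\mu,\sigma^2)$ estimates developed in the proof of Lemma \ref{lemma:PCAcontraction}(b), weight $\zeta^{T-1}$); both contribute at most $O(\zeta^T)$.

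The main obstacle is disentangling the two sources of error in the residual bounds---the truncation of the infinite series (handled cleanly by the zero-padding in the embedding) and the asymmetric role of $U_1$ as a non-posterior-mean iterate---and then verifying that the geometric decay in $1/\alpha$ provided by the MMSE factors $(1-\delta_{ii})=O(1/\alpha^2)$, combined with the at-most-exponential cumulant growth from Proposition \ref{prop:cumulantbound}, is strong enough to dominate the weight $\zeta^{T-\min(s,t)}$ uniformly in $(s,t)\in\{1,\ldots,T\}^2$ once $\alpha_0$ is taken large enough relative to $C_0$ and $\zeta$.
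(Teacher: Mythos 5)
The main gap is in your triangle-inequality step. You write that you will bound $\|h^\Sigma(\bDelta_T,\bSigma_{T-1})-h^\Sigma(\x,\y)\|_\zeta$ via Lemma \ref{lemma:PCAcontraction}(a), ``which applies because part (a) of the hypothesis ensures $\bDelta_T\in\cX_{I_\Delta}$ and $\bSigma_{T-1}\in\cX_{I_\Sigma}$.'' This is false: under the embedding (\ref{eq:Xembedding}), the zero-padded entries of $\bDelta_T$ (those with index $\le -T$) equal $0\notin I_\Delta$, and even the entries coming from the first row and column (index $-T+1$) are only guaranteed to lie in $[-1,1]$, not $I_\Delta$. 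The same problem afflicts $\bSigma_{T-1}$. Since Lemma \ref{lemma:PCAcontraction}(a) is a Lipschitz bound established \emph{only on} $\cX_{I_\Delta}\times\cX_{I_\Sigma}$, it cannot be invoked at the point $(\bDelta_T,\bSigma_{T-1})$. Worse, $h^\Sigma(\bDelta_T,\bSigma_{T-1})$ is not even well-defined for these zero-padded inputs: for $j\ge s$ (in your original indexing) the factor $x_{s-j,t-k}=0$ is multiplied by $\alpha x_{s-j,s-j}/y_{s-j+1,s-j+1}=0/0$, so ``automatic truncation'' is a convention you're silently imposing, not a property of the function.

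The paper's proof repairs exactly this by choosing a different intermediate point: it first replaces $\bDelta_T$ and $\bSigma_{T-1}$ by their coordinatewise projections $\x',\y'$ onto $I_\Delta$ and $I_\Sigma$ (which puts them inside the domain, makes $h^\Sigma$ well-defined with denominators bounded away from zero, and lets the Lipschitz bound apply), and uses that coordinatewise projection is $1$-Lipschitz to convert $\|\x-\x'\|_\zeta$ into $\|\x-\bDelta_T\|_\zeta$. The price is that the residual $\|\bSigma_T-h^\Sigma(\x',\y')\|_\zeta$ now has two sources of error rather than one: besides the $\mu_1=\alpha\eps$ versus $\alpha\delta_{11}$ mismatch that you correctly identified, the projection turns the zero-padded entries into nonzero values, so $h^\Sigma(\x',\y')$ acquires extra $(j,k)$-terms with $j\ge s$ or $k\ge t$ that have no counterpart in $\bSigma_T$. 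The paper controls both at once by truncating both series at $j,k<T/4$: for embedded indices $s,t>-T/2$ and $j,k<T/4$, every entry referenced lies strictly inside the lower-right $(T-1)\times(T-1)$ block where $\x'$ and $\y'$ coincide with $\bDelta_T$ and $\bSigma_{T-1}$ and the products never reach original index $2$, so the truncated sums agree exactly; for the remaining $(s,t)$ a trivial bound times the weight $\zeta^{|s|\vee|t|}$ already gives $C\zeta^{T/2}$. Your term-by-term accounting of the $\mu_1$ mismatch and your bookkeeping that $(C/\alpha)^{m-1}\zeta^{T-m}\le C\zeta^{T/2}$ when $C/\alpha\le\zeta^{1/2}$ are sound and would complete the estimate once you insert the projection step and add the missing bound on the extra $(j,k)$-terms. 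Part (a) of your proposal is essentially the paper's argument and is fine.
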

\begin{proof}
For part (a), the arguments are similar to those in the proof of
Lemma \ref{lemma:PCAcontraction}: We induct on $T$.
Note that $\delta_{11}=\EE[U_1^2] \leq 1$ so the claim holds for $\bDelta_1$.
Suppose that the claims hold for $\bDelta_T$ and $\bSigma_{T-1}$.
To establish the claim for $\bSigma_T$,
for any $s,t \geq 1$ we may write (\ref{eq:PCAsigma}) as
\[\sigma_{st}=\sum_{j=0}^{s-1}\sum_{k=0}^{t-1} \kappa_{j+k+2}
\sigma_{st}^{(j,k)}.\]
Observe that $\sigma_{st}^{(0,0)}=\delta_{st}$, which belongs to $I_\Delta$ if
$s,t \geq 2$ and to $[-1,1]$ otherwise. Observe also that
$|\sigma_{st}^{(j,k)}| \leq (C/\alpha)^{j+k}$ for all $(j,k) \neq (0,0)$
by the same argument as in Lemma \ref{lemma:PCAcontraction}(a).
(Here, each factor $1-\delta_{ii}$ has an index $i \geq 2$, so this is at most
$C/\alpha^2$.)
For sufficiently large $\alpha$, applying $|\kappa_{j+k+2}| \leq
(16C_0)^{j+k+2}$ and summing over $(j,k) \neq (0,0)$ shows the claim
for $\bSigma_T$.
Now suppose that the claims of part (a) hold for $\bDelta_T$
and $\bSigma_T$, and consider $\bDelta_{T+1}$. For $s=1$ or $t=1$, we apply
$|\delta_{s,T+1}|=|\EE[U_1U_{T+1}]| \leq
\EE[U_1^2]^{1/2}\EE[U_{T+1}^2]^{1/2} \leq 1$.
For $s,t \geq 2$, the argument of (\ref{eq:UsUtidentity}) in
Lemma \ref{lemma:PCAcontraction}(b) shows
\[\delta_{st}-1 \geq -\frac{3}{2}\left(\frac{\sigma_{s-1,s-1}}
{\mu_{s-1}^2}+\frac{\sigma_{t-1,t-1}}{\mu_{t-1}^2}\right).\]
We recall that $\mu_j=\alpha \delta_{jj}$ if $j \geq 2$ and
$\mu_1=\alpha \eps$. For sufficiently large $\alpha$, this implies
$\delta_{st} \in I_\Delta$ in both cases (where the constant $C$ defining
$I_\Delta$ depends on $\eps$), so the claim holds for
$\bDelta_{T+1}$. This concludes the induction and establishes part (a).

For part (b), let us now index the entries of
$\bDelta_T,\bSigma_{T-1},\bSigma_T$ by
$s,t \leq 0$, to coincide with the indices of $\cX$. Let
$\x' \in \cX_{I_\Delta}$ be $\bDelta_T$ with each coordinate projected onto
the interval $I_\Delta$, and let $\y' \in \cX_{I_\Sigma}$
be the analogous projection of $\bSigma_{T-1}$ onto
$I_\Sigma$. We first bound $\|\bSigma_T-h^\Sigma(\x',\y')\|_\zeta$.
Observe that by part (a) already shown, $\x'$ must coincide
with $\bDelta_T$ in the lower-right $(T-1) \times (T-1)$ corner,
and $\y'$ must coincide with $\bSigma_{T-1}$ in the lower-right
$(T-2) \times (T-2)$ corner.
Applying again $|\kappa_{j+k+2}| \leq (16C_0)^{j+k+2}$, we may write
\[(\bSigma_T)_{st}=\sum_{0 \leq j,k<T/4}
\kappa_{j+k+2}(\bSigma_T)_{st}^{(j,k)}+R_{st}\]
where this remainder satisfies $|R_{st}| \leq C(C'/\alpha)^{T/4}$ for
$\alpha>\alpha_0$ sufficiently large. Similarly, we may write
\[h^\Sigma_{st}(\x',\y')=\sum_{0 \leq j,k<T/4}
\kappa_{j+k+2}h_{st}^{(j,k)}(\x',\y')+R_{st}(\x',\y')\]
where $|R_{st}(\x',\y')| \leq C(C'/\alpha)^{T/4}$. Comparing the forms of
(\ref{eq:PCAsigma}) and (\ref{eq:PCASigmaapprox}), observe that for $s,t>-T/2$
and $j,k<T/4$, we have $(\bSigma_T)_{st}^{(j,k)}=h_{st}^{(j,k)}(\x',\y')$
because these are identical functions of the entries of the lower-right
$(3T/4) \times (3T/4)$ sub-matrices of $\bDelta_T$ and $\bSigma_{T-1}$. So
\[|(\bSigma_T)_{st}-h_{st}^\Sigma(\x',\y')| \leq
C(C'/\alpha)^{T/4} \text{ for all } s,t>-T/2.\] 
Applying the trivial bound
\[|(\bSigma_T)_{st}-h^\Sigma_{st}(\x',\y')|
\leq |(\bSigma_T)_{st}|+|h^\Sigma_{st}(\x',\y')| \leq 3\kappa_2
\text{ for } s \leq -T/2 \text{ or } t \leq -T/2,\]
we obtain
\[\|\bSigma_T-h^\Sigma(\x',\y')\|_\zeta \leq C(C'/\alpha)^{T/4}+C\zeta^{T/2}
\leq C\zeta^{T/2}\]
for $\alpha>\alpha_0$ large enough. By Lemma \ref{lemma:PCAcontraction}(a) and
the definitions of $\x',\y'$, we have also
\begin{align*}
\|h^\Sigma(\x',\y')-h^\Sigma(\x,\y)\|_\zeta &\leq
C\alpha\|\x-\x'\|_\zeta+(C/\alpha)\|\y-\y'\|_\zeta\\
&\leq C\alpha\|\x-\bDelta_T\|_\zeta+(C/\alpha)\|\y-\bSigma_{T-1}\|_\zeta,
\end{align*}
and combining these shows part (b).

For part (c), now let $\x'$ and $\y'$ be the coordinate-wise projections of
$\bDelta_T$ and $\bSigma_T$ onto $I_\Delta$ and $I_\Sigma$. We bound
$\|\bDelta_{T+1}-h^\Delta(\x',\y')\|_\zeta$. Observe that $\x'$ and $\y'$
coincide with $\bDelta_T$ and $\bSigma_T$ in their lower-right
$(T-1) \times (T-1)$ corners, and each $2 \times 2$ principal minor of
$\bSigma_T$ must be positive-semidefinite because $\bSigma_T$ is a covariance
matrix. Thus
\[(\bDelta_{T+1})_{st}=h^\Delta_{st}(\x',\y') \text{ for all } s,t \in
\{-T+2,\ldots,0\}.\]
Applying the trivial bound $|(\bDelta_{T+1})_{st}-h^\Delta(\x',\y')_{st}| \leq 2$ for the remaining $s,t$, and
\begin{align*}
\|h^\Delta(\x,\y)-h^\Delta(\x',\y')\|_\zeta &\leq (C/\alpha)\|\x-\x'\|_\zeta
+(C/\alpha^2)\|\y-\y'\|_\zeta \\
&\leq (C/\alpha)\|\x-\bDelta_T\|_\zeta+(C/\alpha^2)\|\y-\bSigma_T\|_\zeta
\end{align*}
similar to the above, we obtain part (c).
\end{proof}

\begin{proof}[Proof of Theorem \ref{thm:PCA}(b)]
Theorem \ref{thm:PCA}(a) shows
\[\lim_{n \to \infty} n^{-1}\|\u_T\|^2=\EE[U_T^2]=\delta_{TT},\qquad
\lim_{n \to \infty} n^{-1}\u_T^\top \u_*=\EE[U_TU_*]=\EE[U_T^2]=\delta_{TT}.\]
Thus, it suffices to show that $\delta_{TT} \to \Delta_*$ as $T \to \infty$,
where $(\Delta_*,\Sigma_*) \in I_\Delta \times I_\Sigma$ is the unique fixed
point of (\ref{eq:PCAfixedpoint}).

Consider the map $G:\cX_{I_\Delta} \times \cX_{I_\Sigma} \to
\cX_{I_\Delta} \times \cX_{I_\Sigma}$ that is the successive composition of
\[(\x,\y) \mapsto (\x,h^\Sigma(\x,\y)), \qquad
(\x,\y) \mapsto (h^\Delta(\x,\y),\y)\]
which approximates $(\bDelta_T,\bSigma_{T-1}) \mapsto
(\bDelta_{T+1},\bSigma_T)$. Writing its components as $G=(G_x,G_y)$,
Lemma \ref{lemma:PCAcontraction} implies
\begin{align*}
\|G_y(\x,\y)-G_y(\x',\y')\|_\zeta &\leq
C\alpha\|\x-\x'\|_\zeta+(C/\alpha)\|\y-\y'\|_\zeta\\
\|G_x(\x,\y)-G_x(\x',\y')\|_\zeta &\leq
(C/\alpha)\|\x-\x'\|_\zeta+(C/\alpha^2)\|G_y(\x,\y)-G_y(\x',\y')\|_\zeta\\
&\leq (C'/\alpha)\|\x-\x'\|_\zeta+(C'/\alpha^3)\|\y-\y'\|_\zeta.
\end{align*}
Then defining the norm $\|\cdot\|_{\zeta,\alpha}$ on the product space
$\cX_{I_\Delta} \times \cX_{I_\Sigma}$ by
\[\|(\x,\y)\|_{\zeta,\alpha}=\|\x\|_\zeta+(1/\alpha^2)\|\y\|_\zeta,\]
this shows
\[\|G(\x,\y)-G(\x',\y')\|_{\zeta,\alpha}
\leq (C/\alpha)\|(\x,\y)-(\x',\y')\|_{\zeta,\alpha}
\leq \tau\|(\x,\y)-(\x',\y')\|_{\zeta,\alpha}\]
for some constant $\tau \in (0,1)$ and all $\alpha>\alpha_0$ sufficiently large.
Thus $G$ is a contraction on $\cX_{I_\Delta} \times \cX_{I_\Sigma}$ in this
norm, and admits a unique fixed point $(\x_*,\y_*)
\in \cX_{I_\Delta} \times \cX_{I_\Sigma}$ by the Banach fixed point theorem.

We claim that this fixed point is such that $\x_*$ equals a constant $\Delta_*
\in I_\Delta$ and $\y_*$ equals a constant $\Sigma_* \in I_\Sigma$
in every coordinate. By the definitions of the functions $h^\Delta$ and
$h^\Sigma$, such a pair is a fixed point if and only if
\[\Sigma_*=\sum_{j,k=0}^\infty
\kappa_{j+k+2}\left(\frac{\alpha \Delta_*}{\Sigma_*}(1-\Delta_*)\right)^{j+k}
\Delta_*, \qquad \Delta_*=\EE[\EE[U_* \mid F]^2]\]
in the model $F=\alpha \Delta_* U_*+Z$ where $Z \sim \N(0,\Sigma_*)$.
These equations may be rewritten as
\begin{align*}
\Sigma_*&=\sum_{k=0}^\infty (k+1)\kappa_{k+2}\left(\frac{\alpha
\Delta_*}{\Sigma_*}(1-\Delta_*)\right)^k \Delta_*
=\Delta_* R'\left(\frac{\alpha \Delta_*(1-\Delta_*)}{\Sigma_*}\right)\\
1-\Delta_*&=\EE[(U_*-\EE[U_* \mid F])^2]
=\mmse\left(\frac{\alpha^2\Delta_*^2}{\Sigma_*}\right)
\end{align*}
which is exactly the pair of fixed point equations (\ref{eq:PCAfixedpoint}).

To argue that such a fixed point exists and is unique in $I_\Delta \times
I_\Sigma$, consider the pair of scalar maps
\[h^\Sigma_\text{sc}(\Delta_*,\Sigma_*)=\Delta_* R'\left(\frac{\alpha
\Delta_*(1-\Delta_*)}{\Sigma_*}\right),
\qquad h^\Delta_\text{sc}(\Delta_*,\Sigma_*)=1-\mmse\left(\frac{\alpha^2\Delta_*^2}
{\Sigma_*}\right)=\EE[\EE[U_* \mid F]^2].\]
Denote their composition as $G_\text{sc}(\Delta_*,\Sigma_*)$.
Specializing Lemma \ref{lemma:PCAcontraction} to pairs $(\x,\y)$ and
$(\x',\y')$ where $\x,\x',\y,\y'$ are each equal to a constant in every
coordinate, our preceding arguments
imply that $G_{\text{sc}}:I_\Delta \times I_\Sigma
\to I_\Delta \times I_\Sigma$ is a contraction with respect to the norm
$\|(\Delta,\Sigma)\|=|\Delta|+(1/\alpha^2)|\Sigma|$. Then there exists a unique
fixed point $(\Delta_*,\Sigma_*) \in I_\Delta \times I_\Sigma$ for
$G_{\text{sc}}$, by the Banach fixed point theorem applied to this scalar
setting. So the fixed point $(\x_*,\y_*)$ for $G$ must be such that
$\x_*$ is constant and equal to $\Delta_*$, and $\y_*$ is constant and equal to
$\Sigma_*$, by uniqueness of $(\x_*,\y_*)$.

Finally, to conclude the proof, fix any $\eps>0$. Let $G^{T_0}=G \circ \ldots
\circ G$ denote the $T_0$-fold composition of $G$. For the above contraction
rate $\tau$ of this function $G$, the Banach fixed point theorem implies
quantitatively, for any $\x,\y \in \cX_{I_\Delta} \times \cX_{I_\Sigma}$,
\[\|G^{T_0}(\x,\y)-(\x_*,\y_*)\|_{\zeta,\alpha} \leq 
\tau^{T_0}\|(\x,\y)-(\x_*,\y_*)\|_{\zeta,\alpha} \leq C\tau^{T_0}\]
where the second inequality holds because
$\cX_{I_\Delta} \times \cX_{I_\Sigma}$ is bounded under
$\|\cdot\|_{\zeta,\alpha}$. Then for all large enough $T_0$, we have
\[\|G^{T_0}(\x,\y)-(\x_*,\y_*)\|_{\zeta,\alpha}<\eps/2.\]
By Lemma \ref{lemma:PCAapproxmap}(b) and (c), for any $(\x,\y) \in
\cX_{I_\Delta} \times \cX_{I_\Sigma}$ and any $T \geq 1$, also
\begin{align*}
&\|(\bDelta_{T+1},\bSigma_T)-G(\x,\y)\|_{\zeta,\alpha}\\
&=\|\bDelta_{T+1}-h^\Delta(\x,h^\Sigma(\x,\y))\|_\zeta+(1/\alpha^2)
\|\bSigma_T-h^\Sigma(\x,\y)\|_\zeta\\
&\leq (C/\alpha)\|\x-\bDelta_T\|_\zeta+C\zeta^T
+[(C/\alpha^2)+(1/\alpha^2)]
\Big(C\alpha \|\x-\bDelta_T\|_\zeta+(C/\alpha)\|\y-\bSigma_{T-1}\|_\zeta
+C\zeta^{T/2}\Big)\\
&\leq \tau\|(\bDelta_T,\bSigma_{T-1})-(\x,\y)\|_{\zeta,\alpha}
+C'\zeta^{T/2}.
\end{align*}
Iterating this bound,
\[\|(\bDelta_{T+T_0},\bSigma_{T+T_0-1})-G^{T_0}(\x,\y)\|_{\zeta,\alpha}
\leq \tau^{T_0}\|(\bDelta_T,\bSigma_{T-1})-(\x,\y)\|_{\zeta,\alpha}
+C'\zeta^{T/2}(1-\tau)^{-1}.\]
For all large enough $T_0$ and $T$, this is also at most $\eps/2$. Thus,
combining with the above,
\[\limsup_{T \to \infty} |\delta_{TT}-\Delta_*| \leq
\limsup_{T \to \infty} \|(\bDelta_{T+1},\bSigma_T)-(\x_*,\y_*)\|_{\zeta,\alpha}
\leq \eps.\]
Here $\eps>0$ is arbitrary, so $|\delta_{TT}-\Delta_*| \to 0$ as desired.
\end{proof}

\subsubsection{Rectangular matrices}

We now prove Theorem \ref{thm:PCArect}(b) using a similar argument.

Recall that the AMP algorithm is
given by (\ref{eq:AMPPCArect1}--\ref{eq:AMPPCArect4}), where $v_t(\cdot)$ and
$u_{t+1}(\cdot)$ are the single-iterate posterior-mean denoisers
in (\ref{eq:vtutBayes}). As in the symmetric square setting, we have
\begin{align*}
u_{t+1}'(f_t)&=\frac{\mu_t}{\sigma_{tt}}\Var[U_* \mid F_t=f_t]
=\frac{\mu_t}{\sigma_{tt}}(1-\delta_{t+1,t+1})\\
v_t'(g_t)&=\frac{\nu_t}{\omega_{tt}}\Var[V_* \mid G_t=g_t]
=\frac{\nu_t}{\omega_{tt}}(1-\gamma_{tt}).
\end{align*}
Here,
\[\mu_t=(\alpha/\gamma) \gamma_{tt}, \qquad
\nu_t=\begin{cases} \alpha \eps & \text{ if } t=1 \\
\alpha \delta_{tt} & \text{ if } t \geq 2. \end{cases}\]

For a sufficiently large constant $C>0$, we define the intervals
\[I_\Delta=I_\Gamma=\left[1-\frac{C}{\alpha^2},\;1\right], \quad
I_\Sigma=\left[\frac{1}{2}\kappa_2,\frac{3}{2}\kappa_2\right], \quad
I_\Omega=\left[\frac{1}{2}\gamma \kappa_2,\frac{3}{2}\gamma\kappa_2 \right]\]
and the corresponding domains
$\cX_{I_\Delta},\cX_{I_\Sigma},\cX_{I_\Gamma},\cX_{I_\Omega}$.
We then define four maps $h^\Omega,h^\Gamma,h^\Sigma,h^\Delta:
\cX_{I_\Delta} \times \cX_{I_\Sigma}
\times \cX_{I_\Gamma} \times \cX_{I_\Omega} \to \cX$ that respectively
approximate the state evolution functions
\begin{align*}
(\bDelta_T,\bSigma_{T-1},\bGamma_{T-1},\bOmega_{T-1}) &\mapsto \bOmega_T\\
(\bDelta_T,\bSigma_{T-1},\bGamma_{T-1},\bOmega_T) &\mapsto \bGamma_T\\
(\bDelta_T,\bSigma_{T-1},\bGamma_T,\bOmega_T) &\mapsto \bSigma_T\\
(\bDelta_T,\bSigma_T,\bGamma_T,\bOmega_T) &\mapsto \bDelta_{T+1}.
\end{align*}
Substituting the above forms of the derivatives into
(\ref{eq:PCAsigmarect}) and (\ref{eq:PCAomegarect}), and
identifying $(\bDelta,\bSigma,\bGamma,\bOmega) \leftrightarrow
(\x,\y,\z,\w)$ with the appropriate offsets of indices,
we may define these maps to have the entries
\begin{align*}
h^\Omega_{st}(\x,\y,\z,\w)
&=\gamma \sum_{j=0}^\infty \sum_{k=0}^\infty 
\left(\left(\prod_{i=s-j+1}^s \times \prod_{i=t-k+1}^t\right)
\frac{\alpha z_{ii}}{\gamma y_{ii}}
(1-x_{ii})\frac{\alpha x_{i-1,i-1}}{w_{ii}}(1-z_{ii})\right)
\bigg(\kappa_{2(j+k+1)}x_{s-j,t-k}\\
&\hspace{0.5in}+\kappa_{2(j+k+2)}
\frac{\alpha z_{s-j,s-j}}{\gamma y_{s-j,s-j}} (1-x_{s-j,s-j})
\frac{\alpha z_{t-k,t-k}}{\gamma y_{t-k,t-k}} (1-x_{t-k,t-k})z_{s-j,t-k}\bigg),\\
h^\Gamma_{st}(\x,\y,\z,\w)&=\EE_{\x,\w}\Big[\EE_{\x,\w}[V_* \mid G_s]
\EE_{\x,\w}[V_* \mid G_t]\Big]\\
h^\Sigma_{st}(\x,\y,\z,\w)
&=\sum_{j=0}^\infty \sum_{k=0}^\infty
\left(\left(\prod_{i=s-j+1}^s \times \prod_{i=t-k+1}^t\right)
 \frac{\alpha x_{ii}}{w_{ii}}(1-z_{ii})\frac{\alpha z_{i-1,i-1}}
{\gamma y_{ii}}(1-x_{ii}) \right)
\bigg(\kappa_{2(j+k+1)} z_{s-j,t-k}\\
&\hspace{0.5in}+\kappa_{2(j+k+2)}
\frac{\alpha x_{s-j,s-j}}{w_{s-j,s-j}}(1-z_{s-j,s-j})
\frac{\alpha x_{t-k,t-k}}{w_{t-k,t-k}}(1-z_{t-k,t-k})x_{s-j,t-k}\bigg),\\
h^\Delta_{st}(\x,\y,\z,\w)&=\EE_{\z,\y}\Big[\EE_{\z,\y}[U_* \mid F_s]
\EE_{\z,\y}[U_* \mid F_t]\Big],
\end{align*}
where these expectations are taken with respect to the $(\x,\y,\z,\w)$-dependent
joint laws
\begin{align*}
(G_s,G_t)&=(\alpha x_{ss},\alpha x_{tt})V_*+
\N\left(0,\;\Pi\begin{pmatrix} w_{ss} & w_{st} \\ w_{ts} & w_{tt}
\end{pmatrix}\right)\\
(F_s,F_t)&=((\alpha/\gamma) z_{ss},(\alpha/\gamma)
z_{tt})U_*+\N\left(0,\;\Pi\begin{pmatrix}
y_{ss} & y_{st} \\ y_{ts} & y_{tt}\end{pmatrix}\right)
\end{align*}
and $\Pi(\cdot)$ is as defined in (\ref{eq:Pi}).
Note that $h^\Gamma$ depends only on $(\x,\w)$, while $h^\Delta$ depends only on
$(\z,\y)$.

The following establishes Lipschitz bounds for these functions,
and is analogous to Lemma \ref{lemma:PCAcontraction}.

\begin{lemma}\label{lemma:PCAcontractionrect}
In the setting of Theorem \ref{thm:PCArect}(b),
there exist constants $C,\alpha_0>0$
such that for all $\alpha>\alpha_0$ and $(\x,\y,\z,\w),(\x',\y',\z',\w') \in
\cX_{I_\Delta} \times \cX_{I_\Sigma} \times \cX_{I_\Gamma} \times
\cX_{I_\Omega}$:
\begin{enumerate}[(a)]
\item $h^\Sigma(\x,\y,\z,\w) \in \cX_{I_\Sigma}$, $h^\Omega(\x,\y,\z,\w) \in \cX_{I_\Omega}$,
and
\begin{align*}
\|h^\Sigma(\x,\y,\z,\w)-h^\Sigma(\x',\y',\z',\w')\|_\zeta
&\leq C(\|\x-\x'\|_\zeta+\|\z-\z'\|)_\zeta+(C/\alpha^2)(\|\y-\y'\|_\zeta
+\|\w-\w'\|_\zeta),\\
\|h^\Omega(\x,\y,\z,\w)-h^\Omega(\x',\y',\z',\w')\|_\zeta
&\leq C(\|\x-\x'\|_\zeta+\|\z-\z'\|)_\zeta+(C/\alpha^2)(\|\y-\y'\|_\zeta
+\|\w-\w'\|_\zeta).
\end{align*}
\item $h^\Delta(\x,\y,\z,\w) \in \cX_{I_\Delta}$, $h^\Gamma(\x,\y,\z,\w) \in \cX_{I_\Gamma}$,
and
\begin{align*}
\|h^\Delta(\x,\y,\z,\w)-h^\Delta(\x',\y',\z',\w')\|_\zeta &\leq
(C/\alpha)\|\z-\z'\|_\zeta+(C/\alpha^2)\|\y-\y'\|_\zeta,\\
\|h^\Gamma(\x,\y,\z,\w)-h^\Gamma(\x',\y',\z',\w')\|_\zeta &\leq
(C/\alpha)\|\x-\x'\|_\zeta+(C/\alpha^2)\|\w-\w'\|_\zeta.
\end{align*}
\end{enumerate}
\end{lemma}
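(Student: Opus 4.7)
The strategy mirrors the symmetric square case (Lemma \ref{lemma:PCAcontraction}), but one must now bookkeep two pairs of variables and the slightly more elaborate series involving both $\kappa_{2(j+k+1)}$ and $\kappa_{2(j+k+2)}$. The plan is to treat $h^\Sigma,h^\Omega$ by direct manipulation of the defining series, and to treat $h^\Delta,h^\Gamma$ by the same posterior-mean / linear-interpolation argument used for $h^\Delta$ in the square case.

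For part (a), I would first record the basic size estimates valid on $\cX_{I_\Delta}\times\cX_{I_\Sigma}\times\cX_{I_\Gamma}\times\cX_{I_\Omega}$: the factors $\alpha x_{ii}/w_{ii}$ and $\alpha z_{i-1,i-1}/(\gamma y_{ii})$ are $O(\alpha)$, while $(1-z_{ii}),(1-x_{ii})=O(1/\alpha^2)$, so \emph{each} block $(\alpha x_{ii}/w_{ii})(1-z_{ii})\cdot(\alpha z_{i-1,i-1}/(\gamma y_{ii}))(1-x_{ii})$ contributing to the products in $h^\Sigma_{st}$ is $O(1/\alpha^2)$. Combining with the free-cumulant bound $|\kappa_{2k}|\le (CC_0)^{2k}$ from Proposition \ref{prop:cumulantbound} gives term-wise bounds of order $(C/\alpha^2)^{j+k}$ on the $(j,k)$-summand, so the series converges absolutely and the $(j,k)=(0,0)$ term $\kappa_2 z_{s,t}$ dominates; since $z_{s,t}\in I_\Gamma$ and $\kappa_2>0$, this puts $h^\Sigma_{st}$ in $I_\Sigma$ for all $\alpha>\alpha_0$. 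The same argument in which the roles of the two pairs of factors are swapped and an overall factor of $\gamma$ is extracted yields $h^\Omega_{st}\in I_\Omega$.

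For the Lipschitz bound, I write
\[
|h^\Sigma_{st}(\x,\y,\z,\w)-h^\Sigma_{st}(\x',\y',\z',\w')|
\le \sup_{\lambda\in[0,1]}\Big|\tfrac{d}{d\lambda}h^\Sigma_{st}(\x^\lambda,\y^\lambda,\z^\lambda,\w^\lambda)\Big|
\]
along the linear interpolation and expand via the chain rule into a sum of partial derivatives $\partial h^\Sigma_{st}/\partial x_{pq}$, $\partial h^\Sigma_{st}/\partial y_{pp}$, etc. For each $(j,k)$-summand I bound these partial derivatives by counting how many factors of $O(1/\alpha^2)$ survive: derivatives with respect to $z_{s-j,t-k}$ (the argument in the $\kappa_{2(j+k+1)}$ term) contribute at leading order $|\kappa_{2(j+k+1)}|(C/\alpha^2)^{j+k}$, derivatives falling on a $(1-x_{ii})$ or $(1-z_{ii})$ factor lose an $O(1/\alpha^2)$ but gain nothing, and derivatives falling on a denominator $y_{ii}$ or $w_{ii}$ are bounded similarly since $I_\Sigma,I_\Omega$ are bounded away from $0$. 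Summing geometrically in $j,k$ then produces bounds of the form
\[
\Big|\tfrac{\partial h^\Sigma_{st}}{\partial x_{pq}}\Big|,\ \Big|\tfrac{\partial h^\Sigma_{st}}{\partial z_{pq}}\Big|\ \le\ C\cdot\mathbf{1}\{p\le s,q\le t\}(C'/\alpha^2)^{(s-p)+(t-q)},
\]
\[
\Big|\tfrac{\partial h^\Sigma_{st}}{\partial y_{pp}}\Big|,\ \Big|\tfrac{\partial h^\Sigma_{st}}{\partial w_{pp}}\Big|\ \le\ (C/\alpha^2)\big[\mathbf{1}\{p\le s\}(C'/\alpha^2)^{s-p}+\mathbf{1}\{p\le t\}(C'/\alpha^2)^{t-p}\big],
\]
(with matching off-diagonal $y,w$-bounds), and the same $\zeta$-weighted telescoping used in the square case converts these into the stated $\|\cdot\|_\zeta$ estimate. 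The treatment of $h^\Omega$ is identical modulo the prefactor $\gamma$ and a relabeling $(\x,\y)\leftrightarrow(\z,\w)$.

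For part (b), note that $h^\Delta$ depends only on $(\z,\y)$ and $h^\Gamma$ only on $(\x,\w)$, so each reduces to the scalar posterior-mean problem already handled in Lemma \ref{lemma:PCAcontraction}(b) with signal-to-noise ratio $\alpha^2 z_{ss}^2/(\gamma^2 y_{ss})$ (resp.\ $\alpha^2 x_{ss}^2/w_{ss}$). I would reuse verbatim the identity
\[
\EE[U_sU_t]-1=\EE[(U_s-U_*)(U_t-U_*)]-\EE[(U_s-U_*)^2]-\EE[(U_t-U_*)^2],
\]
together with the variance bound $\EE[(U_*-U_t)^2]\le y_{tt}/(\alpha z_{tt}/\gamma)^2=O(1/\alpha^2)$ (and the mirrored bound for $V_*$) to show containment in $I_\Delta$ and $I_\Gamma$, and then apply the same linear-interpolation argument in $(\mu_s,\mu_t,\beta_{ss},\beta_{st},\beta_{tt})$ from the proof of Lemma \ref{lemma:PCAcontraction}(b) to $U_s^\lambda=\eta(F_s^\lambda\mid\mu_s^\lambda,\sigma_{ss}^\lambda)$. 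The crucial cancellation at the leading order of $\partial_\lambda U_s^\lambda$ (obtained by comparing the explicit formulas for $\partial_\mu\eta$ and $\partial_{\sigma^2}\eta$ in terms of $v$ and $k$) still yields $\EE[(\partial_\lambda U_s^\lambda)^2]=O(1/\alpha^2)$, and the Lipschitz bound on the map $(y_{ss},y_{st},y_{ts},y_{tt})\mapsto(\beta_{ss},\beta_{st},\beta_{tt})$ (via (\ref{eq:Pi}) and (\ref{eq:matrixsquareroot})) transports this to the stated $(C/\alpha)\|\z-\z'\|_\zeta+(C/\alpha^2)\|\y-\y'\|_\zeta$ bound for $h^\Delta$, and symmetrically for $h^\Gamma$.

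The main obstacle I anticipate is purely combinatorial: the $h^\Sigma$ and $h^\Omega$ series contain \emph{two} independent running products and a sum over two orders of cumulants, so the enumeration of partial-derivative bounds by where the derivative falls (on one of the four kinds of factors: a numerator $\alpha x$ or $\alpha z$, a denominator $y$ or $w$, a $(1-x)$/$(1-z)$ factor, or the trailing $z_{s-j,t-k}$ / $x_{s-j,t-k}$) has more cases than in the square setting. Getting the resulting geometric sums to collapse cleanly enough to yield the claimed exponents (notably the $C$ rather than $C\alpha$ on $\|\x-\x'\|_\zeta$ and $\|\z-\z'\|_\zeta$) requires using that the two families of $O(1/\alpha^2)$ factors always appear \emph{paired} at each index $i$, so that a derivative at index $i$ removes one $O(1/\alpha^2)$ while leaving its partner intact—this is what gains an extra $1/\alpha$ relative to the square analog and is the only place where the rectangular structure genuinely enters the estimate.
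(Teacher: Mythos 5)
Your approach matches the paper's: part (a) via termwise differentiation of the cumulant series plus the $\zeta$-weighted telescoping developed for the square case, and part (b) via the observation that $h^\Delta,h^\Gamma$ each depend on only two of the four arguments and hence reduce directly to Lemma~\ref{lemma:PCAcontraction}(b). Your closing remark about why the Lipschitz constant improves from $C\alpha$ (square case) to $C$ on the $\x,\z$ blocks---that each index $i$ in the running product carries \emph{two} $O(1/\alpha^2)$ factors, so the geometric ratio of the $(j,k)$-summands is $O(1/\alpha^2)$ rather than $O(1/\alpha)$---is exactly the mechanism the paper uses.

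One intermediate display is off, though the final estimate survives. Your bound $|\partial h^\Sigma_{st}/\partial x_{pq}| \le C\,\1\{p\le s,\,q\le t\}(C'/\alpha^2)^{(s-p)+(t-q)}$ is correct for off-diagonal $p\ne q$ (where $x_{pq}$ appears only as the trailing $x_{s-j,t-k}$ in the $\kappa_{2(j+k+2)}$ term), but is not an upper bound in the diagonal case $p=q$. There, $x_{pp}$ also enters as $(1-x_{ii})$ inside the running products; differentiating such a factor gains $\alpha^2$ on a single $(j,k)$-term, and summing over $j\ge s-p+1,\ k\ge 0$ (or over $k\ge t-p+1,\ j\ge 0$) gives a contribution of order $(C'/\alpha^2)^{s-p}$ (resp.\ $(C'/\alpha^2)^{t-p}$), not $(C'/\alpha^2)^{(s-p)+(t-p)}$. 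For instance $\partial h^\Sigma_{st}/\partial x_{ss}$ with $t>s$ is $O(1)$, not $O((1/\alpha^2)^{t-s})$. The correct diagonal bound is the additive form $C\bigl[\1\{p\le s\}(C'/\alpha^2)^{s-p}+\1\{p\le t\}(C'/\alpha^2)^{t-p}\bigr]$, which is what the paper records. Since both forms telescope against the $\zeta^{|p|}$ weights to $C\|\x-\x'\|_\zeta\,\zeta^{-(|s|\vee|t|)}$, your claimed Lipschitz constants are still right---just restate the diagonal case separately before doing the weighted sum.
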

\begin{proof}
For part (a), the argument is similar to Lemma \ref{lemma:PCAcontraction}(a).
We denote by $C,C',c,\ldots$ constants that depend only on $C_0,\eps,\gamma$.
Let us write
\[h_{st}^\Sigma(\x,\y,\z,\w)=\sum_{j,k=0}^\infty
\kappa_{2(j+k+1)}h_{st,0}^{(j,k)}(\x,\y,\z,\w)
+\kappa_{2(j+k+2)}h_{st,1}^{(j,k)}(\x,\y,\z,\w).\]
For both $a=0$ and $a=1$, we have
\[|h_{st,a}^{(j,k)}| \leq C(C'/\alpha)^{2(j+k+a)}.\]
Applying $|\kappa_{2j}| \leq C^{2j}$ from
Proposition \ref{prop:cumulantbound}(b), for $\alpha>\alpha_0$ large enough,
we obtain $h_{st}^\Sigma(\x,\y,\z,\w) \in I_\Sigma$. We may also verify the 
bounds, for both $a=0$ and $a=1$,
\begin{align*}
\left|\frac{\partial h_{st,a}^{(j,k)}}{\partial y_{pp}}\right|
&\leq C(C'/\alpha)^{2(j+k+a)} \quad
\text{ if } p \in \{s-j+1,\ldots,s\} \text{ or } p \in \{t-k+1,\ldots,t\}\\
\left|\frac{\partial h_{st,a}^{(j,k)}}{\partial w_{pp}}\right|
&\leq C(C'/\alpha)^{2(j+k+a)} \quad
\text{ if } p \in \{s-j+1-a,\ldots,s\} \text{ or } p \in \{t-k+1-a,\ldots,t\}\\
\left|\frac{\partial h_{st,0}^{(j,k)}}{\partial x_{pp}}\right|
&\leq C\alpha^2(C'/\alpha)^{2(j+k)} \quad
\text{ if } p \in \{s-j+1,\ldots,s\} \text{ or } p \in \{t-k+1,\ldots,t\}\\
\left|\frac{\partial h_{st,1}^{(j,k)}}{\partial x_{pp}}\right|
&\leq \begin{cases} C\alpha^2(C'/\alpha)^{2(j+k+1)} &
\text{ if } p \in \{s-j+1,\ldots,s\} \text{ or } p \in \{t-k+1,\ldots,t\}\\
C(C'/\alpha)^{2(j+k+1)} & \text{ if } p=s-j \text{ or } p=t-k
\end{cases}\\
\left|\frac{\partial h_{st,0}^{(j,k)}}{\partial z_{pp}}\right|
&\leq \begin{cases} C\alpha^2(C'/\alpha)^{2(j+k)} &
\text{ if } p \in \{s-j+1,\ldots,s\} \text{ or } p \in \{t-k+1,\ldots,t\} \\
C(C'/\alpha)^{2(j+k)} & \text{ if } p=s-j \text{ or } t-k
\end{cases}\\
\left|\frac{\partial h_{st,1}^{(j,k)}}{\partial z_{pp}}\right|
&\leq C\alpha^2(C'/\alpha)^{2(j+k+1)} \quad
\text{ if } p \in \{s-j,\ldots,s\} \text{ or } p \in \{t-k,\ldots,t\}\\
\left|\frac{\partial h_{st,0}^{(j,k)}}{\partial z_{s-j,t-k}}\right|
&\leq C(C'/\alpha)^{2(j+k)}\\
\left|\frac{\partial h_{st,1}^{(j,k)}}{\partial x_{s-j,t-k}}\right|
&\leq C(C'/\alpha)^{2(j+k+1)}
\end{align*}
and all other partial derivatives are 0. Multiplying by $\kappa_{2(j+k+1)}$ and
$\kappa_{2(j+k+2)}$, applying the bound $|\kappa_{2j}| \leq C^{2j}$ from
Proposition \ref{prop:cumulantbound}(b), and summing over $j,k \geq 0$, we
obtain
\begin{align*}
\left|\frac{\partial h_{st}^\Sigma}{\partial x_{pp}}\right|,
\left|\frac{\partial h_{st}^\Sigma}{\partial z_{pp}}\right|
&\leq C\left(\1\{p \leq s\} \left(\frac{C'}{\alpha}\right)^{2(s-p)}
+\1\{p \leq t\} \left(\frac{C'}{\alpha}\right)^{2(t-p)}\right)\\
\left|\frac{\partial h_{st}^\Sigma}{\partial y_{pp}}\right|,
\left|\frac{\partial h_{st}^\Sigma}{\partial w_{pp}}\right|
&\leq C\left(\1\{p \leq s\} \left(\frac{C'}{\alpha}\right)^{2(s-p+1)}
+\1\{p \leq t\} \left(\frac{C'}{\alpha}\right)^{2(s-p+1)}\right)\\
\left|\frac{\partial h_{st}^\Sigma}{\partial z_{pq}}\right|
&\leq C \cdot \1\{p \leq s \text{ and } q \leq
t\}\left(\frac{C'}{\alpha}\right)^{2(s-p+t-q)}\\
\left|\frac{\partial h_{st}^\Sigma}{\partial x_{pq}}\right|
&\leq C \cdot \1\{p \leq s \text{ and } q \leq
t\}\left(\frac{C'}{\alpha}\right)^{2(s-p+t-q+1)}.
\end{align*}
Then applying the same argument as in Lemma \ref{lemma:PCAcontraction}(a),
we obtain
\[\|h^\Sigma(\x,\y,\z,\w)-h^\Sigma(\x',\y',\z',\w')\|_\zeta
\leq C(\|\x-\x'\|_\zeta+\|\z-\z'\|)_\zeta+(C/\alpha^2)(\|\y-\y'\|_\zeta
+\|\w-\w'\|_\zeta).\]
The proof for $h^\Omega$ is analogous, and part (a) follows.

Part (b) is a direct consequence of Lemma \ref{lemma:PCAcontraction}(b), since
$h^\Delta(\x,\y,\z,\w)$ is a function only of $(\z,\y)$ that has the same form
as $h^\Delta(\x,\y)$ in Lemma \ref{lemma:PCAcontraction}(b), and similarly for
$h^\Gamma$.
\end{proof}

The next lemma now follows from Lemma \ref{lemma:PCAcontractionrect} via
the same argument as Lemma \ref{lemma:PCAapproxmap}, and we omit the proof for
brevity.

\begin{lemma}\label{lemma:PCAapproxmaprect}
In the setting of Theorem \ref{thm:PCArect}(b), there exist constants 
$C,\alpha_0>0$ such that for all $\alpha>\alpha_0$ and $T \geq 1$:
\begin{enumerate}[(a)]
\item Each entry of $\bDelta_T$, $\bSigma_T$, $\bGamma_T$, and $\bOmega_T$
belongs respectively to $I_\Delta$, $I_\Sigma$, $I_\Gamma$, and $I_\Omega$,
except for entries in the first row or column of $\bDelta_T$ and $\bOmega_T$
which belong to $[-1,1]$ and $[-3\gamma \kappa_2/2,3\gamma \kappa_2/2]$.
\item For any $(\x,\y,\z,\w) \in \cX_{I_\Delta} \times \cX_{I_\Sigma} \times 
\cX_{I_\Gamma} \times \cX_{I_\Omega}$, we have
\begin{align*}
\|\bOmega_T-h^\Omega(\x,\y,\z,\w)\|_\zeta
&\leq C\Big(\|\x-\bDelta_T\|_\zeta+\|\z-\bGamma_{T-1}\|_\zeta\Big)\\
&\hspace{1in}+(C/\alpha^2)\Big(\|\y-\bSigma_{T-1}\|_\zeta
+\|\w-\bOmega_{T-1}\|_\zeta\Big)+C\zeta^{T/2}\\
\|\bGamma_T-h^\Gamma(\x,\y,\z,\w)\|_\zeta
&\leq (C/\alpha)\|\x-\bDelta_T\|_\zeta
+(C/\alpha^2)\|\w-\bOmega_T\|_\zeta+C\zeta^T\\
\|\bSigma_T-h^\Sigma(\x,\y,\z,\w)\|_\zeta
&\leq C\Big(\|\x-\bDelta_T\|_\zeta+\|\z-\bGamma_T\|_\zeta\Big)\\
&\hspace{1in}+(C/\alpha^2)\Big(\|\y-\bSigma_{T-1}\|_\zeta
+\|\w-\bOmega_T\|_\zeta\Big)+C\zeta^{T/2}\\
\|\bDelta_{T+1}-h^\Delta(\x,\y,\z,\w)\|_\zeta
&\leq (C/\alpha)\|\z-\bGamma_T\|_\zeta
+(C/\alpha^2)\|\y-\bSigma_T\|_\zeta+C\zeta^T
\end{align*}
\end{enumerate}
\end{lemma}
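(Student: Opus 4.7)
\textbf{Proof proposal for Lemma \ref{lemma:PCAapproxmaprect}.}
The plan is to mirror the proof of Lemma \ref{lemma:PCAapproxmap}, using the Lipschitz bounds of Lemma \ref{lemma:PCAcontractionrect} in place of those of Lemma \ref{lemma:PCAcontraction}. Part (a) is shown by a single induction on $T$ that cycles through the four sequences in the order $(\bDelta_T, \bOmega_T, \bGamma_T, \bSigma_T, \bDelta_{T+1}, \ldots)$ matching the order in which the matrices are defined by the state evolution. Part (b) then follows by combining part (a), an approximation-by-truncation step that uses the rectangular cumulant bound $|\kappa_{2j}^\infty| \leq C^{2j}$ from Proposition \ref{prop:cumulantbound}, and the Lipschitz bounds from Lemma \ref{lemma:PCAcontractionrect}.

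For part (a), the base cases $\delta_{11} = \EE[U_1^2] \leq 1$ and $\omega_{11}^\infty = \gamma \kappa_2^\infty \EE[U_1^2] \in [-3\gamma\kappa_2/2,\,3\gamma\kappa_2/2]$ are immediate. The inductive step for $\bSigma_T$ and $\bOmega_T$ proceeds by writing each entry as $\sum_{j,k} \kappa_{2(j+k+1)}^\infty \Theta_{st}^{(j,k)} + \kappa_{2(j+k+2)}^\infty \Xi_{st}^{(j,k)}$ exactly as in the definition of $h^\Sigma,h^\Omega$, observing that the $(j,k) = (0,0)$ term of the first sum equals $\kappa_2^\infty \gamma_{st}$ (resp.\ $\gamma \kappa_2^\infty \delta_{st}$), and that every remaining term has a product of at least two factors each of size $O(1/\alpha^2)$ coming from the $(1-\delta_{ii})$ or $(1-\gamma_{ii})$ terms (for indices $i \geq 2$). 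Summing these against the cumulant bound keeps us inside $I_\Sigma$ and $I_\Omega$ for $\alpha > \alpha_0$ sufficiently large. For $\bDelta_{T+1}$ and $\bGamma_T$, the argument from \eqref{eq:UsUtidentity}, using $\mmse(\mu^2/\sigma^2) \leq \sigma^2/\mu^2$ along with the freshly established bounds on $\sigma_{tt}, \omega_{tt}$, controls $1 - \delta_{s,T+1}$ and $1 - \gamma_{st}$ by $C/\alpha^2$.

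For part (b), I will argue for the $\bSigma_T$ bound; the other three are analogous. Let $(\x', \y', \z', \w')$ denote the coordinatewise projections of $(\bDelta_T, \bSigma_{T-1}, \bGamma_T, \bOmega_T)$ onto $I_\Delta \times I_\Sigma \times I_\Gamma \times I_\Omega$. By part (a), these projections agree with the original matrices except possibly in the first row/column (which in the $\cX$-embedding lives in coordinates with index $\leq -T+2$ or $-T+1$). Truncating the $(j,k)$ sums at $j,k < T/4$ in both $\bSigma_T$ and $h^\Sigma(\x',\y',\z',\w')$ introduces a remainder bounded by $C(C'/\alpha)^{T/2}$ using $|\kappa_{2j}^\infty| \leq C^{2j}$. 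For entries with indices $s,t > -T/2$, the two truncated sums depend only on the lower-right $(3T/4) \times (3T/4)$ blocks where the projected and unprojected matrices coincide, so $|(\bSigma_T)_{st} - h^\Sigma_{st}(\x', \y', \z', \w')| \leq C(C'/\alpha)^{T/2}$; for the remaining coordinates one uses the trivial bound, paying a factor $\zeta^{T/2}$. Combining and applying Lemma \ref{lemma:PCAcontractionrect}(a) to pass from $(\x', \y', \z', \w')$ to the arbitrary $(\x, \y, \z, \w)$ yields the claimed inequality.

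The main obstacle I anticipate is purely bookkeeping: unlike in the symmetric case, the rectangular state evolution produces four interleaved sequences, the indices of $\bSigma_{T-1}$ and $\bOmega_T$ are offset from those of $\bDelta_T$ and $\bGamma_T$ by different amounts in the $\cX$-embedding, and the $h^\Sigma, h^\Omega$ maps have two families of summands (with coefficients $\kappa_{2(j+k+1)}^\infty$ and $\kappa_{2(j+k+2)}^\infty$) rather than one. Care is needed to check that, in the inductive step for part (a), each of $\bDelta_T, \bOmega_T, \bGamma_T, \bSigma_T$ is already known to lie in the correct interval at the moment it is used to define the next matrix, and that in part (b) the boundary effects from first-row/column entries (which lie outside $I_\Delta$ or $I_\Omega$) are absorbed into the $C \zeta^{T/2}$ error.
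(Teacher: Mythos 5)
The paper explicitly omits the proof of this lemma, stating only that it "follows from Lemma \ref{lemma:PCAcontractionrect} via the same argument as Lemma \ref{lemma:PCAapproxmap}." Your proposal correctly executes exactly this transfer: the four-stage induction cycling through $(\bDelta_T,\bOmega_T,\bGamma_T,\bSigma_T,\bDelta_{T+1},\ldots)$ matches the composition order in the paper's definition of the map $G$, the identification of the $(j,k)=(0,0)$ leading terms $\kappa_2^\infty\gamma_{st}$ and $\gamma\kappa_2^\infty\delta_{st}$ is right, the use of \eqref{eq:UsUtidentity} for $\bDelta$ and $\bGamma$ is the correct analogue, and in part (b) the projection-and-truncation argument with the sharper remainder $(C'/\alpha)^{T/2}$ (reflecting the $O(1/\alpha^2)$ per-factor decay in the rectangular cumulant expansion) is the right adaptation. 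One small imprecision worth tightening when writing this up: in part (a) you write that every non-leading term contains "at least two factors each of size $O(1/\alpha^2)$," but terms of the first family with $j+k=1$ contribute only a single such factor of size $O(1/\alpha^2)$; the argument still closes because the sum of all non-leading terms against the cumulant bound is $O(1/\alpha^2)\cdot\kappa_2$, which suffices. You should also state explicitly that, unlike $\bDelta_T$ and $\bOmega_T$, the matrices $\bSigma_T$ and $\bGamma_T$ have no first-row/column exception (since the $(j,k)=(0,0)$ leading term of $\sigma_{st}$ is $\kappa_2\gamma_{st}$ with $\gamma_{st}\in I_\Gamma$ for \emph{all} $s,t$, as $v_1$ is already a posterior-mean denoiser while $u_1$ is the arbitrary initialization), which is what makes the $\cX$-embedding bookkeeping in part (b) go through cleanly.
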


\begin{proof}[Proof of Theorem \ref{thm:PCArect}(b)]
Given Theorem \ref{thm:PCArect}(a), it suffices to show that
$\delta_{TT} \to \Delta_*$ and $\gamma_{TT} \to
\Gamma_*$ as $T \to \infty$, where $(\Delta_*,\Sigma_*,\Gamma_*,\Omega_*,X_*)
\in I_\Delta \times I_\Sigma \times I_\Gamma \times I_\Omega \times \RR$ is
the unique fixed point of (\ref{eq:PCAfixedpointrect}).

We define the map $G:\cX_{I_\Delta} \times
\cX_{I_\Sigma} \times \cX_{I_\Gamma} \times \cX_{I_\Omega}
\to \cX_{I_\Delta} \times
\cX_{I_\Sigma} \times \cX_{I_\Gamma} \times \cX_{I_\Omega}$ as the successive
composition of the four maps
\begin{align*}
(\x,\y,\z,\w) &\mapsto (\x,\y,\z,h^\Omega(\x,\y,\z,\w))\\
(\x,\y,\z,\w) &\mapsto (\x,\y,h^\Gamma(\x,\y,\z,\w),\w)\\
(\x,\y,\z,\w) &\mapsto (\x,h^\Sigma(\x,\y,\z,\w),\z,\w)\\
(\x,\y,\z,\w) &\mapsto (h^\Delta(\x,\y,\z,\w),\y,\z,\w)
\end{align*}
Writing its components as $G=(G_x,G_y,G_z,G_w)$,
Lemma \ref{lemma:PCAcontractionrect} may be applied to show that
\begin{align*}
\|G_w(\x,\y,\z,\w)-G_w(\x',\y',\z',\w')\|_\zeta
&\leq C\|\x-\x'\|_\zeta+C\|\z-\z'\|_\zeta
+\frac{C}{\alpha^2}\|\y-\y'\|_\zeta+\frac{C}{\alpha^2}\|\w-\w'\|_\zeta\\
\|G_z(\x,\y,\z,\w)-G_z(\x',\y',\z',\w')\|_\zeta
&\leq \frac{C}{\alpha}\|\x-\x'\|_\zeta+\frac{C}{\alpha^2}\|\z-\z'\|_\zeta
+\frac{C}{\alpha^4}\|\y-\y'\|_\zeta+\frac{C}{\alpha^4}\|\w-\w'\|_\zeta\\
\|G_y(\x,\y,\z,\w)-G_y(\x',\y',\z',\w')\|_\zeta
&\leq C\|\x-\x'\|_\zeta+\frac{C}{\alpha^2}\|\z-\z'\|_\zeta
+\frac{C}{\alpha^2}\|\y-\y'\|_\zeta+\frac{C}{\alpha^4}\|\w-\w'\|_\zeta\\
\|G_x(\x,\y,\z,\w)-G_x(\x',\y',\z',\w')\|_\zeta
&\leq \frac{C}{\alpha^2}\|\x-\x'\|_\zeta+\frac{C}{\alpha^3}\|\z-\z'\|_\zeta
+\frac{C}{\alpha^4}\|\y-\y'\|_\zeta+\frac{C}{\alpha^5}\|\w-\w'\|_\zeta.
\end{align*}
Then defining the norm $\|\cdot\|_{\zeta,\alpha}$ on $\cX_{I_\Delta} \times
\cX_{I_\Sigma} \times \cX_{I_\Gamma} \times \cX_{I_\Omega}$ by
\[\|(\x,\y,\z,\w)\|_{\zeta,\alpha}=\|\x\|_\zeta+(1/\alpha)\|\z\|_\zeta
+(1/\alpha^2)\|\y\|_\zeta+(1/\alpha^3)\|\w\|_\zeta,\]
we obtain
\begin{align*}
\|G(\x,\y,\z,\w)-G(\x',\y',\z',\w')\|_{\zeta,\alpha}
&\leq (C/\alpha^2)\|(\x,\y,\z,\w)-(\x',\y',\z',\w')\|_{\zeta,\alpha}\\
&\leq \tau\|(\x,\y,\z,\w)-(\x',\y',\z',\w')\|_{\zeta,\alpha}
\end{align*}
for some $\tau \in (0,1)$ and $\alpha>\alpha_0$ sufficiently large. So $G$
admits a unique fixed point $(\x_*,\y_*,\z_*,\w_*) \in \cX_{I_\Delta} \times
\cX_{I_\Sigma} \times \cX_{I_\Gamma} \times \cX_{I_\Omega}$.

By the same argument as in the proof of Theorem \ref{thm:PCA}(b) for
the symmetric square setting, this fixed point
$(\x_*,\y_*,\z_*,\w_*)$ must be equal to scalar constants
$(\Delta_*,\Sigma_*,\Gamma_*,\Omega_*)$ in every coordinate,
where these constants satisfy
\begin{align*}
\Sigma_*&=\sum_{j,k=0}^\infty 
\left(\frac{\alpha^2 \Delta_*\Gamma_*(1-\Delta_*)(1-\Gamma_*)}
{\gamma \Sigma_*\Omega_*}\right)^{j+k}
\left(\kappa_{2(j+k+1)}\Gamma_*+\kappa_{2(j+k+2)}
\frac{\alpha^2 \Delta_*^3(1-\Gamma_*)^2}{\Omega_*^2}\right)\\
\Omega_*&=\sum_{j,k=0}^\infty 
\left(\frac{\alpha^2 \Delta_*\Gamma_*(1-\Delta_*)(1-\Gamma_*)}
{\gamma \Sigma_*\Omega_*}\right)^{j+k}
\left(\gamma \kappa_{2(j+k+1)}\Delta_*+\gamma\kappa_{2(j+k+2)}
\frac{\alpha^2 \Gamma_*^3(1-\Delta_*)^2}{\gamma \Sigma_*^2}\right)\\
\Delta_*&=1-\mmse\left(\frac{\alpha^2\Gamma_*^2}{\gamma^2\Sigma_*}\right),
\qquad \Gamma_*=1-\mmse\left(\frac{\alpha^2\Delta_*^2}{\Omega_*}\right).
\end{align*}
(The fixed point $(\Delta_*,\Sigma_*,\Gamma_*,\Omega_*)$ to these equations
exists by the Banach fixed point theorem specialized to the scalar setting.)
Writing
\begin{align*}
R'(x)&=\sum_{k=1}^\infty \kappa_{2k} \cdot kx^{k-1}
=\sum_{k=0}^\infty \kappa_{2(k+1)} \cdot (k+1)x^k\\
S(x)&=\left(\frac{R(x)}{x}\right)'
=\sum_{k=2}^\infty \kappa_{2k} \cdot (k-1)x^{k-2}
=\sum_{k=0}^\infty \kappa_{2(k+2)} \cdot (k+1)x^k,
\end{align*}
we see that the above equations are equivalent to the fixed point equations
(\ref{eq:PCAfixedpointrect}). The proof is concluded using the same
contractive mapping argument as in Theorem \ref{thm:PCA}(b).
\end{proof}

\subsection{Verification of
Eq.\ (\ref{eq:rectPCAimproves})}\label{subsec:rectPCAimproves}

Denote $T(z)=(1+z)(1+\gamma z)$.
Let us first show that the values $\Delta_\PCA$ and $\Gamma_\PCA$ in
(\ref{eq:DeltaPCA}--\ref{eq:GammaPCA}) may be written equivalently as
\begin{equation}\label{eq:DeltaGammaPCAequiv}
\Delta_\PCA=\frac{T(R(x))-xT'(R(x))R'(x)}{1+\gamma R(x)},
\qquad \Gamma_\PCA=\frac{T(R(x))-xT'(R(x))R'(x)}{1+R(x)}.
\end{equation}
To see this, let us define $\varphi(z)$, $\bar{\varphi}(z)$, and $D(z)$ as in
(\ref{eq:BGNnotation}), and define also
\[M(z)=\sum_{k=1}^\infty m_{2k}^\infty z^k.\]
From \cite[Eq.\ (8)]{benaych2012singular},
the rectangular R-transform is given by $R(z)=U(z(D^{-1}(z))^2-1)$,
where $U(z)$ is a function defined such that $T(U(z-1))=z$. Thus
$T(R(z))=z(D^{-1}(z))^2$, and differentiating on both sides yields
\begin{equation}\label{eq:PCArectid1}
T(R(z))-zT'(R(z))R'(z)=\frac{-2z^2D^{-1}(z)}{D'(D^{-1}(z))}.
\end{equation}
Next, applying series expansions for $\varphi(z)$ and $\bar{\varphi}(z)$ 
and substituting into $D(z)$, we obtain
\[\varphi(z)=z^{-1}\left(1+M(z^{-2})\right),
\qquad \bar{\varphi}(z)=z^{-1}\left(1+\gamma M(z^{-2})\right),
\qquad D(z)=z^{-2}T(M(z^{-2})).\]
By (\ref{eq:rectseries}), the rectangular
R-transform satisfies the identity $M(z)=R(zT(M(z)))$.
Then $M(z^{-2})=R(D(z))$, so
\[D(z)=z^{-1}\Big(1+R(D(z))\Big) \cdot \bar{\varphi}(z)
=\varphi(z) \cdot z^{-1}\Big(1+\gamma R(D(z))\Big).\]
Hence, applying this with $D^{-1}(z)$ in place of $z$ and rearranging,
\begin{equation}\label{eq:PCArectid2}
\frac{z}{1+R(z)}=\frac{\bar{\varphi}(D^{-1}(z))}{D^{-1}(z))},
\qquad \frac{z}{1+\gamma R(z)}=\frac{\varphi(D^{-1}(z))}{D^{-1}(z)}.
\end{equation}
Applying these identities (\ref{eq:PCArectid1}) and (\ref{eq:PCArectid2}) for
$z=x$, we see that (\ref{eq:DeltaGammaPCAequiv}) coincides with the definitions
(\ref{eq:DeltaPCA}--\ref{eq:GammaPCA}), as desired. 

Now we proceed to verify (\ref{eq:rectPCAimproves}).
As in Remark \ref{remark:squarePCA}, applying the mmse
inequality (\ref{eq:mmsebound}) to the second and third
fixed point equations of (\ref{eq:PCAfixedpointrect}) and rearranging, we obtain
\begin{equation}\label{eq:mmseboundrect}
\Sigma_* \geq \Sigma_\lb
\equiv \frac{\alpha^2\Gamma_*^2(1-\Delta_*)}{\gamma^2\Delta_*}, \qquad
\Omega_* \geq \Omega_\lb
\equiv \frac{\alpha^2\Delta_*^2(1-\Gamma_*)}{\Gamma_*}.
\end{equation}
We apply the following argument to ``substitute'' these inequalities into the
remaining fixed-point equations: Fixing $\Delta_* \in I_\Delta$ and $\Gamma_*
\in I_\Gamma$, denote
\begin{align*}
X(\Sigma,\Omega)&=\frac{\alpha^2 \Delta_*\Gamma_*(1-\Delta_*)(1-\Gamma_*)}
{\gamma \Sigma\Omega}\\
f(\Sigma,\Omega)&=\Delta_* R'(X(\Sigma,\Omega))
+\frac{\alpha^2\Delta_*^4(1-\Gamma_*)^2}{\Gamma_*\Omega^2}S(X(\Sigma,\Omega)),\\
g(\Sigma,\Omega)&=\gamma \Gamma_*R'(X(\Sigma,\Omega))
+\frac{\alpha^2\Gamma_*^4(1-\Delta_*)^2}{\gamma \Delta_*\Sigma^2}
S(X(\Sigma,\Omega)).
\end{align*}
The fourth and fifth fixed point equations of (\ref{eq:PCAfixedpointrect}) may
be written as $(\Delta_*/\Gamma_*)\Sigma_*=f(\Sigma_*,\Omega_*)$ and
$(\Gamma_*/\Delta_*)\Omega_*=g(\Sigma_*,\Omega_*)$. So for any constant
$\eta \in \RR$, $(\Sigma_*,\Omega_*)$ solves the equation
\begin{equation}\label{eq:SigmaOmegaeq}
0=f(\Sigma,\Omega)+\eta \cdot g(\Sigma,\Omega)
-\frac{\Delta_*}{\Gamma_*}\Sigma-\frac{\eta\cdot \Gamma_*}{\Delta_*}\Omega.
\end{equation}

Let us denote
\[x=X(\Sigma_\lb,\Omega_\lb)=\gamma/\alpha^2\]
and pick this constant $\eta$ to solve the linear equation
\begin{equation}\label{eq:etacondition}
\Big(\eta \alpha^2-R'(x)-\frac{\eta \gamma^3}{\alpha^2}S(x)\Big)
\Big(1+R(x)\Big)=\Big(\frac{\alpha^2}{\gamma^2}-\eta \gamma R'(x)
-\frac{1}{\alpha^2}S(x)\Big)\Big(1+\gamma R(x)\Big).
\end{equation}
Note that for all $\alpha>\alpha_0$ sufficiently large, we have $\eta \approx
[(1+\gamma R(x))/\gamma^2]/(1+R(x)) \approx 1/\gamma^2$, which is of constant
order. We claim that for any $\Delta_* \in I_\Delta$ and
$\Gamma_* \in I_\Gamma$, the right side of (\ref{eq:SigmaOmegaeq}) is decreasing
as a function of $\Sigma \in [\Sigma_\lb,\infty)$ and $\Omega \in
[\Omega_\lb,\infty)$.
To see this, observe first that since $1-\Delta_* \leq C/\alpha^2$ and
$1-\Gamma_* \leq C/\alpha^2$, we have $|X(\Sigma,\Omega)| \leq C/\alpha^2$ for
parameters in these domains. Then to compute the derivatives of
$f(\Sigma,\Omega)$ and $g(\Sigma,\Omega)$, we may apply the series expansions
\begin{align*}
R'(X(\Sigma,\Omega))&=\kappa_2^\infty+2\kappa_4^\infty
\cdot \frac{\alpha^2 \Delta_*\Gamma_*(1-\Delta_*)(1-\Gamma_*)}
{\gamma \Sigma\Omega}+\ldots\\
S(X(\Sigma,\Omega))&=\kappa_4^\infty+2\kappa_6^\infty
\cdot \frac{\alpha^2 \Delta_*\Gamma_*(1-\Delta_*)(1-\Gamma_*)}
{\gamma \Sigma\Omega}+\ldots
\end{align*}
which are convergent for $\alpha>\alpha_0$ sufficiently large, and
differentiate these term-by-term. We may thus verify the bounds
\[|\partial_\Sigma R'(X(\Sigma,\Omega))|,
|\partial_\Omega R'(X(\Sigma,\Omega))|,
|\partial_\Sigma S(X(\Sigma,\Omega))|,
|\partial_\Omega S(X(\Sigma,\Omega))| \leq \frac{C}{\alpha^2}, \qquad
|S(X(\Sigma,\Omega))| \leq C,\]
which imply
\[|\partial_\Sigma f(\Sigma,\Omega)|,|\partial_\Omega f(\Sigma,\Omega)|,
|\partial_\Sigma g(\Sigma,\Omega)|,|\partial_\Omega g(\Sigma,\Omega)|
\leq \frac{C}{\alpha^2}.\]
Then the derivatives in $(\Sigma,\Omega)$ of the right
side of (\ref{eq:SigmaOmegaeq}) are negative for all $\alpha>\alpha_0$
sufficiently large, yielding the desired monotonicity.

Since $(\Sigma_*,\Omega_*)$ satisfies (\ref{eq:SigmaOmegaeq}) with equality, we
may then substitute (\ref{eq:mmseboundrect}) to obtain
\begin{equation}\label{eq:SigmaOmegalbineq}
0 \leq f(\Sigma_\lb,\Omega_\lb)+\eta \cdot g(\Sigma_\lb,\Omega_\lb)
-\frac{\Delta_*}{\Gamma_*}\Sigma_\lb-\frac{\eta \cdot \Gamma_*}{\Delta_*}
\Omega_\lb.
\end{equation}
Applying the forms of $\Sigma_\lb$, $\Omega_{\lb}$, $f$, and $g$ and
rearranging, we arrive at
\[\left(\eta \alpha^2-R'(x)-\frac{\eta \gamma^3}{\alpha^2}S(x)\right)\Delta_*
+\left(\frac{\alpha^2}{\gamma^2}-\eta\gamma R'(x)-\frac{1}{\alpha^2}S(x)
\right)\Gamma_*
\leq \left(\frac{\alpha^2}{\gamma^2}+\eta\alpha^2\right)\Delta_*\Gamma_*.\]
Now applying the identity (\ref{eq:etacondition}), we may write this as
\begin{equation}\label{eq:rectimprovesABeq}
A(x)\Big((1+\gamma R(x))\Delta_*+(1+R(x))\Gamma_*\Big) \leq 
B(x)\Delta_*\Gamma_*.
\end{equation}
Here, solving explicitly the equation (\ref{eq:etacondition}) for $\eta$
and applying also $S(x)=R'(x)/x-R(x)/x^2$, these values $A(x)$ and $B(x)$
may be computed after some algebraic simplification to be
\begin{align*}
A(x)&=\frac{(1+R(x))(1+\gamma R(x))-x(1+\gamma+2\gamma R(x))R'(x)}
{\gamma x[(1+R(x))(1+\gamma R(x))+xR'(x)(1-\gamma)]}\\
B(x)&=\frac{2(1+R(x))(1+\gamma R(x))}{\gamma x[(1+R(x))(1+\gamma
R(x))+xR'(x)(1-\gamma)]}.
\end{align*}
Note that for $\alpha>\alpha_0$ sufficiently large (and hence small
$x=\gamma/\alpha^2$), the numerators and denominators of $A(x)$ and $B(x)$ are
all positive. Then clearing the denominators of $A(x)$ and $B(x)$ in
(\ref{eq:rectimprovesABeq}) and applying to the left side
\begin{equation}\label{eq:AMGM}
(1+\gamma R(x))\Delta_*+(1+R(x))\Gamma_*
\geq 2\sqrt{(1+\gamma R(x))(1+R(x))\Delta_*\Gamma_*},
\end{equation}
we obtain
\[\sqrt{(1+R(x))(1+\gamma R(x))\Delta_*\Gamma_*} \geq
(1+R(x))(1+\gamma R(x))-x(1+\gamma+2\gamma R(x))R'(x).\]
Recalling the notation $T(z)=(1+z)(1+\gamma z)$, this may be rewritten as
\[\Delta_*\Gamma_* \geq \frac{[T(R(x))-xT'(R(x))R'(x)]^2}{T(R(x))},\]
where the right side coincides with $\Delta_\PCA \Gamma_\PCA$ by
(\ref{eq:DeltaGammaPCAequiv}). This establishes (\ref{eq:rectPCAimproves}).

The inequalities in the preceding argument stem from (\ref{eq:mmseboundrect})
and (\ref{eq:AMGM}). If both $U_* \sim \N(0,1)$ and $V_* \sim \N(0,1)$,
then (\ref{eq:mmseboundrect}) holds with equality. In this case, we have
$(\Delta_*/\Gamma_*)\Sigma_\lb=f(\Sigma_\lb,\Omega_\lb)$ and
$(\Gamma_*/\Delta_*)\Omega_\lb=g(\Sigma_\lb,\Omega_\lb)$ in the preceding
argument, and these two equations may be solved to yield $\Delta_*=\Delta_\PCA$
and $\Gamma_*=\Gamma_\PCA$. Then equality holds in (\ref{eq:rectPCAimproves}).
(Note that equality also holds in (\ref{eq:AMGM}) because
$(1+\gamma R(x))\Delta_\PCA=(1+R(x))\Gamma_\PCA$ by
(\ref{eq:DeltaGammaPCAequiv}).) Conversely, if either $U_*$ or $V_*$ is not
distributed as $\N(0,1)$, then at least one of the inequalities in
(\ref{eq:mmseboundrect}) is strict. Then the inequality
(\ref{eq:SigmaOmegalbineq}) is also strict, implying that
(\ref{eq:rectPCAimproves}) holds with strict inequality as well.

\section{Removing the non-degeneracy assumption}\label{appendix:degenerate}

In this appendix, we prove Corollary \ref{cor:degenerate}.
We follow a similar approach to \cite{berthier2020state} and
construct a perturbed version of the AMP sequence:
Let $\bgamma,\w_1,\w_2,\ldots \in \RR^n$ be random vectors independent of each
other and of all other quantities, where $\bgamma$ has i.i.d.\
$\operatorname{Uniform}(-1,1)$ entries and each $\w_t$ has
i.i.d.\ $\N(0,1)$ entries. For a fixed small parameter $\eps>0$, consider the
perturbed noise matrix
\[\W^\eps=\O^\top \diag(\blambda+\eps\bgamma)\O,\]
the perturbed initialization
\begin{equation}\label{eq:u1eps}
\u_1^\eps=\u_1+\eps \w_1,
\end{equation}
and the perturbed AMP iterations
\begin{align}
\z_t^\eps&=\W^\eps\u_t^\eps-b_{t1}^\eps \u_1^\eps-\ldots
-b_{tt}^\eps\u_t^\eps\label{eq:AMPzeps}\\
\u_{t+1}^\eps&=u_{t+1}(\z_1^\eps,\ldots,\z_t^\eps,\E)+\eps
\w_{t+1}.\label{eq:AMPueps}
\end{align}
We define
$\bDelta_t^\eps,\bPhi_t^\eps,\B_t^\eps,\bSigma_t^\eps$ by (\ref{eq:DeltaPhi})
and (\ref{eq:BSigma}) using this perturbed sequence,
and the above coefficients
$(b_{t1}^\eps,\ldots,b_{tt}^\eps)$ are the last column of $\B_t^\eps$.

Note that for any fixed $\eps>0$ and up to any fixed iteration $T$, these
perturbed iterations are an example of the general iterations
(\ref{eq:AMPz}--\ref{eq:AMPu}) applied with noise matrix $\W^\eps$, by defining
the augmented side-information matrix $\E^\eps=(\E,\w_2,\ldots,\w_{T+1})$ and
considering the functions
\[u_{t+1}^\eps(\z_1,\ldots,\z_t,\E^\eps)
=u_{t+1}(\z_1,\ldots,\z_t,\E)+\eps \w_{t+1}.\]
By Propositions \ref{prop:iid} and \ref{prop:scalarprod}, we have
\[\blambda+\eps \bgamma \toW \Lambda^\eps \equiv \Lambda+\eps\Gamma, \quad
(\u_1^\eps,\E,\w_2,\ldots,\w_{T+1}) \toW (U_1+\eps W_1,E,W_2,\ldots,W_{T+1}),\]
where $\Gamma \sim \operatorname{Uniform}(-1,1)$ is independent of $\Lambda$ and
$(W_1,\ldots,W_{T+1}) \sim \N(0,\Id)$ is independent of $(U_1,E)$.
It is then clear that Assumption \ref{assump:main} including part (e) holds
for this perturbed sequence, so Lemma \ref{lemma:main} applies.

Define the almost-sure limits
\[(\bDelta_t^{\eps,\infty},\bPhi_t^{\eps,\infty},\B_t^{\eps,\infty},\bSigma_t^{\eps,\infty})=\lim_{n
\to \infty} (\bDelta_t^\eps,\bPhi_t^\eps,\B_t^\eps,\bSigma_t^\eps),\]
as guaranteed by Lemma \ref{lemma:main}.
We let $\u_1,\z_1,\u_2,\z_2,\ldots$ continue to denote the original
AMP sequence. We now establish inductively the following
two claims, almost surely for each $t=1,2,3,\ldots$, where the second claim
implies the corollary:
\begin{enumerate}[(a)]
\item 
\[\limsup_{n \to \infty} n^{-1} \|\z_{t-1}\|^2<\infty,
\qquad \lim_{\eps \to 0}\;
\limsup_{n \to \infty} n^{-1} \|\z_{t-1}^\eps-\z_{t-1}\|^2=0,\]
\[\limsup_{n \to \infty} n^{-1} \|\u_t\|^2<\infty,
\qquad \lim_{\eps \to 0}\;\limsup_{n \to \infty} n^{-1}
\|\u_t^\eps-\u_t\|^2=0.\]
\item $(\u_1,\ldots,\u_t,\z_1,\ldots,\z_{t-1},\E) \toWtwo
(U_1,\ldots,U_t,Z_1,\ldots,Z_{t-1},E)$. The deterministic limits
\[(\bDelta_t^\infty,\bPhi_t^\infty,\B_t^\infty,\bSigma_t^\infty)=\lim_{n
\to \infty} (\bDelta_t,\bPhi_t,\B_t,\bSigma_t)\]
all exist, where
$(\bDelta_t^\infty,\bPhi_t^\infty,\B_t^\infty,\bSigma_t^\infty)
=\lim_{\eps \to 0}
(\bDelta_t^{\eps,\infty},\bPhi_t^{\eps,\infty},\B_t^{\eps,\infty},\bSigma_t^{\eps,\infty})$.
\end{enumerate}

Let $t^{(a)},t^{(b)}$ denote these claims up to and including iteration $t$.
We induct on $t$.

For $1^{(a)}$, we have $n^{-1}\|\u_1\|^2 \to \EE[U_1^2]<\infty$ by Assumption
\ref{assump:main}(c). We also have $\u_1^\eps-\u_1=\eps \w_1$
and $n^{-1} \|\w_1\|^2 \to 1$. Thus
$\lim_{\eps \to 0} \limsup_{n \to \infty} n^{-1}\|\u_1^\eps-\u_1\|^2=0$.

For $1^{(b)}$, we have $(\u_1,\E) \toWtwo (U_1,E)$
and $\bDelta_1 \to \EE[U_1^2]$ also by Assumption \ref{assump:main}(c).
Since $\bPhi_1=0$ and $\kappa_k \to \kappa_k^\infty$ (the $k^\text{th}$ free
cumulant of $\Lambda$) for each $k \geq 1$, this shows the existence of all four
limits $\bDelta_1^\infty,\bPhi_1^\infty,\B_1^\infty,\bSigma_1^\infty$.
Note that $\bDelta_1^{\eps,\infty}=\EE[U_1^2]+\eps^2$, so that
$\bDelta_1^{\eps,\infty} \to \bDelta_1=\EE[U_1^2]$ as $\eps \to 0$.
Letting $\kappa_k^{\eps,\infty}$ be the
free cumulants of $\Lambda^\eps$, note that the moments of $\Lambda^\eps$
converge to those of $\Lambda$ as $\eps \to 0$, so also
$\kappa_k^{\eps,\infty} \to \kappa_k^\infty$. Since
$\bPhi_1^{\eps,\infty}=0=\bPhi_1$, this shows the last statement of $1^{(b)}$.

Suppose now that $t^{(a)}$ and $t^{(b)}$ hold. To show $t+1^{(a)}$, observe that
\[\|\z_t\| \leq \|\W\|\|\u_t\|+\sum_{s=1}^t |b_{ts}| \|\u_s\|.\]
Applying $\|\W\|=\|\blambda\|_\infty$,
$\limsup_{n \to \infty} \|\blambda\|_\infty<\infty$,
and $\limsup_{n \to \infty} n^{-1} \|\u_s\|^2<\infty$
and $\lim_{n \to \infty} |b_{ts}|=|b_{ts}^\infty|$ by $t^{(a)}$ and $t^{(b)}$,
this shows
\[\limsup_{n \to \infty} n^{-1}\|\z_t\|^2<\infty.\]
Now comparing (\ref{eq:AMPz}) with (\ref{eq:AMPzeps}),
\[\|\z_t^\eps-\z_t\|\leq
\|\W^\eps-\W\|\|\u_t^\eps\|+\|\W\|\|\u_t^\eps-\u_t\|
+\sum_{s=1}^t |b_{ts}^\eps-b_{ts}|\|\u_s^\eps\|+|b_{ts}|
\|\u_s^\eps-\u_s\|.\]
Applying also $\|\W^\eps-\W\| \leq \eps$,
$\lim_{\eps \to 0} |b_{ts}^\eps|=|b_{ts}|$, and
$\lim_{\eps \to 0} \limsup_{n \to \infty} n^{-1} \|\u_s^\eps-\u_s\|^2=0$
by $t^{(a)}$ and $t^{(b)}$, this shows
\begin{equation}\label{eq:zdiff}
\lim_{\eps \to 0}\;\limsup_{n \to \infty}
n^{-1}\|\z_t^\eps-\z_t\|^2=0.
\end{equation}
For $\u_{t+1}$, we have
\[n^{-1}\sum_{i=1}^n \Big(u_{t+1}(z_{i1},\ldots,z_{it},E)
-u_{t+1}(0,\ldots,0,E)\Big)^2 \leq Cn^{-1}(\|\z_1\|^2+\ldots+\|\z_t\|^2)\]
by the Lipschitz assumption for $u_{t+1}$. Then applying $t^{(a)}$ to bound the
right side,
\[\limsup_{n \to \infty} n^{-1}\|\u_{t+1}\|^2<\infty.\]
Now comparing (\ref{eq:AMPu}) and (\ref{eq:AMPueps}),
\begin{align*}
n^{-1}\|\u_{t+1}^\eps-\u_{t+1}\|^2
&\leq 2\eps^2 \cdot n^{-1}\|\w_{t+1}\|^2+2n^{-1}
\|u_{t+1}(\z_1,\ldots,\z_t,\E)-u_{t+1}(\z_1^\eps,\ldots,\z_t^\eps,\E)\|^2\\
&\leq 2\eps^2 \cdot n^{-1}\|\w_{t+1}\|^2
+2C\sum_{s=1}^t n^{-1}\|\z_s^\eps-\z_s\|^2.
\end{align*}
Then applying $t^{(a)}$ and (\ref{eq:zdiff}) to bound the right side, we obtain
\[\lim_{\eps \to 0}\;\limsup_{n \to \infty}
n^{-1}\|\u_{t+1}^\eps-\u_{t+1}\|^2=0.\]
This shows $t+1^{(a)}$.

For $t+1^{(b)}$, let
\begin{align}
x_i&=(\u_1,\ldots,\u_{t+1},\z_1,\ldots,\z_t,\E)_i\label{eq:xi}\\
x_i^\eps&=(\u_1^\eps,\ldots,\u_{t+1}^\eps,\z_1^\eps,\ldots,\z_t^\eps,\E)_i\label{eq:xieps}
\end{align}
be the $i^\text{th}$ rows of these matrices.
Let $X=(U_1,\ldots,U_{t+1},Z_1,\ldots,Z_t,E)$ be the limit to be shown, and
let $X^\eps$ be the limit of the perturbed sequence.
To show the desired $W_2$ convergence, it suffices to check the convergence
\begin{equation}\label{eq:Xconvergence}
\frac{1}{n} \sum_{i=1}^n f(x_i) \to \EE[f(X)]
\end{equation}
for all Lipschitz functions $f(x)$ and for $f(x)=\|x\|^2$. Let us write
\begin{align}
&\limsup_{n \to \infty}
\left|\frac{1}{n}\sum_{i=1}^n f(x_i)-\EE[f(X)]\right|\nonumber\\
&\leq \lim_{\eps \to 0} \limsup_{n \to \infty}
\left(\left|\frac{1}{n}\sum_{i=1}^n f(x_i)-f(x_i^\eps)\right|
+\left|\frac{1}{n}\sum_{i=1}^n f(x_i^\eps)-\EE[f(X^\eps)]\right|
+\Big|\EE[f(X^\eps)]-\EE[f(X)]\Big|\right).\label{eq:triangle}
\end{align}
For the first term of (\ref{eq:triangle}), note that any such 
function $f$ satisfies the pseudo-Lipschitz condition
\[|f(x)-f(x')| \leq C(1+\|x\|+\|x'\|)\|x-x'\|\]
for some constant $C>0$. Then by this and Cauchy-Schwarz,
\begin{align*}
\left|\frac{1}{n}\sum_{i=1}^n f(x_i)-f(x_i^\eps)\right|
&\leq \frac{C}{n}\sum_{i=1}^n (1+\|x_i\|+\|x_i^\eps\|)\|x_i-x_i^\eps\|\\
&\leq C'\left(\frac{1}{n}\sum_{i=1}^n (1+\|x_i\|^2+\|x_i^\eps\|^2)\right)^{1/2}
\left(\frac{1}{n}\sum_{i=1}^n \|x_i-x_i^\eps\|^2\right)^{1/2}.
\end{align*}
Recalling the definitions of $x_i$ and
$x_i^\eps$ in (\ref{eq:xi}) and (\ref{eq:xieps}) and
applying $t+1^{(a)}$, this term converges to 0
in the limits $n \to \infty$ followed by $\eps \to 0$.
The second term of (\ref{eq:triangle}) converges to 0 as $n \to \infty$ for any
fixed $\eps>0$, by Lemma \ref{lemma:main}. For the third term of
(\ref{eq:triangle}), note that as $\eps \to 0$, we have 
\[U_1^\eps \to U_1, \qquad (Z_1^\eps,\ldots,Z_t^\eps) \to (Z_1,\ldots,Z_t)\]
in the Wasserstein space $W_2$, where the second convergence follows from
$\|\bSigma_t^\eps-\bSigma_t\| \to 0$ in
$t^{(b)}$. Since the functions $u_2,\ldots,u_{t+1}$ are Lipschitz, this implies
$X^\eps \to X$ in $W_2$, so $\EE[f(X^\eps)] \to \EE[f(X)]$.
Combining these establishes (\ref{eq:Xconvergence}), and hence
\[(\u_1,\ldots,\u_{t+1},\z_1,\ldots,\z_t,\E) \toWtwo
(U_1,\ldots,U_{t+1},Z_1,\ldots,Z_t,E).\]
This implies the existence of the limits $\bDelta_{t+1}^\infty$.
Each function $u_s$ is Lipschitz and continuously-differentiable by assumption,
so each partial derivative $\partial_{s'} u_s$ is bounded and continuous.
Then this also implies the existence of $\bPhi_{t+1}^\infty$, and hence of
$\B_{t+1}^\infty$ and $\bSigma_{t+1}^\infty$. As $\eps \to 0$,
since $X^\eps \to X$ in $W_2$ as shown above, we also have
$\bDelta_{t+1}^{\eps,\infty} \to \bDelta_{t+1}^\infty$ and
$\bPhi_{t+1}^{\eps,\infty} \to \bPhi_{t+1}^\infty$, and hence
$\B_{t+1}^{\eps,\infty} \to \B_{t+1}^\infty$ and
$\bSigma_{t+1}^{\eps,\infty} \to \bSigma_{t+1}^\infty$. This concludes the
proof of $t+1^{(b)}$.

\section{Properties of empirical Wasserstein
convergence}\label{appendix:wasserstein}

In this appendix, we collect several properties of the notation $\toW$ and
$\toWp$ for empirical Wasserstein convergence from Section
\ref{sec:wasserstein}.

We will use below the following fact: To verify $\V \toWp \mathcal{L}$ where $\V
\in \RR^{n \times k}$, it suffices to check that (\ref{eq:LLN}) holds for every
function $f:\RR^k \to \RR$ satisfying, for some constant $C>0$,
the pseudo-Lipschitz condition
\begin{equation}\label{eq:pseudolipschitz}
|f(v)-f(v')| \leq C\Big(1+\|v\|^{p-1}+\|v'\|^{p-1}\Big)\|v-v'\|.
\end{equation}
This is because by \cite[Definition 6.7]{villani2008optimal}, it suffices to
check (\ref{eq:LLN}) for $f(v)=\|v\|^p$, together with the usual
weak convergence which is equivalent to (\ref{eq:LLN}) holding for bounded
Lipschitz functions. Note that this condition (\ref{eq:pseudolipschitz})
implies the polynomial growth condition (\ref{eq:growth}).

\begin{proposition}\label{prop:iid}
Fix any $p \geq 1$, $t \geq 1$, and $k \geq 0$. Let $\E \in \RR^{n \times k}$
be a deterministic matrix satisfying $\E \toWp E$, and let
$\V \in \RR^{n \times t}$ be random with i.i.d.\ rows equal in law to
$V \in \RR^t$, where $\EE[\|V\|^p]<\infty$. Then the joint convergence
\[(\V,\E) \toWp (V,E)\]
holds almost surely, where $V$ is independent of $E$ in the limit $(V,E)$.
\end{proposition}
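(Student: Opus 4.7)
The plan is to verify empirical $W_p$ convergence using its characterization as weak convergence of empirical measures together with convergence of $p$-th absolute moments (Chapter 6 of \cite{villani2008optimal}, as noted in Section \ref{sec:wasserstein}). All randomness lies in $\V$ since $\E$ is deterministic, and independence of $V$ and $E$ in the limit is encoded automatically by the right-hand side $\EE[f(V,E)]$ being computed under the product law of the two marginals.

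For the weak-convergence step, it suffices to show that for every bounded Lipschitz $f:\RR^{t+k} \to \RR$, the empirical average
\[
\frac{1}{n}\sum_{i=1}^n f(\tilde{\v}_i,\tilde{\e}_i) \to \EE[f(V,E)]
\quad \text{almost surely,}
\]
where $\tilde{\v}_i \in \RR^t$ and $\tilde{\e}_i \in \RR^k$ denote the $i$-th rows of $\V$ and $\E$. Setting $g(e) = \EE[f(V,e)]$, which is itself bounded Lipschitz, I would decompose
\[
\frac{1}{n}\sum_i f(\tilde{\v}_i,\tilde{\e}_i) - \EE[f(V,E)]
= \frac{1}{n}\sum_i \bigl(f(\tilde{\v}_i,\tilde{\e}_i) - g(\tilde{\e}_i)\bigr)
+ \Bigl(\frac{1}{n}\sum_i g(\tilde{\e}_i) - \EE[g(E)]\Bigr).
\]
The second summand tends to $0$ because $\E \toWp E$ implies weak convergence and $g$ is bounded continuous. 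The first is an average of independent mean-zero variables uniformly bounded by $2\|f\|_\infty$ (independence uses that $\E$ is deterministic and the rows of $\V$ are i.i.d.), so Hoeffding's inequality combined with Borel--Cantelli gives almost-sure convergence to $0$. Passing from bounded Lipschitz to bounded continuous test functions is a standard approximation.

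For the $p$-th moment convergence, I would apply a truncation. Let $\psi_M:[0,\infty) \to [0,1]$ be continuous with $\psi_M \equiv 1$ on $[0,M]$ and $\psi_M \equiv 0$ on $[2M,\infty)$, and set $h_M(v,e) = \|(v,e)\|^p \psi_M(\|(v,e)\|)$. Since $h_M$ is bounded continuous, the preceding step yields $\frac{1}{n}\sum h_M(\tilde{\v}_i,\tilde{\e}_i) \to \EE[h_M(V,E)]$ almost surely, while $\EE[h_M(V,E)] \to \EE[\|(V,E)\|^p]$ as $M \to \infty$ by dominated convergence. For the tail contribution, using $\|(v,e)\|^p \leq C_p(\|v\|^p + \|e\|^p)$ and $\{\|(v,e)\| \geq M\} \subset \{\|v\| \geq M/\sqrt 2\} \cup \{\|e\| \geq M/\sqrt 2\}$ reduces matters to four pieces: a pure $\V$-tail $\frac{1}{n}\sum \|\tilde{\v}_i\|^p \1\{\|\tilde{\v}_i\| \geq M/\sqrt 2\}$ (handled by Kolmogorov's strong law, since $\EE[\|V\|^p]<\infty$), a pure $\E$-tail (handled by $\E \toWp E$, which gives $\frac{1}{n}\sum \|\tilde{\e}_i\|^p \to \EE[\|E\|^p]$ and hence tail control), and two cross terms. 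A cross term like $\frac{1}{n}\sum \|\tilde{\v}_i\|^p \1\{\|\tilde{\e}_i\| \geq M/\sqrt 2\}$ is handled via a secondary truncation of $\|\tilde{\v}_i\|^p$ at level $K^p$: the bounded piece is dominated by $K^p \cdot \EE[\|E\|^p]/(M/\sqrt 2)^p$ using Markov's inequality and $\E \toWp E$, and the unbounded remainder is controlled by $\frac{1}{n}\sum \|\tilde{\v}_i\|^p \1\{\|\tilde{\v}_i\| > K\} \to \EE[\|V\|^p \1\{\|V\|>K\}]$, which vanishes as $K \to \infty$; the symmetric cross term is analogous.

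The main technical obstacle is precisely the control of these cross terms, which couple the randomness of $\V$ to the deterministic empirical data of $\E$ and require a double truncation (once in $M$, once in $K$) together with a Markov-type moment bound. Once the bounded-continuous weak-convergence step is combined with this uniform control of tails as $M \to \infty$, both characterizations of $W_p$-convergence are verified, completing the proof.
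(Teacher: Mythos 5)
Your proof is correct and takes a genuinely different route from the paper. The paper works directly with pseudo-Lipschitz test functions $f$: it first truncates the $e$-variable to a bounded set $\mathcal{B}$ (controlling the resulting tail, including a cross term $\frac{1}{n}\sum_{\|e_i\|>R}\|v_i\|^p$, by a uniform-integrability argument), then partitions $\mathcal{B}$ into small cells $\{U_\alpha\}$ with $E$-null boundaries, replaces $f(v_i,e_i)$ by $f(v_i,u_\alpha)$ on each cell at the cost of a uniformly small pseudo-Lipschitz error, and applies the strong law separately to each of the finitely many functions $f(\cdot,u_\alpha)$. Your argument instead separates the $W_p$ characterization into (i) bounded Lipschitz test functions and (ii) the $p$-th moment functional, and avoids the discretization entirely: for (i) you introduce the conditional mean $g(e)=\EE[f(V,e)]$ so that $f(\tilde v_i,\tilde e_i)-g(\tilde e_i)$ is a triangular array of independent, mean-zero, uniformly bounded variables, killed off by Hoeffding plus Borel--Cantelli, while the drift $\frac{1}{n}\sum g(\tilde e_i)\to\EE[g(E)]$ follows directly from $\E\toWp E$; for (ii) you run a standard truncation. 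The tradeoff is that your concentration step for bounded $f$ is cleaner and quantitative, but you take on a more intricate moment-tail analysis with a double truncation (in $M$ and then $K$) for the cross terms, which is roughly where the paper's uniform-integrability case analysis lives. One small technical point worth noting in a full write-up (present implicitly in both arguments) is that Hoeffding plus Borel--Cantelli handles only countably many $(f,\eps)$ pairs at a time, so one should invoke separability of the bounded-Lipschitz class (or metrizability of weak convergence) to pass to the almost-sure event on which all test functions converge simultaneously; this is routine and does not constitute a gap.
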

\begin{proof}
For $k=0$, the result $\V \toWp V$ follows from the strong law of large
numbers applied to any function $f$ satisfying
(\ref{eq:growth}), as $\EE[|f(V)|] \leq C(1+\EE[\|V\|^p])<\infty$.

For $k>0$, we proceed by approximating $E$ with a discrete random variable, and
then applying the law of large numbers for each discrete value of $E$. In
detail: Fix any function $f$ satisfying (\ref{eq:pseudolipschitz}), and fix
any $\eps \in (0,1)$. Let $\{(v_i,e_i)\}_{i=1}^n$ be the rows of $(\V,\E)$.
Since (\ref{eq:pseudolipschitz}) implies (\ref{eq:growth}),
for any $R>0$ we have
\begin{equation}\label{eq:truncation}
\frac{1}{n}\sum_{i:\|e_i\|>R} |f(v_i,e_i)|
\leq \frac{C}{n}\sum_{i:\|e_i\|>R} (1+\|v_i\|^p+\|e_i\|^p).
\end{equation}
Note that as $\E \toWp E$, the uniform integrability condition of
\cite[Definition 6.7(iii)]{villani2008optimal} shows
\[\lim_{R \to \infty} \limsup_{n \to \infty} \frac{1}{n}
\sum_{i:\|e_i\|>R} 1 \leq 
\lim_{R \to \infty} \limsup_{n \to \infty} \frac{1}{n}
\sum_{i:\|e_i\|>R} \|e_i\|^p=0.\]
This bounds the first and third terms on the right side of
(\ref{eq:truncation}). For the middle term, we consider two cases.
If $\|V\| \leq K$ almost surely for some $K>0$, then by this and the convergence
$\V \toWp V$,
\[\lim_{n \to \infty} \frac{1}{n}\sum_{i=1}^n \min(\|v_i\|,2K)^p
=\EE[\min(\|V\|,2K)^p]=\EE[\|V\|^p]=
\lim_{n \to \infty} \frac{1}{n}\sum_{i=1}^n \|v_i\|^p.\]
Applying also $|\{i:\|v_i\|>2K\}|/n \to 0$, this implies
\[\lim_{n \to \infty} \frac{1}{n}\sum_{i:\|v_i\|>2K} \|v_i\|^p
=\lim_{n \to \infty} \left(\frac{1}{n}\sum_{i=1}^n \left(\|v_i\|^p
-\min(\|v_i\|,2K)^p\right)+(2K)^p \cdot \frac{|\{i:\|v_i\|>2K\}|}{n}\right)=0.\]
In this case, we may bound
\[\lim_{R \to \infty} \limsup_{n \to \infty} \frac{1}{n}\sum_{i:\|e_i\|>R}
\|v_i\|^p \leq \lim_{R \to \infty} \limsup_{n \to \infty} \frac{1}{n}
\left(\sum_{i:\|e_i\|>R} (2K)^p+\sum_{i:\|v_i\|>2K} \|v_i\|^p\right)=0.\]
Conversely, if the support of $V$ is unbounded, then let
$\|v\|_{(1)} \geq \ldots \geq \|v\|_{(n)}$ be the ordered values of
$\{\|v_i\|\}_{i=1}^n$. Note that for each $R>0$, we have
$|\{i:\|e_i\|>R\}|/n \to \delta(R)$ for some $\delta(R) \geq 0$, where
$\delta(R) \to 0$ as $R \to \infty$. Then
\[\lim_{R \to \infty} \limsup_{n \to \infty} \frac{1}{n}\sum_{i:\|e_i\|>R}
\|v_i\|^p \leq \lim_{\delta \to 0} \limsup_{n \to \infty}
\frac{1}{n}\sum_{i=1}^{\delta n} \|v\|_{(i)}^p.\]
Now applying $\V \toWp V$ and the corresponding uniform
integrability condition for $\V$,
\[\lim_{\delta \to 0} \limsup_{n \to \infty}
\frac{1}{n}\sum_{i=1}^{\delta n} \|v\|_{(i)}^p
=\lim_{R \to \infty} \limsup_{n \to \infty}
\frac{1}{n}\sum_{i:\|v_i\|>R} \|v_i\|^p=0.\]
Combining the above and applying this to (\ref{eq:truncation}),
\[\lim_{R \to \infty} \limsup_{n \to \infty}
\frac{1}{n}\sum_{i:\|e_i\|>R} |f(v_i,e_i)|=0.\]
So we may pick a bounded set $\mathcal{B} \subset \RR^k$ large enough
such that
\begin{equation}\label{eq:iid1}
\limsup_{n \to \infty}
\frac{1}{n}\sum_{i:e_i \notin \mathcal{B}} |f(v_i,e_i)|<\eps.
\end{equation}
Applying also
\[\EE\big[|f(V,E)| \cdot \1\{E \notin \mathcal{B}\}\big]
\leq \EE\big[C(1+\|V\|^p+\|E\|^p)\1\{E \notin \mathcal{B}\}\big]\]
and the integrability of $\|V\|^p$ and $\|E\|^p$,
we may pick $\mathcal{B}$ large enough such that
\begin{equation}\label{eq:iid2}
\EE\big[|f(V,E)| \cdot \1\{E \notin \mathcal{B}\}\big]<\eps.
\end{equation}

Now let $\{U_\alpha\}_{\alpha=1}^M$ be any finite partition of $\mathcal{B}$
such that each set $U_\alpha$ has diameter at most $\eps$, and the boundary of
$U_\alpha$ has probability 0 under the law of $E$.
(For example, take $\mathcal{B}=[-K,K]^k$ to be a hyperrectangle in
$\RR^k$, and construct this partition by dividing $[-K,K]$ along each axis
into small enough intervals. Take $-K$, $K$, and these
interval boundaries to have probability 0
under the univariate marginal distribution of each coordinate of $E$.) Pick a
point $u_\alpha \in U_\alpha$ for each $\alpha=1,\ldots,M$. For each $e \in
\mathcal{B}$, define $u(e)=u_\alpha$ where $\alpha$ is the index such that
$e \in U_\alpha$. Then applying (\ref{eq:pseudolipschitz}) and $\|u(e)\|^{p-1}
\leq C(\|e\|^{p-1}+1)$,
\[\frac{1}{n}\sum_{i:e_i \in \mathcal{B}} |f(v_i,e_i)-f(v_i,u(e_i))|
\leq \frac{C}{n}\sum_{i=1}^n
\Big(1+\|v_i\|^{p-1}+\|e_i\|^{p-1}\Big) \cdot \eps\]
for a constant $C>0$ independent of $\eps$. Since $\V \toWp V$
and $\E \toWp E$, this yields
\begin{equation}\label{eq:iid3}
\limsup_{n \to \infty} \frac{1}{n}\sum_{i:e_i \in \mathcal{B}}
|f(v_i,e_i)-f(v_i,u(e_i))| \leq C'\eps.
\end{equation}
Similarly,
\begin{equation}\label{eq:iid4}
\EE\Big[\big|f(V,E)-f(V,u(E))\big|
\cdot \1\{E \in \mathcal{B}\}\Big] \leq C'\eps.
\end{equation}

Finally, let us write
\[\frac{1}{n}\sum_{i:e_i \in \mathcal{B}} f(v_i,u(e_i))
=\sum_{\alpha=1}^M \frac{1}{n}\sum_{i:e_i \in U_\alpha} f(v_i,u_\alpha).\]
Observe that for each fixed $\alpha=1,\ldots,M$,
since the boundary of $U_\alpha$ has probability 0 under $E$,
by weak convergence
we have $|\{i:e_i \in U_\alpha\}|/n \to \PP[E \in U_\alpha]$.
Then by the law of large numbers applied to the function $f(\cdot,u_\alpha)$,
almost surely
\[\frac{1}{n}\sum_{i:e_i \in U_\alpha}
f(v_i,u_\alpha) \to \PP[E \in U_\alpha] \cdot \EE[f(V,u_\alpha)].\]
Summing over $\alpha=1,\ldots,M$ and applying the independence of $V$ and $E$,
\begin{align}
\frac{1}{n}\sum_{i:e_i \in \mathcal{B}} f(v_i,u(e_i))
&\to \sum_{\alpha=1}^M \PP[E \in U_\alpha] \cdot \EE[f(V,u_\alpha)]\nonumber\\
&=\sum_{\alpha=1}^M \EE[f(V,u_\alpha) \cdot \1\{E \in U_\alpha\}]
=\EE[f(V,u(E)) \cdot \1\{E \in \mathcal{B}\}].\label{eq:iid5}
\end{align}
Combining (\ref{eq:iid1}), (\ref{eq:iid2}), (\ref{eq:iid3}), (\ref{eq:iid4}),
and (\ref{eq:iid5}), we obtain
\[\limsup_{n \to \infty} \left|\frac{1}{n}\sum_{i=1}^n f(v_i,e_i)
-\EE[f(V,E)]\right| \leq C\eps\]
for a constant $C>0$ independent of $\eps$. As this holds for all $\eps>0$, this
shows $n^{-1}\sum_{i=1}^n f(v_i,e_i) \to \EE[f(V,E)]$, which concludes the
proof.
\end{proof}

\begin{proposition}\label{prop:composition}
Fix $p,p' \geq 1$ and $k,\ell \geq 1$.
If $\V \in \RR^{n \times k}$ satisfies $\V \overset{W_{p+p'}}{\to} V$, and
$g:\RR^k \to \RR^\ell$ is any continuous function satisfying
$\|g(v)\| \leq C(1+\|v\|^{p'})$ for some $C>0$ and all $v \in \RR^k$,
then $g(\V) \toWp g(V)$.
\end{proposition}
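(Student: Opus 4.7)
The plan is to verify the defining condition of $W_p$-convergence for the empirical distribution of $g(\V)$ directly from Definition \ref{def:toW}. By the criterion noted after that definition, it suffices to check that $n^{-1}\sum_i h(g(v_i)) \to \EE[h(g(V))]$ both for every bounded Lipschitz $h:\RR^\ell \to \RR$ and for the single function $h(w)=\|w\|^p$. I split the argument into these two cases.

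For bounded Lipschitz $h$, the composite $h \circ g$ is a bounded continuous function on $\RR^k$. The hypothesis $\V \overset{W_{p+p'}}{\to} V$ implies weak convergence of the empirical distribution of $\V$ to the law of $V$, so the portmanteau theorem immediately yields $n^{-1}\sum_i h(g(v_i)) \to \EE[h(g(V))]$. The substantive case is $h(w)=\|w\|^p$, i.e.\ showing $n^{-1}\sum_i \|g(v_i)\|^p \to \EE[\|g(V)\|^p]$. My plan here is a truncation argument: pick a continuous cutoff $\chi_R:[0,\infty)\to[0,1]$ with $\chi_R(t)=1$ for $t\leq R$ and $\chi_R(t)=0$ for $t\geq 2R$, and decompose
\[
\|g(v)\|^p \;=\; \|g(v)\|^p\chi_R(\|v\|) \;+\; \|g(v)\|^p\bigl(1-\chi_R(\|v\|)\bigr).
\]
The first summand is bounded and continuous in $v$, so the weak-convergence step above gives $n^{-1}\sum_i \|g(v_i)\|^p\chi_R(\|v_i\|) \to \EE[\|g(V)\|^p\chi_R(\|V\|)]$ for each fixed $R$. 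The second summand is supported on $\{\|v\|>R\}$ and dominated there by $C(1+\|v\|^{p'})^p$. Using the finiteness of $\EE[\|V\|^{p+p'}]$ and the convergence $n^{-1}\sum_i\|v_i\|^{p+p'}\to\EE[\|V\|^{p+p'}]$ guaranteed by the hypothesis, I argue that both the empirical average and the expectation of the tail summand tend to $0$ as $R\to\infty$, uniformly in $n$. Sending $R\to\infty$ in the decomposition then yields the desired moment convergence.

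The main obstacle is the uniform tail estimate for $\|g(v_i)\|^p$ on $\{\|v_i\|>R\}$. The crude bound $(1+\|v\|^{p'})^p \leq 2^{p-1}(1+\|v\|^{pp'})$ replaces the problem by uniform integrability of $\|v_i\|^{pp'}$, and $pp'$ can exceed the order $p+p'$ afforded by the hypothesis, so this coarse bound is not useful on its own. I would instead split the tail $\{\|v_i\|>R\}$ according to the size of $\|v_i\|$ relative to $R$ and apply a H{\"o}lder-type interpolation that uses only moments up to order $p+p'$ combined with the pointwise growth bound on $g$, so that the exponent controlling the tail remains $\leq p+p'$. This careful bookkeeping is the only non-routine step; once it is in place, the remaining pieces of the argument reduce to standard manipulations with weak convergence and uniform integrability.
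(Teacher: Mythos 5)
Your truncation argument is a genuinely different route from the paper's one-line proof. The paper observes that for any continuous $f:\RR^\ell\to\RR$ satisfying (\ref{eq:growth}) with order $p$, the composition $f\circ g$ is continuous and --- the paper claims --- satisfies (\ref{eq:growth}) with order $p+p'$; the conclusion then follows directly from Definition~\ref{def:toW}. That claimed growth order is incorrect: the natural bound
\[
|f(g(v))| \;\leq\; C\bigl(1+\|g(v)\|^p\bigr) \;\leq\; C\bigl(1+{C'}^p(1+\|v\|^{p'})^p\bigr) \;\leq\; C''\bigl(1+\|v\|^{pp'}\bigr)
\]
gives growth of order $pp'$, not $p+p'$, and $pp'>p+p'$ whenever $(p-1)(p'-1)>1$. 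You have found exactly this gap, which is to your credit; the paper's own proof does not address it.

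Unfortunately, the repair you sketch --- a H\"older-type interpolation bringing the required exponent back down to $p+p'$ --- cannot succeed, because the proposition is false as stated. Take $k=\ell=1$, $p=p'=3$, $g(v)=v^3$, so that $|g(v)|\leq 1+|v|^{p'}$. Let $\v\in\RR^n$ have entries $v_1=\cdots=v_{n-1}=0$ and $v_n=n^{1/8}$; then $n^{-1}\sum_i |v_i|^6=n^{-1/4}\to 0$, so $\v\overset{W_6}{\to} V$ with $V\equiv 0$. But $n^{-1}\sum_i |g(v_i)|^3=n^{-1}\cdot n^{9/8}=n^{1/8}\to\infty$, so $g(\v)$ does not converge in $W_3$ to $g(V)\equiv 0$. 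The growth of $\|g(v)\|^p$ is genuinely of order $pp'$, and no manipulation using only empirical moments of order $p+p'$ can control the tail contribution. The correct hypothesis is $\V\overset{W_{pp'}}{\to}V$, under which the paper's one-line argument (and equally your bounded-Lipschitz-plus-$p$th-moment decomposition, now using the crude bound you dismissed) is valid; this correction does not affect any downstream result, since every invocation of the proposition in the paper has $\toW$ (convergence at all orders) on the left-hand side.
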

\begin{proof}
This follows from Definition \ref{def:toW}, since for any continuous function
$f:\RR^\ell \to \RR$ satisfying (\ref{eq:growth}) for the order
$p$, the composition $f \circ g:\RR^k \to \RR$ is continuous and
satisfies (\ref{eq:growth}) for the order $p+p'$.
\end{proof}

\begin{proposition}\label{prop:discontinuous}
Fix $p \geq 1$ and $k \geq 0$. Suppose $\V \in \RR^{n \times k}$ satisfies
$\V \toWp V$, and $f:\RR^k \to \RR$ is a function satisfying
(\ref{eq:growth}) that is continuous everywhere except on a set having
probability 0 under the law of $V$. Then
\[\frac{1}{n}\sum_{i=1}^n f(\V)_i \to \EE[f(V)].\]
\end{proposition}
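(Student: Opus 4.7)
The plan is to reduce to the classical fact that $W_p$-convergence of probability measures is equivalent to weak convergence plus convergence of $p$-th absolute moments, and then invoke a Skorokhod/Vitali argument to handle the almost-everywhere continuous integrand $f$. Let $\mu_n = \frac{1}{n}\sum_{i=1}^n \delta_{(\V)_i}$ be the empirical distribution of the rows of $\V$, and let $\mu$ denote the law of $V$. The hypothesis $\V \toWp V$ is exactly that $\mu_n \to \mu$ in the Wasserstein space $W_p(\RR^k)$, which by \cite[Chapter 6]{villani2008optimal} is equivalent to weak convergence $\mu_n \Rightarrow \mu$ together with $\int \|v\|^p\, d\mu_n \to \int \|v\|^p\, d\mu < \infty$.

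First I would invoke Skorokhod's representation theorem to construct random vectors $X_n \sim \mu_n$ and $X \sim \mu$ on a common probability space with $X_n \to X$ almost surely. Since $f$ is continuous outside a set of probability zero under $\mu$, and $X \sim \mu$, the event that $X$ is a point of continuity of $f$ has probability one; combined with $X_n \to X$ a.s., this gives $f(X_n) \to f(X)$ almost surely.

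Next I would upgrade this almost sure convergence to convergence in expectation via Vitali's theorem. The growth condition $|f(v)| \leq C(1+\|v\|^p)$ reduces the required uniform integrability of $\{f(X_n)\}$ to uniform integrability of $\{\|X_n\|^p\}$. The latter follows from the standard Scheffé-type fact that a.s.\ convergence $\|X_n\|^p \to \|X\|^p$ together with convergence of first moments $\EE[\|X_n\|^p] \to \EE[\|X\|^p] < \infty$ implies $L^1$-convergence, hence uniform integrability. Vitali then yields $\EE[f(X_n)] \to \EE[f(X)]$, which is exactly $\frac{1}{n}\sum_{i=1}^n f(\V)_i = \int f\, d\mu_n \to \int f\, d\mu = \EE[f(V)]$.

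The main obstacle is cleanly establishing this uniform integrability and making sure the Skorokhod construction is legitimate here; once these are in place the proof is essentially a standard continuous mapping argument. An alternative route that avoids Skorokhod altogether would be to truncate: pick a continuous cutoff $\phi_R$ supported on $\{\|v\| \leq R+1\}$ equal to $1$ on $\{\|v\| \leq R\}$, write $f = f\phi_R + f(1-\phi_R)$, apply the Portmanteau theorem to the bounded a.e.-continuous function $f\phi_R$, and control the tail using the uniform estimate $\limsup_n \int_{\|v\| > R} (1+\|v\|^p)\, d\mu_n \to 0$ as $R \to \infty$ obtained (as in the proof of Proposition~\ref{prop:iid}) from $W_p$-convergence.
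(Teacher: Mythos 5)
Your primary route is correct and genuinely different from the paper's. The paper works entirely at the level of integrals against the empirical measure: it truncates in \emph{range}, setting $f^M=\max(-M,\min(f,M))$, applies the standard fact that weak convergence is preserved by bounded, $\mu$-a.e.-continuous test functions, and then controls the truncation error $\frac{1}{n}\sum_i |f^M(v_i)-f(v_i)|$ uniformly in $n$ via the tail-uniform-integrability characterization of $W_p$-convergence (Villani, Definition 6.7(iii)). Your approach instead passes to a probabilistic representation: Skorokhod gives coupled $X_n\sim\mu_n$ and $X\sim\mu$ with $X_n\to X$ a.s., the a.e.-continuity of $f$ under $\mu$ gives $f(X_n)\to f(X)$ a.s., Scheff\'e upgrades convergence of $\EE[\|X_n\|^p]$ to uniform integrability of $\{\|X_n\|^p\}$ and hence (via the growth bound) of $\{f(X_n)\}$, and Vitali closes the argument. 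All the steps check out: Skorokhod applies because $\RR^k$ is Polish and the $\mu_n$ may be taken deterministic here, and the domination $|f(X_n)|\leq C(1+\|X_n\|^p)$ does indeed transfer uniform integrability. The trade-off is roughly: your route is conceptually cleaner and makes the logical structure (pointwise convergence plus UI) explicit, at the cost of importing Skorokhod; the paper's truncation argument is more hands-on and self-contained, staying at the level of sums and limsups without invoking the representation theorem. Your final ``alternative route'' paragraph (domain truncation by a continuous cutoff $\phi_R$, Portmanteau on $f\phi_R$, tail control) is essentially the paper's proof with range truncation replaced by domain truncation; either variant works because the growth bound makes $f$ bounded on compact sets.
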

\begin{proof}
Let $f$ be such a function. For any $M>0$, consider the bounded function
$f^M(v)=\max(-M,\min(f(v),M))$. Let $v_i$ be the $i^\text{th}$ row of $\V$.
The condition $\V \toWp V$ implies the usual weak
convergence of the empirical distribution of $\{v_i\}_{i=1}^n$ to $V$,
so $n^{-1} \sum_{i=1}^n f^M(v_i) \to \EE[f^M(V)]$ even when $f^M$ is
discontinuous on a set of probability 0 under $V$. Now taking $M \to \infty$,
we have $\EE[f^M(V)] \to \EE[f(V)]$ by the bound
$|f^M(v)| \leq C(1+\|v\|^p)$ and the dominated convergence theorem.
By this bound, we also have
\begin{align*}
\lim_{M \to \infty} \limsup_{n \to \infty} \frac{1}{n}\sum_{i=1}^n
|f^M(v_i)-f(v_i)|
&\leq \lim_{M \to \infty} \limsup_{n \to \infty} \frac{1}{n}\sum_{i:|f(v_i)|>M}
|f(v_i)|\\
&\leq \lim_{R \to \infty} \limsup_{n \to \infty}
\frac{1}{n}\sum_{i:\|v_i\|>R} C(1+\|v_i\|^p)=0,
\end{align*}
where the last limit is 0 by \cite[Definition 6.7(iii)]{villani2008optimal}.
Then $n^{-1} \sum_{i=1}^n f(v_i) \to \EE[f(V)]$ as desired.
\end{proof}

\begin{proposition}\label{prop:scalarprod}
Fix $p \geq 1$ and $k,\ell \geq 1$.
If $\V \in \RR^{n \times k}$, $\W \in \RR^{n \times \ell}$, and
$\M_n,\M \in \RR^{k \times \ell}$ satisfy
$\V \toWp (V_1,\ldots,V_k)$, $\W \toWp 0$, and $\M_n \to \M$
as $n \to \infty$, then
\[\V \M_n+\W \toWp \begin{pmatrix} V_1 & \ldots & V_k \end{pmatrix}
\M.\]
\end{proposition}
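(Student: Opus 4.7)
The plan is to verify the definition of $\toWp$ by checking the integration condition (\ref{eq:LLN}) on an appropriate class of test functions. As noted in the remark above Proposition \ref{prop:iid}, it suffices to take $f:\RR^\ell \to \RR$ satisfying the pseudo-Lipschitz bound (\ref{eq:pseudolipschitz}) of order $p$, i.e.
\[|f(y)-f(y')| \leq C\Big(1+\|y\|^{p-1}+\|y'\|^{p-1}\Big)\|y-y'\|,\]
since this class controls both bounded Lipschitz test functions and the function $\|y\|^p$. Writing $V=(V_1,\ldots,V_k)$ and letting $v_i,w_i$ denote the $i^\text{th}$ rows of $\V,\W$, the goal is
\[\frac{1}{n}\sum_{i=1}^n f(v_i\M_n+w_i) \to \EE[f(V\M)].\]

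First I would split by the triangle inequality as
\[\left|\frac{1}{n}\sum_{i=1}^n f(v_i\M_n+w_i)-\EE[f(V\M)]\right| \leq A_n+B_n,\]
where $A_n = n^{-1}\sum_i |f(v_i\M_n+w_i)-f(v_i\M)|$ and $B_n = |n^{-1}\sum_i f(v_i\M)-\EE[f(V\M)]|$. The term $B_n$ tends to 0 by Proposition \ref{prop:composition} applied to the continuous function $g(v)=f(v\M)$, which inherits the polynomial growth (\ref{eq:growth}) of order $p$ from $f$.

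For $A_n$, applying (\ref{eq:pseudolipschitz}) and then Hölder's inequality with exponents $p/(p-1)$ and $p$ gives
\[A_n \leq C\left(\frac{1}{n}\sum_{i=1}^n \Big(1+\|v_i\M_n+w_i\|^p+\|v_i\M\|^p\Big)\right)^{(p-1)/p}\left(\frac{1}{n}\sum_{i=1}^n \|v_i(\M_n-\M)+w_i\|^p\right)^{1/p}.\]
The first parenthesized factor stays bounded: $n^{-1}\sum_i \|v_i\M\|^p \to \EE[\|V\M\|^p]<\infty$ because $\V\toWp V$, and $n^{-1}\sum_i \|v_i\M_n+w_i\|^p$ is controlled by $\|\M_n\|$, $n^{-1}\sum_i\|v_i\|^p$, and $n^{-1}\sum_i\|w_i\|^p$, the last of which tends to 0 since $\W\toWp 0$ applied to $f(w)=\|w\|^p$. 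The second parenthesized factor tends to 0 by the elementary estimate
\[\frac{1}{n}\sum_{i=1}^n \|v_i(\M_n-\M)+w_i\|^p \leq \frac{C_p}{n}\sum_{i=1}^n\Big(\|v_i\|^p\|\M_n-\M\|^p+\|w_i\|^p\Big),\]
combined with $\|\M_n-\M\|\to 0$, boundedness of $n^{-1}\sum_i\|v_i\|^p$, and $n^{-1}\sum_i\|w_i\|^p\to 0$. Together these give $A_n\to 0$, completing the proof.

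The main obstacle to keep track of is the uniform integrability bookkeeping---ensuring the boundedness of $n^{-1}\sum_i\|v_i\M_n+w_i\|^p$ despite $\M_n$ varying with $n$---but this follows routinely from the triangle inequality and convergence of $\M_n$. The edge case $p=1$ is immediate since the pseudo-Lipschitz condition reduces to ordinary Lipschitz and no Hölder step is needed.
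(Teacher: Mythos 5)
Your proof is correct and takes essentially the same approach as the paper: both reduce to pseudo-Lipschitz test functions, compare $f(v_i\M_n+w_i)$ to $f(v_i\M)$ via the triangle inequality, and apply H\"older's inequality together with the moment facts $n^{-1}\sum_i\|v_i\|^p \to \EE[\|V\|^p]$ and $n^{-1}\sum_i\|w_i\|^p \to 0$. The only cosmetic difference is that the paper splits the comparison into two separate steps (first $\W$, then $\M_n - \M$), whereas you absorb both perturbations into a single H\"older application.
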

\begin{proof}
Let $f:\RR^\ell \to \RR$ satisfy (\ref{eq:pseudolipschitz}). Then
$(v_1\;\cdots\;v_k) \mapsto
f((v_1\;\cdots\;v_k)\M)$ is continuous and satisfies
(\ref{eq:growth}) with the order $p$, so by the convergence
$\V \toWp (V_1\;\cdots\;V_k)$, we have
\[\frac{1}{n}\sum_{i=1}^n f\big(\V\M\big)_i \to
\EE\big[f\big((V_1\;\cdots\;V_k)\M\big)\big].\]

Let $v_i,w_i$ be the $i^\text{th}$ rows of $\V$ and $\W$.
Note that $\V \toWp (V_1\;\cdots\;V_k)$ implies
$n^{-1}\sum_{i=1}^n \|v_i\|^p \to \EE[\|(V_1\;\cdots\;V_k)\|^p]<\infty$.
Similarly $n^{-1}\sum_{i=1}^n \|w_i\|^p \to 0$.
Then applying Jensen's inequality, Holder's inequality,
and the bound $\|v_i\M+w_i\|^{p-1} \leq C(\|v_i\|^{p-1}+\|w_i\|^{p-1})$,
we have for some constants $C,C'>0$ depending on $\M$ that
\begin{align*}
&\frac{1}{n}\sum_{i=1}^n |f(\V\M)_i-f(\V\M+\W)_i|\\
&\leq \frac{C}{n}\sum_{i=1}^n \Big(1+\|v_i\M\|^{p-1}+\|v_i\M+w_i\|^{p-1}\Big)\|w_i\|\\
&\leq C' \cdot \frac{1}{n}(\|w_i\|+\|v_i\|^{p-1}\|w_i\|
+\|w_i\|^p)\\
&\leq C'\left[\left(\frac{1}{n}\sum_{i=1}^n \|w_i\|^p\right)^{1/p}
+\left(\frac{1}{n}\sum_{i=1}^n \|v_i\|^p\right)^{(p-1)/p}
\left(\frac{1}{n}\sum_{i=1}^n \|w_i\|^p\right)^{1/p}
+\left(\frac{1}{n}\sum_{i=1}^n \|w_i\|^p\right)\right] \to 0.
\end{align*}
Similarly,
\begin{align*}
&\frac{1}{n}\sum_{i=1}^n |f\big(\V\M_n+\W\big)_i-f\big(\V\M+\W\big)_i|\\
&\leq \frac{C}{n}\sum_{i=1}^n
\Big(1+\|v_i\M_n+w_i\|^{p-1}+\|v_i\M+w_i\|^{p-1}\Big) \cdot \|v_i(\M_n-\M)\|\\
&\leq C'\|\M_n-\M\| \cdot \frac{1}{n}\sum_{i=1}^n
(\|v_i\|+\|v_i\|^p+\|w_i\|^{p-1}\|v_i\|) \to 0.
\end{align*}
Combining the above yields the proposition.
\end{proof}

The following is an empirical form of Stein's lemma.

\begin{proposition}\label{prop:stein}
Fix $p \geq 2$.
Suppose $(\z_1,\ldots,\z_t,\E) \in \RR^{n \times (t+k)}$ are such that
\[(\z_1,\ldots,\z_t,\E) \toWp (Z_1,\ldots,Z_t,E)\]
where, for some non-singular covariance matrix $\Sigma \in \RR^{t \times t}$,
$(Z_1,\ldots,Z_t) \sim \N(0,\Sigma)$ and this
is independent of $E$. Suppose $u:\RR^{t+k} \to \RR$ is weakly
differentiable in its first $t$ arguments and satisfies (\ref{eq:growth})
for the order $p-1$.
Then, almost surely as $n \to \infty$,
\[\frac{1}{n} \begin{pmatrix} \z_1^\top \\ \vdots \\ \z_t^\top \end{pmatrix}
u(\z_1,\ldots,\z_t,\E)
\to \Sigma \cdot \begin{pmatrix} \EE[\partial_1 u(Z_1,\ldots,Z_t,E)] \\
\vdots \\ \EE[\partial_t u(Z_1,\ldots,Z_t,E)] \end{pmatrix}.\]
\end{proposition}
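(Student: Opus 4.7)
The plan is to reduce the proposition to two pieces: (i) the classical multivariate Stein (Gaussian integration-by-parts) identity applied at the limit, which converts $\Sigma \cdot \EE[\nabla u(Z,E)]$ into $\EE[(Z_1,\ldots,Z_t)^\top u(Z_1,\ldots,Z_t,E)]$; and (ii) the empirical convergence $n^{-1}\sum_i z_{is}u(z_i,e_i) \to \EE[Z_s u(Z,E)]$ for each $s$. For (i), for each fixed $e$ the map $v \mapsto u(v,e)$ is weakly differentiable on $\RR^t$ with polynomial growth, so for the non-singular Gaussian $(Z_1,\ldots,Z_t)$ the usual integration-by-parts gives $\EE[Z_s u(Z,e)] = \sum_r \Sigma_{sr} \EE[\partial_r u(Z,e)]$; integrating out $E$ (independent of $Z$) yields the identity with $E$ in place of $e$, and in the settings where the proposition is applied the growth of $\partial_s u$ ensures all expectations are finite. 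So the whole task comes down to establishing (ii).

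If $u$ were jointly continuous, (ii) would be immediate from Definition \ref{def:toW}, since the map $(v,e)\mapsto v_s u(v,e)$ satisfies the order-$p$ growth bound (\ref{eq:growth}) given that $u$ is of order $p-1$. To handle the merely weakly differentiable case, I would smooth $u$ in its first $t$ arguments by convolution against a standard mollifier $\phi_\delta$ on $\RR^t$, setting $u_\delta(v,e) = \int u(v-w,e)\phi_\delta(w)\,dw$. Standard Sobolev theory gives $u_\delta \in C^\infty$ in $v$, a uniform-in-$\delta$ order-$(p-1)$ growth bound, $\partial_s u_\delta = (\partial_s u)_\delta$, and $u_\delta \to u$ pointwise Lebesgue-a.e.\ (and in $L^q$ against any Gaussian measure on $\RR^t$). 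For fixed $\delta>0$, applying $\toWp$ to the continuous function $(v,e)\mapsto v_s u_\delta(v,e)$ gives
\[
\frac{1}{n}\sum_i z_{is} u_\delta(z_i,e_i) \to \EE[Z_s u_\delta(Z,E)],
\]
and Stein applied to $u_\delta$ together with $L^q$ convergence of $\partial_s u_\delta$ gives $\EE[Z_s u_\delta(Z,E)] \to \EE[Z_s u(Z,E)]$ as $\delta \to 0$.

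The main obstacle is the uniform-in-$n$ control of the approximation error
\[
R_n(\delta) \;=\; \frac{1}{n}\sum_i z_{is}\bigl[u(z_i,e_i)-u_\delta(z_i,e_i)\bigr].
\]
Cauchy--Schwarz bounds $|R_n(\delta)|^2$ by $\bigl(n^{-1}\sum_i z_{is}^2\bigr)\cdot\bigl(n^{-1}\sum_i (u-u_\delta)^2(z_i,e_i)\bigr)$; the first factor is uniformly bounded because $p\ge 2$ and $\z \toWp Z$. The second factor is the real work: $(u-u_\delta)^2$ has polynomial growth of order $2(p-1)$, uniformly in $\delta$. I would truncate at a large radius $R$ in $v$, handle the truncated bounded-continuous-a.e.\ piece using Proposition \ref{prop:discontinuous} applied to $(\z,\E) \toWp (Z,E)$ (here non-singularity of $\Sigma$ is essential, so that the Lebesgue-a.e.\ continuity of the precise representative of $u$ translates into $(Z,E)$-a.e.\ continuity), and control the tail $\{\|v\|>R\}$ using the uniform integrability of $\|\z\|^p$ supplied by $\toWp$. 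Letting $n\to\infty$, then $R\to\infty$, then $\delta\to 0$ (and using dominated convergence for $\EE[(u-u_\delta)^2(Z,E)] \to 0$) makes $R_n(\delta)$ vanish in the appropriate iterated limit, closing the argument.

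The core technical hurdle is this interchange of limits: one cannot collapse the empirical measure onto the Gaussian before $u_\delta$ has been inserted, so the truncation-plus-mollification has to be performed in a way compatible with both the atomic nature of the empirical distribution and the $L^q$-only regularity of $u$, which is why the full $W_p$ strength of the convergence (rather than mere weak convergence) must be used at every step.
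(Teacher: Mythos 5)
Your overall decomposition matches the paper's: establish the empirical convergence $\frac{1}{n}\sum_i z_{is}u(z_i,e_i)\to\EE[Z_s u(Z,E)]$ from the $W_p$ convergence, then identify $\EE[(Z_1,\ldots,Z_t)^\top u(Z,E)]=\Sigma\,\EE[\nabla u(Z,E)]$ by Stein's lemma. Where you diverge is in the justification of both steps. For the Stein identity, the paper changes variables $Z=\Sigma^{1/2}X$, invokes the chain rule for weak differentiability under bi-Lipschitzian coordinate changes (Ziemer), and cites a version of Stein's lemma for weakly differentiable functions (Fourdrinier--Strawderman); your route of mollifying to $u_\delta$, applying classical Stein, and sending $\delta\to 0$ is a valid and arguably more elementary alternative. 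For the empirical convergence, the paper simply asserts that $(z,e)\mapsto z_s u(z,e)$ is continuous and applies $\toWp$ directly; you are right that this is not literally implied by ``weakly differentiable with growth $p-1$,'' but in every application in the paper the partial derivatives $\partial_s u$ also obey the growth bound (\ref{eq:growth}), which puts $u(\cdot,e)$ in $W^{1,q}_{loc}(\RR^t)$ for every $q<\infty$ and hence, by Sobolev embedding, gives $u$ a continuous version. So the paper's assertion is justified in context, and your mollification layer is extra work you did not need.

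More importantly, your mollification route has a gap at the step it was introduced to avoid. To control the residual $R_n(\delta)$ you want to feed the truncated $(u-u_\delta)^2$ into Proposition~\ref{prop:discontinuous}, and you justify this by claiming ``Lebesgue-a.e.\ continuity of the precise representative of $u$.'' That property is not available for general weakly differentiable functions: the precise representative of a $W^{1,1}_{loc}$ function is only approximately continuous a.e.\ and quasi-continuous with respect to capacity, neither of which yields a Lebesgue-null set of genuine discontinuities. Without that, Proposition~\ref{prop:discontinuous} does not apply to $(u-u_\delta)^2$, and the iterated limit $n\to\infty$, $R\to\infty$, $\delta\to 0$ does not close. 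In other words, the hypothesis you implicitly need to make your argument work (a continuous version of $u$) is exactly the one that makes the mollification unnecessary, so the detour buys nothing and introduces an unjustified step.
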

\begin{proof}
Note that for each $s=1,\ldots,t$, the function $(z_1,\ldots,z_t,e)
\mapsto z_su(z_1,\ldots,z_t,e)$ is continuous and
satisfies (\ref{eq:growth}) with order $p$, so
\begin{equation}\label{eq:steinconvergence}
\frac{1}{n}(\z_1,\ldots,\z_t)^\top
u(\z_1,\ldots,\z_t,\E) \to \EE\Big[(Z_1,\ldots,Z_t) \cdot
u(Z_1,\ldots,Z_t,E)\Big].
\end{equation}
To show that the right side of (\ref{eq:steinconvergence}) is equivalent to the
given expression, we apply Stein's lemma:
Let us condition on a realization $E=e$ for any fixed $e \in \RR^k$,
and denote $Z=(Z_1,\ldots,Z_t)$. We may write
$Z=\Sigma^{1/2}X$ where $X \sim \N(0,\Id)$, and define
\[v_e(x)=u(\Sigma^{1/2}x,e).\]
Since $\Sigma$ is non-singular, the maps
$X \mapsto \Sigma^{1/2}X$ and $Z \mapsto \Sigma^{-1/2}Z$ are both Lipschitz.
Then by the chain rule for weak differentiability under bi-Lipschitzian
changes of coordinates, see \cite[Theorem 2.2.2]{ziemer2012weakly},
$v_e(x)$ is weakly differentiable with
\[\nabla v_e(x)=\Sigma^{1/2} \cdot \nabla u(\Sigma^{1/2}x,e)\]
a.e.\ over $x \in \RR^t$.
(We denote by $\nabla (\cdot)$ the vector of partial derivatives.)
Applying Stein's lemma for weakly differentiable functions, see \cite[Theorem
2.1]{fourdrinier2018shrinkage}, we have for each $s=1,\ldots,t$ that
$\EE[X_s v_e(X)]=\EE[\partial_s v_e(X)]$. Then
\[\EE[Z \cdot u(Z,e)]=\EE[\Sigma^{1/2} X \cdot v_e(X)]
=\Sigma^{1/2} \cdot \EE[\nabla v_e(X)]=\Sigma \cdot \EE[\nabla u(Z,e)].\]
Taking the expectation over $E$ and applying this to (\ref{eq:steinconvergence})
concludes the proof.
\end{proof}

\section{Auxiliary lemmas}

This appendix collects several auxiliary results that were used in the preceding
arguments.

\subsection{Properties of Haar-orthogonal matrices}\label{appendix:orthog}

The following result was established as \cite[Lemma 4]{rangan2019vector}.

\begin{proposition}\label{prop:orthogconditioning}
Fix $k \geq 1$, and let $\X,\Y \in \RR^{n \times k}$ be deterministic matrices
with rank $k$,
such that $\X=\Q\Y$ for some orthogonal matrix $\Q \in \RR^{n \times n}$. If
$\O \in \RR^{n \times n}$ is a random Haar-uniform orthogonal matrix,
then the law of $\O$ conditioned on $\X=\O\Y$ is equal to the law of
\[\X(\X^\top
\X)^{-1}\Y^\top+\proj_{\X^\perp}\tilde{\O}\proj_{\Y^\perp}^\top
=\X(\Y^\top\Y)^{-1}\Y^\top+\proj_{\X^\perp}\tilde{\O}\proj_{\Y^\perp}^\top.\]
Here, $\tilde{\O} \in \RR^{(n-k) \times (n-k)}$ is a Haar-uniform
orthogonal matrix, and
$\proj_{\X^\perp},\proj_{\Y^\perp} \in \RR^{n \times (n-k)}$
are matrices with orthonormal columns spanning the orthogonal complements of
the column spans of $\X$ and $\Y$.
\end{proposition}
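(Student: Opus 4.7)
The plan is to reduce the conditional law of $\O$ to a statement about the Haar measure on a smaller orthogonal group via an explicit change of basis, exploiting the bilateral Haar invariance.

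First, the equality of the two displayed expressions is immediate: from $\X=\Q\Y$ with $\Q$ orthogonal we have $\X^\top\X=\Y^\top\Q^\top\Q\Y=\Y^\top\Y$, and both are invertible since $\X,\Y$ have rank $k$. Next, I would rewrite the conditioning event $\{\X=\O\Y\}$ in a form that cleanly separates the deterministic from the random part. Using $\proj_\Y=\Y(\Y^\top\Y)^{-1}\Y^\top$, the event is equivalent to $\{\O\proj_\Y=\X(\Y^\top\Y)^{-1}\Y^\top\}$ (multiply $\X=\O\Y$ on the right by $(\Y^\top\Y)^{-1}\Y^\top$, and conversely multiply the displayed identity on the right by $\Y$). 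Writing $\O=\O\proj_\Y+\O\proj_{\Y^\perp}$ on the event gives $\O=\X(\Y^\top\Y)^{-1}\Y^\top+\O\proj_{\Y^\perp}$. Because any orthogonal $\O$ satisfying the event maps $\mathrm{col}(\Y)$ bijectively onto $\mathrm{col}(\X)$, it also maps $\mathrm{col}(\Y)^\perp$ onto $\mathrm{col}(\X)^\perp$, so $\O\proj_{\Y^\perp}=\proj_{\X^\perp}\O\proj_{\Y^\perp}$.

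It remains to identify the conditional law of the random piece $\proj_{\X^\perp}\O\proj_{\Y^\perp}$. The plan is to introduce orthogonal matrices $\U_X,\U_Y\in\RR^{n\times n}$ whose first $k$ columns form orthonormal bases of $\mathrm{col}(\X),\mathrm{col}(\Y)$ respectively. By bilateral Haar invariance, $\M:=\U_X^\top\O\U_Y$ is also Haar-uniform on $\mathrm{O}(n)$. Writing $\Y=\U_Y\binom{\A}{\0}$ and $\X=\U_X\binom{\A'}{\0}$ for invertible $\A,\A'\in\RR^{k\times k}$, the event becomes a constraint on the top-left block, $\M_{11}=\A'\A^{-1}$, and the identity $\A^\top\A=\Y^\top\Y=\X^\top\X=(\A')^\top\A'$ ensures that $\A'\A^{-1}$ is itself orthogonal. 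Combined with the orthogonality relation $\M^\top\M=\I$, this pins $\M_{11}$ to an orthogonal $k\times k$ matrix and forces $\M_{21}^\top\M_{21}=\I-\M_{11}^\top\M_{11}=0$ and $\M_{12}\M_{12}^\top=\I-\M_{11}\M_{11}^\top=0$, so both off-diagonal blocks vanish on the event. The remaining block $\M_{22}$ is an $(n-k)\times(n-k)$ orthogonal matrix whose conditional law is Haar-uniform on $\mathrm{O}(n-k)$; translating back via $\U_X,\U_Y$ gives exactly the law of $\proj_{\X^\perp}\tilde{\O}\proj_{\Y^\perp}$ for an independent Haar-uniform $\tilde{\O}$.

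The main technical obstacle is the last conditioning step: asserting that after fixing the first $k$ columns of a Haar-uniform orthogonal matrix to a compatible (orthonormal) configuration, the lower-right $(n-k)\times(n-k)$ block is Haar-uniform on $\mathrm{O}(n-k)$. I would justify this by a symmetry argument: the conditional law of $\proj_{\X^\perp}\O\proj_{\Y^\perp}$ must be invariant under $N\mapsto\R_X\N\R_Y^\top$ for every pair of orthogonal $\R_X,\R_Y$ that act as the identity on $\mathrm{col}(\X)$ and $\mathrm{col}(\Y)$ respectively (since such transformations preserve both the unconditional Haar law of $\O$ and the event $\{\X=\O\Y\}$); this bilateral invariance on partial isometries between $\mathrm{col}(\Y)^\perp$ and $\mathrm{col}(\X)^\perp$ characterizes the Haar distribution, yielding the claim.
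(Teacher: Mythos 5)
Your proposal is correct in substance and takes a genuinely different route from the paper. The paper's ``proof'' is essentially a citation: it invokes \cite[Lemma 4]{rangan2019vector}, which gives the conditional law as $\X(\X^\top\X)^{-1}\Y^\top + \U_{\X^\perp}\check{\O}\,\U_{\Y^\perp}^\top$ with $\check{\O}$ Haar on the $(n-k)\times(n-k)$ orthogonal group, and then notes a translation to the form stated in the proposition. You instead rebuild this from scratch: reduce the event to a block constraint by an adapted orthonormal change of basis, use the orthogonality of $\M_{11}=\A'\A^{-1}$ (which you correctly derive from $\A^\top\A=(\A')^\top\A'$) to force the off-diagonal blocks to vanish, and then identify the conditional law of $\M_{22}$ as Haar on $\mathrm{O}(n-k)$ via the bilateral invariance argument. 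All of these steps are sound; in particular the invariance argument is clean and correct (the maps $\O\mapsto\R_X\O\R_Y^\top$ preserve Haar measure and the conditioning event, and bi-invariance on a compact group characterizes Haar). One small redundancy: $\M_{21}=\0$ already follows directly from the conditioning event $\M_{21}\A=\0$, so you do not need to derive it a second time from $\M^\top\M=\Id$, though that derivation is not wrong. Your approach has the merit of being self-contained, which is useful if one does not want to track down the lemma in the cited reference.

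There is one subtle point worth flagging in the very last sentence. What your argument actually establishes is that, conditionally, $\proj_{\X^\perp}\O\proj_{\Y^\perp}$ has the same law as $\U_{\X^\perp}\check{\O}\,\U_{\Y^\perp}^\top$ with $\check{\O}$ Haar on $\mathrm{O}(n-k)$ (an orthogonal matrix in the $(n-k)$-dimensional frames), and this is the exact statement proved in \cite[Lemma 4]{rangan2019vector}. The further re-expression as $\proj_{\X^\perp}\tilde{\O}\proj_{\Y^\perp}$ with $\tilde{\O}$ Haar on the full group $\mathrm{O}(n)$ is not a literal equality in law: for instance with $n=2$, $k=1$, $\X=\Y=e_1$, the block $\check{\O}\in\{\pm 1\}$ is Rademacher, whereas the $(2,2)$ entry of a Haar $\tilde{\O}\in\mathrm{O}(2)$ has the arcsine distribution on $[-1,1]$, so $\proj_{\X^\perp}\tilde{\O}\proj_{\Y^\perp}$ is not even a partial isometry almost surely. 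This is a quirk inherited from the paper's own statement of the proposition (where it is asymptotically harmless in the $n\to\infty$ regime in which it is applied), so it is not a defect introduced by you, but it would be cleaner to state the conclusion with $\check{\O}\in\mathrm{O}(n-k)$ rather than claim exact distributional equality with the projected full-dimensional Haar matrix.
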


\begin{proposition}\label{prop:orthognormal}
Fix any $p \geq 1$ and $k,\ell \geq 0$. Let $\O \in \RR^{(n-\ell) \times
(n-\ell)}$ be a
random Haar-uniform orthogonal matrix. Let
$\E \in \RR^{n \times k}$ and $\v \in \RR^{n-\ell}$ be deterministic
and satisfy $\E \toWp E$ and $n^{-1}\|\v\|^2 \to \sigma^2$,
and let $\proj \in \RR^{n \times (n-\ell)}$ be any deterministic
matrix with orthonormal columns.
\begin{enumerate}[(a)]
\item Almost surely as $n \to \infty$,
\[(\proj \O\v,\E) \toWp (Z,E)\]
where $Z \sim \N(0,\sigma^2)$ is independent of $E$.
\item Consider a second dimension $m$ such that $m,n \to \infty$ simultaneously.
Fix $j \geq 0$, and let $\F \in \RR^{m \times j}$ be deterministic and satisfy
$\F \toWp F$. Let $\check{\v} \in \RR^m$ be the first $m$ entries of $\Pi \O\v$
if $m \leq n$, or $\Pi\O\v$ extended by $m-n$ i.i.d.\ entries with
distribution $\N(0,\sigma^2)$ if $m>n$. Then almost
surely as $m,n \to \infty$,
\[(\check{\v},\F) \toWp (\check{Z},F)\]
where $\check{Z} \sim \N(0,\sigma^2)$ is independent of $F$.
\end{enumerate}
\end{proposition}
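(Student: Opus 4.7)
\emph{Plan.} The strategy for both parts is to reduce to an iid Gaussian model via the Haar-invariance coupling $\O \v = \|\v\|\,\g/\|\g\|$, where $\g \sim \N(0, \Id_n)$ is independent of the deterministic side information. Since $\v \toWtwo V$ gives $\|\v\|^2/n \to \EE[V^2]$ and the strong law gives $\|\g\|^2/n \to 1$, we have $\|\v\|/\|\g\| \to \sqrt{\EE[V^2]}$ almost surely. I will compare $\proj \O \v$ (respectively $\check{\v}$) to a scaled iid Gaussian surrogate, show the difference vanishes in $W_p$ for every $p \geq 1$, then invoke Proposition \ref{prop:iid} for the surrogate and Proposition \ref{prop:scalarprod} to combine the pieces.

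\emph{Part (a).} Write $\Id - \proj = \U_\ell \U_\ell^\top$ with $\U_\ell \in \RR^{n \times \ell}$ having orthonormal columns, so $\U_\ell^\top \g \sim \N(0, \Id_\ell)$. Then both $\|(\Id - \proj)\g\|_2 = \|\U_\ell^\top \g\|$ and $\|(\Id - \proj)\g\|_\infty \leq \|\U_\ell^\top \g\|$ are $O(1)$ almost surely---the $\ell_\infty$ bound using that each row of $\U_\ell$ has norm at most $1$---and interpolating gives $n^{-1}\|(\Id - \proj)\g\|_p^p \to 0$ for every $p \geq 1$. Combining with $\|\v\|/\|\g\| \to \sqrt{\EE[V^2]}$, the identity
\[\proj \O\v - \sqrt{\EE[V^2]}\,\g = (\|\v\|/\|\g\| - \sqrt{\EE[V^2]})\,\g - (\|\v\|/\|\g\|)(\Id - \proj)\g\]
yields $n^{-1}\|\proj\O\v - \sqrt{\EE[V^2]}\,\g\|_p^p \to 0$. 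Proposition \ref{prop:iid} gives $(\g, \E) \toWp (\bar{Z}, E)$ with $\bar{Z} \sim \N(0,1)$ independent of $E$, and two applications of Proposition \ref{prop:scalarprod}---first to rescale $\g$ by $\sqrt{\EE[V^2]}$, then to absorb the $W_p$-negligible difference above into the joint vector with $\E$---complete the proof.

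\emph{Part (b).} I construct an iid Gaussian surrogate $\tilde{\g} \in \RR^m$ paralleling $\check{\v}$. When $m \leq n$, take $\tilde{\g} = (g_1, \ldots, g_m)$, the first $m$ coordinates of $\g$; the difference between $\check{\v}$ and $\sqrt{\EE[V^2]}\,\tilde{\g}$ is the restriction of the part (a) error to the first $m$ indices, whose $\ell_p^p$-norm divided by $m$ still vanishes because the global bounds on $(\Id - \proj)\g$ survive restriction. When $m > n$, define $\tilde{\g}$ to equal $\g$ on its first $n$ entries and equal the padded iid $\N(0, \EE[V^2])$ entries divided by $\sqrt{\EE[V^2]}$ on its last $m-n$ entries; since the padded entries are independent of $\g$ and $\F$, we have $\tilde{\g} \sim \N(0, \Id_m)$, and $\check{\v}$ agrees exactly with $\sqrt{\EE[V^2]}\,\tilde{\g}$ on the last $m-n$ coordinates while differing on the first $n$ by the same controlled perturbation. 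In both sub-cases Proposition \ref{prop:iid} gives $(\tilde{\g}, \F) \toWp (\bar{Z}, F)$ with $\bar{Z} \sim \N(0,1)$ independent of $F$, and Proposition \ref{prop:scalarprod} (rescaling and absorbing the perturbation) produces $(\check{\v}, \F) \toWp (\check{Z}, F)$. The main technical point, shared with part (a), is establishing the projection defect bound in $W_p$ for every $p \geq 1$, which hinges on the $\ell_\infty$ control of $(\Id-\proj)\g$ alongside the usual $\ell_2$ control; without the $\ell_\infty$ bound one would only recover $W_2$ convergence.
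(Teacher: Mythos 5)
Your proof is correct and follows essentially the same route as the paper's: both couple $\O\v$ to a standard Gaussian $\g$ via $\O\v = \|\v\|\,\g/\|\g\|$, show that the finite-rank perturbation $(\Id-\proj)\g$ and the scalar rescaling error are $W_p$-negligible, and then invoke Propositions \ref{prop:iid} and \ref{prop:scalarprod} (and in part (b), the same case split on $m \le n$ versus $m > n$ with the same coupling of the padded entries). The only small technical variation is in controlling the projection error: you bound $\|(\Id-\proj)\g\|_\infty$ by $\|\U_\ell^\top\g\|$ via the observation that a projection's diagonal entries lie in $[0,1]$, and interpolate with the $\ell_2$ bound to get $n^{-1}\|(\Id-\proj)\g\|_p^p \to 0$, whereas the paper decomposes $(\Id-\proj)\g=\sum_{j=1}^\ell \u_j w_j$ and bounds each rank-one term's empirical $\ell_p$ mass using $n^{-1}\sum_i |u_{ij}|^p \le n^{-1/2}$; both estimates give the same conclusion.
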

\begin{proof}
For part (a), observe that $\O\v$ is a
random vector uniformly distributed on the sphere of radius $\|\v\|$. Thus, we
may introduce a Gaussian vector $\z \sim \N(0,\Id_{n \times n})$
so that $\Pi^\top \z \sim \N(0,\Id_{(n-\ell) \times (n-\ell)})$, 
and write $\O\v=\Pi^\top \z \cdot \|\v\|/\|\Pi^\top \z\|$. Then
\begin{equation}\label{eq:orthognormal1}
\Pi \O \v=\Pi\Pi^\top \z \cdot \|\v\|/\|\Pi^\top \z\|
=\z \cdot \|\v\|/\|\Pi^\top \z\|-\Pi^\perp \z \cdot \|\v\|/\|\Pi^\top \z\|
\end{equation}
where $\Pi^\perp=\Id-\Pi\Pi^\top \in \RR^{n \times n}$ is a projection onto a subspace of fixed
dimension $\ell$. By Proposition \ref{prop:iid},
\begin{equation}\label{eq:orthognormal2}
(\z,\E) \toWp (\tilde{Z},E)
\end{equation}
where $\tilde{Z} \sim \N(0,1)$.
We have $n^{-1}\|\v\|^2 \to \sigma^2$ by assumption and
$n^{-1}\|\Pi^\top \z\|^2 \to 1$ almost surely, so
\begin{equation}\label{eq:orthognormal3}
\|\v\|/\|\Pi^\top \z\| \to \sigma.
\end{equation}
We also have the equality in law $\Pi^\perp \z=\u_1w_1+\ldots+\u_\ell w_\ell$
for some
orthonormal unit vectors $\u_1,\ldots,\u_\ell \in \RR^n$ spanning the range of
$\Pi^\perp$, and for $w_1,\ldots,w_\ell \overset{iid}{\sim} \N(0,1)$.
Letting $\{u_{ij}\}_{i=1}^n$ be the entries of $\u_j$,
for each $j=1,\ldots,\ell$ and any fixed $p \geq 1$, we have
\[\frac{1}{n}\sum_{i=1}^n |u_{ij} w_j|^p
\leq |w_j|^p \cdot \frac{1}{\sqrt{n}} \to 0\]
almost surely as $n \to \infty$. Thus also
\[\frac{1}{n}\sum_{i=1}^n \big|(\Pi^\perp \z)_i\big|^p \to 0,\]
so $\Pi^\perp \z \cdot \|\v\|/\|\Pi^\top \z\| \toWp 0$.
Combining this with (\ref{eq:orthognormal1}), (\ref{eq:orthognormal2}), and
(\ref{eq:orthognormal3}) and applying
Proposition \ref{prop:scalarprod}, we obtain part (a).

For part (b), let $\check{\z} \in \RR^m$ be the first $m$ entries of $\z$ if $m
\leq n$, or $\z$ extended by $m-n$ additional $\N(0,1)$ entries if $m>n$.
Let $\r_1 \in \RR^m$ be the first $m$ entries of 
$\proj^\perp \z \cdot \|\v\|/\|\Pi^\top \z\|$ if $m \leq n$, or this vector extended by
$m-n$ additional 0's if $m>n$. Let $\r_2 \in \RR^m$ be 0 if $m \leq n$, or equal
to 0 in the first $n$ entries and equal to
$\check{\z} \cdot (\|\v\|/\|\Pi^\top \z\|-\sigma)$ in the last $m-n$ entries
if $m>n$. Then we may write
\[\check{\v}=\check{\z} \cdot \|\v\|/\|\Pi^\top \z\|-\r_1-\r_2.\]
The same argument as in part (a) shows
\[(\check{\z},\F) \toWp (\tilde{Z},F)\]
where $\tilde{Z} \sim \N(0,1)$ is independent of $F$, and $\r_1 \toWp 0$.
When $m>n$, we also have
\[\frac{1}{m}\sum_{i=n+1}^m
\Big|\Big(\check{\z}(\|\v\|/\|\Pi^\top \z\|-\sigma)\Big)_i\Big|^p
\leq \Big|\|\v\|/\|\Pi^\top \z\|-\sigma\Big|^p \cdot
\frac{1}{m}\sum_{i=1}^m |(\check{\z})_i|^p \to 0\]
almost surely. So $\r_2 \toWp 0$. Then applying
Proposition \ref{prop:scalarprod} shows part (b).
\end{proof}

\subsection{Properties of moments and free cumulants}

\begin{proposition}\label{prop:cumulantbound}
Let $\Lambda$ be a random variable with finite moments of all orders, such that
$\EE[|\Lambda|^k] \leq M^k$ for some $M>0$ and all integers $k \geq 1$.
\begin{enumerate}[(a)]
\item Let $\{\kappa_k\}_{k \geq 1}$ be the free cumulants of $\Lambda$. Then
for all $k \geq 1$,
\[|\kappa_k| \leq (16M)^k.\]
Thus the R-transform of $\Lambda$ is analytic on the domain $|x|<1/(16M)$,
where it may be defined by the convergent series
\[R(x)=\sum_{k=1}^\infty \kappa_k x^{k-1}.\]
\item Let $\{\kappa_{2k}\}_{k \geq 1}$ be the rectangular free cumulants of
$\Lambda$ with aspect ratio $\gamma$. Then for all $k \geq 1$,
\[|\kappa_{2k}| \leq \max(\gamma^k,1) \cdot (16M)^{2k}.\]
Thus the rectangular R-transform of $\Lambda$ is analytic on the domain
$|x|<\min(\gamma^{-1},1)/(16M)^2$, where it may be defined by the convergent
series
\[R(x)=\sum_{k=1}^\infty \kappa_{2k} x^k.\]
\end{enumerate}
\end{proposition}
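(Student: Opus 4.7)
The plan is to invert each moment-cumulant relation by Moebius inversion on the appropriate non-crossing partition lattice, and to bound the resulting sum using the Catalan estimate $C_j \leq 4^j$ together with the hypothesis $|m_j| \leq M^j$ (which follows from Jensen's inequality applied to $\EE[|\Lambda|^j] \leq M^j$).

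For part (a), the standard inversion of (\ref{eq:momentcumulant}) yields
$\kappa_k = \sum_{\pi \in \NC(k)} \mu_{\NC}(\pi, \hat{1}) \prod_{S \in \pi} m_{|S|}$,
where $\hat{1}$ is the one-block partition and $\mu_{\NC}$ is the Moebius function on $\NC(k)$. By Kreweras' interval isomorphism $[\pi, \hat{1}] \cong \NC(|\pi|)$, one has $|\mu_{\NC}(\pi, \hat{1})| = C_{|\pi|-1} \leq 4^{k-1}$ for every $\pi \in \NC(k)$, and the product of moments over any $\pi$ is bounded by $M^k$. Summing over the $|\NC(k)| = C_k \leq 4^k$ partitions then gives $|\kappa_k| \leq C_k \cdot 4^{k-1} \cdot M^k < (16M)^k$, and the claimed analyticity of $R(x)$ on $|x| < 1/(16M)$ follows from the root test applied to $\sum \kappa_k x^{k-1}$.

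For part (b), I would apply the analogous (operator-valued) Moebius inversion on the even-block non-crossing lattice $\NC'(2k)$, which has cardinality $C_k$ via the bijection to $\NC(k)$ that collapses consecutive-pair blocks. The rectangular moment-cumulant relations (\ref{eq:momentcumulantrect}) come in two dual forms weighted by $\gamma^{e(\pi)}$ and $\gamma^{o(\pi)}$ respectively, and the inversion expresses $\kappa_{2k}$ as a $\gamma$-polynomial combination of products $\prod_{S \in \pi} m_{|S|}$. The decisive observation is that every block of $\pi \in \NC'(2k)$ has size at least $2$, so $e(\pi) + o(\pi) = |\pi| \leq k$; hence for each $\pi$ the smaller of $\gamma^{e(\pi)}, \gamma^{o(\pi)}$ is at most $\max(\gamma^k, 1)$. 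Using whichever of the two inversions is more favorable at each $\pi$, together with a Catalan bound on the operator-valued Moebius function (which inherits the same magnitude estimate $C_{|\pi|-1}$ from the lattice isomorphism $\NC'(2k) \cong \NC(k)$), gives $|\kappa_{2k}| \leq C_k^2 \cdot \max(\gamma^k, 1) \cdot M^{2k} \leq \max(\gamma^k, 1)(16M)^{2k}$. Analyticity of the rectangular R-transform on $|x| < \min(\gamma^{-1}, 1)/(16M)^2$ again follows from the root test.

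The main obstacle will be in part (b): establishing the precise operator-valued Moebius inversion formula on $\NC'(2k)$ (as developed in the operator-valued free probability framework of \cite{speicher1998combinatorial, benaych2009rectangular}) and extracting a clean magnitude bound on the Moebius coefficients that correctly isolates the $\gamma$-dependence from the combinatorial factor. The lattice isomorphism $\NC'(2k) \cong \NC(k)$ should reduce the Moebius magnitude question directly to the square case already handled in part (a), so the only genuinely new bookkeeping is tracking the $\gamma$ powers through the two dual inversions and selecting pointwise the one with the smaller exponent.
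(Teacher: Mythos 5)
Part (a) is correct and follows the same route as the paper: M\"obius inversion of the moment-cumulant relation over $\NC(k)$, a Catalan bound for the M\"obius function, $|\NC(k)| \leq 4^k$, and $m_\pi \leq M^k$. (The paper simply uses $|\mu(\pi,1_k)| \leq 4^k$ rather than the sharper $C_{|\pi|-1} \leq 4^{k-1}$; the target bound is the same.)

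Part (b) is where your route diverges from the paper's, and the argument as stated has a gap. You propose to invert the $\gamma$-weighted relations $m_{2k}=\sum_{\pi \in \NC'(2k)} \gamma^{e(\pi)}\prod_S\kappa_{|S|}$ and $\bar m_{2k}=\sum_{\pi \in \NC'(2k)} \gamma^{o(\pi)}\prod_S\kappa_{|S|}$ directly on $\NC'(2k)$, and then, at the estimation stage, to ``use whichever of the two inversions is more favorable at each $\pi$.'' That last step is not a valid move: M\"obius inversion produces $\kappa_{2k}$ as \emph{one} fixed signed sum over $\pi$; you must commit to a single inversion and cannot select a different formula partition-by-partition. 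Moreover, the weighted relation is not a multiplicative moment-cumulant relation on the lattice $\NC'(2k)$ --- the factor $\gamma^{e(\pi)}$ depends non-multiplicatively on the block structure of $\pi$ --- so the inversion is not a standard M\"obius inversion on $\NC'(2k)$, and the Catalan bound you invoke via $\NC'(2k)\cong\NC(k)$ does not directly apply to its coefficients. In short, you have the right target bound for the $\gamma$-powers (the $|\pi|\le k$ observation is exactly the right quantity), but the inversion machinery that would carry that observation to a bound on $|\kappa_{2k}|$ is asserted, not established.

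The paper avoids the difficulty by never inverting on $\NC'(2k)$ at all. It works in the rectangular probability space $(\mathcal A, p_m,p_n,\varphi_m,\varphi_n)$ of \cite{benaych2009rectangular} and performs M\"obius inversion of the $\mathcal D$-valued moment-cumulant relation over the \emph{full} lattice $\NC(2k)$, which is standard operator-valued free probability \cite{speicher1998combinatorial}. There the M\"obius function bound $|\mu(\pi,1_{2k})|\le 4^{2k}$ and $|\NC(2k)|\le 4^{2k}$ are off the shelf, giving the factor $16^{2k}$. The restriction to even-block partitions and the $\gamma$-powers then emerge automatically: $m_\pi^{\mathcal D}(a,a^*,\dots,a,a^*)$ vanishes unless $\pi$ has even blocks, and when nonzero, $\varphi_m(m_\pi^{\mathcal D})$ is a product of terms $\varphi_m((aa^*)^i)\le M^{2i}$ and $\varphi_n((a^*a)^j)=\gamma\varphi_m((aa^*)^j)\le \gamma M^{2j}$, with at most $k$ factors in total, yielding $|\varphi_m(m_\pi^{\mathcal D})|\le\max(\gamma^k,1)M^{2k}$. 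This makes the $\gamma$-bookkeeping a mechanical consequence of the trace identity rather than something that must be pushed through a nonstandard weighted inversion. If you want to rescue a direct-inversion argument, you would need to first prove a clean formula and bound for the coefficients of $\kappa_{2k}$ as a $\gamma$-polynomial in the moments, which is precisely what the operator-valued detour buys you for free.
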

\begin{proof}
For part (a), the free cumulants may be expressed explicitly
by M\"obius inversion of the
moment-cumulant relations (\ref{eq:momentcumulant}), yielding
\[\kappa_k=\sum_{\pi \in \NC(k)} m_\pi \cdot \mu(\pi,1_k),
\qquad m_\pi=\prod_{S \in \pi} m_{|S|},\]
where $\mu(\cdot,\cdot)$ are the M\"obius functions on the non-crossing
partition lattice and $1_k$ is the trivial partition consisting of the single
set $\{1,\ldots,k\}$. We have $|\mu(\pi,1_k)|
\leq 4^k$ and $|\NC(k)| \leq 4^k$---see the proof of \cite[Proposition
13.15]{nica2006lectures}. Combining with $|m_\pi| \leq M^k$ for all $\pi \in
\NC(k)$, part (a) follows.

For part (b), we apply a similar argument in the rectangular probability
space $(\mathcal{A},p_m,p_n,\varphi_m,\varphi_n)$ from which the rectangular
free cumulants are defined---see \cite[Section 1.2]{benaych2009rectangular} for
definitions. Here, $p_m,p_n \in \mathcal{A}$ are orthogonal projections
satisfying $p_m+p_n=1$, and $\varphi_m$ and $\varphi_n$ are traces on
$p_m\mathcal{A}p_m$ and $p_n\mathcal{A}p_n$ that satisfy $\varphi_m(p_m)=1$,
$\varphi_n(p_n)=1$, and $\gamma/(1+\gamma) \cdot \varphi_m(xy)=1/(1+\gamma)
\cdot \varphi_n(yx)$ for $x \in p_m\mathcal{A}p_n$ and $y \in
p_n\mathcal{A}p_m$. Let $E:\mathcal{A} \to \mathcal{D}$ be the conditional
expectation onto the sub-algebra $\mathcal{D}$ generated by $(p_m,p_n)$, given
by $E(x)=\varphi_m(p_mxp_m)p_m+\varphi_n(p_nxp_n)p_n$.
For $k \geq 1$ and partitions $\pi \in \NC(k)$, let
$\kappa_\pi^{\mathcal{D}}$ be the $\mathcal{D}$-valued
free cumulants defined by the moment-cumulant relations
\[E(a_1\ldots a_k)=\sum_{\pi \in \NC(k)}
\kappa_\pi^{\mathcal{D}}(a_1,\ldots,a_k).\]

If $a \in p_m\mathcal{A}p_n$ is an element such that
$\varphi_m((aa^*)^k)=\EE[\Lambda^{2k}]$,
then the rectangular free cumulant $\kappa_{2k}$ of $\Lambda$ is given by
\[\kappa_{2k} \cdot p_m=\kappa_{1_{2k}}^\mathcal{D}(a,a^*,\ldots,a,a^*).\]
(Compare \cite[Eq.\ (2.5)]{benaych2009rectangularII} with
\cite[Eq.\ (8)]{benaych2009rectangular}, the latter being the definition of
rectangular free cumulants that we have reviewed in
Section \ref{sec:cumulantsrect} and used throughout this work.)
From the M\"obius inversion
\[\kappa_{1_{2k}}^\mathcal{D}(a,a^*,\ldots,a,a^*)
=\sum_{\pi \in \NC(2k)} m_\pi^\mathcal{D}(a,a^*,\ldots,a,a^*)
\cdot \mu(\pi,1_{2k})\]
where $m_\pi^\mathcal{D}$ is the $\mathcal{D}$-valued joint moment function
associated to $\pi$, we obtain
\[|\kappa_{2k}|=
\left|\varphi_m\Big(\kappa_{1_{2k}}^\mathcal{D}(a,a^*,\ldots,a,a^*)\Big)\right|
\leq 16^{2k} \max_{\pi \in \NC(2k)}
\Big|\varphi_m(m_\pi^\mathcal{D}(a,a^*,\ldots,a,a^*))\Big|.\]
Here, it may be checked that when
$\varphi_m(m_\pi^\mathcal{D}(a,a^*,\ldots,a,a^*))$ is non-zero, it must be a
product of $\varphi_m((aa^*)^{i_1}),\ldots,\varphi_m((aa^*)^{i_a})$ and
$\varphi_n((a^*a)^{j_1}),\ldots,\varphi_n((a^*a)^{j_b})$
where the elements of $\pi$ have cardinalities
$2i_1,\ldots,2i_a,2j_1,\ldots,2j_b$. Then applying
$\varphi_n((a^*a)^j)=\gamma \varphi_m((aa^*)^j)$ and
$\varphi_m((aa^*)^j)=\EE[\Lambda^{2j}] \leq M^{2j}$, we obtain
$|\varphi_m(m_\pi^\mathcal{D}(a,a^*,\ldots,a,a^*))| \leq
\max(\gamma^k,1)M^{2k}$, which yields part (b).
\end{proof}

\begin{proposition}\label{prop:conditionalcentralmoment}
Let $k$ be a positive integer. Then for any random variable $X$ and any
sigma-algebra $\cF$,
\[\EE[(X-\EE[X \mid \cF])^k] \leq 2^k\EE[|X|^k].\]
\end{proposition}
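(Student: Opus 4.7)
The plan is to reduce the claim to the $L^k$ triangle inequality combined with conditional Jensen's inequality. Set $Y = \EE[X \mid \cF]$, so that the left-hand side is $\EE[(X-Y)^k]$. Since $(X-Y)^k \leq |X-Y|^k$ (trivially when $k$ is even, and by the elementary bound $a^k \leq |a|^k$ when $k$ is odd), it suffices to show
\[
\EE[|X-Y|^k] \leq 2^k \EE[|X|^k].
\]

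First I would invoke Minkowski's inequality in $L^k$ to get $\|X-Y\|_k \leq \|X\|_k + \|Y\|_k$. Next, I would apply conditional Jensen's inequality to the convex function $t \mapsto |t|^k$, yielding $|Y|^k = |\EE[X \mid \cF]|^k \leq \EE[|X|^k \mid \cF]$ almost surely. Taking expectations on both sides gives $\EE[|Y|^k] \leq \EE[|X|^k]$, i.e., $\|Y\|_k \leq \|X\|_k$. Combining these two bounds produces $\|X-Y\|_k \leq 2\|X\|_k$, and raising to the $k$-th power delivers the desired inequality.

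No step here is delicate: the only thing to be slightly careful about is handling the case $\EE[|X|^k] = \infty$, in which case the bound is vacuous, and otherwise Minkowski and conditional Jensen apply directly. The argument works uniformly for all positive integers $k$ (odd or even) precisely because we pass to absolute values before invoking the triangle inequality.
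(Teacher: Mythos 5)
Your proof is correct. It does take a slightly different route from the paper's: you invoke Minkowski's inequality in $L^k$ to get $\|X-Y\|_k \leq \|X\|_k + \|Y\|_k$ directly, then combine with conditional Jensen ($\|Y\|_k \leq \|X\|_k$) and raise to the $k$-th power. The paper instead expands $(X-Y)^k$ via the binomial theorem, bounds each term $\EE[|X|^j|Y|^{k-j}]$ by H\"older's inequality, applies the same Jensen bound $\EE[|Y|^k]\leq\EE[|X|^k]$ termwise, and then sums the binomial coefficients to get the factor $2^k$. The two arguments are closely related --- Minkowski's inequality is itself standardly deduced from H\"older --- but yours is shorter and avoids the expansion bookkeeping, while the paper's is more self-contained in the sense that it cites only H\"older and Jensen rather than a named $L^p$ triangle inequality. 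Both approaches correctly handle odd $k$ by passing to $|X-Y|^k$ before estimating, as you note.
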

\begin{proof}
Write as shorthand $Y=\EE[X \mid \cF]$. We expand the left side and apply
H\"older's inequality to obtain
\[\EE[(X-Y)^k]=\sum_{j=0}^k \binom{k}{j} \EE[X^jY^{k-j}]\\
\leq \sum_{j=0}^k \binom{k}{j} \EE[|X|^k]^{j/k}\EE[|Y|^k]^{(k-j)/k}.\]
By Jensen's inequality,
\[\EE[|Y|^k]=\EE\Big[|\EE[X \mid \cF]|^k\Big] \leq \EE\Big[\EE[|X|^k \mid
\cF]\Big]=\EE[|X|^k],\]
and the result follows from $\sum_{j=0}^k \binom{k}{j}=2^k$.
\end{proof}

\section*{Acknowledgements}
I am grateful to my advisor Andrea Montanari, who first introduced me to the
beautiful worlds of both free probability and AMP. I would like to thank Keigo
Takeuchi and Galen Reeves for helpful discussions and pointers to related
literature, and Yufan Li for pointing out an error in a previous version
of the manuscript. This research is supported in part by NSF Grant DMS-1916198.

\bibliographystyle{alpha}
\bibliography{AMP_orthog}

\end{document}